\DeclareSymbolFont{extraitalic} {U}{zavm}{m}{it}
\DeclareMathSymbol{\stigma}{\mathord}{extraitalic}{168}
\newtheorem*{theorem*}{Theorem}
\newtheorem{theorem}{Theorem}[subsection]
\newtheorem{proposition}[theorem]{Proposition}
\newtheorem*{proposition*}{Proposition}
\newtheorem{lemma}[theorem]{Lemma}
\newtheorem{corollary}[theorem]{Corollary}
\newtheorem{conjecture}{Conjecture}[subsection]
\theoremstyle{remark}
\newtheoremstyle{boldremark}
{\dimexpr\topsep/2\relax} 
{\dimexpr\topsep/2\relax} 
{}          
{}          
{\bfseries} 
{.}         
{.5em}      
{}          
\theoremstyle{boldremark}\newtheorem{bremark}[theorem]{Remark}
\newtheoremstyle{bolddefinition}
{\dimexpr\topsep/2\relax} 
{\dimexpr\topsep/2\relax} 
{}          
{}          
{\bfseries} 
{.}         
{.5em}      
{}          
\theoremstyle{bolddefinition}\newtheorem{bdefinition}[theorem]{Definition}
\newtheorem*{conjecture*}{Conjecture}
\newcommand*\leftdash{\rotatebox[origin=c]{-45}{$\dabar@\dabar@\dabar@$}}
\newcommand*\rightdash{\rotatebox[origin=c]{45}{$\dabar@\dabar@\dabar@$}}
\newcommand{\Fr}{\operatorname{Fr}}
\newcommand{\Qlbar}{\overline{\mathbb{Q}}_{\ell}}
\newcommand{\Fqbar}{\overline{\mathbb{F}}_q}
\newcommand{\Fq}{\mathbb{F}_q}
\newcommand{\GL}{\textrm{GL}}
\newcommand{\h}{\mathfrak{h}}
\newcommand{\g}{\mathfrak{g}}
\newcommand{\yy}{\mathcal{Y}}
\newcommand{\F}{\mathcal{F}}
\newcommand{\hc}{\mathfrak{hc}}
\newcommand{\cc}{\underline{\Qlbar}}
\newcommand{\DD}{\hat{\Delta}}
\newcommand{\dd}{\hat{\delta}}
\newcommand{\NN}{\hat{\nabla}}
\newcommand{\LL}{\hat{\mathcal{L}}}
\newcommand{\T}{\hat{\mathcal{T}}}
\newcommand{\IC}{\operatorname{IC}}
\newcommand{\Rep}{\operatorname{Rep}}
\newcommand{\Ho}{\operatorname{Ho}}
\newcommand{\pro}{\operatorname{pro}}
\newcommand{\M}{\mathscr{M}}
\newcommand{\cM}{\widehat{\mathscr{M}}}
\newcommand{\Tilt}{\operatorname{Tilt}}
\newcommand{\Perv}{\hat{\mathcal{P}}}
\newcommand{\ind}{\operatorname{Ind}}
\newcommand{\res}{\operatorname{Res}}
\newcommand{\SBim}{\mathbb{S}\!\operatorname{Bim}}
\newcommand{\HH}{\operatorname{HH}}
\newcommand{\Hom}{\operatorname{Hom}}
\newcommand{\Spec}{\operatorname{Spec}}
\newcommand{\prolim}{\text{``$\lim\limits_{\leftarrow}$''}}
\newcommand{\dirlim}[1]{\lim\limits_{\xrightarrow[#1]{}}}
\newcommand{\invlim}[1]{\lim\limits_{\xleftarrow[#1]{}}}
\newcommand{\tw}[1]{\langle #1 \rangle}
\newcommand{\D}{\mathbb{D}}
\newcommand{\Frinv}{\mathrel{\scalebox{0.7}{\reflectbox{\rotatebox[origin=c]{180}{$\operatorname{F}$}}}}}
\newcommand{\Av}{\operatorname{Av}}
\newcommand{\FT}{\operatorname{FT_{\psi}}}
\newcommand{\hHoL}{h\Ho\Lambda}
\newcommand{\Ad}{\operatorname{Ad}}
\newcommand{\sgn}{\operatorname{sgn}}
\titleformat{\subsection}[runin]
{\normalfont\bfseries}{\thesubsection}{1em}{}
\titleformat{\subsubsection}[runin]
{\normalfont\bfseries}{\thesubsection}{1em}{}
\newcommand{\Addresses}{{
  \bigskip
  \footnotesize

  R.~Bezrukavnikov, \textsc{Department of Mathematics, Massachusetts Institute of Technology,
    Cambridge, Massachusetts}\par\nopagebreak
  \textit{E-mail address}: \texttt{bezrukav@math.mit.edu}

  \medskip

  K.~Tolmachov, \textsc{University of Edinburgh,
    Edinburgh, United Kingdom}\par\nopagebreak
  \textit{E-mail address}: \texttt{tolmak@khtos.com}

}}
\begin{document} \title{Monodromic model for Khovanov-Rozansky homology}

\author{Roman Bezrukavnikov, Kostiantyn Tolmachov} \date{}
\maketitle \abstract{
We describe a new geometric model for the Hochschild cohomology of Soergel bimodules based on the monodromic Hecke category studied earlier by the first author and Yun. Moreover, we identify the objects representing individual Hochschild cohomology groups (for the zero and the top degree cohomology this reduces to an earlier result of Gorsky, Hogancamp, Mellit and Nakagane). These objects turn out to be closely related to explicit character sheaves corresponding to exterior powers of the reflection representation of the Weyl group. Applying the described functors to the images of braids in the Hecke category of type A we obtain a geometric description for Khovanov-Rozansky knot homology, essentially different from the one considered earlier by Webster and Williamson.}
\tableofcontents
\section{Introduction}

In \cite{khovanov2007triply}, Khovanov defined a triply-graded link invariant and proved that it coincides with the invariant defined in \cite{khovanov2008matrix} by Khovanov and Rozansky. We refer to this invariant as Khovanov-Rozansky link homology. Khovanov's construction uses the braid group action, defined by Rouquier, on the homotopy category of Soergel bimodules. For a link given as a closure of a braid, the invariant is computed by applying the Hochschild cohomology functor to the Rouquier complex of Soergel bimodules categorifying this braid.

The category of Soergel bimodules is a graded algebraic version of the Hecke category. The latter has several geometric incarnations.

Let $G$ be a connected reductive group defined over the finite field $\Fq$, and let $B \subset G$ be a split Borel subgroup. Let $D^b_m(B\backslash G/B)$ be the mixed derived category of sheaves on the stack $B\backslash G/B$.  This is a monoidal category we will refer to as the mixed equivariant Hecke category.

A geometric model for Hochschild homology based on the mixed equivariant version of the Hecke category was described by Webster and Williamson in a series of works \cite{webster2008geometric}, \cite{webster2009geometry}, \cite{webster2017geometric}. Applied to $G$ of type A, this gives a geometric model for Khovanov-Rozansky homology.

Let $G^{\vee}$ be the Langlands dual group, $B^{\vee} \subset G^{\vee}$ a split Borel subgroup, and $U^{\vee} \subset B^{\vee}$ its unipotent radical. In \cite{by}, the monodromic Hecke category was defined as a completion of the mixed equivariant category of sheaves $D_m^b(U^{\vee}\backslash G^{\vee}/U^{\vee})$ on the basic affine space, monodromic with respect to the right (equivalently, left) action of the maximal torus $T^{\vee} \subset B^{\vee}$. In loc. cit. it was also proved, generalizing the ideas of \cite{s}, \cite{bgs}, \cite{bg}, that the equivariant and the monodromic Hecke categories are monoidally equivalent, for an arbitrary Kac-Moody group $G$. This equivalence has many features of the Koszul duality, and is referred to as such.

In this paper, we describe a new geometric model for Hochschild cohomology of Soergel bimodules, based on the above mixed monodromic version of the Hecke category. This approach has several advantages. One is that we are able to identify objects representing individual Hochschild cohomology groups. This generalizes the result of \cite{ghmn}, where this was done for top and bottom Hochschild cohomology. We also express these objects as images of explicit character sheaves under the Harish-Chandra transform.

Our initial motivation comes from the conjectures of \cite{gorsky2016flag} and \cite{tolmachov2018towards}. Namely, in \cite{gorsky2016flag} it is conjectured that there is a pair of adjoint functors between a certain derived category of coherent sheaves on the flag Hilbert scheme and the homotopy category of Soergel bimodules. Loc. cit. also proposed a way to compute Khovanov-Rozansky homology inside the coherent category, based on the above conjecture. Individual Hochschild cohomology groups are predicted to be represented by the exterior powers of the tautological bundle on the flag Hilbert scheme. There are natural filtrations of the tautological bundle and its exterior powers, with subquotients isomorphic to line bundles. The conjecture also states that the corresponding line bundles are sent to the Rouquier complexes categorifying products of the so-called Jucys-Murphy braids.

On the other hand, it was observed in \cite{tolmachov2018towards} that the character sheaves we obtain in this paper also categorify the elementary symmetric polynomials in Jucys-Murphy braids, matching those appearing in the categories of coherent sheaves on the flag Hilbert scheme. It was also observed that the corresponding objects in the Hecke category are images of the exterior powers of a certain central sheaf on the affine flag variety under the (conjectural) flattening functor from the affine Hecke category to the finite Hecke category.

Thus, this paper gives further evidence towards the above conjectures.

\subsection{Hochschild homology and monodromic categories.} See Sections \ref{sec:khov-rozansky-homol}, \ref{sec:gener-compl-categ} for the precise definitions.

Let $G$ be a split connected reductive group defined over a finite field $\Fq$.

Let $W$ be its Weyl group. The $\ell$-adic Tate module $V_T \simeq \operatorname H_1(T,\Qlbar)$ of the maximal torus $T$ carries a representation of $W$. The module $V_T$ is a vector space over $\Qlbar$ of dimension equal to the rank of $G$.

Recall (see e.g. \cite{soergel2007kazhdan}, \cite{ew}, \cite{elias2020introduction}) that to $V_T$, considered as a $W$-represenation, one can attach the category of Soergel bimodules, denoted by $\SBim$ in this introduction. In fact, the category of Soergel bimodules can be attached to any Coxeter group and its representation (not necessarily over a field of characteristic 0) satisfying certain technical condition, trivially satisfied in our case. We shall not use this more general setting.

$\SBim$ is a full additive subcategory of the category of graded $R$-bimodules, where $R = \operatorname{Sym}V_T$ with $V_T$ placed in degree 2.

Recall from loc. cit. that the isomorphism classes of indecomposable Soergel bimodules, up to a grading shift, are labeled by the elements of the Weyl group. Let $B_w$ be a representative of a class labeled by $w$.

In \cite{by}, a family of indecomposable free-monodromic tilting objects in the completion $\cM$ of the category $D^b_m(U\backslash G/U)$ was constructed. Let $\T_w\in\cM$ be an indecomposable free-monodromic tilting object labeled by $w \in W$, constructed in loc. cit. There is a functor $\Lambda:\SBim \to \operatorname{Tilt}$, where $\operatorname{Tilt}$ is the additive category generated by $\T_w$ and their Tate twists. This functor is inverse to the functor $\mathbb{V}$ constructed in loc. cit. We have $\Lambda(B_w) = \T_w$.

Let $h\Ho(\Lambda):\Ho(\SBim) \to \cM$ denote the extension of the functor $\Lambda$ to the homotopy category of Soergel bimodules (see Section \ref{sec:comp-with-soerg-4}).

For two objects $\F, \mathcal{G}$ in the mixed derived category on the stack $X$ over $\Fq$, we write $\Hom(\F,\mathcal{G})$ for the $\Hom$-space in the derived category of sheaves on $X\otimes_{\operatorname{Spec}\Fq}{\operatorname{Spec}{\Fqbar}}$ with the Frobenius action. For a Frobenius module $M$, let $M^f$ be the union of all its finite dimensional submodules, and let $M^{\Frinv}$ be the module with the inverse Frobenius action. If the eigenvalues of the Frobenius on $M$ are integral, let $\underline{M}$ be the corresponding graded $\Qlbar$-vector space.

For a finitely-generated $\Fr$-module $M$ with intergral weights over the power series ring $\Qlbar[[V_T]]$, $\underline{M}^f$ is naturally a finitely-generated graded module over the polynomial ring $\operatorname{Sym}[V_T]$. 

Consider the adjoint action of $T$ on $U\backslash G/U$ and let $p$ be the projection $p: U\backslash G/U \to \dfrac{U \backslash G/U}{T}$. Our first result is the following (see Propositions \ref{sec:comp-with-soerg-4} and \ref{sec:comp-with-soerg}):
\begin{proposition} There is an isomorphism of functors on $\SBim$
  \[ \operatorname{HH}^{\bullet}(M) \dot{=} \underline{\operatorname{Hom}^{\bullet}(\mathbb{K}, \Lambda(M))^{f,\Frinv}},
  \] where $\mathbb{K} = p^*p_!\T_{w_0}$, $w_{0}$ is the longest element of $W$ and $\dot{=}$ denotes an isomorphism up to shifts.
\end{proposition}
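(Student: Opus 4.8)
The plan is to reduce the statement to a known description of Hochschild cohomology of Soergel bimodules in terms of sheaf cohomology on $U\backslash G/U$, and then to match the two sides step by step along the Koszul-duality functor $\Lambda$. First I would recall the standard fact that for a Soergel bimodule $M$ the Hochschild cohomology $\operatorname{HH}^{\bullet}(M)$ is computed as $\operatorname{Ext}^{\bullet}_{R\otimes R}(R, M)$, and that this can be rewritten, using the resolution of the diagonal $R$-bimodule by a Koszul complex built from $V_T$, as the cohomology of $M\otimes_R \bigwedge^{\bullet} V_T^{*}$ (with appropriate shift and twist). On the geometric side, the key input is the computation of $\operatorname{Hom}$-spaces in $\cM$ against the "big tilting" object $\T_{w_0}$: by the general theory of \cite{by} the functor $\mathbb{V}$ (inverse to $\Lambda$) intertwines $\operatorname{Hom}$ in $\cM$ with $\operatorname{Hom}$ of $R$-bimodules, so that $\operatorname{Hom}^{\bullet}(\T_{w_0}, \Lambda(M))$ recovers, after passing to $\underline{(-)}^{f}$, the bimodule underlying $M$ (up to the shifts hidden in $\dot{=}$). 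The object $\mathbb{K} = p^{*}p_{!}\T_{w_0}$ is exactly the tool that converts "$\operatorname{Hom}$ against $\T_{w_0}$" into "$\operatorname{Hom}$ against $\T_{w_0}$ after averaging over the adjoint $T$-action", and the adjunction $(p^{*}, p_{*})$ together with $p_{!}=p_{*}[\dim T](\dim T)$ on the torsor $p$ produces precisely the extra Koszul factor $\bigwedge^{\bullet}V_T^{*}$ that distinguishes $\operatorname{HH}^{\bullet}$ from plain $\operatorname{Hom}$.

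Concretely I would proceed as follows. Step 1: identify $p^{*}p_{!}$ with convolution against (the constant sheaf on) the $T$-orbit, i.e. show $\operatorname{Hom}^{\bullet}(p^{*}p_{!}\T_{w_0}, \Lambda(M)) \cong \operatorname{Hom}^{\bullet}(\T_{w_0}, p^{!}p_{*}\Lambda(M))$ and that $p^{!}p_{*}$ acts on the free-monodromic category as tensoring with $H^{\bullet}_T(\mathrm{pt})$-style Koszul data; since $p$ is a $T$-torsor (up to the quotient-stack bookkeeping), $p_{!}p^{*} = \bigoplus_i \bigwedge^{i}V_T[-i](-i)\otimes(-)$, which is the exterior-algebra factor. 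Step 2: apply the main theorem of \cite{by} in the form "$\Lambda$ is an equivalence onto $\operatorname{Tilt}$ and $\operatorname{Hom}^{\bullet}(\T_{w_0}, \Lambda(M))^{f}$, as a graded Frobenius module, is $M$ viewed as a left $R$-module" — the Frobenius-semisimplicity and integrality of weights on tilting objects, guaranteed in loc. cit., is what makes $\underline{(-)}^{f}$ well-defined and exact here, and the $\Frinv$ accounts for the standard sign/duality convention relating the Frobenius grading to the cohomological grading on $R$. Step 3: combine Steps 1 and 2: $\operatorname{Hom}^{\bullet}(\mathbb{K}, \Lambda(M)) = \operatorname{Hom}^{\bullet}(\T_{w_0}, \Lambda(M))\otimes \bigwedge^{\bullet}V_T$ up to shift, and passing to $\underline{(-)}^{f,\Frinv}$ yields $M\otimes_R \bigwedge^{\bullet}V_T$ — which Step 0 identified with $\operatorname{HH}^{\bullet}(M)$. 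Step 4: check naturality in $M$, i.e. that all the isomorphisms above are natural transformations of functors on $\SBim$, which is automatic since each is induced by an adjunction or by the functoriality of $\Lambda$.

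The main obstacle I expect is Step 1 together with the bookkeeping of shifts and twists: one has to be careful that $p$ is a morphism of stacks (the source is already $U\backslash G/U$ and the target is a further quotient by the adjoint $T$-action, which is \emph{not} free), so "$p$ is a $T$-torsor" is only true after stabilizer-conscious interpretation, and the identity $p_{!}p^{*}\cong \bigoplus_i\bigwedge^{i}V_T[-i](-i)$ must be justified via $H^{\bullet}$ of the classifying stack $BT$ (a polynomial, not exterior, ring) and a regrading/duality argument — this is exactly where the "$f$" and "$\Frinv$" superscripts do real work, converting $\operatorname{Sym}$ into $\bigwedge$ by taking Koszul (co)homology. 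A secondary technical point is that $\T_{w_0}$ is a \emph{pro-}object (free-monodromic), so the interchange of $\operatorname{Hom}$ with the colimit defining $\underline{(-)}^{f}$ and with $p_{!}$ needs the finiteness/completeness properties of $\cM$ established in \cite{by}; granting those, the argument is a formal assembly of adjunctions plus the Koszul-duality equivalence. Because the statement is only claimed up to shifts ($\dot{=}$), I would not track the precise normalization beyond checking that a single overall shift suffices, deferring the exact constants to the proof of Propositions \ref{sec:comp-with-soerg-4} and \ref{sec:comp-with-soerg}.
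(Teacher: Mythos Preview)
Your approach is correct and essentially coincides with the paper's; the difference is packaging. Rather than moving $p^*p_!$ across by adjunction and then computing $p^!p_*\Lambda(M)$, the paper introduces the explicit complex $K_{\mathbb{S}} = K^{\bullet}\otimes_{R\otimes R}B_{w_0}$ of Soergel bimodules (the key point being $B_{w_0}\simeq R\otimes_{R^W}R$, so tensoring $K^{\bullet}$ with $B_{w_0}$ over $R\otimes R$ does not change $\underline{\Hom}(-,M)$ for $M\in\SBim$), then applies the $\Hom$-preservation of $h\Ho\Lambda$ (your Step~2) and separately proves $h\Ho\Lambda(K_{\mathbb{S}})\simeq\mathbb{K}$ via a dg-model for $T$-equivariant sheaves adapted from \cite{2geometric} (Lemma~\ref{sec:comp-with-soerg-3}): this lemma is the precise form of your Step~1, realizing $p^{\dagger}p_!\T_{w_0}$ as $\T_{w_0}\otimes_S\mathbb{K}_T$ with $S$ acting by the \emph{antidiagonal} monodromy. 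The paper's route has the advantage of staying inside complexes of free-monodromic tilting objects, where the comparison with $\SBim$ is already in place; your adjunction route is valid but would still need exactly this lemma to identify $p^!p_*$ (or $p^*p_!$) on $\cM$, and you should be careful that your formula ``$p_!p^* = \bigoplus_i\bigwedge^iV_T[-i](-i)\otimes(-)$'' and the phrase ``$M\otimes_R\bigwedge^{\bullet}V_T^{*}$'' suppress the differential --- it is contraction against $\alpha\otimes 1-1\otimes\alpha$, not a genuine direct sum, and this is what makes the result compute $\HH^{\bullet}$ rather than merely $M\otimes_{\Qlbar}\bigwedge^{\bullet}V_T$.
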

Note that inversion of the Frobenius action $\Frinv$ comes from the fact that the Koszul duality for Hecke categories from \cite{by} sends the Tate twist to the composition of the inverse twist and homological shift.

Informally, we say that $\mathbb{K}$ represents the Hochschild cohomology functor on Soergel bimodules. Our main observation here is that, for the purposes of computing Hochschild cohomology of Soergel bimodules, the Koszul resolution $K^{\bullet}$ of the diagonal bimodule over the polynomial ring can be replaced by a complex $K_{\mathbb{S}}:=K^{\bullet}\otimes_{R \otimes R}B_{w_0}$ of Soergel bimodules. We then observe that the complex of tilting sheaves $h\Ho(\Lambda)(K_{\mathbb{S}})$ can be geometrically described as the averaging $\mathbb{K} = p^*p_!\T_{w_0}$ of $\T_{w_0}$ with respect to the adjoint $T$-action.

\subsection{Tilting sheaves and the Radon transform.} Summarizing the discussion above, we get that to recover individual Hochschild cohomology groups $\operatorname{HH}^k(B_w)$ one has to compute the cohomology of the complex $\Hom(p^*p_!\T_{w_0}, \T_w)$. To do this, it turns out to be natural to consider a new t-structure on the monodromic Hecke category, which we now describe.

Recall that the functor of left convolution with $\NN_{w_0}$ is called the \emph{Radon transform} (see Example 4.1.2 of \cite{yun2009weights} for an explanation of this terminology).

In the setting of sheaves on $U\backslash G/B$ (considered as a left module category over the monodromic Hecke category) it was proved in \cite{beilinson2004tilting} that the Radon transform functor takes tilting perverse sheaves to injective perverse sheaves on $U\backslash G/B$. In our setting the above statement can be reformulated as follows: tilting complexes are injective with respect to the pullbacks of the perverse t-structure along the autoequivalence $(\NN_{w_0}\star -)$. Let $\mathcal{H}^{-k}_{w_0}(-)$ stand for the cohomology functor with respect to this shifted t-structure. We denote
\[
  \mathfrak{E}_k = \mathcal{H}^{-k}_{w_0}(\mathbb{K}).
\]
Our second observation is as follows (see Theorem \ref{sec:comp-with-soerg-2}):
\begin{theorem}\label{sec:tilt-sheav-radon} For $M \in \SBim$, there is a functorial isomorphism
\[ \operatorname{HH}^k(M) \dot{=} \underline{\Hom(\mathfrak{E}_k, \Lambda(M))^{f,\Frinv}}.
\] Here  $\dot{=}$ denotes an isomorphism up to shifts.
\end{theorem}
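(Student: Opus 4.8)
The plan is to reduce the statement to the preceding proposition, $\operatorname{HH}^{\bullet}(M) \dot{=} \underline{\operatorname{Hom}^{\bullet}(\mathbb{K}, \Lambda(M))^{f,\Frinv}}$, and then to identify the individual cohomology groups of the total $\operatorname{Hom}$-complex $\operatorname{Hom}^{\bullet}(\mathbb{K}, \Lambda(M))$ using the $w_0$-shifted perverse t-structure. The essential input is that for $M \in \SBim$ the object $\Lambda(M) \in \operatorname{Tilt}$ is a finite direct sum of Tate twists of the free-monodromic tilting objects $\T_w$; since the Tate twist is perverse-t-exact, $\Lambda(M)$ is then injective with respect to the $w_0$-shifted t-structure — this is precisely the reformulation, recalled above, of the Beilinson--Bezrukavnikov result of \cite{beilinson2004tilting} that the Radon transform sends tilting objects to injectives. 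Using additivity of all functors involved, one may even reduce to the case $M = B_w$, so that $\Lambda(M) = \T_w$.

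Next I would record that $\mathbb{K} = p^*p_!\T_{w_0}$ is a bounded object for the $w_0$-shifted t-structure, hence possesses a finite canonical (Postnikov) filtration with graded pieces $\mathfrak{E}_k[k]$, where $\mathfrak{E}_k = \mathcal{H}^{-k}_{w_0}(\mathbb{K})$ is nonzero for only finitely many $k$ — consistent with the vanishing of $\operatorname{HH}^k(M)$ outside a bounded range. The spectral sequence of this filtration converging to $\operatorname{Hom}^{\bullet}(\mathbb{K}, \Lambda(M))$ has $E_2$-term $\operatorname{Ext}^{p}_{w_0}(\mathfrak{E}_{q}, \Lambda(M))$ in bidegree $(p,q)$, with $\operatorname{Ext}^{\bullet}_{w_0}$ taken in the heart of the $w_0$-shifted t-structure. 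By the injectivity of $\Lambda(M)$ just recalled, this vanishes for $p \ne 0$; the spectral sequence therefore degenerates and
\[
  H^{k}\bigl(\operatorname{Hom}^{\bullet}(\mathbb{K}, \Lambda(M))\bigr) \;\cong\; \operatorname{Hom}(\mathfrak{E}_k, \Lambda(M)).
\]
Combining this with the preceding proposition, and using that the operation $\underline{(-)^{f,\Frinv}}$ is exact on the relevant categories of Frobenius modules with integral weights, hence commutes with passing to $H^k$, yields $\operatorname{HH}^{k}(M) \dot{=} \underline{\operatorname{Hom}(\mathfrak{E}_k, \Lambda(M))^{f,\Frinv}}$. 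Functoriality in $M$ is automatic, since the canonical filtration lives on the fixed object $\mathbb{K}$ and $\operatorname{Hom}(\mathfrak{E}_k, -)$ and $\Lambda$ are functors.

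The main obstacle is \emph{not} this formal degeneration argument but the foundational work that makes it legitimate inside the completed monodromic category $\cM$: one must check that the $w_0$-shifted t-structure is defined on a subcategory containing $\mathbb{K}$, that $\mathbb{K}$ is bounded there (so the $\mathfrak{E}_k$ exist and almost all vanish), and — crucially — that the injectivity statement of \cite{beilinson2004tilting}, originally about honest perverse sheaves on $U\backslash G/B$, transports to the free-monodromic tilting objects $\T_w$ in $\cM$, with no higher $\operatorname{Ext}$'s concealed by the pro-completion or by the passage between the $U\backslash G/B$ and $U\backslash G/U$ pictures. Once these points are secured — they are the substance of the sections preparing this theorem — the argument above is a short diagram chase.
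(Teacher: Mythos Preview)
Your proposal is correct and follows essentially the same route as the paper: the paper packages your ``injectivity plus spectral-sequence degeneration'' step as the single statement $\Hom(\F,\T_v[k])\simeq\Hom(\mathcal{H}_{w_0}^{-k}(\F),\T_v)$ (Proposition~\ref{sec:pro-unip-tilt}), and then combines it with the identification $\mathbb{K}\simeq h\Ho(\Lambda)(K_{\mathbb{S}})$ and the $\Hom$-comparison of Proposition~\ref{sec:comp-with-soerg-4}. The one point worth noting is that the paper does \emph{not} transport the injectivity from \cite{beilinson2004tilting}; instead it proves the vanishing $\Hom(\F,\T_v[k])=0$ for $\F\in\Perv_{w_0}$ and $k\neq 0$ directly in $\cM$, using only the costandard filtration of $\T_v$ and the orthogonality $\Hom(\DD_u,\NN_w[i])=0$ for $i>0$ from Proposition~\ref{braid_relations}\ref{item:11} --- so the foundational concern you flag in your last paragraph is resolved by an elementary self-contained argument rather than by appeal to the $U\backslash G/B$ picture.
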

\begin{bremark} For technical convenience, we work not with the standard t-structure on the completed category defined in \cite{by}, but with the dual one, see Section \ref{sec:gener-compl-categ}.
\end{bremark}

\subsection{Rouquier complexes and Khovanov-Rozansky link invariant.} For the purposes of this subsection, let $G = \operatorname{GL}_n$. Let $\operatorname{Br}_n$ stand for the braid group on $n$ strands. In \cite{rouquier2004categorification}, the complex $F_{\beta}$ in the homotopy category of Soergel bimodules attached to any $\beta \in \operatorname{Br}_n$ is constructed. Moreover, the tensor product with $F_{\beta}$ defines the categorical $\operatorname{Br}_n$-action on the homotopy category of Soergel bimodules. In \cite{khovanov2007triply}, the invariant
\[ \operatorname{HHH}^k(\beta) = \operatorname{H}^{\bullet}(\operatorname{HH}^k(F_{\beta}))
\] is shown, up to grading shifts, to depend only on the link closure of the braid $\beta$. Note that the invariant above, called the Khovanov-Rozansky link invariant, is defined as the Hochschild cohomology functor applied to a complex of Soergel bimodules term by term. For $w \in W$, denote also by $\sigma_w$ its lift to the $\operatorname{Br}_n$. Arbitrary Rouquier complex can be expressed as a tensor product of complexes $F_{\sigma_w}$ (denoted $\Delta_w$ in the main text) and their monoidal inverses (denoted $\nabla_w$, respectively). The functor $h\Ho(\Lambda)$ sends $F_{\sigma_w}$ to the standard pro-unipotent object $\hat{\Delta}_w$ and its inverse to the costandard pro-unipotent object $\hat{\nabla}_w$, see Section \ref{sec:stand-cost-pro}. The complex $F_\beta$ corresponding to an arbitrary braid is sent to the corresponding convolution of standard and costandard objects.

The discussion above implies that (see Theorem \ref{sec:comp-with-soerg-2}):
\begin{theorem}\label{sec:rouq-compl-khov} There is a functorial isomorphism
\[ \operatorname{HHH}^k(\beta) \dot{=} \underline{\Hom(\mathfrak{E}_k, h\Ho(\Lambda)(F_{\beta}))^{f,\Frinv}}.
\]
\end{theorem}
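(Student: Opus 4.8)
The plan is to deduce Theorem~\ref{sec:rouq-compl-khov} from Theorem~\ref{sec:tilt-sheav-radon} by applying the latter term by term along the Rouquier complex. First I would fix a bounded representative $F_\beta = (\cdots \to M^{i} \xrightarrow{d^i} M^{i+1} \to \cdots)$ and record that every term $M^i$ lies in $\SBim$, while every differential $d^i$, being a morphism of $R$-bimodules between objects of $\SBim$, is a morphism in $\SBim$; hence $\Lambda(M^i) \in \Tilt$ is a free-monodromic tilting object, and by the construction of $h\Ho(\Lambda)$ (Section~\ref{sec:comp-with-soerg-4}) the object $h\Ho(\Lambda)(F_\beta) \in \cM$ is the totalization of the bounded complex of tilting objects $\Lambda(M^\bullet)$. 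Recall also that, by definition, $\operatorname{HHH}^k(\beta) = \operatorname{H}^\bullet\big(\operatorname{HH}^k(M^\bullet)\big)$, the cohomology of the complex obtained by applying the additive functor $\operatorname{HH}^k$ in each homological degree.

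Next, using that the isomorphism of Theorem~\ref{sec:tilt-sheav-radon} is functorial in $M \in \SBim$ and that the grading shift concealed in $\dot{=}$ is uniform in $M$, I would assemble the termwise isomorphisms into an isomorphism of complexes of graded vector spaces
\[
  \operatorname{HH}^k(M^\bullet) \;\dot{=}\; \underline{\Hom\big(\mathfrak{E}_k, \Lambda(M^\bullet)\big)^{f,\Frinv}},
\]
the right side carrying the differential induced by $d^\bullet$. Taking $\operatorname{H}^\bullet$ identifies the cohomology of the left side with $\operatorname{HHH}^k(\beta)$, so the theorem reduces to the claim that the cohomology of the right side is $\underline{\Hom\big(\mathfrak{E}_k, h\Ho(\Lambda)(F_\beta)\big)^{f,\Frinv}}$ up to shift.

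For this I would argue in two steps. First, the operations $\underline{(-)}$, $(-)^f$ and the Frobenius-inversion $(-)^{\Frinv}$ are exact on the relevant categories of finitely generated integral-weight Frobenius modules --- part of the formalism of Section~\ref{sec:gener-compl-categ} --- so they commute with taking cohomology of a bounded complex, and it suffices to compute $\operatorname{H}^\bullet\big(\Hom(\mathfrak{E}_k, \Lambda(M^\bullet))\big)$. Second, each $\Lambda(M^i)$ is tilting, hence --- by the result of \cite{beilinson2004tilting} recalled above --- injective in the heart of the $(\NN_{w_0}\star-)$-shifted perverse t-structure, while $\mathfrak{E}_k = \mathcal{H}^{-k}_{w_0}(\mathbb{K})$ lies in that heart; therefore $\operatorname{RHom}(\mathfrak{E}_k, \Lambda(M^i))$ is concentrated in a single cohomological degree (depending only on $k$). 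The $\Lambda(M^i)$ are thus acyclic for $\Hom(\mathfrak{E}_k, -)$, and for the bounded complex $\Lambda(M^\bullet)$ the derived $\Hom$ out of the single object $\mathfrak{E}_k$ is then computed by the naive termwise $\Hom$-complex, so that
\[
  \Hom\big(\mathfrak{E}_k, h\Ho(\Lambda)(F_\beta)\big) \;\cong\; \operatorname{H}^\bullet\big(\Hom(\mathfrak{E}_k, \Lambda(M^\bullet))\big).
\]
Combining the two steps with the isomorphism of complexes above proves the theorem.

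The hard part will be the acyclicity argument carried out \emph{inside the completed monodromic category} $\cM$: one must check that the $(\NN_{w_0}\star-)$-shifted t-structure, the injectivity of tilting objects under it, and the degeneration of the resulting hyper-$\Hom$ spectral sequence all remain valid there, where the objects are free-monodromic pro-objects and the $\Hom$-spaces are formed after base change to $\Fqbar$ together with their Frobenius action. This is precisely what the discussion preceding Theorem~\ref{sec:tilt-sheav-radon}, together with Section~\ref{sec:gener-compl-categ}, are designed to supply; granting it, the reduction of Theorem~\ref{sec:rouq-compl-khov} to Theorem~\ref{sec:tilt-sheav-radon} is purely formal.
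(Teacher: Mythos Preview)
Your proposal is correct and follows essentially the same route as the paper. The paper states the result as the second assertion of Theorem~\ref{sec:comp-with-soerg-2} and deduces it by ``combining Proposition~\ref{sec:comp-with-soerg} with Proposition~\ref{sec:pro-unip-tilt}''; unwinding this gives exactly your termwise argument. The acyclicity step you flag as the hard part---that $\Hom(\mathfrak{E}_k,\T[i])=0$ for $i\neq 0$ when $\T$ is free-monodromic tilting---is precisely Proposition~\ref{sec:pro-unip-tilt}, which the paper proves directly in $\cM$ from the standard/costandard filtration of $\T_v$ and the orthogonality $\Hom(\DD_u,\NN_v[i])=0$ for $i\neq 0$ (Proposition~\ref{braid_relations}\ref{item:11}), rather than by transporting the result of \cite{beilinson2004tilting}. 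One cosmetic point: since both $\mathfrak{E}_k$ and $\Lambda(M^i)$ lie in $\Perv_{w_0}$, the single degree in which $\operatorname{RHom}(\mathfrak{E}_k,\Lambda(M^i))$ is concentrated is $0$, not something depending on $k$; the $k$-dependence enters only through the choice of $\mathfrak{E}_k$.
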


Theorems \ref{sec:tilt-sheav-radon} and~\ref{sec:rouq-compl-khov} can be formulated entirely inside the homotopy category of Soergel bimodules. Namely, there is a t-structure on $\Ho(\SBim)$ with cohomology functor denoted by $\mathcal{H}_{\mathbb{S},w_0}^{\bullet}(-)$. The functor $h\Ho\Lambda$ is t-exact with respect to it and the shifted t-structure on $\cM$. We denote
\[
  \mathbb{E}_k = \mathcal{H}^{-k}_{\mathbb{S},w_0}(K_{\mathbb{S}}).
\]
See Proposition \ref{sec:comp-with-soerg-4} and Corollary \ref{sec:comp-with-soerg-7}.
\begin{corollary}  There are functorial isomorphisms
\[ \operatorname{HH}^k(M) \dot{=} \Hom(\mathbb{E}_k, M),
\]
\[ \operatorname{HHH}^k(\beta) \dot{=} \Hom(\mathbb{E}_k, F_{\beta}).
\] for $M \in \SBim, \beta \in \operatorname{Br}_n$.
\end{corollary}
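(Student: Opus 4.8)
The proof is essentially formal given Theorems~\ref{sec:tilt-sheav-radon} and~\ref{sec:rouq-compl-khov}: the plan is to transport those statements along the Koszul-duality equivalence $h\Ho(\Lambda)$, the only real point being that $h\Ho(\Lambda)$ carries $\mathbb{E}_k$ to $\mathfrak{E}_k$. To see the latter, recall from the discussion after the first Proposition that $h\Ho(\Lambda)(K_{\mathbb{S}})$ is canonically isomorphic to $\mathbb{K} = p^*p_!\T_{w_0}$. Since $h\Ho(\Lambda)$ is t-exact for the t-structure on $\Ho(\SBim)$ with cohomology functors $\mathcal{H}^{\bullet}_{\mathbb{S},w_0}$ and the Radon-shifted t-structure on $\cM$ with cohomology functors $\mathcal{H}^{\bullet}_{w_0}$ — established in Section~\ref{sec:comp-with-soerg-4} and Corollary~\ref{sec:comp-with-soerg-7} — it commutes with the formation of cohomology objects, so
\[ h\Ho(\Lambda)(\mathbb{E}_k) = h\Ho(\Lambda)\bigl(\mathcal{H}^{-k}_{\mathbb{S},w_0}(K_{\mathbb{S}})\bigr) \cong \mathcal{H}^{-k}_{w_0}\bigl(h\Ho(\Lambda)(K_{\mathbb{S}})\bigr) \cong \mathcal{H}^{-k}_{w_0}(\mathbb{K}) = \mathfrak{E}_k. \]

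Next I would pass to $\Hom$-spaces. Because $\Lambda\colon\SBim\to\Tilt$ is an equivalence of additive categories and its homotopy extension $h\Ho(\Lambda)$ is fully faithful onto the relevant subcategory of $\cM$ (Section~\ref{sec:comp-with-soerg-4}), it induces, naturally in both variables, isomorphisms between the internally graded (resp. bigraded) $\Hom$-spaces in $\Ho(\SBim)$ and the corresponding Frobenius-graded $\Hom$-spaces in $\cM$; and because the Koszul duality of~\cite{by} exchanges the Tate twist with the composite of inverse twist and homological shift — the remark following the first Proposition — this matching is precisely the one recorded by the operation $\underline{(-)^{f,\Frinv}}$. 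Combining this with the identification $h\Ho(\Lambda)(\mathbb{E}_k)\cong\mathfrak{E}_k$ and with Theorem~\ref{sec:tilt-sheav-radon} applied to an arbitrary $M\in\SBim$ yields
\[ \Hom(\mathbb{E}_k, M) \cong \underline{\Hom(\mathfrak{E}_k, \Lambda(M))^{f,\Frinv}} \dot{=} \operatorname{HH}^k(M); \]
applying instead Theorem~\ref{sec:rouq-compl-khov} to $F_\beta$, $\beta\in\operatorname{Br}_n$, together with $\operatorname{HHH}^k(\beta) = \operatorname{H}^{\bullet}(\operatorname{HH}^k(F_\beta))$, yields
\[ \Hom(\mathbb{E}_k, F_\beta) \cong \underline{\Hom(\mathfrak{E}_k, h\Ho(\Lambda)(F_\beta))^{f,\Frinv}} \dot{=} \operatorname{HHH}^k(\beta). \]
Functoriality in $M$, and compatibility with the braid-group action in $\beta$, are inherited from Theorems~\ref{sec:tilt-sheav-radon} and~\ref{sec:rouq-compl-khov} since every arrow used above is natural.

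The substantive inputs — hence the real obstacles — are the two facts borrowed from Section~\ref{sec:comp-with-soerg-4}: that $h\Ho(\Lambda)$ is t-exact for the two exotic t-structures (equivalently, that the Radon-shifted perverse t-structure on $\cM$ pulls back to a genuine t-structure on $\Ho(\SBim)$), and that $h\Ho(\Lambda)$ is an equivalence onto its essential image in the completed category $\cM$. The former rests on $h\Ho(\Lambda)$ sending the standard and costandard pro-unipotent objects $\DD_w$, $\NN_w$ to the Rouquier complexes $\Delta_w$, $\nabla_w$, so that the recollement data transport, and on the statement of~\cite{beilinson2004tilting} that free-monodromic tilting objects are injective for the Radon-shifted t-structure. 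The only delicate point in the present argument itself is the bookkeeping of the shifts absorbed into $\dot{=}$: one must confirm that the homological and Tate shifts arising from the geometric identification of $h\Ho(\Lambda)(K_{\mathbb{S}})$ with $\mathbb{K}$, from the $\Frinv$-convention, and from the normalization of $K_{\mathbb{S}}$ relative to the Koszul complex $K^{\bullet}$ are consistent across all degrees $k$ — exactly what $\dot{=}$ is designed to absorb, and exactly where a careless argument would go wrong.
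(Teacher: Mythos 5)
Your proof is correct and follows the same route the paper leaves implicit: identify $h\Ho(\Lambda)(K_{\mathbb{S}})$ with $\mathbb{K}$ via Proposition~\ref{sec:comp-with-soerg}, use t-exactness of $h\Ho(\Lambda)$ for the shifted t-structures (this is Corollary~\ref{sec:comp-with-soerg-5}, not Corollary~\ref{sec:comp-with-soerg-7} as you cite, which is the statement being proved) to deduce $h\Ho(\Lambda)(\mathbb{E}_k)\simeq\mathfrak{E}_k$, and then combine the $\Hom$-space identification of Proposition~\ref{sec:comp-with-soerg-4}~\ref{item:12} with Theorem~\ref{sec:comp-with-soerg-2}. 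The only slip is that reference label; the mathematical content matches the paper's intended argument.
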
 In the Appendix, we compute some examples in types $\operatorname{A}_1, \operatorname{A}_2$.

\subsection{Character sheaves and Hochschild cohomology.} Finally, we relate the above shifted perverse cohomology groups with character sheaves. For a scheme $X$ with an action of an algebraic group $H$ we write $\Av_H$ for the averaging with compact support, i.e. the functor $\pi_!$ for the canonical projection $X \to X/H$.

We have a !-version of the Harish-Chandra functor \[\hc_!:D^b_m(G/_{\Ad}G)\to D^b_m\left(\dfrac{U \backslash G/U}{T}\right),\] that is given by the averaging $\Av_U$ with respect to the left (or right) multiplication action of $U$ on $G$. The derived category of unipotent character sheaves $D\mathcal{CS}$ is the full triangulated subcategory consisting of objects $\F \in D^b_m(G/_{\Ad}G)$ such that $\hc_!(\F)$ is $T$-monodromic with unipotent monodromy, see \cite{lcs1}, \cite{mirkovic1988characteristic}. Recall that, according to \cite{bfo} in characteristic zero and to \cite{chenyd} in the $\ell$-adic setting, $\hc_!$ is a t-exact functor from $D\mathcal{CS}$ (with the perverse t-structure) to the $\Ad_T$-equivariant Hecke category $D^b_m\left(\dfrac{U \backslash G/U}{T}\right)$ (with the shifted perverse t-structure described above). To compute the shifted perverse cohomology of $\mathbb{K}$ in terms of character sheaves, we find an $\Ad_G$-equivariant complex on $G$ that is mapped to $\mathbb{K}$ by the Harish-Chandra functor. 

Let $\mathcal{N}_G \subset G$ stand for the unipotent variety of $G$, and let $j:\mathcal{N}_G^{reg} \to \mathcal{N}_G$ be the embedding of the open subset of regular unipotent elements. Let $Z(G)$ stand for the center of $G$, and let $\Av_{Z(G)}$ stand for the averaging functor (with compact support) with respect to the (trivial) adjoint action of $Z(G)$. We prove the following result, see Theorem \ref{sec:whittaker-averaging-5}.

For a stack $X$, let $\cc_X$ stand for the constant rank one  $\Qlbar$-sheaf on $X$. 

Let $-\star-$ stand for the operation of convolution with compact support on $D_m^b(G/_{\Ad}G)$ and $\cM$, see Sections \ref{sec:harish-chandra-funct-1} and \ref{sec:convolution}, respectively. 

Let $\Xi = \Av_{Z(G)}j_*\underline{\Qlbar}_{\mathcal{N}_{G}^{reg}}$.
\begin{theorem}
\label{sec:char-sheav-hochsch}
  Denote by $\Xi\star\dd_G$ the projection of the sheaf $\Xi$ to the derived category of character sheaves. Then $p^*\hc_!(\Xi\star\dd_G) \dot{=} \mathbb{K}$. Here $\dot{=}$ denotes an isomorphism up to shifts and twists.
\end{theorem}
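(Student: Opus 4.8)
\emph{Reduction and removal of $\dd_{G}$.} The plan is first to reduce the assertion to the $\Ad_{T}$-equivariant Hecke category: since $\mathbb{K}=p^{*}p_{!}\T_{w_{0}}$ by definition, it suffices to produce an isomorphism $\hc_{!}(\Xi\star\dd_{G})\ \dot{=}\ p_{!}\T_{w_{0}}$ in $D^{b}_{m}\big(\tfrac{U\backslash G/U}{T}\big)$ and then apply $p^{*}$. Next I would remove the convolution with $\dd_{G}$: here $\dd_{G}$ is the pro-unipotent unit for $!$-convolution on $D^{b}_{m}(G/_{\Ad}G)$, so that $-\star\dd_{G}$ is the projection onto unipotent character sheaves; since $\hc_{!}$ is compatible with $!$-convolution there is a canonical isomorphism $\hc_{!}(\Xi\star\dd_{G})\cong\hc_{!}(\Xi)\star\hc_{!}(\dd_{G})$, where $\hc_{!}(\dd_{G})$ is the monodromic unit and convolution with it implements the passage to the completed category $\cM$. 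Everything is thereby reduced to computing $\hc_{!}(\Xi)$ and its monodromic completion.

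\emph{Computing $\hc_{!}(\Xi)=\Av_{U}(\Xi)$ via Whittaker averaging.} The section on Whittaker averaging supplies the main input. First, $\mathcal{N}_{G}^{reg}$ is a single $\Ad_{G}$-orbit, isomorphic to $G/C$ with $C$ the connected centralizer of a regular unipotent, $Z(G)\subset C$ and $\dim C=\operatorname{rank}G$; using the Kostant section one identifies $\Xi$, after the projection $-\star\dd_{G}$ to $D\mathcal{CS}$ and up to shift and twist, with the sheaf-theoretic Gelfand--Graev object, i.e.\ with the image of the unit of $D\mathcal{CS}$ under the Whittaker-averaging endofunctor. The averaging $\Av_{Z(G)}$ is not an artefact: $Z(G)$ acts trivially on $\mathcal{N}_{G}$ and on $U\backslash G/U$, so the cohomology of $Z(G)$ that it contributes on the left is matched by the identical factor produced by $p^{*}p_{!}$ on the right (the adjoint $T$-action on $U\backslash G/U$ factoring through $T/Z(G)$), both being swallowed by $\dot{=}$. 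Second, I would prove that $\hc_{!}$ intertwines this Whittaker averaging on $D\mathcal{CS}$ with the Whittaker-averaging endofunctor of $\cM$, whose value on the monodromic unit is, up to shift and twist, $p_{!}\T_{w_{0}}$ --- the free-monodromic strengthening of the result of \cite{beilinson2004tilting} recalled above, that the Radon transform $\NN_{w_{0}}\star-$ sends the big tilting object to an injective object. As a cross-check, $\Av_{U}(j_{*}\cc_{\mathcal{N}_{G}^{reg}})$ can be computed directly by restriction to the big Bruhat cell $U\dot{w}_{0}TU$, which meets the generic regular unipotents and, after the two $U$-averagings, yields exactly the object supported on the large stratum of $U\backslash G/U$ with the expected generic rank.

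\emph{Rigidification.} To upgrade the agreement of supports and generic ranks to a genuine isomorphism with $p_{!}\T_{w_{0}}$, I would invoke the t-exactness of $\hc_{!}$ from $D\mathcal{CS}$ with the perverse t-structure to $D^{b}_{m}\big(\tfrac{U\backslash G/U}{T}\big)$ with the shifted perverse t-structure, recalled above from \cite{bfo} and \cite{chenyd}: since $\Xi\star\dd_{G}$ is, up to a shift, a single unipotent character sheaf --- the ``big'' one attached to regular unipotents and trivial central character --- its Harish--Chandra transform is a single object of the shifted heart, and being the Radon image of a tilting sheaf it is forced, by indecomposability and the support computation, to be the one attached to $w_{0}$, i.e.\ $p_{!}\T_{w_{0}}$ up to the harmless central cohomology factor.

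\emph{The main obstacle.} The crux, and the principal difficulty, is the second step: one must set up compatible Whittaker-averaging functors on $D\mathcal{CS}$ and on $\cM$, prove that $\hc_{!}$ --- suitably extended to pro-objects --- intertwines them (the genuine geometric content, essentially a computation on the Grothendieck--Springer resolution and the Kostant slice), and identify the Whittaker average of the monodromic unit with $p_{!}\T_{w_{0}}$ within the free-monodromic framework of \cite{by}, all while keeping exact track of the homological shift and the Tate twist, including the $\Frinv$-type twist coming from Koszul duality. By contrast, the first step is formal once $\hc_{!}$ is known to be (lax) monoidal for $!$-convolution, and rigidification is a soft consequence of t-exactness and of the indecomposability of the ``big'' character sheaf.
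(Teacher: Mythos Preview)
Your overall architecture --- reduce to $\hc_!(\Xi\star\dd_G)\dot{=}p_!\T_{w_0}$, use monoidality of $\hc_!$ to peel off $\dd_G$, and then identify $\hc_!(\Xi)$ via Whittaker averaging --- is the same as the paper's. But the execution diverges in two places, one a detour and one an actual error.

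\emph{The detour.} You take $\Xi=\Av_{Z(G)}j_{reg*}\cc_{\mathcal{N}_G^{reg}}$ as the starting point and propose to recover its Whittaker nature via the Kostant section. The paper does the opposite: it \emph{defines} $\Xi_\psi:=\Av_G^{\Ad}i_{-*}\xi^*\operatorname{AS}_\psi$ and proves the theorem directly from this definition; the identification with $\Av_{Z(G)}j_{reg*}\cc$ is a separate proposition, established by a one-dimensional Fourier computation (the $\mathbb{G}_m$-averaging of $\operatorname{AS}_\psi$), not the Kostant slice. The Kostant slice appears only later, for the computation of the perverse cohomology $\mathcal{H}^k(\Xi_\psi)$. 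With the Whittaker definition in hand, the intertwining you flag as ``the main obstacle'' is a two-line manipulation: $\hc_!\circ\Av_G^{\Ad}=\Av_U\circ\Av_B^{\Ad}\dot{=}\Av_T^{\Ad}\circ\Av_U$, and on the other side one uses $\T_{w_0}\simeq\Av_U\Av_\psi\dd$ (Proposition~\ref{sec:whittaker-category}, from \cite{by}) together with $\Av_\psi(-)\simeq i_{-*}\xi^*\operatorname{AS}_\psi\star(-)$. Both sides become $\Av_T^{\Ad}\Av_U(i_{-*}\xi^*\operatorname{AS}_\psi)\star\hc_!(\mathcal{E}_n)$ on the nose. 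No abstract ``compatible Whittaker functors'' framework is needed.

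\emph{The error.} Your rigidification step asserts that $\Xi\star\dd_G$ is ``up to a shift, a single unipotent character sheaf,'' hence its Harish--Chandra transform lies in the shifted heart. This is false: $\Xi_\psi$ has $n+1$ nonzero perverse cohomology sheaves $\mathcal{H}^k(\Xi_\psi)\simeq\mathfrak{W}_{n-k}$ for $k=0,\dots,n$ (Theorem~\ref{sec:whittaker-averaging-1}), and correspondingly $\mathbb{K}$ has $n+1$ nonzero shifted-perverse cohomology groups $\mathfrak{E}_k$. So neither side is concentrated in a single degree, and the t-exactness argument cannot pin down an isomorphism this way. Fortunately this step is unnecessary once the intertwining is done properly as above.
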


Here by the projection to the derived category of character sheaves we mean a convolution with a certain pro-object $\dd_G$ in the category $D\mathcal{CS}$, which serves as a unit in the category of unipotent character sheaves. The existence of such an object is implicit in the work \cite{bfo} in the characteristic 0 setting, where the Drinfeld center of the (abelian) Hecke category is shown to be equivalent to the category of character sheaves. Since the unit is naturally a central object, it must come as an image of some (pro-)object on $G$. We construct this object in the $\ell$-adic setting using the result of Chen from \cite{chen}.

We describe the object $\dd_G$ in more detail. Let $G^{rss}$ be the subset of regular semisimple elements. For a local system $E$ on $T/W$, let $E^G$ be its pullback under the map $G^{rss}/_{\Ad}G \to T/W$. $E^G$ can be viewed as a $G$-equivariant local system on $G^{rss}$. We identify the group algebra $\Qlbar[\pi_1^{\ell}(T)]$ with $S = \operatorname{Sym}V_T$. Here $\pi_1^{\ell}$ stands for the $\ell$-adic tame quotient of the fundamental group. Let $S^W$ stand for the subring of $W$-invariant polynomials in $S$, and let $\mathfrak{m}_W\subset S^W$ be an ideal generated by homogeneous $W$-invariant polynomials of positive degree.  Let $\varepsilon_n$ be a $W$-equivariant local system on $T$ corresponding to a representation $S/S\mathfrak{m}_W^n \simeq S\otimes_{S^W}S^W/\mathfrak{m}_W^n$ of $\pi_1^{\ell}(T)$. Let $\mathcal{E}_n = \IC(\varepsilon_{n}^G)$, where $\IC$ stands for the Goresky-MacPherson extension of the perversely shifted local system. For a projective system of objects $(A_n)$ write formally $\prolim A_n$ for the corresponding pro-object. We have the following description of $\dd_G$, see Corollary \ref{centralunit}. 

\begin{proposition}
  For any monodromic complex $\F\in\cM$ we have $\invlim{n}(\F\star p^*\hc_!(\mathcal{E}_n)) \dot{=} \F$, so that  $\prolim p^*\hc_!(\mathcal{E}_n) \dot{=} \DD_{e} \simeq \NN_{e}$ is the unit in the category $\cM$, isomorphic to a standard (and costandard) object $\DD_{e}\simeq \NN_{e}$, for the unit $e \in W$ (up to a shift and Tate twist).  
\end{proposition}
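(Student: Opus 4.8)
The plan is to reduce the statement to the fact (established in the preceding section, using \cite{chen}) that the central unit $\dd_G$ of $D\mathcal{CS}$ is the pro-object $\prolim\mathcal{E}_n$, and then to transport this through the Harish-Chandra functor, with all shifts and twists absorbed into $\dot{=}$. First, an algebraic reduction. Since $W$ acts on $V_T$ as a reflection group, $S^W$ is a polynomial ring and $S$ is $S^W$-free of rank $|W|$, so the coinvariant ring $S/S\mathfrak{m}_W$ is finite-dimensional and $\mathfrak{m}^N\subseteq S\mathfrak{m}_W$ for $N\gg 0$, where $\mathfrak{m}\subset S$ is the augmentation ideal. Hence $\mathfrak{m}^{nN}\subseteq S\mathfrak{m}_W^n\subseteq\mathfrak{m}^n$, so the filtrations $(S\mathfrak{m}_W^n)_n$ and $(\mathfrak{m}^n)_n$ are mutually cofinal and $\invlim{n}S/S\mathfrak{m}_W^n$ is the completion $\hat S$ of $S\cong\Qlbar[\pi_1^{\ell}(T)]$ at $\mathfrak{m}$. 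Under the equivalence between tame, unipotently monodromic local systems on $T$ and pro-(finite-dimensional) $\hat S$-modules, the $\varepsilon_n$ thus form a projective system whose pro-limit is the rank-one free-monodromic local system, compatibly with the $W$-equivariant structures; in particular any projective system cofinal with $(\mathcal{E}_n)$ has the same pro-limit.

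\emph{Geometric core.} Let $\DD_e^{(n)}\in\cM$ be the perversely shifted monodromic local system on the (closed) identity double coset with monodromy the $\pi_1^{\ell}(T)$-module $S/S\mathfrak{m}_W^n$; since that stratum is closed, $!$- and $*$-extensions from it agree, $\DD_e^{(n)}\simeq\NN_e^{(n)}$, and $\invlim{n}\DD_e^{(n)}=\DD_e=\NN_e=\T_e$ is the monoidal unit of $\cM$ (\cite{by}) by the cofinality above. The crux is the identity $p^*\hc_!(\mathcal{E}_n)\dot{=}\DD_e^{(n)}$, hence $p^*\hc_!(\dd_G)\dot{=}\DD_e$. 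Conceptually this is a monoidal functor sending the unit to the unit: by \cite{bfo}, \cite{chenyd} the functor $\hc_!$ is t-exact from $D\mathcal{CS}$, with the perverse t-structure, to the $\Ad_T$-equivariant Hecke category with the shifted perverse t-structure, and in the completed $\ell$-adic setting of the present paper $p^*\hc_!$ is the forgetful functor exhibiting $D\mathcal{CS}$ as (a completion of) the Drinfeld centre of $\cM$, so it carries $\dd_G\dot{=}\prolim\mathcal{E}_n$ to the unit of $\cM$. In practice one verifies $p^*\hc_!(\mathcal{E}_n)\dot{=}\DD_e^{(n)}$ term by term: t-exactness places $\hc_!(\mathcal{E}_n)$ in the shifted heart; a support estimate collapses its support onto the closed stratum $e$ (equivalently, convolution with $\NN_{w_0}$ turns it into the costandard object $\NN_{w_0}$ with coefficient module $S/S\mathfrak{m}_W^n$); and its residual $T$-monodromy is read off, via $\Qlbar[\pi_1^{\ell}(T)]\cong S$, to be $S/S\mathfrak{m}_W^n$.

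\emph{Formal consequences and the main difficulty.} The functors $p^*$ and $\hc_!$ are computed term by term on the pro-categories in play and commute with the relevant filtered inverse limits, and $\star$ is continuous in each argument; hence
\[
\invlim{n}\bigl(\F\star p^*\hc_!(\mathcal{E}_n)\bigr)\ \dot{=}\ \invlim{n}\bigl(\F\star\DD_e^{(n)}\bigr)\ =\ \F\star\invlim{n}\DD_e^{(n)}\ =\ \F\star\DD_e\ \dot{=}\ \F,
\]
the last step by the unit axiom in $\cM$; equivalently one may invoke directly that every object of $\cM$ is recovered as the inverse limit of its finite approximations $\F\star\DD_e^{(n)}$, which is part of the construction of the completion in \cite{by}. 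This gives the first displayed isomorphism of the Proposition, and the second then follows formally: $\prolim p^*\hc_!(\mathcal{E}_n)$ is a pro-object whose convolution action $\invlim{n}(-\star p^*\hc_!(\mathcal{E}_n))$ is the identity functor, so by uniqueness of the monoidal unit it is $\dot{=}\DD_e\simeq\NN_e$. The one genuinely substantial point is the geometric core: showing that $p^*\hc_!$ is, up to the controlled shift and twist, the monoidal forgetful functor from the Drinfeld centre — equivalently that the support of $\hc_!(\mathcal{E}_n)$ collapses to the identity stratum with residual monodromy exactly $S/S\mathfrak{m}_W^n$. Interchanging $\hc_!$, $p^*$ and $\star$ with the inverse limits defining the completed categories of \cite{by} is the remaining, routine, technicality.
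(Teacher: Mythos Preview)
Your proposal is correct and follows the same route as the paper: the cofinality of the filtrations $(S\mathfrak{m}_W^n)$ and $(\mathfrak{m}^n)$, together with the identification of $p^{\dagger}\hc_!(\mathcal{E}_n)$ with the local system $\varepsilon_n$ on the identity stratum, gives $\prolim p^{\dagger}\hc_!(\mathcal{E}_n)\simeq\dd$.

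The only noteworthy difference is in how the ``geometric core'' is handled. The paper's proof (Corollary~\ref{centralunit}) is a one-line application of Chen's theorem, quoted as Theorem~\ref{sec:pro-unit-character}: for a $W$-equivariant unipotent local system $\F$ on $T$ of the form $S\otimes_{S^W}\F'$, one has $p^{\dagger}\hc_!\bigl((\operatorname{Ind}_B^G\F)^W\bigr)\simeq\F$. Since $\mathcal{E}_n$ is \emph{defined} as $(\operatorname{Ind}_B^G\varepsilon_n)^W$, the identification $p^{\dagger}\hc_!(\mathcal{E}_n)\simeq\varepsilon_n$ is immediate. Your sketch via t-exactness from \cite{chenyd}, a support-collapse argument, and reading off the residual monodromy is in effect an outline of how one would re-prove that theorem; it is not needed here and makes the argument look harder than it is. Also, your opening phrase ``reduce to the fact \ldots\ that the central unit $\dd_G$ of $D\mathcal{CS}$ is $\prolim\mathcal{E}_n$'' is slightly circular, since in the paper $\dd_G$ is \emph{defined} to be $\prolim\mathcal{E}_n$; the content is precisely that its image under $p^{\dagger}\hc_!$ is the monoidal unit $\dd$ of $\cM$.
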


Since the functor $\hc_!$ can easily be seen to be faithful (see Lemma~\ref{sec:four-deligne-transf-5}), it can be deduced that $\dd_G = \prolim\mathcal{E}_n$ serves as a pro-unit in the category $D\mathcal{CS}$.
\begin{bremark}
  One can consider $\dd_G$ as an object in the pro-completion
  of the category of unipotent character sheaves admitting an embedding
  in the completion of the Hecke category defined in \cite{by}. We do not
  develop this topic in this paper.
\end{bremark}

We also compute the perverse cohomology of $\Xi = \Av_{Z(G)}j_*\cc_{\mathcal{N}_G^{reg}}$ in terms of character sheaves corresponding to the exterior powers of the representation of the Weyl group on $\mathfrak{t} := H^1(T, \Qlbar)$. Namely, let $\mathfrak{W}_k$ be a summand of Springer sheaf corresponding to the representation $\wedge^k\mathfrak{t}$ of $W$, see Section \ref{sec:four-deligne-transf-4}. These are semisimple perverse sheaves supported on the unipotent variety of $G$. We have the following result, see Theorem \ref{sec:whittaker-averaging-1}.
\begin{theorem}
  There is an isomorphism
  \[\mathcal{H}^k(\Xi) \dot{=} \mathfrak{W}_{n-k}.\]
\end{theorem}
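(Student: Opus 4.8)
The plan is to reduce to an explicit geometric computation with the Springer resolution and the Fourier--Deligne transform.

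\textbf{Reduction and the log filtration.} First I would work in the Lie algebra (equivalently, via the Springer resolution of the unipotent variety) and write $\mathcal{N}^{reg}_\g = G\times_B\n^{reg}$, where $\n^{reg}\subseteq\n$ is the complement of the $r$ coordinate hyperplanes cut out by the simple-root components ($r$ the semisimple rank). Since the Springer map $\pi\colon\widetilde{\mathcal{N}}=T^*(G/B)\to\mathcal{N}_\g$ is an isomorphism over $\mathcal{N}^{reg}_\g$, one has $j_*\cc_{\mathcal{N}^{reg}_\g}\cong\pi_*\tilde{j}_*\cc_{\widetilde{\mathcal{N}}^{reg}}$ with $\widetilde{\mathcal{N}}^{reg}=G\times_B\n^{reg}$. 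Fibrewise over $G/B$, $\tilde{j}_*\cc_{\widetilde{\mathcal{N}}^{reg}}$ is the box product of $r$ copies of the Kummer log sheaf $j_{0*}\cc_{\mathbb{G}_m}$ on $\mathbb{A}^1$ (a self-extension of $\delta_0\tw{-1}$ by $\cc_{\mathbb{A}^1}$) with a constant sheaf in the remaining directions; it therefore carries a canonical filtration whose associated graded is $\bigoplus_{S}\cc_{Y_S}\tw{-|S|}$, the sum over subsets $S$ of the simple roots, with $Y_S=G\times_B\n_S$ the subbundle on which the $S$-components vanish. Note $\#\{S\colon|S|=j\}=\binom{r}{j}$.

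\textbf{The role of $\Av_{Z(G)}$.} As $Z(G)$ acts trivially on $\mathcal{N}$ by conjugation, $\Av_{Z(G)}$ amounts to tensoring by $R\Gamma_c(Z(G),\Qlbar)$: up to shift and twist this is $\wedge^\bullet(\operatorname{Lie}Z(G)^\circ)$ together with the projection onto the part with trivial $\pi_0(Z(G))$-monodromy. Combining, $\Xi$ acquires a filtration whose $j$-th graded piece is assembled from $\pi_*\cc_{Y_S}$ for $|S|=j'$ tensored with $\wedge^b(\operatorname{Lie}Z(G)^\circ)$ for $j'+b=j$ (all made $\pi_0(Z(G))$-trivial), and $\sum_{|S|+b=j}\binom{r}{|S|}\binom{c}{b}=\binom{r+c}{j}=\dim\wedge^j\mathfrak{t}$, where $c=\dim Z(G)^\circ$ and $r+c$ is the rank. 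This is exactly where $\wedge^\bullet\mathfrak{t}=\operatorname H^\bullet(T,\Qlbar)$ enters: the semisimple directions come from $\operatorname H^\bullet((\mathbb{G}_m)^r)$ in the log sheaves, the central ones from $R\Gamma_c(Z(G)^\circ)$ in $\Av_{Z(G)}$.

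\textbf{Identifying the pieces and collapsing the filtration.} Next I would apply $\FT$ (via the Killing form): each $\n_S$ being $B$-stable, one gets $\FT(\cc_{Y_S})\dot{=}\cc_{Y_S^\perp}$ with $Y_S^\perp=G\times_B\n_S^\perp$, and since $\FT$ commutes with the proper push-forward $\widetilde{\mathcal{N}}\to\mathcal{N}_\g\hookrightarrow\g$ and is t-exact up to shift, the problem becomes the analysis of the Grothendieck--Springer-type sheaves $\mu_{S*}\cc_{Y_S^\perp}$ on $\g$, where $\mu_S\colon Y_S^\perp\to\g$; for $S=\varnothing$ one has $Y_\varnothing^\perp=\widetilde{\g}$ and recovers the Grothendieck--Springer sheaf $\mu_*\cc_{\widetilde{\g}}$, the other $S$ interpolating. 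Each $\mu_{S*}\cc_{Y_S^\perp}[\dim]$ is a sum of shifted $\IC$-sheaves (decomposition theorem). The key lemma is that after the $\pi_0(Z(G))$-projection and after summing the contributions with $|S|+b=j$ (with the central twists) one obtains exactly the Springer summand $\mathfrak{W}_{\wedge^j\mathfrak{t}}$, the matching of $W$-representations being carried out through the Springer action on $\pi_*$ (equivalently, the monodromy of $\widetilde{\g}\to\mathfrak{t}$ around the root hyperplanes). A purity/parity argument --- all $\mathfrak{W}_\chi$ are pure of a single parity --- then forces the spectral sequence of the filtration to degenerate, so $\mathcal{H}^j(\Xi)$ is the $j$-th graded piece; the index reversal $\mathcal{H}^k(\Xi)\dot{=}\mathfrak{W}_{n-k}$ reflects that the log differentials raise cohomological degree, placing the origin (giving $\mathfrak{W}_n\dot{=}\delta_0$) lowest and the open stratum (giving $\mathfrak{W}_0\dot{=}\IC(\mathcal{N})$) highest.

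\textbf{Main obstacle.} The hard part is the key lemma: showing that after push-forward and $\pi_0(Z(G))$-averaging every ``unwanted'' $\IC$-summand (in particular every one carrying a nontrivial local system) cancels, leaving precisely $\bigoplus_j\mathfrak{W}_{n-j}[-j]$, together with the degeneration of the associated spectral sequence. I expect this to require either an orbit-by-orbit Green-function computation, or an a priori identification of $\FT(\Xi)$ with a push-forward of $\cc_{\widetilde{\g}}$ along $\widetilde{\g}\to\mathfrak{t}$ whose perverse filtration is visibly the $\mathfrak{t}$-grading; pinning down the Springer $W$-action in those terms is where the real content lies. Self-duality of the $\mathfrak{W}_\chi$, combined with the Verdier-duality exchange $\D\big(j_*\cc_{\mathcal{N}^{reg}}\big)\dot{=}j_!\cc_{\mathcal{N}^{reg}}$, can be used to halve the casework.
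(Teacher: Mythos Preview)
Your approach via the log filtration on $j_*\cc_{\mathcal{N}^{reg}}$ is genuinely different from the paper's, and the gap you yourself flag as the ``main obstacle'' is real: showing that the unwanted $\IC$-summands cancel after push-forward and that the spectral sequence degenerates is essentially as hard as the theorem itself, and you have not indicated a mechanism for it beyond hoping for a Green-function or purity miracle.

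The paper circumvents this entirely. The two key moves you are missing are:
\begin{itemize}
\item[(i)] By a result of Rider (\cite{rider2016perverse}, Theorem~3.5), all perverse cohomology of $j_{reg*}\cc_{\mathcal{N}^{reg}}$ already lie in the triangulated category generated by the summands of the Springer sheaf. This means that to identify $\mathcal{H}^k(\Xi)$ it suffices to identify the restriction of its Fourier--Deligne transform to the regular semisimple locus $\g^{rss}$ --- no orbit-by-orbit analysis of lower strata is needed.
\item[(ii)] Rather than Fourier-transforming the log pieces, the paper uses the \emph{Whittaker} description $\Xi_\psi = \Av_G^{\Ad}(i_{-*}\xi^*\operatorname{AS}_\psi)$ (Corollary~\ref{sec:whittaker-averaging-8}). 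Since $\FT$ sends the Artin--Schreier sheaf to a skyscraper, one gets $\FT(\Xi_\psi)\dot{=}\Av_G^{\Ad}\cc_{e+\mathfrak{b}^-}$. Restricting to the Kostant slice $\mathcal{S}\subset\g$ and using that $U^-\times\mathcal{S}\simeq e+\mathfrak{b}^-$, this becomes the push-forward of the constant sheaf on the universal centralizer $\mathcal{I}_{\mathcal{S}}\to\mathcal{S}$; over $x\in\g^{rss}$ the fibre is a maximal torus, so the stalk is $W$-equivariantly $H_c^\bullet(T)\simeq\wedge^{2n-\bullet}\mathfrak{t}$.
\end{itemize}
Thus the $\wedge^\bullet\mathfrak{t}$ you were trying to assemble combinatorially from $\binom{r}{j}\binom{c}{b}$ pieces appears in the paper in one stroke as the cohomology of the centralizer torus, and Rider's theorem replaces your degeneration/cancellation argument. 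Your route might be salvageable, but it is substantially longer and the hard step remains unproved.
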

This, together with Theorem \ref{sec:char-sheav-hochsch} and t-exactness properties of the functor $\hc_!$ mentioned above, gives an explicit description of the objects representing the Hochschild cohomology of Soergel bimodules in terms of character sheaves, see Theorem \ref{sec:char-sheav-full-3}.
\begin{theorem}
 There is an isomorphism  
       \[
         \mathfrak{E}_k \dot{=} p^{\dagger}\hc_!(\mathfrak{W}_k\star\dd_G).
       \]
\end{theorem}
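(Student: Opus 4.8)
The plan is to feed three already-available facts into one another: the identification $\mathbb{K}\dot{=}p^*\hc_!(\Xi\star\dd_G)$ of Theorem~\ref{sec:char-sheav-hochsch}; the t-exactness of $\hc_!$ from $D\mathcal{CS}$ with its perverse t-structure to the $\Ad_T$-equivariant Hecke category with the shifted perverse t-structure, recalled above from \cite{bfo}, \cite{chenyd}; and the computation $\mathcal{H}^k(\Xi)\dot{=}\mathfrak{W}_{n-k}$. Concretely, I would apply the shifted perverse cohomology functor $\mathcal{H}^{-k}_{w_0}(-)$ to the isomorphism $\mathbb{K}\dot{=}p^*\hc_!(\Xi\star\dd_G)$ and then commute it past $p^{\dagger}$, $\hc_!$ and the projection $(-)\star\dd_G$.

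First I would put the compatibilities in usable form. The pullback $p^*$ differs from the renormalized pullback $p^{\dagger}$ only by the shift and Tate twist $[\dim T](\tfrac{\dim T}{2})$, so the identification of Theorem~\ref{sec:char-sheav-hochsch} also reads $\mathbb{K}\dot{=}p^{\dagger}\hc_!(\Xi\star\dd_G)$; and since $p\colon U\backslash G/U\to (U\backslash G/U)/T$ is a $T$-torsor for the adjoint action of the connected torus $T$, the functor $p^{\dagger}$ is t-exact for the shifted perverse t-structures on source and target. Combined with the cited t-exactness of $\hc_!$, the composite $p^{\dagger}\hc_!$ is t-exact from the perverse t-structure on $D\mathcal{CS}$ to the $w_0$-shifted perverse t-structure on $\cM$. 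Consequently $\mathcal{H}^{-k}_{w_0}(\mathbb{K})$ is computed by applying $p^{\dagger}\hc_!$ to a single perverse cohomology sheaf of $\Xi\star\dd_G$ on $G/_{\Ad}G$; tracking the shifts — those in $p^*=p^{\dagger}[\dim T](\tfrac{\dim T}{2})$ and those implicit in the isomorphism of Theorem~\ref{sec:char-sheav-hochsch} — one finds the relevant index to be perverse degree $n-k$, where $n=\dim T$ is the rank of $G$.

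Next, by the computation $\mathcal{H}^k(\Xi)\dot{=}\mathfrak{W}_{n-k}$ every perverse cohomology sheaf of $\Xi$ is a summand of the Springer sheaf, hence a unipotent character sheaf, so $\Xi$ lies in the triangulated subcategory $D\mathcal{CS}$; since $\dd_G=\prolim\mathcal{E}_n$ is a pro-unit there, $\Xi\star\dd_G\dot{=}\Xi$. Plugging this into the previous paragraph, and using that $\mathfrak{W}_k$ is itself a character sheaf (so $\mathfrak{W}_k\dot{=}\mathfrak{W}_k\star\dd_G$), gives
\[
  \mathfrak{E}_k=\mathcal{H}^{-k}_{w_0}(\mathbb{K})\dot{=}p^{\dagger}\hc_!\big(\mathcal{H}^{n-k}(\Xi)\big)\dot{=}p^{\dagger}\hc_!(\mathfrak{W}_{n-(n-k)})=p^{\dagger}\hc_!(\mathfrak{W}_k)\dot{=}p^{\dagger}\hc_!(\mathfrak{W}_k\star\dd_G),
\]
which is the assertion; the factor $\star\dd_G$ in the final formula is kept only for uniformity with Theorem~\ref{sec:char-sheav-hochsch}.

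The main obstacle is the bookkeeping of t-structures and cohomological degrees. One must verify carefully that $p^{\dagger}$ is t-exact for the \emph{shifted} perverse t-structures, not merely the standard ones, and then pin down exactly how the $w_0$-shifted perverse t-structure on $\cM$ — defined through the autoequivalence $(\NN_{w_0}\star-)$ — is matched, via the t-exact functors $\hc_!$ and $p^{\dagger}$, with the perverse t-structure on $D\mathcal{CS}$, so that the resulting index is $n-k$ and not another linear function of $k$. A secondary and milder point is that $\dd_G$ is only a pro-object, so one should confirm that the convolutions and cohomology functors involved behave well in the relevant pro-completion; this is harmless here because $\Xi$ and each $\mathfrak{W}_k$ are genuine bounded objects already lying in $D\mathcal{CS}$.
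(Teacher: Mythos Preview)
Your outline has the right shape, but the step where you assert that $p^{\dagger}\hc_!$ is t-exact from the perverse t-structure on $D\mathcal{CS}$ to the $w_0$-shifted t-structure on $\cM$ is a genuine gap, not merely bookkeeping. The result cited from \cite{bfo}, \cite{chenyd} (Proposition~\ref{sec:whittaker-averaging-3}) says that $\NN_{w_0}\star\hc_!$ is t-exact to the \emph{standard} perverse t-structure on $D^b_m(\mathcal{Y}_{w_0})$. But the shifted t-structure on $\cM$ is defined (Definition~\ref{sec:pro-unip-tilt-2}) as $\DD_{w_0}\star\Perv'$, where $\Perv'$ is the heart of the \emph{dual} perverse t-structure, not the standard one. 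These differ already on the Soergel side (Remark~\ref{sec:t-structure-duality-2}). So the naive t-exactness you invoke does not follow directly from the cited input.

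The paper does not shortcut this. It first unwinds the definition to obtain (Theorem~\ref{sec:whittaker-averaging-4})
\[
\mathfrak{E}_k \;\simeq\; \DD_{w_0}\star\D'\bigl(\NN_{w_0}\star p^{\dagger}\hc_!(\mathfrak{W}_{n-k}\star\dd_G)\bigr)
\]
(up to twists), with the $\D'$ recording exactly the passage from $\Perv$ to $\Perv'$. The remaining step, removing the $\D'$ and swapping $n-k$ for $k$, is Proposition~\ref{sec:whittaker-averaging-9}, which rests on the algebraic Lemma~\ref{sec:char-sheav-full}: the Verdier dual of $C_n=S/(\mathfrak{m}_WS)^n$ is $\operatorname{sgn}_W\otimes C_n$ up to shift, which after $\ind_B^G$ and taking isotypic components converts $\mathfrak{W}_{n-k}$ into $\mathfrak{W}_k$ via $\wedge^k\mathfrak{t}\simeq\operatorname{sgn}_W\otimes\wedge^{n-k}\mathfrak{t}$. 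This sign-twisted duality is the missing idea in your plan; without it, you cannot justify that the index you arrive at is $k$ rather than $n-k$, nor that the extra $\D'$ disappears. Your own ``main obstacle'' paragraph correctly flags exactly this point, but the resolution is substantive, not formal.
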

For example, one immediately recovers the result proved in \cite{ghmn} for type A, that the highest degree Hochschild cohomology is represented by the full twist braid, see Corollary \ref{sec:whittaker-averaging-6}.

We also give a different description of the pro-object that is sent to the full twist braid by the Harish-Chandra functor. Namely, let $\sgn\otimes\varepsilon_n$ be the $W$-equivariant local system corresponding to the mo\-dule $\operatorname{sgn}_W\otimes_{\Qlbar}S/S\mathfrak{m}_W^n$, where $\operatorname{sgn}_W$ stands for the sign representaion of $W$. Let $\mathcal{E}_n^{\operatorname{sgn}} = \IC((\sgn\otimes\varepsilon_{n})^G)$. We obtain the following result, see Corollary \ref{sec:char-sheav-full-2}.

\begin{proposition}
  We have the following isomorphisms, up to shifts and Tate twists,
  \[
    \prolim p^*\hc_!(\mathcal{E}_n^{\sgn}) \dot{=} \DD_{w_0}\star\DD_{w_0},
  \]
  \[
    \mathcal{E}_n^{\sgn}\dot{=} \cc_{\mathcal{N}_G}\star\mathcal{E}_n.
  \]
\end{proposition}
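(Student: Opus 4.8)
The plan is to establish the second displayed isomorphism first --- it lives inside $D^b_m(G/_{\Ad}G)$ and is the geometric heart of the statement --- and then to deduce the first one from it by applying $p^*\hc_!$, passing to the projective limit over $n$, and quoting the results already proved in the paper. For $\mathcal{E}_n^{\sgn}\dot{=}\cc_{\mathcal{N}_G}\star\mathcal{E}_n$ I would compare the two sides on the regular semisimple locus $G^{rss}$ and then extend. Under $G^{rss}/_{\Ad}G\simeq T/W$ one has $\mathcal{E}_n|_{G^{rss}}=\varepsilon_n^G$, while $\mathcal{E}_n^{\sgn}|_{G^{rss}}=(\sgn\otimes\varepsilon_n)^G=\sgn^G\otimes\varepsilon_n^G$, where $\sgn^G$ is the rank-one local system attached to the sign representation of $W$ (a square root of the discriminant local system). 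Writing the convolution as $\cc_{\mathcal{N}_G}\star\mathcal{E}_n\simeq(\mathrm{pr}_2)_!\,b^*\mathcal{E}_n$, with $b\colon\mathcal{N}_G\times G\to G$, $(u,x)\mapsto u^{-1}x$, and $\mathrm{pr}_2$ the projection to $G$ (all modulo the adjoint action), its restriction to a regular semisimple $x$ is computed by the compactly supported cohomology of $\mathcal{N}_G$ against $b_x^*\mathcal{E}_n$. A transverse-slice argument at the conjugacy class of $x$ --- whose centralizer is a maximal torus, so that the slice is a neighbourhood inside a torus --- reduces this to a rank-one computation showing that convolution with the constant sheaf on the unipotent variety acts on $G^{rss}$ as the twist by $\sgn^G$ (this is the incarnation here of the Steinberg/sign correspondence). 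Hence $(\cc_{\mathcal{N}_G}\star\mathcal{E}_n)|_{G^{rss}}\dot{=}\sgn^G\otimes\varepsilon_n^G$.

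To upgrade this to an isomorphism over all of $G$, I would check that $\cc_{\mathcal{N}_G}\star\mathcal{E}_n$ is, up to a shift, a perverse sheaf equal to the intermediate extension of this restriction --- equivalently, that it has no composition factors supported on the discriminant $G\setminus G^{rss}$. This uses that $\mathcal{E}_n$ is a thickening of the Grothendieck--Springer sheaf for the map $\pi\colon\widetilde G\to G$ (for $n=1$ it is that sheaf up to a shift, $\varepsilon_1$ being the coinvariant algebra), so that the convolution can be computed through the Grothendieck--Springer resolution and the relevant support and cosupport estimates can be read off; this forces the intermediate-extension conclusion and gives $\mathcal{E}_n^{\sgn}\dot{=}\cc_{\mathcal{N}_G}\star\mathcal{E}_n$. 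For $G=\mathrm{GL}_n$ this step simplifies, since the nilpotent cone is rationally smooth and $\cc_{\mathcal{N}_G}[\dim\mathcal{N}_G]$ is already the intersection cohomology sheaf $\IC(\mathcal{N}_G)$.

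For the first isomorphism, apply $p^*\hc_!$ to the one just established and pass to the limit over $n$. Since $\hc_!$ and $p^*$ are compatible with convolution up to a shift, this yields $\prolim p^*\hc_!(\mathcal{E}_n^{\sgn})\dot{=}p^*\hc_!(\cc_{\mathcal{N}_G})\star\prolim p^*\hc_!(\mathcal{E}_n)$, and by Corollary~\ref{centralunit} the last factor is the unit $\DD_e$ of $\cM$, so $\prolim p^*\hc_!(\mathcal{E}_n^{\sgn})\dot{=}p^*\hc_!(\cc_{\mathcal{N}_G})$. For $G=\mathrm{GL}_n$, $\cc_{\mathcal{N}_G}[\dim\mathcal{N}_G]=\IC(\mathcal{N}_G)$ is the top Springer summand $\mathfrak{W}_n$ attached to $\wedge^n\mathfrak{t}=\sgn$; as $\mathfrak{W}_n$ is a character sheaf we have $\mathfrak{W}_n\star\dd_G\dot{=}\mathfrak{W}_n$, so by Theorem~\ref{sec:char-sheav-full-3} (identifying $\mathfrak{E}_k$ with $p^{\dagger}\hc_!(\mathfrak{W}_k\star\dd_G)$) together with Corollary~\ref{sec:whittaker-averaging-6} (which recovers the result of \cite{ghmn}) we conclude $p^*\hc_!(\cc_{\mathcal{N}_G})\dot{=}\mathfrak{E}_n\dot{=}\DD_{w_0}\star\DD_{w_0}$. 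Chaining the isomorphisms gives $\prolim p^*\hc_!(\mathcal{E}_n^{\sgn})\dot{=}\DD_{w_0}\star\DD_{w_0}$.

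I expect the main obstacle to lie entirely in the second isomorphism and to be genuinely geometric: the rank-one local computation showing that convolution with $\cc_{\mathcal{N}_G}$ realizes the twist by $\sgn^G$ on the regular semisimple locus, together with the accompanying verification that $\cc_{\mathcal{N}_G}\star\mathcal{E}_n$ carries no spurious composition factors along the discriminant, so that the intermediate-extension identification is legitimate. The remaining ingredients --- the manipulations with pro-objects, the compatibility of $\hc_!$ and $p^*$ with convolution, and the appeals to the earlier theorems --- are routine by comparison.
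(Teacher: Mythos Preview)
Your deduction of the first isomorphism from the second is essentially correct and close in spirit to the paper's, though with two caveats. First, the claim ``$\mathfrak{W}_n\star\dd_G\dot{=}\mathfrak{W}_n$'' is not literally true in $\operatorname{pro}D\mathcal{CS}$; what holds is that $p^{\dagger}\hc_!(\mathfrak{W}_n\star\dd_G)\simeq p^{\dagger}\hc_!(\mathfrak{W}_n)\star\dd\simeq p^{\dagger}\hc_!(\mathfrak{W}_n)$ in $\cM$, which suffices. Second, $\cc_{\mathcal{N}_G}\langle\dim\mathcal{N}_G\rangle\simeq S_{\mathcal{N}_G}(\sgn_W)=\mathfrak{W}_n$ holds for arbitrary $G$ in good characteristic (Corollary~\ref{sec:four-deligne-transf-2}), so your restriction to $\GL_n$ is unnecessary. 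The paper gets $\DD_{w_0}^2$ slightly more directly by setting $k=n$ in Proposition~\ref{sec:whittaker-averaging-9} and using that $\mathfrak{W}_0$ is the skyscraper at the identity, so that the right-hand side collapses to $\DD_{w_0}\star\D'\NN_{w_0}\simeq\DD_{w_0}^2$.

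The genuine gap is in your argument for $\mathcal{E}_n^{\sgn}\dot{=}\cc_{\mathcal{N}_G}\star\mathcal{E}_n$. The proposed ``transverse-slice / rank-one'' computation of the stalk of $\cc_{\mathcal{N}_G}\star\mathcal{E}_n$ at a regular semisimple $x$ is not spelled out, and I do not see a clean way to carry it through: you would need $H^*_c(\mathcal{N}_G,b_x^*\mathcal{E}_n)$ with $b_x(u)=u^{-1}x$, which requires the stalks of $\mathcal{E}_n$ along the entire translate $\mathcal{N}_G^{-1}x$, and these are not concentrated on $G^{rss}$ nor governed by any evident slice to a torus. The paper takes an entirely different, representation-theoretic route that avoids stalk computations: one has the $W$-equivariant isomorphism $\operatorname{Spr}\star\mathcal{E}_n\simeq\chi\hc_!(\mathcal{E}_n)\simeq\ind_B^G\varepsilon_n$ (Lemma~\ref{sec:four-deligne-transf-5} and Proposition~\ref{sec:four-deligne-transf-6}), and taking the $\sgn$-isotypic component of both sides gives $\cc_{\mathcal{N}_G}\langle\dim\mathcal{N}_G\rangle\star\mathcal{E}_n\simeq(\ind_B^G\varepsilon_n)^{\sgn}=\mathcal{E}_n^{\sgn}$ immediately --- this is Proposition~\ref{sec:char-sheav-full-1} with $V=\operatorname{triv}$. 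This also resolves your intermediate-extension concern for free: since $\cc_{\mathcal{N}_G}$ is a summand of $\operatorname{Spr}$, the convolution $\cc_{\mathcal{N}_G}\star\mathcal{E}_n$ is a summand of $\ind_B^G\varepsilon_n$, which is already perverse and equal to the Goresky--MacPherson extension of its restriction to $G^{rss}$ (the Grothendieck--Springer map being small), so no separate check for composition factors along the discriminant is needed.
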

Thus the pro-unit and the full twist objects are minimal extensions 
of local systems on $G^{rss}$ which differ by a sign twist.

We also obtain a filtration of the object $\mathfrak{E}_k$ representing the $k$th cohomology by the products of Jucys-Murphy braids, see Section \ref{sec:jm-filtr}, especially Theorem~\ref{sec:jucys-murphy-filtr-1}. See also \cite{tolmachov2018towards}. Together with the discussion in loc. cit., this gives a new piece of evidence to the Gorsky-Negu\c{t}-Rasmussen conjecture of \cite{gorsky2016flag}, which we now explain.

\subsection{Relation to the conjectures in \cite{gorsky2016flag}.}

Recall that in loc. cit. a dg version of a flag Hilbert scheme $\operatorname{FHilb_n^{dg}}(\mathbb{C})$ is defined, together with a tautological vector bundle $\mathcal{T}_n$ of rank $n$ on it. This scheme parametrizes full flags of ideals containing an ideal of codimension $n$ in $\mathbb{C}[x,y]$, with successive quotients supported on the line $y = 0$ (hence $\mathbb{C}$ in the notation).

$\mathcal{T}_n$ has a filtration whose subquotients are line bundles denoted $\mathcal{L}_k, k = 1, \dots, n$ in loc. cit.

Let $\Ho(\SBim_n)$ stand for the bounded homotopy category of Soergel bimodules associated to $G = \GL_n$ as above. Let $s_1, \dots, s_{n-1}$ be the set of simple reflections of $S_n$, and write $\sigma_i$ for the lift of $s_i$ to the braid group. Recall that Jucys-Murphy braids $\mathfrak{j}_0, \dots, \mathfrak{j}_{n-1}$ are defined as \[\mathfrak{j}_0 = 1, \mathfrak{j}_{k} = \sigma_k\mathfrak{j}_{k-1}\sigma_k.\] Conjecture 1.1 of loc. cit. states, in particular,
\begin{conjecture}[see \cite{gorsky2016flag}, Conjecture 1.1 in Section 3.6]
  \label{sec:relat-conj-citeg} There is a pair of adjoint functors
  \[ \iota_*:\Ho(\SBim_n) \leftrightarrows D^b(\operatorname{Coh}_{\mathbb{C}^*\times\mathbb{C}^*}(\operatorname{FHilb}^{dg}_n)):\iota^*,
 \] with $\iota^*(\mathcal{L}_k)) \simeq \mathfrak{J}_{k-1}$. Here $\mathfrak{J}_k$ stands for the Rouquier complex $h\Ho\Lambda(F_{\mathfrak{j}_k})$ corresponding to the $k$th Jucys-Murphy braid.

\end{conjecture}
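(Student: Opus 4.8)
As the statement is a conjecture, what follows is a proposed line of attack rather than a complete argument. The plan is to construct the functor $\iota^*$ out of the monodromic and character-sheaf model developed above, using the Jucys--Murphy filtration of $\mathfrak{E}_k$ (Theorem~\ref{sec:jucys-murphy-filtr-1}) as the Soergel-side counterpart of the filtration of $\wedge^k\mathcal{T}_n$ by the line bundles $\mathcal{L}_k$. Both sides already carry matching packages of data: the exterior powers $\wedge^k\mathcal{T}_n$ and the objects $\mathbb{E}_k\in\Ho(\SBim_n)$ of Corollary~\ref{sec:comp-with-soerg-7}, each representing $\HH^k$, equipped with filtrations whose associated graded pieces are indexed by products of Jucys--Murphy braids. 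The work is to promote this coincidence to a genuine adjunction. An alternative, essentially dual, route would go through the trace of the Hecke category $\Ho(\SBim_n)$ and its conjectural description by coherent sheaves on a Hilbert scheme of points, in which the line-bundle filtration of $\mathcal{T}_n$ emerges from categorically diagonalizing the full twist, whose image under $h\Ho(\Lambda)$ is $\DD_{w_0}\star\DD_{w_0}$; we expect the two approaches to produce the same functor $\iota^*$.

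\textbf{Steps.} (1) Identify $\mathcal{T}_n$ and its exterior powers with the $\mathbb{E}_k$ via the two representability statements, $\HH^k(-)\dot{=}\operatorname{RHom}(\wedge^k\mathcal{T}_n,-)$ on the coherent side (conjecturally) and $\HH^k(-)\dot{=}\Hom(\mathbb{E}_k,-)$ (proved here), together with the formula $\mathfrak{E}_k\dot{=}p^{\dagger}\hc_!(\mathfrak{W}_k\star\dd_G)$, which pins the $\mathbb{E}_k$ down explicitly. (2) Compute the graded derived endomorphism algebra $A_{\mathrm{coh}}=\operatorname{RHom}^{\bullet}\big(\bigoplus_k\wedge^k\mathcal{T}_n,\bigoplus_k\wedge^k\mathcal{T}_n\big)$ on $\operatorname{FHilb}^{dg}_n$ and the algebra $A_{\SBim}=\operatorname{RHom}^{\bullet}\big(\bigoplus_k\mathbb{E}_k,\bigoplus_k\mathbb{E}_k\big)$ in $\Ho(\SBim_n)$, and produce a quasi-isomorphism of $A_{\infty}$-algebras $A_{\mathrm{coh}}\simeq A_{\SBim}$; here the character-sheaf realization is used to transport the second computation to the geometry of the unipotent variety and the Springer action of $W$ on $\mathfrak{t}$, where the relevant $\operatorname{Ext}$ groups are more accessible. (3) Invoke a tilting / Barr--Beck type theorem to upgrade the algebra isomorphism of (2) to a fully faithful functor from the subcategory of $D^b(\operatorname{Coh}_{\mathbb{C}^*\times\mathbb{C}^*}(\operatorname{FHilb}^{dg}_n))$ generated by the $\wedge^k\mathcal{T}_n$ and their twists into $\Ho(\SBim_n)$, extend it by continuity to all of $D^b(\operatorname{Coh}_{\mathbb{C}^*\times\mathbb{C}^*}(\operatorname{FHilb}^{dg}_n))$ to obtain $\iota^*$, and take $\iota_*$ to be its adjoint. (4) Deduce $\iota^*(\mathcal{L}_k)\simeq\mathfrak{J}_{k-1}$ by tracking the $\mathcal{L}_k$, i.e.\ the associated graded pieces of the filtration of $\mathcal{T}_n=\wedge^1\mathcal{T}_n$, through the identifications of (1)--(3) onto the associated graded pieces of the Jucys--Murphy filtration of $\mathbb{E}_1$, which by Theorem~\ref{sec:jucys-murphy-filtr-1} are exactly the $\mathfrak{J}_{k-1}$.

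\textbf{Main obstacle.} The essential difficulty --- and the reason this remains conjectural --- is step (2): matching the full derived Hom-algebras, not merely the objects and their filtrations. The scheme $\operatorname{FHilb}^{dg}_n$ is genuinely derived and badly singular, so controlling $\operatorname{RHom}$ between exterior powers of $\mathcal{T}_n$ requires understanding the derived self-intersection geometry of the flag loci it parametrizes; on the Soergel side, $\operatorname{RHom}$ in $\Ho(\SBim_n)$ between the $\mathbb{E}_k$ is already delicate beyond the smallest ranks, even with the explicit $t$-structure $\mathcal{H}^{\bullet}_{\mathbb{S},w_0}$ at hand. A secondary point is that $\iota^*$ and $\iota_*$ are expected to form only an adjoint pair, not an equivalence, so one must correctly pin down the essential image of $\iota_*$ --- presumably the subcategory generated by the $\wedge^k\mathcal{T}_n$ --- and check that $\iota^*\iota_*$ is the identity there. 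I expect the character-sheaf picture of Sections~\ref{sec:four-deligne-transf-4}--\ref{sec:char-sheav-full-3}, which trades the singular coherent geometry for the geometry of unipotent classes and the $W$-representation $\wedge^{\bullet}\mathfrak{t}$, to be the right organizing tool for these $\operatorname{Ext}$ computations, and anticipate that an honest proof first establishes an equivalence of the relevant ``small'' subcategories on the nose and only then extends it to the stated adjunction.
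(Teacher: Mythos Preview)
The statement is labeled a conjecture, and the paper does not claim to prove it; its contribution is to supply evidence by constructing the objects $\mathbb{E}_k$ (Corollary~\ref{sec:comp-with-soerg-7}), identifying them with images of explicit character sheaves (Theorem~\ref{sec:char-sheav-full-3}), and exhibiting the Jucys--Murphy filtrations of Theorem~\ref{sec:jucys-murphy-filtr-1}, which match on the Soergel side the filtration of $\wedge^k\mathcal{T}_n$ by products of the $\mathcal{L}_k$. There is therefore no proof in the paper to compare your proposal against, and you are right to present this as a strategy rather than an argument.

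That said, two points in your outline would need sharpening before it could become a proof. First, in step~(3), even granting a quasi-isomorphism $A_{\mathrm{coh}}\simeq A_{\SBim}$, extending ``by continuity'' from the subcategory generated by the $\wedge^k\mathcal{T}_n$ to all of $D^b(\operatorname{Coh}_{\mathbb{C}^*\times\mathbb{C}^*}(\operatorname{FHilb}^{dg}_n))$ is not automatic: those exterior powers are not known to generate, so you would need either a generation statement or an independent construction of $\iota^*$ compatible with the algebra match. Second, the Barr--Beck/tilting step implicitly requires a dg or $\infty$-categorical enhancement on the Soergel side; the bare homotopy category $\Ho(\SBim_n)$ does not carry functorial cones or the adjoints such arguments need, so the target of $\iota^*$ would have to be upgraded before the formalism applies. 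Your identification of step~(2) as the essential obstruction is accurate and is precisely why the statement remains conjectural.
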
 Moreover, in loc. cit. it is predicted that the Khovanov-Rozansky homology can be computed on the coherent side as follows.
\begin{conjecture}[see \cite{gorsky2016flag}, Conjecture 1.1 of Introduction and Conjecture 3.8] There is an isomorphism of bigraded vector spaces
   \[ \operatorname{HHH}^{k}(F_\beta)\simeq \operatorname{HH}^0(\iota^*(\wedge^k\mathcal{T}_n^{\vee})\otimes F_{\beta})\simeq \int \iota_*(F_\beta) \otimes \wedge^k\mathcal{T}_n^{\vee},
   \] where $\int$ denotes the pushforward to the point in the coherent equivariant category.
\end{conjecture}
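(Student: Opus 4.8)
The plan is to reduce this isomorphism to Conjecture~\ref{sec:relat-conj-citeg} by combining it with the explicit description, obtained in this paper, of the objects representing individual Hochschild cohomology groups; the statement is not made unconditional here, and the point of the reduction is to isolate the ``Soergel-side'' input, which our results supply, from the genuinely coherent-geometric content of \cite{gorsky2016flag}.

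First I would pass from the filtration of $\mathcal{T}_n$ by the line bundles $\mathcal{L}_1,\dots,\mathcal{L}_n$ to the induced filtration of $\wedge^k\mathcal{T}_n^\vee$, whose associated graded is $\bigoplus_{i_1<\dots<i_k}\mathcal{L}_{i_1}^\vee\otimes\dots\otimes\mathcal{L}_{i_k}^\vee$, the $k$th elementary symmetric combination of the $\mathcal{L}_i^\vee$. Conjecture~\ref{sec:relat-conj-citeg}, in the filtered form recalled in the introduction, sends these subquotients to the tensor products $\mathfrak{J}_{i_1-1}^{-1}\otimes\dots\otimes\mathfrak{J}_{i_k-1}^{-1}$ of inverse Jucys--Murphy Rouquier complexes. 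On the other side, the Jucys--Murphy filtration of $\mathbb{E}_k$ established in Theorem~\ref{sec:jucys-murphy-filtr-1} (see also \cite{tolmachov2018towards}) --- whose content is precisely that $\mathbb{E}_k$ categorifies the $k$th elementary symmetric polynomial in the Jucys--Murphy braids --- exhibits the monoidal dual $\mathbb{E}_k^\vee$ as a filtered object with the same associated graded $\bigoplus_{i_1<\dots<i_k}\mathfrak{J}_{i_1-1}^{-1}\otimes\dots\otimes\mathfrak{J}_{i_k-1}^{-1}$. I would then argue that the two filtered objects coincide, obtaining $\iota^*(\wedge^k\mathcal{T}_n^\vee)\dot{=}\mathbb{E}_k^\vee$ in $\Ho(\SBim_n)$; the appearance of duals and of \emph{inverse} Jucys--Murphy braids is forced by the inverse-Frobenius twist in the Koszul-dual normalization, and reconciling the bigradings on the two sides is where most of the bookkeeping lives.

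Granting this identification, the first isomorphism $\operatorname{HHH}^k(F_\beta)\simeq\operatorname{HH}^0(\iota^*(\wedge^k\mathcal{T}_n^\vee)\otimes F_\beta)$ follows from the Corollary above, $\operatorname{HHH}^k(\beta)\dot{=}\Hom(\mathbb{E}_k,F_\beta)$, together with the rigidity adjunction $\operatorname{HH}^0(\mathbb{E}_k^\vee\otimes F_\beta)=\Hom(R,\mathbb{E}_k^\vee\otimes F_\beta)\simeq\Hom(\mathbb{E}_k,F_\beta)$ in $\Ho(\SBim_n)$, where $R$ denotes the monoidal unit; here one also uses the symmetry $k\leftrightarrow n-k$, visible in $\mathcal{H}^k(\Xi)\dot{=}\mathfrak{W}_{n-k}$, to see that $\mathbb{E}_k^\vee$ is again of elementary-symmetric-Jucys--Murphy type. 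The second isomorphism $\operatorname{HH}^0(\iota^*(\wedge^k\mathcal{T}_n^\vee)\otimes F_\beta)\simeq\int\iota_*(F_\beta)\otimes\wedge^k\mathcal{T}_n^\vee$ is then the projection formula $\iota_*\bigl(F_\beta\otimes\iota^*(\wedge^k\mathcal{T}_n^\vee)\bigr)\simeq\iota_*(F_\beta)\otimes\wedge^k\mathcal{T}_n^\vee$ for the adjoint pair $(\iota_*,\iota^*)$, together with the compatibility of $\operatorname{HH}^0$ on $\Ho(\SBim_n)$ with the pushforward $\int$ to the point in the equivariant coherent category --- both being $\Hom$ out of the unit, and $\iota_*$ sending the unit to the structure sheaf.

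The principal obstacle is that all of the above rests on Conjecture~\ref{sec:relat-conj-citeg}: the functor $\iota^*$, and a fortiori its compatibility with tensor products, filtrations, and the integration functor $\int$, is not yet available. What the present paper contributes unconditionally is the complementary half of the picture --- the object $\mathbb{E}_k$ (equivalently its geometric avatar $\mathfrak{E}_k\dot{=}p^\dagger\hc_!(\mathfrak{W}_k\star\dd_G)$), the identity $\operatorname{HH}^k(-)\dot{=}\Hom(\mathbb{E}_k,-)$, and the Jucys--Murphy filtration of $\mathbb{E}_k$ --- which is exactly the package the coherent side demands via $\wedge^k\mathcal{T}_n^\vee$ and its $\mathcal{L}_i$-filtration. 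Removing the conditionality would require constructing $\iota^*$ (or the flattening functor from the affine Hecke category alluded to in the introduction) and proving that it respects all the relevant monoidal and grading structures; that, rather than anything internal to the Hecke-categorical or character-sheaf arguments, is the genuine difficulty.
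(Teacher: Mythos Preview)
The statement you are attempting to prove is a \emph{conjecture} quoted from \cite{gorsky2016flag}; the paper does not prove it and does not claim to. There is therefore no ``paper's own proof'' to compare against. What the paper does is exactly what you identify in your last paragraph: it supplies the Soergel-side input (the objects $\mathbb{E}_k$, the identity $\operatorname{HH}^k(-)\dot{=}\Hom(\mathbb{E}_k,-)$, and the Jucys--Murphy filtration of Theorem~\ref{sec:jucys-murphy-filtr-1}) and observes that, \emph{assuming} the conjectures, the rewriting $\operatorname{HH}^0(\iota^*(\wedge^k\mathcal{T}_n^{\vee})\otimes F_{\beta}) = \Hom(\iota^*(\wedge^k\mathcal{T}_n), F_{\beta})$ from \cite{gorsky2016flag} forces $\iota^*(\wedge^k\mathcal{T}_n)$ to coincide with $\mathbb{E}_k$. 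The paper's discussion stops there; it is presented as evidence, not as a reduction.

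Your proposal goes further than the paper and tries to sketch an actual conditional reduction, but several steps are not justified. First, you invoke a ``monoidal dual'' $\mathbb{E}_k^\vee$ and a rigidity adjunction $\Hom(R,\mathbb{E}_k^\vee\otimes F_\beta)\simeq\Hom(\mathbb{E}_k,F_\beta)$; the homotopy category of Soergel bimodules is not rigid, and there is no reason the object $\mathbb{E}_k$ (which is not a Rouquier complex of a braid) is dualizable in the required sense. The paper avoids this by citing the adjunction as a statement internal to \cite{gorsky2016flag} rather than deriving it. Second, the step ``the two filtered objects coincide'' is a genuine gap: having the same associated graded in a triangulated category does not determine the object, and you give no mechanism (functoriality of $\iota^*$ on the filtration, uniqueness of extensions, etc.) to pin it down. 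Third, the projection-formula step presumes $\iota^*$ is monoidal and compatible with $\int$, which is part of the conjectural package, not something one can use freely. In short: your diagnosis of what is conditional and what is unconditional is correct, but the intermediate ``reduction'' you outline assumes more structure than either the paper or \cite{gorsky2016flag} makes available.
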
 The discussion in loc. cit. also implies that, assuming the conjectures, one can rewrite
\[ \operatorname{HH}^0(\iota^*(\wedge^k\mathcal{T}_n^{\vee})\otimes F_{\beta}) = \Hom(\iota^*(\wedge^k\mathcal{T}_n), F_{\beta}).
\] Thus the objects $\mathfrak{E}_k$ representing Hochschild cohomology, constructed in this article, can be identified with (conjectural) images of the exterior powers of the tautological bundle $\mathcal{T}_n$ under the functor $\iota^*$. For this to match with the Conjecture \ref{sec:relat-conj-citeg}, these objects must have filtrations (in the sense of triangulated categories) by the objects corresponding to products of Jucys-Murphy braids, matching the filtrations of the corresponding vector bundles by line bundles. The existence of such filtrations is explained in Section \ref{sec:jm-filtr}.

\subsection{Organization of the paper.} In Section \ref{sec:khov-rozansky-homol} we recall the definition of the Hochschild cohomology of Soergel bimodules and fix the required notation. We also recall the t-structure and duality on the homotopy category of Soergel bimodules defined in \cite{achar2019mixed}.

In Section \ref{sec:gener-compl-categ} we define the duality functor on the completed category of \cite{by}.

In Section \ref{sec:monodr-hecke-categ} we recall the facts about the monodromic Hecke category we will use, state the relations of our constructions with the homotopy category of Soergel bimodules and define the complex $\mathbb{K}$, representing the Hochschild cohomology.

In Section \ref{sec:char-sheav-harish} we recall the definition of the Harish-Chandra functor and prove the main results of this paper, relating Hochschild cohomology of the Hecke category to character sheaves.

In Section \ref{sec:jm-filtr} we construct the filtrations of the objects representing Hochschild cohomology by the objects corresponding to products of Jucys-Murphy braids, and relate our results to the decategorified computations with Jones-Ocneanu traces. 

In the Appendix we illustrate our result by doing a computation in the homotopy categories of Soergel bimodules of types $\operatorname{A}_1, \operatorname{A}_2$.
\subsection{Acknowledgments.} We would like to thank the anonymous referees for their helpful suggestions on improving this text. We thank Alexander Braverman for useful discussions, especially for suggesting an idea that helped us establish Theorem \ref{sec:whittaker-averaging-1} in its present generality. The first author would like to thank Tanmay Deshpande for related discussions. He is partially supported by NSF grant DMS-2101507. The second author would like to thank Eugene Gorsky for the constant interest in this work, encouragement, many very helpful discussions, and comments on a draft of this paper; and Stefan Dawydiak for reading of the draft and suggestions to improve the presentation. He would also like to acknowledge the role of the WARTHOG workshop ``Knot homologies, Hilbert schemes, and Cherednik algebras'' of 2016 and AIM workshop ``Categorified Hecke algebras, link homology, and Hilbert schemes'' of 2018, where he learned about the problem and many related things, and to thank their participants and organizers.

This work was done while the second author was a Postdoctoral Fellow at the Perimeter Institute of Theoretical Physics. Perimeter Institute is supported in part by the Government of Canada through the Department of Innovation, Science and Economic Development Canada and by the Province of Ontario through the Ministry of Colleges and Universities.
\section{Khovanov-Rozansky homology}
\label{sec:khov-rozansky-homol}

\subsection{Soergel bimodules.} See \cite{ew}, \cite{ghmn} for an introduction to Soergel bimodules and the discussion of some relevant homological algebra.

Let $\mathfrak{g}_0$ be a reductive Lie algebra over an algebraically closed field $k$ of characteristic 0. In the sequel we will be mostly interested in $k = \Qlbar$.

Let $\mathfrak{h}_0$ be a Cartan subalgebra of $\g_0$.

Let $W$ be its Weyl group, with the set of simple reflections $\Sigma$. Let $l: W \to \mathbb{Z}_{\geq 0}$ be the length function with respect to $\Sigma$, and let $w_0 \in W$ be the longest element.

$W$ acts on $\h_0$ by reflections with respect to the dual collections $(\alpha_s)_{s \in \Sigma} \subset \h_0^\vee, (\alpha^{\vee}_s)_{s \in \Sigma} \subset \h_0$. Let $R = \operatorname{Sym}_k(\h_0^\vee)$. We turn $R$ into a graded ring by setting $\deg \h_0^\vee = 2$. Let $R-\operatorname{mod}-R$ be the category of finitely generated graded $R$-bimodules. This is a monoidal category, with the symmetric product given by $\otimes_{R} =: \otimes$. We will frequently omit the tensor symbol from notations, writing $AB$ in place of $A\otimes_R B$. For a graded $R$-bimodule $M = \oplus_i M^i$, write $M(r)^i = M^{i+r}$ for its grading shift by $r$.

When $\mathcal{C}$ is an additive category, we write $\Ho(\mathcal{C})$ for the bounded homotopy category of $\mathcal{C}$. For all triangulated categories we consider, we let $[k]$ denote the homological shift by $k$.

For $s \in \Sigma$, let $R^s$ denote the subring of $s$-invariant elements in $R$. Let $B_s = R \otimes_{R^s} R(1)$. Recall that the category $\SBim_R(W)$ of Soergel bimodules is defined as the smallest full additive monoidal subcategory of $R-\operatorname{mod}-R$ closed under taking direct summands and grading shifts, containing the bimodules $R, B_s, s \in \Sigma$. By \cite{soergel2007kazhdan}, isomorphism classes of indecomposable Soergel bimodules, up to grading shifts, are labeled by the elements of $W$. Let $B_w$ be the indecomposable summand of $B_{s_{i_1}}\otimes \dots \otimes B_{s_{i_q}}$ for a reduced expression $w = s_{i_1}\dots s_{i_q}$ whose shifts do not appear as summands in the products $B_{t_{j_1}}\otimes \dots \otimes B_{t_{j_r}}$ for any reduced expression $v = t_{j_1}\dots t_{j_r}, v < w,$ where $<$ denotes the Bruhat order with respect to $\Sigma$.

For two graded bimodules $M, N$, we will write $\hom(M,N)$ for the space of morphisms in the category of \emph{graded} bimodules, and denote \[\Hom(M,N) = \oplus_{i \in \mathbb{Z}}\hom(M, N(i)),\] the graded $\Hom$ space between $M$ and $N$ as ungraded bimodules.

\begin{bremark}
  \label{RW} It is easy to see that $R\otimes_kR$ action on any Soergel bimodule factors through the $R\otimes_{R^W}R$ action, where $R^W$ stands for the ring of symmetric polynomials. It is also known that $B_{w_0} \simeq R\otimes_{R^W}R(l(w_0))$, and so $\operatorname{End}(B_{w_0})\simeq R\otimes_{R^W}R$ as a graded algebra.

\end{bremark}
\subsection{Rouquier complexes.} Recall that for $s\in\Sigma$, the Rouquier complexes in $\Ho(\SBim_R(W))$ corresponding to simple reflections are defined as
\[ \Delta_s = \underline{B_s} \to R(1), \nabla_s = R(-1) \to \underline{B_s}.
\] Here underline marks the $0$th cohomological degree in the complex.

For $w \in W$ with a reduced expression $w = s_{i_1}\dots s_{i_k}$, Rouquier complexes corresponding to $w$ are defined as
\[ \Delta_w = \Delta_{s_{i_1}}\dots\Delta_{s_{i_k}}, \nabla_w = \nabla_{s_{i_1}} \dots \nabla_{s_{i_k}}.
\] They do not depend on the choice of the reduced expression, up to an isomorphism in $\Ho(\SBim_R(W))$, by a result of \cite{rouquier2004categorification}.

Tensor product with Rouquier complexes defines braid group action on $\Ho(\SBim_R(W))$, both on the left and on the right. Recall that the Artin-Tits braid group $\operatorname{Br}(W)$ can be presented with generators $\sigma_w, w \in W$, and relations $\sigma_v\sigma_w=\sigma_{vw}$ if $l(v)+l(w) = l(vw)$. On the other hand, we have the following isomorphisms in $\Ho(\SBim_R(W))$:
\[ \Delta_v\Delta_w \simeq \Delta_{vw}, \text{ when } l(v)+l(w) = l(vw),
\]
\[ \nabla_v\nabla_w \simeq \nabla_{vw}, \text{ when } l(v)+l(w) = l(vw),
\]
\[ \Delta_v\nabla_{v^{-1}} \simeq R.
\]
For an element $\beta \in \operatorname{Br}(W)$ written as a product of generators $\sigma_w, (\sigma_w)^{-1}$, $w \in W,$ let $F_\beta$ be the corresponding tensor product of the complexes $\Delta_w, \nabla_{w^{-1}}$, respectively. It was shown in \cite{rouquier2004categorification} that the complex $F_\beta$ is well-defined up to a canonical isomorphism in $\Ho(\SBim_R(W))$.
\subsection{t-structure and duality.}
\label{sec:t-structure-duality} In \cite{achar2019mixed} a t-structure and duality on the homotopy category $\Ho(\SBim_R(W))$ are defined. They correspond to the perverse t-structure and duality on the geometric Hecke category, as will be described below. For now, we record some of their properties.

The t-structure, which we denote $(\prescript{p}{}\SBim_R(W)^{\leq 0}, \prescript{p}{}\SBim_R(W)^{\geq 0})$, was defined in loc. cit. by gluing from t-structures on certain categories $\mathcal{D}_{\{w\}}, w \in W,$ each being equivalent to the homotopy category of free graded $R$-modules.

In more detail, let $\SBim_R(W)_{\leq w}$ be the minimal additive monoidal subcategory of $\SBim_R(W)$ containing all grading shifts of bimodules $B_v, v \leq w$ and closed under direct summands. Similarly, let $\SBim_R(W)_{< v}$ be the minimal additive monoidal subcategory of $\SBim_R(W)$ containing all grading shifts of bimodules $B_v, v < w$ and closed under taking direct summands. Let $\mathcal{D}_{\{w\}}$ be the Verdier quotient category $\Ho(\SBim_R(W)_{\leq w})/\Ho(\SBim_R(W)_{< w})$. The functors $i^*_w, i^!_w:\Ho(\SBim_R(W)) \to \mathcal{D}_{\{w\}}$ were constructed, and the categories $\mathcal{D}_{\{w\}}$ were proved in loc. cit. to be equivalent to the bounded derived category of ${D}^b(R-\operatorname{mod})$ of finitely generated graded $R$-modules. Let $({D}^{\leq 0}, {D}^{\geq 0})$ stand for the standard t-structure on the latter category, and let $(\mathcal{D}_{\{w\}}^{\leq 0}, \mathcal{D}_{\{w\}}^{\geq 0})$ stand for the t-structure on $\mathcal{D}_{\{w\}}$ corresponding to $({D}^{\leq 0}, {D}^{\geq 0})$ under the above equivalence. Now define the t-structure on $\Ho(\SBim_R(W))$ by gluing as follows:
\[ \prescript{p}{}{}\SBim_R(W)^{\leq 0} = \{\F \in \Ho(\SBim_R(W)),\forall w\ i_w^*\F \in \mathcal{D}_{\{w\}}^{\leq 0}\},
  \]
   \[ \prescript{p}{}{}\SBim_R(W)^{\geq 0} = \{\F \in \Ho(\SBim_R(W)),\forall w\ i_w^!\F \in \mathcal{D}_{\{w\}}^{\geq 0}\}.
 \]

Note that this definition uses the t-structure on $\mathcal{D}_{\{w\}}$ that is Koszul dual to the one used in \cite{achar2019mixed}.
 
This construction is a direct analogue of the construction of the perverse t-structure on a stratified space. Objects $B_w(m), \Delta_w(m), \nabla_w(m)$ are in the heart of this t-structure for any $w \in W, m \in \mathbb{Z}$. The category $(\prescript{p}{}\SBim_R(W))^{\leq 0}$ is generated under extensions by the objects $\Delta[n](m)$ with $n \geq 0$, see Lemma 7.5 of \cite{achar2019mixed}.

Recall that on the category $\SBim_R(W)$ the duality functor $M \mapsto M^{\vee}$ is defined as the usual duality in $R-\operatorname{mod}$ with respect to either left or right module structure. See, e.g., \cite{ghmn} for the detailed discussion. We have $B_w^{\vee} \simeq B_w, \nabla_w^{\vee} \simeq \Delta_w$.
\begin{bremark}
  \label{sec:t-structure-duality-2}
Note that the t-structure above is \emph{not} self-dual with respect to this duality. For example, the complex $ R(-2) \xrightarrow{\alpha_s} \underline{R} $ is in the heart, but its dual complex $ \underline{R} \xrightarrow{\alpha_s} R(2) $ is not. We will, in fact, need the dual t-structure $\left(\left(\prescript{p}{}\SBim_R(W)^{\geq 0}\right)^{\vee}, \left(\prescript{p}{}\SBim_R(W)^{\leq 0}\right)^{\vee}\right)$ in what follows.
\end{bremark}
\begin{bdefinition}
  \label{sec:t-structure-duality-1} Define the shifted t-structure $\left(\SBim_R(W)_{w_0}^{\geq 0},\SBim_R(W)_{w_0}^{\leq 0}\right)$ on $\Ho(\SBim_R(W))$ as
 \[ \SBim_R(W)_{w_0}^{\geq 0} = \Delta_{w_0} \star \left(\prescript{p}{}\SBim_R(W)^{\leq 0}\right)^{\vee}
 \] and
 \[ \SBim_R(W)_{w_0}^{\leq 0}= \Delta_{w_0} \star \left(\prescript{p}{}\SBim_R(W)^{\geq 0}\right)^{\vee}.
 \]
 Let $\mathcal{H}_{\mathbb{S},w_0}^{\bullet}$ stand for the cohomology functor with respect to this t-structure.
\end{bdefinition}

\subsection{Hochschild cohomology.}
\label{sec:hochsch-cohom} For any $M \in R-\operatorname{mod}-R$, let $\HH^i(M) = \operatorname{Ext}^i(R, M)$ stand for the Hochschild cohomology functor. It can be computed as follows: for $s \in \Sigma$, let \[K_s = {R \otimes_{k} R}(-2) \xrightarrow{\alpha_s\otimes 1 - 1 \otimes \alpha_s} \underline{R \otimes_{k}R} \] so that $K^{\bullet} = \bigotimes_{s\in\Sigma}K_s$ (tensor product over $R \otimes_{k} R$) is the Koszul resolution of $R$ as an $R \otimes_{k} R$-bimodule. Here underline marks the $0$th cohomological degree in the two-term complex.

Then we have
\[ \HH^i(M) \simeq \operatorname{H}^i(\underline{\Hom}(K^{\bullet},M)),
\] where $\underline{\Hom}$ stands for the complex of $\Hom$-spaces.

It would be important to us to rewrite the above expression for $M \in \SBim_R(W)$. Consider the modified complex
\[K_{\mathbb{S}} = K^{\bullet}\otimes_{R \otimes_{k} R} B_{w_0}.\]
\begin{lemma} For $M \in \SBim_R(W)$, there is a natural isomorphism
 \[ \HH^i(M)(-l(w_0)) \simeq \operatorname{H}^i(\underline{\Hom}(K_{\mathbb{S}}, M)).
\]
\end{lemma}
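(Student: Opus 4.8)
The plan is to reduce the statement about $\HH^i(M)$ to a statement about the ordinary Hochschild cohomology computed via the Koszul resolution, using the key fact (Remark~\ref{RW}) that $B_{w_0} \simeq R\otimes_{R^W}R(l(w_0))$ and that the $R\otimes_k R$-action on any Soergel bimodule $M$ factors through $R\otimes_{R^W}R$. First I would unwind the definitions: since $K^\bullet$ is a complex of free $R\otimes_k R$-modules, the complex $K_{\mathbb{S}} = K^\bullet \otimes_{R\otimes_k R} B_{w_0}$ is a complex of $R\otimes_k R$-modules, and by the tensor-hom adjunction
\[
\underline{\Hom}_{R\otimes_k R}(K_{\mathbb{S}}, M) = \underline{\Hom}_{R\otimes_k R}(K^\bullet \otimes_{R\otimes_k R} B_{w_0}, M) \simeq \underline{\Hom}_{R\otimes_k R}(K^\bullet, \underline{\Hom}_{R\otimes_k R}(B_{w_0}, M)).
\]
So the cohomology of the left-hand side is $\HH^i$ of the bimodule $\underline{\Hom}_{R\otimes_k R}(B_{w_0}, M)$, and the task becomes to identify this internal Hom bimodule, up to the grading shift $(-l(w_0))$, with $M$ itself.

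Next I would compute $\underline{\Hom}_{R\otimes_k R}(B_{w_0}, M)$. Using $B_{w_0} \simeq R\otimes_{R^W}R(l(w_0))$, this is $\underline{\Hom}_{R\otimes_k R}(R\otimes_{R^W}R, M)(-l(w_0))$. Since the $R\otimes_k R$-action on $M$ factors through $R\otimes_{R^W}R$, and $R\otimes_{R^W}R$ is the unit object for the relevant module structure — more precisely, $\Hom_{R\otimes_k R}(R\otimes_{R^W}R, M) = \Hom_{R\otimes_{R^W}R}(R\otimes_{R^W}R, M) = M$ as $R\otimes_{R^W}R$-bimodules, hence as $R\otimes_k R$-bimodules — we get $\underline{\Hom}_{R\otimes_k R}(B_{w_0}, M) \simeq M(-l(w_0))$ canonically. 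Combining with the previous paragraph, $\operatorname{H}^i(\underline{\Hom}(K_{\mathbb{S}}, M)) \simeq \HH^i(M(-l(w_0))) = \HH^i(M)(-l(w_0))$, as claimed; naturality in $M$ is clear since every step is a functorial isomorphism.

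The one point requiring genuine care is the identification $\underline{\Hom}_{R\otimes_k R}(R\otimes_{R^W}R, M) \simeq M(-l(w_0))$ — one must check that $R\otimes_{R^W}R$, while not free over $R\otimes_k R$, nevertheless behaves correctly when mapped into $M$, i.e.\ that no higher $\operatorname{Ext}$ or derived phenomena intervene and that the adjunction is applied at the level of complexes rather than just in a derived sense. This is where the hypothesis $M \in \SBim_R(W)$ (as opposed to an arbitrary bimodule) is essential: it guarantees the factorization of the action through $R\otimes_{R^W}R$, so that $\operatorname{Hom}$ out of the ``unit'' $R\otimes_{R^W}R$ recovers $M$ on the nose. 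I expect this bookkeeping — keeping track of which ring we take $\operatorname{Hom}$ over, and the grading shift by $l(w_0)$ coming from $B_{w_0} \simeq R\otimes_{R^W}R(l(w_0))$ — to be the main (though ultimately routine) obstacle; the homological content is entirely carried by the standard fact that $K^\bullet$ resolves $R$.
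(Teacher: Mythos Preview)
Your proposal is correct and takes essentially the same approach as the paper. The paper's proof is a one-line invocation of Remark~\ref{RW} to assert $\underline{\Hom}(K^{\bullet}, M) \simeq \underline{\Hom}(K_{\mathbb{S}}, M(l(w_0)))$; you have simply unpacked that isomorphism via the tensor-hom adjunction and the observation that $\Hom_{R\otimes_k R}(R\otimes_{R^W}R, M) = M$ when $M$ is annihilated by the kernel of $R\otimes_k R \twoheadrightarrow R\otimes_{R^W}R$, which is exactly what the factorization in Remark~\ref{RW} guarantees.
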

\begin{proof}

  In view of the Remark \ref{RW}, we have \[\underline{\Hom}(K^{\bullet}, M) \simeq \underline{\Hom}(K_{\mathbb{S}}, M(l(w_0))),\] so that 
  \[ \HH^i(M) = \operatorname{H}^i(\underline{\Hom}(K^{\bullet}, M)) = \operatorname{H}^i(\underline{\Hom}(K_{\mathbb{S}}, M(l(w_0)))).
  \]

\end{proof}

The functors $\HH^i$, considered as functors from $\SBim_R(W)$ to the category $R-\operatorname{mod}$ of graded $R$-modules, are additive, so we can define their extensions to the corresponding bounded homotopy categories, which we denote in the same way:
\[ \HH^i: \Ho(\SBim_R(W)) \to \Ho(R-\operatorname{mod}).
\]

Recall that for a braid $\beta$  we have a corresponding Rouquier complex $F_\beta \in\Ho(\SBim_R(W))$. Khovanov-Rozansky homology of the link $\bar{\beta}$ given by the closure of $\beta$ is defined as $\operatorname{HHH}^i({\beta}) := \operatorname{HH}^i(F_\beta)$. Note that it has three gradings: one coming from the grading of $\operatorname{HHH}^{\bullet}$, one coming from the cohomological degree in $\Ho(\SBim_R(W))$, and one coming from the internal grading on the bimodules.
\section{Generalities on completed categories}
\label{sec:gener-compl-categ} Conventions in this section follow \cite{by}.

\subsection{Frobenius modules.} Fix a finite field $\Fq$ of cardinality $q = p^r$ for a prime $p$, which is always assumed to be very good for the considered algebraic group $G$, and another prime $\ell \neq p$. Fix an isomorphism $\mathbb{C} \simeq \Qlbar$, and let $|\cdot|$ be the corresponding archimedean norm on $\Qlbar$. Fix the square root of $q$ in $\Qlbar$.

Let $\Fr\in \operatorname{Gal}(\Fqbar/\Fq)$ stand for the geometric Frobenius morphism. By $\Fr$-module we mean a $\Qlbar$-vector space $M$ equipped with an automorphism $\Fr_M$. A $\Fr$-module is called locally-finite if it is a union of finite-dimensional $\Fr$-modules. For an arbitrary $\Fr$-module $M$, let $M^f$ be its locally-finite part, that is a union of all its finite-dimensional $\Fr$-submodules.

The weights of a locally-finite Frobenius module $M$ are numbers $2\log |\lambda|/\log q$, where $\lambda$ stands for the generalized eigenvalue of the $\Fr$-action. Since we fixed a square root of $q$, the half of the Tate twist is defined, which we denote by $(1)$ (note that this differs from the usual notations and, in particular, notations from \cite{by}, where $(1)$ is the notation for the full Tate twist). It shifts weights by $-1$. Write $\tw{1} = [1](1)$.

If $M$ is a $\Fr$-module on which $\Fr$ acts semisimply with integral weights, let \[\underline{M} = \bigoplus_{i\in \mathbb{Z}}{M_i}\] be a graded $\Qlbar$-vector space, where $M_i$ is the $i$-weight subspace of $M$.

For a $\Qlbar$-algebra $A$ with $\Fr$-action, let $(A, \Fr)-\operatorname{mod}$ be the category of $A$-modules with a compatible $\Fr$-action. If $\Fr$ acts on both $A$ and $M$ semi-simply with integral weights, $\underline{M}$ becomes a graded $\underline{A}$-module.

\subsection{Categories of sheaves.} Let $Y$ be a scheme of finite type, and let $H$ be an algebraic group, both defined over $\Fq$. We refer the reader to the \cite{laszlo2008six1}, \cite{laszlo2008six2} for the formalism of the $\ell$-adic derived categories on stacks. One may also work with an $\ell$-adic analogue of the construction of \cite{bernstein2006equivariant}. We denote by $D^b(Y/H) = D_H^b(Y)$ the bounded derived category of \'etale $\Qlbar$-sheaves on the quotient stack $Y/H \times_{\Spec \Fq} \Spec \Fqbar$, and by $D_m^b(Y/H)$ the mixed bounded derived category of \'etale $\Qlbar$-sheaves on the quotient stack $Y/H$. For a stack $X$, write $\mathcal{H}^k(-)$ for the $k$th perverse cohomology functor on $D^b_m(X)$ (with respect to the middle perversity). Let $\omega$ stand for the pullback functor $D_m^b(Y/H) \to D^b(Y/H)$. We will regard the category $D_m^b(Y/H)$ as enriched over $\mathbb{Z}[\Fr]$: for $\mathcal{F}, \mathcal{G} \in D_m^b(Y/H)$, let
\[\Hom(\mathcal{F},\mathcal{G}) := \Hom(\omega\mathcal{F},\omega\mathcal{G}),\] \[\operatorname{Ext}^i(\mathcal{F},\mathcal{G}) := \operatorname{Ext}^i(\omega\mathcal{F},\omega\mathcal{G})\] considered as $\Fr$-modules.

\subsection{Generalities on completions} We briefly recall the formalism of pro-objects in filtered triangulated categories, see Appendix A to \cite{by}. Let $\mathcal{D}$ be any category. By $\pro(\mathcal{D})$ we denote its category of pro-objects. Namely, objects of $\pro(\mathcal{D})$ are sequences \[X_0 \leftarrow X_1 \leftarrow X_2 \leftarrow \dots, X_i \in \mathcal{D},\] denoted by $\prolim X_{\bullet}$, and
\[ \Hom_{\pro(\mathcal{D})}(\prolim X_{\bullet}, \prolim Y_{\bullet}) = \invlim{n}\dirlim{m}\Hom_{\mathcal{D}}(X_m, Y_n).
\]

Let $T$ be a split torus of rank $r$, and let $\pi:X \to Y$ be a $T$-torsor. Following \cite{by}, consider $D' := D'_m(X) \subset D^b_m(X)$ -- unipotently monodromic subcategory.

\begin{bdefinition}
  \label{sec:gener-compl} The completed monodromic category $\hat{D}(X) \subset \pro(D')$ is a full subcategory of sequences $\prolim \mathcal{F}_{\bullet}$ such that
  \begin{enumerate}[label=(\arabic*)]
  \item
    \label{item:1} $\mathcal{F}$ is uniformly bounded in degrees: $\mathcal{F}_{\bullet} \simeq \mathcal{F}'_{\bullet}$ (isomorphism in $\operatorname{pro}(D')$) with $\mathcal{F}'_{\bullet}$ such that there is $N > 0$ for which, for all $n$, $\mathcal{F}'_n$ has no perverse cohomology outside the interval $[-N, N]$.
  \item
    \label{item:2} $\mathcal{F}$ is uniformly bounded above in weights: $\mathcal{F}_{\bullet} \simeq \mathcal{F}'_{\bullet}$ with $\mathcal{F}'_{\bullet}$ such that there is $N \in \mathbb{Z}$ for which, for all $n$, $\mathcal{F}'_n$ is of weight $\leq N$.
  \item
    \label{item:3} $\pi_!\prolim \mathcal{F}_{\bullet}$ (equivalently, $\pi_*\prolim \mathcal{F}_{\bullet}$, see Lemma \ref{*!} below) lies in the essential image of $D^b_m(Y)$ in $\pro(D^b_m(Y))$.
  \end{enumerate}
\end{bdefinition}

It was shown in loc. cit. that $\hat{D}(X)$ is a triangulated category.

We will also consider the stratified situation, as in A.6 of \cite{by}. Assume that we are in the situation of Assumption S of loc. cit. Namely, $Y$ is assumed to be stratified with affine strata $Y_{\alpha}, \alpha \in \mathcal{S}$, $X_\alpha = \pi^{-1}(X_{\alpha})$ are trivial $T$-torsors over $Y_\alpha$, $\operatorname{H}^*(Y_{\alpha}\otimes_{\Fq}\Fqbar) \simeq \Qlbar$. Write $D_{\leq \alpha}(Y)$ (resp. $D_{< \alpha}(Y)$) for $D^b_m(\bar{Y}_{\alpha})$ (resp. $D^b_m(\bar{Y}_{\alpha} \backslash Y_{\alpha})$). We fix a full triangulated subcategory $\mathcal{D} \subset D_m^b(Y)$. For each $\alpha,$ let $\mathcal{D}_{\leq \alpha}$ (resp. $\mathcal{D}_{< \alpha}$) stand for the category $\mathcal{D} \cap D_{\leq \alpha}(Y)$ (resp. $\mathcal{D} \cap D_{< \alpha}(Y)$, and write $\mathcal{D}_{\alpha} = \mathcal{D}_{\leq \alpha}/\mathcal{D}_{< \alpha}$. Assume that the heart of $\omega\mathcal{D}_{\alpha}$ (with respect to the perverse t-structure) has a unique simple object, given by a rank one perverse local system $\mathcal{L}_{\alpha} \in \mathcal{D}_{\alpha}$. Let $d_{\alpha} = \dim Y_{\alpha}$. Let $\tilde{j}_{\alpha}: X_{\alpha} \to X$ be the corresponding embedding.

The main example we use is $X = G/U, Y = G/B$ with Bruhat stratification, and with $\mathcal{D}$ being the derived category of $U$-equivariant mixed complexes on $G/B$.

Let $\M$ be the stratified unipotently monodromic category attached to the above data, that is the full triangulated subcategory of $D^b_m(X)$ generated by objects $\pi^*\F, \F \in \mathcal{D}$, and let $\cM \subset \pro(\M)$ be its monodromic completion, given by replacing $D'$ in Definition \ref{sec:gener-compl} by $\M$.
\subsection{Standard and costandard pro-objects.}
\label{sec:stand-cost-pro} Define $V_T$ to be the $\Qlbar$-Tate module of $T$. This is a Frobenius module of weight $-2$. Write $S = \operatorname{Sym}V_T$. Let $\mathcal{L}_n$ be the unipotent local system on $T$ corresponding to the representation $S/(V_T)^n$ of $\pi_1^{\acute{e} t}(T)$ (that factors through its $\ell$-adic tame quotient), $\LL = \invlim{n} \mathcal{L}_n$ be the free pro-unipotent local system. Since, by assumption, $X_{\alpha}$ is a trivial $T$-torsor over $Y_{\alpha}$, we may consider the corresponding local system on $X_{\alpha}$, which we denote in the same way.

Recall that $r$ stands for the rank of $T$.

Standard and costandard pro-objects in $\cM$ are defined as \[\hat{\Delta}_\alpha = \tilde{j}_{\alpha!} \LL[r + d_{\alpha}](2r + d_{\alpha}), \hat{\nabla}_\alpha = \tilde{j}_{\alpha*} \LL[r + d_{\alpha}](2r + d_{\alpha}), \] respectively.

In case $Y_{\alpha} = \operatorname{pt}$ (in all situations we encounter there is a unique stratum of this form), we adopt the notation $\dd := \LL[r](2r)$ for the unique (up to a non-unique isomorphism) standard (and costandard) pro-object on $X_{\alpha} \simeq T$.

Let $a: T \times X \to X$ be the action morphism. We have the following
\begin{lemma}[\cite{by}, Lemma A.3.6]
  \label{unit} There is a natural isomorphism $a_!(\LL\boxtimes\F) \simeq \F[-2r](-2r)$, $\F \in \cM$.
\end{lemma}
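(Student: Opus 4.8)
The plan is to reduce the statement to a computation on the torus $T$, where the free pro-unipotent local system $\LL$ lives, and exploit the fact that $\LL$ is (pro-)unipotent so that pushing it forward along $T$ is governed by the cohomology of $T$ with coefficients in $\LL_n = S/(V_T)^n$. First I would unwind what $a_!(\LL\boxtimes\F)$ means at the level of the projective system: it is the pro-object $\invlim{n} a_!(\mathcal{L}_n \boxtimes \F_\bullet)$, where $a:T\times X\to X$ is the action map. Since $a$ is smooth of relative dimension $r$ and factors (after choosing, locally on the strata, a trivialization) through the projection $T\times X \to X$ twisted by the $T$-action, I would use the projection formula to write $a_!(\mathcal{L}_n\boxtimes\F) \simeq \F \otimes \mathrm{R}\Gamma_c(T, \mathcal{L}_n)$, or rather its monodromic analogue: the key point is that on each stratum $X_\alpha$ the $T$-action is free and the quotient map trivializes, so $a$ restricted there is (up to the torsor structure already built into $\M$) a base change of the projection away from $T$.

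The main computational input is the cohomology of the torus with coefficients in the unipotent local systems $\mathcal{L}_n$. Here I would invoke the standard fact that $\mathrm{R}\Gamma_c(T, \LL) \simeq \Qlbar[-2r](-2r)$ as a pro-object — i.e. the compactly supported cohomology of $T$ with coefficients in the universal pro-unipotent local system is concentrated in top degree $2r$ and is one-dimensional, with the expected Tate twist by $(-2r)$ coming from $r$ copies of $\mathrm{R}\Gamma_c(\mathbb{G}_m, \LL_{\mathbb{G}_m}) \simeq \Qlbar[-2](-2)$. The Künneth formula reduces the rank-$r$ torus to the rank-one case, and for $\mathbb{G}_m$ this is the classical statement that the co-invariants of the tame fundamental group acting on $S/(V_T)^n$ stabilize to a one-dimensional space. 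Combining this with the projection-formula step gives $a_!(\LL\boxtimes\F) \simeq \F[-2r](-2r)$, and I would check that the identification is natural in $\F$ — this follows because every step (projection formula, Künneth, the $\mathbb{G}_m$-computation) is functorial.

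The step I expect to be the main obstacle is making the projection formula argument rigorous at the level of the completed category $\cM \subset \pro(\M)$ rather than naively in $D^b_m$: one must check that $a_!$ of a pro-object is computed termwise, that the result again lies in $\cM$ (in particular that conditions \ref{item:1}–\ref{item:3} of Definition \ref{sec:gener-compl} are preserved — uniform boundedness in degrees and weights, and that $\pi_!$ of the output lands in the honest derived category), and that the monodromic structure is handled correctly since $\LL$ is not a genuine sheaf but a pro-system whose transition maps are essential. Concretely, one has to commute $a_!$ with the inverse limit, which is where the uniform-boundedness hypotheses built into $\cM$ do their work; I would organize this by reducing to the stratified building blocks $X_\alpha$, where $a$ becomes an honest smooth morphism between schemes and the projection formula is available in $D^b_m$, and then reassemble using the gluing description of $\M$. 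Everything else is bookkeeping with Tate twists and shifts, which I would not belabor.
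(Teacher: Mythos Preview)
The paper does not prove this lemma; it is quoted as Lemma~A.3.6 of \cite{by} and no argument is supplied here. There is therefore no proof in the present paper to compare your proposal against.

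For what it is worth, your sketch is aimed in the right direction. The computational core you identify, $\mathrm{R}\Gamma_c(T,\LL)\simeq\Qlbar[-2r](-2r)$ as a pro-object, is correct: for each $\mathcal{L}_n$ the lower compactly-supported cohomology of $T$ is nonzero, but the transition maps in the system $\mathcal{L}_{n+1}\to\mathcal{L}_n$ annihilate it, so only the top degree survives in the limit. (Compare the companion computation $\operatorname{H}^\bullet(\LL)\simeq\Qlbar[-r](-2r)$ that the paper does invoke in the proof of Lemma~\ref{*!}.) The point where your outline is loose is the passage from this torus computation to $a_!(\LL\boxtimes\F)$: since $a$ is the action map and not the projection, the projection formula does not apply as stated. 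What bridges the gap is exactly the monodromic hypothesis on $\F$, which supplies a canonical identification between $a^*\F$ and $\pi_2^*\F$ twisted by the universal pro-unipotent local system along the $T$-factor; unwinding this is what converts the $a_!$ into the $\pi_{2!}$ you want. You gesture at this via stratum-by-stratum trivialization, which is a legitimate way to organize it. Your identification of the pro-object bookkeeping in $\cM$ (commuting $a_!$ with the limit, preserving the defining conditions of Definition~\ref{sec:gener-compl}) as the place requiring care is also accurate.
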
 The following will be useful to us on multiple occasions.

\begin{lemma}
  \label{*!} We have the following isomorphism of functors:
  \[ \pi_* \simeq \pi_![r] : \cM \to D^b_m(Y).
  \]
\end{lemma}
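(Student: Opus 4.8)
**Proof plan for Lemma \ref{*!} ($\pi_* \simeq \pi_![r]$).**

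The plan is to reduce the statement to the corresponding statement for a single trivial $T$-torsor, and there to compute directly. First I would observe that both $\pi_*$ and $\pi_!$ are defined on the completed category $\cM \subset \pro(\M)$ as the (pro-)extensions of the ordinary pushforward functors $D'_m(X) \to D^b_m(Y)$; the content of condition \ref{item:3} in Definition \ref{sec:gener-compl} is precisely that these pro-objects land in the essential image of $D^b_m(Y)$, so it suffices to produce a functorial isomorphism $\pi_* \mathcal{F}_\bullet \simeq \pi_! \mathcal{F}_\bullet[r]$ at the level of pro-objects, compatibly with the transition maps. Since $\pi: X \to Y$ is a $T$-torsor with $T$ a split torus of rank $r$, it is smooth of relative dimension $r$ with geometrically connected fibers, so there is always a canonical natural transformation $\pi_! \to \pi_*$ and, by relative purity, $\pi^! \simeq \pi^*[2r](r)$; the real point is that on the \emph{unipotently monodromic} subcategory the discrepancy between $\pi_!$ and $\pi_*$ collapses from a two-step complex to a single shift.

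Next I would localize along the strata. Working in the stratified setting of Assumption S, with $X_\alpha = \pi^{-1}(Y_\alpha)$ a trivial $T$-torsor over the affine space $Y_\alpha$, it is enough (by the usual dévissage using the triangles attached to the stratification, and exactness of $\pi_!, \pi_*$ with the shift being constant across strata) to treat $\F$ supported on a single $X_\alpha$, and in fact, by base change, to treat the universal case $X = T \times Y_\alpha \to Y_\alpha = X_\alpha/T$, i.e.\ the projection $T \times Z \to Z$ for $Z$ affine space. There every object of the monodromic completion is, up to the transition system, of the form $\LL \boxtimes \mathcal{G}$ with $\mathcal{G} \in D^b_m(Z)$ (more precisely a pro-system $\mathcal{L}_n \boxtimes \mathcal{G}$), so the computation reduces to computing $R\Gamma_c(T, \mathcal{L}_n)$ versus $R\Gamma(T, \mathcal{L}_n)$ for the pro-unipotent local system. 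For $\mathcal{L}_n$ corresponding to $S/(V_T)^n$ one has by the projection formula and Lemma \ref{unit} (the $a_!$ computation) that $R\Gamma_c(T, \LL) \simeq \Qlbar[-2r](-2r)$, while dually $R\Gamma(T, \LL) \simeq \Qlbar[-r]$ — equivalently $\pi_* \LL \simeq (\pi_! \LL)[r]$ in $\cM$, again up to managing the transition maps in the pro-system. (One could instead invoke Verdier duality directly: $\pi_! = \D_Y \pi_* \D_X$ and $\D_X \simeq \D_Y \circ (\text{relative dualizing twist})$ on $X$, with the relative dualizing complex of the torsor being $\Qlbar[2r](r)$, and track how duality interacts with the completion as set up in Section \ref{sec:gener-compl-categ}.) Finally I would assemble these local isomorphisms into a global natural isomorphism of functors on $\cM$, checking compatibility with the gluing triangles.

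The main obstacle I anticipate is not the geometry over a single stratum — that is a short torus computation — but the bookkeeping in the pro-category: one must verify that the isomorphism $\pi_* \F_\bullet \simeq \pi_!\F_\bullet[r]$ is compatible with the transition maps $\mathcal{F}_{n+1} \to \mathcal{F}_n$ (equivalently $\mathcal{L}_{n+1} \twoheadrightarrow \mathcal{L}_n$), so that it descends to an honest isomorphism of pro-objects lying in the essential image of $D^b_m(Y)$, and that the resulting natural transformation is the one induced by the canonical $\pi_! \to \pi_*$ up to the shift. This is exactly the kind of verification carried out in Appendix A of \cite{by}, so I would expect to lean on those results; indeed the cleanest route is probably to deduce this lemma formally from Lemma \ref{unit} together with the adjunction/base-change package already established in loc.\ cit., rather than redoing the torsor computation from scratch.
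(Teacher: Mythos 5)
Your plan circles the right ingredients but stops short of the step that actually makes the proof work. The paper does not stratify: it considers the square formed by the action map $a\colon T\times X\to X$ and the projection $\pi_2\colon T\times X\to X$ over $\pi\colon X\to Y$ --- a morphism of $T$-torsors --- and invokes the isomorphism $\pi_*a_!\simeq \pi_!\pi_{2*}$ from the proof of Lemma A.3.4(3) of \cite{by}. Evaluating both sides on $\LL\boxtimes\F$, using Lemma \ref{unit} on the left ($a_!(\LL\boxtimes\F)\simeq\F[-2r](-2r)$) and $\operatorname{H}^{\bullet}(\LL)\simeq\Qlbar[-r](-2r)$ on the right, gives $\pi_*\F[-2r](-2r)\simeq\pi_!\F[-r](-2r)$ naturally in $\F$; the pro-object bookkeeping you worry about is entirely absorbed into Lemma \ref{unit}. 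Your closing remark that the cleanest route is to deduce the lemma formally from Lemma \ref{unit} plus the base-change package is exactly right, but the specific statement $\pi_*a_!\simeq\pi_!\pi_{2*}$ is the content you needed to name.

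The genuine gap in your primary (d\'evissage) plan is the source of the natural transformation. There is no a priori map $\pi_*\to\pi_![r]$ to test on generators --- the canonical comparison goes $\pi_!\to\pi_*$, without the shift, and is not an isomorphism here --- so ``assembling local isomorphisms into a global natural isomorphism, checking compatibility with the gluing triangles'' is not a routine verification: constructing the transformation \emph{is} the problem, and an objectwise computation of $R\Gamma_c(T,\mathcal{L}_n)$ versus $R\Gamma(T,\mathcal{L}_n)$ on (co)standard objects does not by itself yield a functorial isomorphism on all of $\cM$. Your Verdier-duality alternative is moreover circular in the paper's logical order: the duality $\D$ on $\cM$ (Definition \ref{sec:verdier-duality-1}) and its basic properties are established after, and by means of, Lemma \ref{*!}.
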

\begin{proof} Consider the diagram
  \[
\begin{tikzcd} T\times X \arrow[d, "a"'] \arrow[r, "\pi_2"] & X \arrow[d, "\pi"] \\ X \arrow[r, "\pi"'] & Y
\end{tikzcd}
\] Here $a$ is the action morphism, $\pi_2$ is the projection to the second factor. Note that the diagram is a morphism of $T$-torsors $T\times X \to X$. We have a natural isomorphism of functors $\pi_*a_! \simeq \pi_!\pi_{2*}$ (see \cite{by}, proof of Lemma A.3.4 (3)). Applying it to the sheaf $\LL \boxtimes \F,$ for $\F \in \cM$, we get a natural in $\F$ isomorphism
\[ \pi_*\F[-2r](-2r) \simeq \pi_*a_!\LL \boxtimes \F \simeq \pi_!\pi_{2*} \LL \boxtimes \F \simeq \pi_! \F [-r](-2r).
\]

Here the last isomorphism follows from the isomorphism
\[ \operatorname{H}^{\bullet}(\LL) \simeq \Qlbar [-r](-2r),
\] where $\Qlbar$ stands for the trivial $\Fr$-module on a point.
\end{proof}
\subsection{Verdier duality.} In this subsection we describe the Verdier duality formalism we will use.

We have an action of $D^b_m(T)$ on $\cM$ defined by
\[ \mathcal{F}\star\mathcal{A} = a_!(\mathcal{F}\boxtimes\mathcal{A})[r],
\] for $\mathcal{F} \in D^b_m(T)$, and $\mathcal{A} \in \cM$.

\begin{lemma}
  \label{sec:verdier-duality-2} Assume that $\F$ is a unipotently monodromic complex on $T$ (considered as a $T$-torsor $T \to \operatorname{Spec}\Fq$). Then, for any $\mathcal{A} \in \cM$, $\F\star\mathcal{A}$ is in the essential image of $\M$ in $\cM$.
\end{lemma}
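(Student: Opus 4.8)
The plan is to reduce, by d\'evissage using the recollement of $\cM$ along the stratification, to the case when $\mathcal A$ is a standard pro-object, and then to apply Lemma~\ref{unit}. Since $\F$ is a genuine (non-pro) complex, the operation $\F\star(-)$, extended termwise to pro-objects, is a triangulated endofunctor of $\cM$, and it preserves the essential image of $\M$: for $\mathcal G$ in the generating family of $\M$, the projection formula together with a trivialization of the action morphism $a$ (which is a trivial $T$-torsor over $X$) shows that $\F\star\pi^*\mathcal G$ is a finite iterated extension of shifts and Tate twists of $\pi^*\mathcal G$. The essential image of $\M$ in $\cM$ is a full triangulated subcategory closed under direct summands, so it is enough to check the claim when $\mathcal A$ runs over a set of triangulated generators of $\cM$; by the recollement of $\cM$ along the stratification (Appendix~A of \cite{by}) we may take for these the standard pro-objects $\hat\Delta_\alpha[k](j)$ for $\alpha\in\mathcal S$, $k,j\in\mathbb Z$ — here one uses that the restriction of an object of $\cM$ to a stratum $X_\alpha\simeq T\times Y_\alpha$ (with $\mathrm H^*(Y_\alpha\otimes\Fqbar)\simeq\Qlbar$) is given by a bounded complex of finitely generated modules over $\Qlbar[[V_T]]$, which, this ring being regular local, admits a finite free resolution, so that $\tilde j_{\alpha!}$ of it is a finite iterated extension of shifts and twists of $\hat\Delta_\alpha$.

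It remains to treat $\mathcal A=\hat\Delta_\alpha=\tilde j_{\alpha!}\LL[r+d_\alpha](2r+d_\alpha)$. Since the $T$-action preserves the stratum $X_\alpha$, base change along the inclusion gives $\F\star\tilde j_{\alpha!}(-)\simeq\tilde j_{\alpha!}\bigl(\F\star_{X_\alpha}(-)\bigr)$; and since $X_\alpha$ is a trivial $T$-torsor over $Y_\alpha$ on which $T$ acts through the first factor, with $\LL$ on $X_\alpha$ equal to $\LL\boxtimes\underline{\Qlbar}_{Y_\alpha}$, we obtain $\F\star_{X_\alpha}\LL\simeq(\F\star_T\LL)\boxtimes\underline{\Qlbar}_{Y_\alpha}$, where $\star_T$ denotes convolution for the action of $T$ on itself. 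Writing $a\colon T\times T\to T$ for the multiplication and using its commutativity, $\F\star_T\LL=a_!(\F\boxtimes\LL)[r]\simeq a_!(\LL\boxtimes\F)[r]$; as $\F$ is genuine it defines an object of $\cM=\hat{D}(T)$, so Lemma~\ref{unit} applies and yields $a_!(\LL\boxtimes\F)[r]\simeq\F[-2r](-2r)[r]=\F[-r](-2r)$, which is again genuine. Hence $\F\star\hat\Delta_\alpha\simeq\tilde j_{\alpha!}\bigl(\F[-r](-2r)\boxtimes\underline{\Qlbar}_{Y_\alpha}\bigr)[r+d_\alpha](2r+d_\alpha)$ lies in the essential image of $\M$, which finishes the argument.

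The step I expect to demand the most care is the compatibility of $\F\star(-)$ with the pro-structure — that it commutes with the projective limits defining objects of $\cM$ and with the gluing functors $\tilde j_{\alpha!}$, $\tilde j_\alpha^*$ — which is what legitimizes both the reduction of the functor $\F\star(-)$ to its values on a generating set and the identity $\F\star\tilde j_{\alpha!}(-)\simeq\tilde j_{\alpha!}\bigl(\F\star_{X_\alpha}(-)\bigr)$ in the completed setting. All of this belongs to the formalism of Appendix~A of \cite{by}, but should be made explicit.
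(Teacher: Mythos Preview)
Your argument is correct, but it takes a noticeably more elaborate route than the paper's. You reduce on the $\mathcal{A}$ side, using recollement plus the regularity of $S$ to generate $\cM$ by the standard pro-objects $\hat\Delta_\alpha$, and then compute $\F\star\hat\Delta_\alpha$ explicitly via Lemma~\ref{unit}. The paper instead reduces on the $\F$ side: since a unipotently monodromic complex on $T$ is an iterated extension of shifts and twists of the constant sheaf $\cc_T$, it suffices to treat $\F=\cc_T$. For that case one observes that the square
\[
\begin{tikzcd}
T\times X \arrow[r,"\pi_2"] \arrow[d,"a"'] & X \arrow[d,"\pi"] \\
X \arrow[r,"\pi"'] & Y
\end{tikzcd}
\]
is Cartesian (this is just the statement that $X\times_Y X\simeq T\times X$ for a $T$-torsor), so proper base change gives $\cc_T\star\mathcal{A}=a_!\pi_2^*\mathcal{A}[r]\simeq\pi^*\pi_!\mathcal{A}[r]$. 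By property~\ref{item:3} of Definition~\ref{sec:gener-compl}, $\pi_!\mathcal{A}$ lies in the essential image of $D^b_m(Y)$, so $\pi^*\pi_!\mathcal{A}[r]$ lies in $\M$.

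The paper's argument is shorter precisely because property~\ref{item:3} was built into the definition of $\cM$ for exactly this purpose; your approach rederives an equivalent statement from the stratum-by-stratum structure theory. Both are valid, but the paper's is the two-line proof. Your closing concern about compatibility of $\F\star(-)$ with the pro-structure is legitimate for your approach (and is handled by the formalism in Appendix~A of \cite{by}), whereas the paper's argument sidesteps it entirely: no d\'evissage on $\mathcal{A}$ is needed, and the identity $\cc_T\star\mathcal{A}\simeq\pi^*\pi_!\mathcal{A}[r]$ holds termwise on any representing pro-system.
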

\begin{proof} It is enough to check the statement of the Lemma for $\F = \cc_T$. For such $\F$ it follows from the proper base change and property \ref{item:3} of Definition \ref{sec:gener-compl}.
\end{proof} Let $\D$ stand for the Verdier duality functor.

\begin{lemma}
  \label{sec:verdier-duality} There is a natural isomorphism
\[ \D(\mathcal{F}\star\mathcal{A}) \simeq \D\F\star\D\mathcal{A}[-r].
\] for $\mathcal{F} \in D^b_m(T)$, and $\mathcal{A} \in \M$.
\end{lemma}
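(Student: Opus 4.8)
The plan is to reduce the statement, by Verdier duality and the Künneth formula, to the assertion that the two pushforwards $a_*$ and $a_!$ along the action map $a\colon T\times X\to X$ agree up to the shift $[r]$ on the objects that occur; this last point is a relative form of Lemma~\ref{*!} and is where the monodromicity built into $\M$ is essential.

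First I would unwind the definition $\mathcal{F}\star\mathcal{A}=a_!(\mathcal{F}\boxtimes\mathcal{A})[r]$. Verdier duality intertwines $a_!$ with $a_*$ and $[r]$ with $[-r]$, and the Künneth isomorphism gives $\D_{T\times X}(\mathcal{F}\boxtimes\mathcal{A})\simeq\D_T\mathcal{F}\boxtimes\D_X\mathcal{A}$ on the product; hence
\[
\D(\mathcal{F}\star\mathcal{A})\;\simeq\;a_*\,\D(\mathcal{F}\boxtimes\mathcal{A})\,[-r]\;\simeq\;a_*(\D\mathcal{F}\boxtimes\D\mathcal{A})\,[-r].
\]
On the other hand, directly from the definition of $\star$,
\[
\D\mathcal{F}\star\D\mathcal{A}\,[-r]\;=\;a_!(\D\mathcal{F}\boxtimes\D\mathcal{A})\,[r]\,[-r]\;=\;a_!(\D\mathcal{F}\boxtimes\D\mathcal{A}).
\]
So it suffices to produce a natural isomorphism $a_*(\mathcal{G}\boxtimes\mathcal{B})\simeq a_!(\mathcal{G}\boxtimes\mathcal{B})[r]$ with $\mathcal{G}=\D\mathcal{F}$ and $\mathcal{B}=\D\mathcal{A}$; equivalently, that the convolution $\mathcal{F}\star\mathcal{A}$ can also be computed with $a_*$ in place of $a_!$, a $*$-description dual to its definition. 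Here one uses $\mathcal{A}\in\M$ rather than $\cM$, so that Verdier duality on $X$ causes no trouble with pro-objects and $\D$ preserves the monodromicity of $\mathcal{A}$.

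For the remaining comparison I would exploit that $a$ is a $T$-torsor: the map $(t,x)\mapsto(a(t,x),x)$ identifies $T\times X$ with $X\times_Y X$ and carries $a$ to the first projection $\mathrm{pr}_1\colon X\times_Y X\to X$, which is the base change of the $T$-torsor $\pi\colon X\to Y$ along $\pi$ and admits the diagonal section, so that $a$ is in fact a trivial $T$-torsor (with the explicit section $x\mapsto(e,x)$). Choosing a trivialization $T\times X\simeq X\times T$ over $X$ turns $a_*,a_!$ into the projection pushforwards from $X\times T$, and the desired comparison reduces to $R\Gamma_c(T,-)[r]\simeq R\Gamma(T,-)$ on the relevant (unipotently monodromic) complexes on $T\cong\mathbb{G}_m^r$ — exactly the input $\mathrm{H}^\bullet(\LL)\simeq\Qlbar[-r](-2r)$ and its Poincaré dual already used to prove Lemma~\ref{*!}. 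To keep the isomorphism functorial in both arguments and compatible with the Frobenius enrichment without pinning down a trivialization, it is cleaner to deduce the comparison from the base-change isomorphism $\pi_*a_!\simeq\pi_!\pi_{2*}$ of the proof of Lemma~\ref{*!}, applied after a further pushforward along $\pi$, together with the faithfulness of $\pi_*$.

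The hard part is precisely this comparison step. Since $a$ is not proper, $a_*\simeq a_![r]$ fails for arbitrary complexes and genuinely uses the monodromicity hypothesis (in both the $X$- and the $T$-direction); matching the cohomological shift together with the Tate twist, and arranging naturality in $\mathcal{F}$ and $\mathcal{A}$, is the only non-formal ingredient. The reduction in the first two paragraphs — compatibility of $\D$ with $a_!$, and the Künneth formula — is entirely standard.
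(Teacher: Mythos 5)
Your proof is correct and follows essentially the same route as the paper's: first reduce via the standard compatibility of $\D$ with $a_!$ and the K\"unneth formula, then identify $a_*$ with $a_![r]$ on the relevant complexes by viewing the action map $a\colon T\times X\to X$ as a $T$-torsor and invoking (the argument of) Lemma~\ref{*!}. The paper simply cites Lemma~\ref{*!} applied to the torsor $a$ for this last step, which is exactly the comparison you carry out by hand.
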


\begin{proof} By the standard properties of the Verdier duality, we have
  \[
\D(\mathcal{F}\star\mathcal{A}) = \D(a_!(\mathcal{F}\boxtimes\mathcal{A})[r]) \simeq a_*(\D\mathcal{F}\boxtimes \D\mathcal{A})[-r].
  \]
  On the other hand, by Lemma \ref{*!} applied to the torsor $a: T \times X \to X$, if $\mathcal{A}$ is in the image of $\M$, we have
  \[ a_*(\D\mathcal{F}\boxtimes \D\mathcal{A})[-r] \simeq a_!(\D\mathcal{F}\boxtimes\D\mathcal{A}) = \D\F\star\D\mathcal{A}[-r].
  \]
\end{proof}

Let $\delta_n = \mathcal{L}_n[r](2r)$, so that $\dd = \text{``$\invlim{n}$''}\delta_n$.

By Lemma \ref{unit}, we have
\[ \mathcal{F} \simeq \invlim{n}\left({\delta_n}\star\mathcal{F}\right)
\] for any $\mathcal{F} \in \M$. By Lemma \ref{sec:verdier-duality} we then get
\[ \D(\mathcal{F}) \simeq \invlim{n}\left({\delta_n}\star\D\mathcal{F}\right) \simeq \invlim{n}\D(\D({\delta_n})\star\mathcal{F})[r],
\] for any $\mathcal{F} \in \M$.

This motivates the following definition of the duality functor on $\cM$.
\begin{bdefinition}
\label{sec:verdier-duality-1}
  Let $\D:\cM \to \pro(\M)$ be the functor
  $$
  \D(\prolim\mathcal{F}_{\bullet}) = \text{``$\invlim{n}$''}\D(\D({\delta_n})\star\mathcal{F})[r].
  $$
for $\mathcal{F} = \prolim\mathcal{F}_{\bullet} \in \cM$.
\end{bdefinition}

Note that by Lemma \ref{sec:verdier-duality-2}, $\D(\delta_n)\star \F$ is in the essential image of $\M$, so $\D\F$ for $\F\in\cM$ can indeed be considered an object of $\pro(\M)$.
 \begin{lemma} For any $\F$ in $\cM$, the object $\D\F$ is also in $\cM$.
 \end{lemma}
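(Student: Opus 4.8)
The goal is to check that $\D\F \in \cM$ for $\F \in \cM$, i.e.\ that the pro-object $\text{``}\invlim{n}\text{''}\D(\D(\delta_n)\star\mathcal{F})[r]$ satisfies conditions \ref{item:1}, \ref{item:2}, \ref{item:3} of Definition \ref{sec:gener-compl}. The plan is to write $\F = \prolim\F_\bullet$ with representatives $\F_m\in\M$ and to analyze the terms $\D(\D(\delta_n)\star\F_m)[r]$, using that $\delta_n = \mathcal{L}_n[r](2r)$ is a genuine object of $\M$ (a perverse local system up to shift and twist), so $\D(\delta_n)\star\F_m$ is a genuine object of $\M$ by Lemma \ref{sec:verdier-duality-2}, and hence its Verdier dual is again in $D^b_m(X)$ — so at the level of each term the expression makes sense and lives in $\M$ (once one checks $\M$ is Verdier-self-dual, which follows since $\M$ is generated by $\pi^*\mathcal{D}$ and $\mathcal{D}$ is stable under duality in the Bruhat-stratified situation, the $\mathcal{L}_\alpha$ being self-dual up to shift/twist).

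First I would verify condition \ref{item:1} (uniform boundedness in perverse degrees). Since $\F\in\cM$, after replacing by an isomorphic pro-object we may assume all $\F_m$ have perverse cohomology concentrated in a fixed interval $[-N,N]$. Convolution $\D(\delta_n)\star-$ with a fixed perverse object (up to shift) shifts perverse amplitude by a bounded amount independent of $n$ — here one uses that $a_!(-\boxtimes-)[r]$ has bounded cohomological amplitude on the relevant categories, as in \cite{by} — and Verdier duality reverses the interval. Hence the terms $\D(\D(\delta_n)\star\F_m)[r]$ are uniformly bounded in perverse degree, uniformly in $n$, giving \ref{item:1}. For condition \ref{item:2} (uniform upper bound on weights), I would instead check a uniform \emph{lower} bound on weights of $\D(\delta_n)\star\F_m$ and dualize: Verdier duality sends weight $\leq w$ to weight $\geq -w$, so a uniform-below bound on $\D(\delta_n)\star\F$ becomes a uniform-above bound on the dual. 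The point is that $\D(\delta_n) = \D(\mathcal{L}_n)[-r](-2r)$ and the $\D(\mathcal{L}_n)$ form an ind-object whose weights are bounded below uniformly in $n$ (the transition maps are the obvious ones dual to $\mathcal{L}_{n+1}\to\mathcal{L}_n$), and convolution with a weight-bounded-above object $\F$ keeps weights bounded below in a controlled way.

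The main obstacle is condition \ref{item:3}: one must show $\pi_!\D\F$ lies in the essential image of $D^b_m(Y)$, not merely in $\pro(D^b_m(Y))$. The natural route is to commute $\pi_!$ past the duality. Using Lemma \ref{*!}, $\pi_* \simeq \pi_![r]$ on $\cM$, and the standard compatibility $\D\pi_! \simeq \pi_*\D$ on $D^b_m$, one computes
\[
  \pi_!\D\F \;\simeq\; \pi_!\,\text{``}\invlim{n}\text{''}\D(\D(\delta_n)\star\F)[r] \;\simeq\; \text{``}\invlim{n}\text{''}\,\pi_*\D(\D(\delta_n)\star\F) \;\simeq\; \text{``}\invlim{n}\text{''}\,\D\pi_!(\D(\delta_n)\star\F).
\]
Now $\D(\delta_n)\star\F \in \M$ (essentially) for each $n$, so by condition \ref{item:3} applied to $\F\in\cM$ — more precisely, applied after observing $\D(\delta_n)\star\F = \invlim{m}(\D(\delta_n)\star\delta_m\star\F)$ reduces to the torsor pushforward statement — the object $\pi_!(\D(\delta_n)\star\F)$ is represented by a genuine object of $D^b_m(Y)$; in fact by Lemma \ref{unit} (or its dual) one expects $\pi_!(\D(\delta_n)\star\F) \simeq \pi_!\F$ up to a fixed shift and twist \emph{independent of $n$}, so that the pro-system is essentially constant and hence lies in the image of $D^b_m(Y)$. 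I would isolate this last identity as the crux: it says convolving with $\D(\delta_n)$ becomes invisible after applying $\pi_!$, which is exactly the $\D$-dual of the defining property of $\dd$ as a unit (Lemma \ref{unit}, $a_!(\LL\boxtimes\F)\simeq\F[-2r](-2r)$). Granting that identity, all three conditions hold and $\D\F\in\cM$.
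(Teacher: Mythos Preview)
Your overall strategy matches the paper's: verify conditions \ref{item:1}, \ref{item:2}, \ref{item:3} of Definition \ref{sec:gener-compl} directly for the pro-object $\prolim\D(\D(\delta_n)\star\F)[r]$. Condition \ref{item:1} is handled correctly. However, the arguments for \ref{item:2} and \ref{item:3} each contain a genuine gap.

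For condition \ref{item:2}, your sentence ``convolution with a weight-bounded-above object $\F$ keeps weights bounded below in a controlled way'' is exactly the point that needs proof, and it is not automatic: $\F$ is a pro-object whose terms may have weights tending to $-\infty$, so a uniform lower bound on weights of $\D(\delta_n)$ alone does not yield one for $\D(\delta_n)\star\F$. The paper supplies the missing idea: $\cM$ is generated as a triangulated category by shifts and twists of the costandards $\NN_{\alpha}$, so one may reduce to $\F=\NN_{\alpha}$; then an explicit computation (via Lemma \ref{*!}) gives $\D\delta_n\star\NN_{\alpha}\dot{=}\tilde{j}_{\alpha*}\D\mathcal{L}_n$, whose weights lie in $[-2r,2(n-r)]$, hence are bounded below by $-2r$ uniformly in $n$. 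Without this reduction-and-compute step your argument for \ref{item:2} is incomplete.

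For condition \ref{item:3}, your key claim that $\pi_!(\D(\delta_n)\star\F)\simeq\pi_!\F$ up to a fixed shift and twist \emph{for each individual $n$} is false: the projection formula gives $\pi_*(\D\delta_n\star\F)\simeq \operatorname{H}^{\bullet}(\D\delta_n)\otimes_{\Qlbar}\pi_*\F$, and $\operatorname{H}^{\bullet}(\D\delta_n)$ is a genuine complex sitting in several degrees and growing in rank with $n$, not a single copy of $\Qlbar$. What is true, and what the paper uses, is that after dualizing and passing to the limit one has $\invlim{n}\D\operatorname{H}^{\bullet}(\D\delta_n)\simeq\invlim{n}\operatorname{H}^{\bullet}_c(\delta_n)\simeq\Qlbar[-r]$, whence $\prolim\D\bigl(\operatorname{H}^{\bullet}(\D\delta_n)\otimes\pi_*\F\bigr)\simeq\D(\pi_*\F)$ as a pro-object, which lies in the essential image of $D^b_m(Y)$. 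So the pro-system is representable by a genuine object, but not because its terms are individually constant; Lemma \ref{unit} is a statement about $\LL=\invlim{n}\mathcal{L}_n$, not about any finite $\D\mathcal{L}_n$.
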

 \begin{proof} Note that if $\mathcal{F}$ is uniformly bounded in degrees, so is $\D\mathcal{F}$, since convolution with a perverse sheaf on $T$ has a perverse cohomological amplitude bounded by $r$. Thus, the property \ref{item:1} of Definition \ref{sec:gener-compl} is satisfied.

   We also have
   \begin{multline*} \pi_!\text{``$\invlim{n}$''}\D(\D({\delta_n})\star\mathcal{F})[r] \dot{=}\text{``$\invlim{n}$''}\D\pi_*(\D({\delta_n})\star\mathcal{F}) \dot{=} \\ \dot{=} \text{``$\invlim{n}$''}\D(\operatorname{H}^{\bullet}(\D{\delta_n}) \otimes_{\Qlbar} \pi_*\F)) \dot{=} \D(\pi_*\F),
   \end{multline*} where $\dot{=}$ denotes an isomorphism up to shifts and twists independent of $n$. Here the first isomorphism is just an isomorphism $\D\pi_* \simeq \pi_!\D$, the second one is the projection formula, and the third is an observation that $\invlim{n}\left(\D\operatorname{H}^{\bullet}(\D\delta_n)\right)\simeq\invlim{n}\operatorname{H}^{\bullet}_c(\delta_n)\simeq \Qlbar[-r]$. This proves that $\D$ preserves property \ref{item:3} of Definition \ref{sec:gener-compl}.

   It remains to show that $\D$ preserves property \ref{item:2} of Definition \ref{sec:gener-compl}. It is enough to show that if $\F = \prolim \F_{\bullet}$ is uniformly bounded above in weights, then $(\D\delta_n\star \F)$ is uniformly bounded below in weights.

   Indeed, under our assumptions, the category $\cM$ is generated as a triangulated category by the shifts and twists of costandard objects $\NN_{\alpha}$, see Lemma A.6.1 of \cite{by}. By Lemma \ref{*!} applied to $a:T\times X\to X$, we have
     \[ \D\delta_n \star \NN_{\alpha} \dot{=} \tilde{j}_{\alpha*}\D\mathcal{L}_n.
     \]
     
     Weights of $\D\mathcal{L}_n$ are in the interval $[-2r, 2(n-r)]$ and $\tilde{j}_{\alpha_*}$ does not decrease weights, so that $\D\delta_n\star \NN_{\alpha}$ is uniformly bounded below in weights, as needed.

 \end{proof}
 
\begin{lemma} The functor $\D: \cM \to \cM$ satisfies the following natural properties of the duality functor:
  \begin{enumerate}[label=(\arabic*)]
  \item $\D\D \simeq \operatorname{Id}$.
  \item $\D[1] \simeq [-1]\D$.
  \item $\Hom(\mathcal{A}, \mathcal{B}) \simeq \Hom(\D \mathcal{B}, \D \mathcal{A})$ for $\mathcal{A}, \mathcal{B} \in \hat{D}(X)$.
  \end{enumerate}
\end{lemma}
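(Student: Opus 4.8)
The plan is as follows. Property (2) is essentially formal: in the defining formula $\D(\prolim\F_\bullet)=\text{``}\invlim{n}\text{''}\,\D(\D(\delta_n)\star\F)[r]$ the shift $[1]$ can be pulled out past convolution with the fixed objects $\D(\delta_n)$, past the naive Verdier duality on $\M$ (which already satisfies $\D[1]\simeq[-1]\D$), and past the formation of pro-limits; so $\D[1]\simeq[-1]\D$ on all of $\cM$.

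For (1) and (3) I would first isolate the three tools, all available from the excerpt: (a) the rearrangement isomorphism $\D(\mathcal{F}\star\mathcal{A})\simeq\D\mathcal{F}\star\D\mathcal{A}[-r]$ of Lemma~\ref{sec:verdier-duality} (equivalently $\D\mathcal{F}\star\D\mathcal{A}\simeq\D(\mathcal{F}\star\mathcal{A})[r]$) for $\mathcal{F}\in D^b_m(T)$, $\mathcal{A}\in\M$; (b) the reconstruction isomorphism $\mathcal{A}\simeq\invlim{n}(\delta_n\star\mathcal{A})$ for $\mathcal{A}\in\cM$, which follows from Lemma~\ref{unit} (so that $\dd\star(-)\simeq\operatorname{Id}$ with $\dd=\prolim\delta_n$) together with Lemma~\ref{sec:verdier-duality-2} (so that $\delta_n\star\mathcal{A}\in\M$); and (c) the ``finite level'' principle that convolution with $\delta_n$ or $\D(\delta_n)$ factors through a bounded unipotent truncation, so that $\D(\delta_n)\star\prolim\F_\bullet$ is a genuine object of $\M$ isomorphic to $\D(\delta_n)\star\F_{m(n)}$ for all $m(n)$ large. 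Combining (a) and (b) one already recovers, as in the computation displayed just before Definition~\ref{sec:verdier-duality-1}, that $\D$ restricts on $\M$ to the naive Verdier duality: $\D\mathcal{A}\simeq\invlim{n}\D(\D(\delta_n)\star\mathcal{A})[r]\simeq\invlim{n}(\delta_n\star\D\mathcal{A})\simeq\D\mathcal{A}$ for $\mathcal{A}\in\M$.

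For (1) there are two routes, and I would carry out the second. The direct route: unwind $\D\D\F$ by the defining formula twice; use (c) to replace $\D(\delta_m)\star\D\F$ by a genuine object of $\M$; push the two naive $\D$'s through the convolutions via (a) and cancel $\D\D\simeq\operatorname{Id}$ on $\M$; the inner expression collapses to $\delta_m\star\D(\delta_n)\star\F$ up to a fixed shift and twist, and the outer pro-limit reconstitutes $\F$ by (b), the various $[r]$'s and Tate twists from Lemmas~\ref{unit}, \ref{sec:verdier-duality} and the definition being arranged to cancel. The cleaner route, which sidesteps the nested pro-limits, is to check $\D\D\simeq\operatorname{Id}$ on a generating family: by Lemma~A.6.1 of \cite{by}, $\cM$ is generated as a triangulated category by shifts and twists of the costandard pro-objects $\NN_\alpha$ (equivalently the standards $\DD_\alpha$). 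By the computation in the proof of the preceding lemma, $\D(\delta_n)\star\NN_\alpha\dot{=}\tilde{j}_{\alpha*}\D\mathcal{L}_n$, and by its evident $!$-analogue $\D(\delta_n)\star\DD_\alpha\dot{=}\tilde{j}_{\alpha!}\D\mathcal{L}_n$; applying naive $\D$ (which swaps $\tilde{j}_{\alpha*}$ with $\tilde{j}_{\alpha!}$ and satisfies $\D\D\mathcal{L}_n\simeq\mathcal{L}_n$) and passing to the pro-limit gives $\D\NN_\alpha\dot{=}\DD_\alpha$ and $\D\DD_\alpha\dot{=}\NN_\alpha$, hence $\D\D\NN_\alpha\dot{=}\NN_\alpha$ and $\D\D\DD_\alpha\dot{=}\DD_\alpha$; one then promotes this to a functorial isomorphism $\operatorname{Id}\xrightarrow{\sim}\D\D$ using the biduality maps on each pro-truncation.

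For (3) (stated for $\mathcal{A},\mathcal{B}\in\hat{D}(X)$, with the argument for $\cM$ identical), the case $\mathcal{A},\mathcal{B}\in\M$ is exactly the corresponding property of naive Verdier duality, by the last sentence of the second paragraph. In general, write $\mathcal{A}=\prolim\mathcal{A}_\bullet$, $\mathcal{B}=\prolim\mathcal{B}_\bullet$ with terms in $\M$, expand both $\Hom(\mathcal{A},\mathcal{B})$ and $\Hom(\D\mathcal{B},\D\mathcal{A})$ as an inverse limit of direct limits of $\Hom_\M$'s (the definition of $\Hom$ in $\pro(\M)$), flip each $\Hom_\M(\D(\D(\delta_k)\star\mathcal{B}),\D(\D(\delta_l)\star\mathcal{A}))$ into $\Hom_\M(\D(\delta_l)\star\mathcal{A},\D(\delta_k)\star\mathcal{B})$ by naive (3) together with $\D\D\simeq\operatorname{Id}$ and (a), and then strip the convolutions by $\D(\delta_l)$ and $\D(\delta_k)$ using the adjunction $\Hom_\M(\D(\delta_n)\star M,N)\simeq\Hom_\M(M,\delta_n\star N)$ (coming from $\iota^*\D(\D(\delta_n))\simeq\delta_n$ for the inversion $\iota$ of $T$) and the identity $\delta_l\star\D(\delta_k)\simeq\D(\delta_k)$ for $l\ge k$; what remains is, after a cofinality comparison, the limit--colimit $\invlim{n}\dirlim{m}\Hom_\M(\delta_m\star\mathcal{A},\mathcal{B}_n)\simeq\Hom_{\cM}(\mathcal{A},\mathcal{B})$ read off from the reconstruction presentation $\mathcal{A}\simeq\invlim{m}(\delta_m\star\mathcal{A})$. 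The main technical point throughout -- and the place where the real work lies -- is the bookkeeping of the nested pro-limits: checking that the intermediate objects are genuine and of bounded level, that the pro-systems produced have their transition maps in the right direction, and that the limit--colimit interchanges and cofinality comparisons are legitimate.
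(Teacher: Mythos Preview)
Your outline is essentially correct, but the paper's argument is considerably shorter and avoids the two places where you do the most work.

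First, for (1) the paper does not verify $\D\D\simeq\operatorname{Id}$ on generators at all: it simply observes that (1) follows from (3) and Yoneda. This bypasses your ``promote object-by-object isomorphisms on $\NN_\alpha$, $\DD_\alpha$ to a natural transformation using biduality maps on pro-truncations'' step, which in your write-up is left rather vague (where exactly does the candidate map $\operatorname{Id}\to\D\D$ on $\cM$ come from, before you know anything?). Deducing (1) from (3) costs nothing.

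Second, for (3) the paper's key observation is that $(\delta_n\star -)$ admits $(\D\delta_n[-r]\star -)$ as \emph{both} left and right adjoint on the monodromic category. Using this biadjunction once in each direction gives in a single step
\[
\Hom(\D\delta_n\star\mathcal{A},\,\D\delta_m\star\mathcal{B})\ \simeq\ \Hom(\delta_m\star\mathcal{A},\,\delta_n\star\mathcal{B}),
\]
which swaps the indices $m$ and $n$ and therefore matches the $\invlim{n}\dirlim{m}$ on both sides with no cofinality argument and no use of $\delta_l\star\D\delta_k\simeq\D\delta_k$. Your route---strip one convolution with a one-sided adjunction, invoke $\delta_l\star\D\delta_k\simeq\D\delta_k$, and then run a cofinality comparison---reaches the same destination but carries extra bookkeeping that the biadjunction makes unnecessary. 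The first step of (3), namely flipping $\Hom(\D(\D\delta_m\star\mathcal{B}),\D(\D\delta_n\star\mathcal{A}))$ to $\Hom(\D\delta_n\star\mathcal{A},\D\delta_m\star\mathcal{B})$ via naive Verdier duality on $\M$, is the same in both approaches.
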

\begin{proof} Second claim is obvious and the first one follows from the third and Yoneda lemma. For the third claim, note that the functor $({\delta_n}\star -)$ admits both left and right adjoint on the monodromic category, namely $(\D{\delta_n}[-r]\star -)$, so we have
  \begin{multline*} \Hom(\D \mathcal{B}, \D \mathcal{A}) = \invlim{n}\dirlim{m}\Hom(\D(\D{\delta_m}\star\mathcal{B}), \D (\D{\delta_n}\star\mathcal{A})) \simeq \\ \simeq \invlim{n}\dirlim{m}\Hom(\D{\delta_n}\star\mathcal{A}, \D{\delta_m}\star\mathcal{B}) \simeq \invlim{n}\dirlim{m}\Hom({\delta_m}\star\mathcal{A}, {\delta_n}\star\mathcal{B}) = \Hom(\mathcal{A}, \mathcal{B}).
  \end{multline*}
\end{proof}

The following records how $\D$ acts on standard and costandard pro-objects.
\begin{lemma} $\D\DD_{\alpha} \simeq \NN_{\alpha}[r], \D\NN_{\alpha} \simeq \DD_{\alpha}[r]$.
\end{lemma}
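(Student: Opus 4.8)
The plan is to unwind Definition~\ref{sec:verdier-duality-1} applied to the standard pro-object $\DD_\alpha$, reducing everything to Verdier duals of honest complexes and to a single convolution on the torus. I would establish $\D\DD_\alpha\simeq\NN_\alpha[r]$ first and then deduce the statement for $\NN_\alpha$ from it.

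First I would compute $\D\delta_n$: since $\delta_n=\mathcal{L}_n[r](2r)$ and $T$ is smooth of dimension $r$, so that $\omega_T\simeq\cc_T[2r](2r)$, one gets $\D\mathcal{L}_n\simeq\mathcal{L}_n^{\vee}[2r](2r)$, where $\mathcal{L}_n^{\vee}$ denotes the local system attached to the contragredient representation, hence $\D\delta_n\simeq\mathcal{L}_n^{\vee}[r]$ is an honest, finite, unipotently monodromic complex on $T$. Next I would identify the honest object $\D\delta_n\star\DD_\alpha$. Writing $\DD_\alpha=\tilde{j}_{\alpha!}\LL[r+d_\alpha](2r+d_\alpha)$ and using that the stratum $X_\alpha$ is stable under the $T$-action (so that the action morphism $a$ restricts to $a_\alpha\colon T\times X_\alpha\to X_\alpha$ with $a\circ(\operatorname{id}\times\tilde{j}_\alpha)=\tilde{j}_\alpha\circ a_\alpha$), functoriality of $(-)_!$ and the compatibility of exterior products with $(-)_!$ push the convolution inside $\tilde{j}_{\alpha!}$ and reduce it to a convolution on the torus factor of $X_\alpha\simeq T\times Y_\alpha$. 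By Lemma~\ref{unit} applied to the torsor $m\colon T\times T\to T$ together with the commutativity of $T$ this last convolution is computed explicitly, and after collecting shifts and twists one obtains $\D\delta_n\star\DD_\alpha\simeq\tilde{j}_{\alpha!}\,\mathcal{L}_n^{\vee}[r+d_\alpha](d_\alpha)$, an honest object of $\M$, consistently with Lemma~\ref{sec:verdier-duality-2}.

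Then I would dualize and pass to the limit. As $X_\alpha$ is smooth of dimension $r+d_\alpha$, $\D\mathcal{L}_n^{\vee}\simeq\mathcal{L}_n[2(r+d_\alpha)](2(r+d_\alpha))$, so ordinary Verdier duality (which exchanges $\tilde{j}_{\alpha!}$ with $\tilde{j}_{\alpha*}$) gives
\[
  \D\bigl(\tilde{j}_{\alpha!}\,\mathcal{L}_n^{\vee}[r+d_\alpha](d_\alpha)\bigr)\simeq\tilde{j}_{\alpha*}\,\mathcal{L}_n[r+d_\alpha](2r+d_\alpha),
\]
which is exactly the $n$-th term of the pro-system defining $\NN_\alpha$. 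Substituting into Definition~\ref{sec:verdier-duality-1} gives $\D\DD_\alpha\simeq\invlim{n}\bigl(\tilde{j}_{\alpha*}\,\mathcal{L}_n[r+d_\alpha](2r+d_\alpha)\bigr)[r]$; it then remains to check that the transition maps supplied by the definition --- the Verdier duals of the maps induced from the surjections $\mathcal{L}_n\twoheadrightarrow\mathcal{L}_{n-1}$ --- agree with the defining transition maps of $\NN_\alpha$, which gives $\D\DD_\alpha\simeq\NN_\alpha[r]$. Finally, applying $\D$ to this isomorphism and invoking $\D\D\simeq\operatorname{Id}$ and $\D[1]\simeq[-1]\D$ from the preceding lemma yields $\D\NN_\alpha\simeq\DD_\alpha[r]$.

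The only genuine difficulty is the bookkeeping of homological shifts and half-Tate twists through the two Verdier dualizations and the torus convolution above: it is crucial that all occurrences of $r$ and $d_\alpha$ cancel so as to leave precisely the shift $[r]$ and no Tate twist. A minor secondary point is confirming that the pro-system produced by Definition~\ref{sec:verdier-duality-1} is the one defining $\NN_\alpha$ (built from the $\mathcal{L}_n$) rather than one built from the dual local systems $\mathcal{L}_n^{\vee}$; reconciling these is exactly why Verdier duality has to be applied twice.
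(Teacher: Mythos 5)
Your proposal is correct and follows essentially the same route as the paper: unwind Definition~\ref{sec:verdier-duality-1}, push the convolution $\D\delta_n\star\DD_\alpha$ through $\tilde{j}_{\alpha!}$ to reduce to a unit-type computation on the torus factor, recognize the result as the Verdier dual of the $n$-th term of $\NN_\alpha$, pass to the limit, and deduce the statement for $\NN_\alpha$ from $\D\D\simeq\operatorname{Id}$. The only cosmetic difference is that you make $\D\delta_n\simeq\mathcal{L}_n^{\vee}[r]$ explicit, whereas the paper keeps $\D\mathcal{L}_m$ abstract and invokes the pro-unit property $\invlim{n}(\D\delta_m\star\mathcal{L}_n)\simeq\D\mathcal{L}_m$.
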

\begin{proof} We have
  \begin{multline*} \D\delta_m\star \DD_{\alpha} = \invlim{n}\D\delta_m \star \tilde{j}_{\alpha !}\mathcal{L}_n[r+d_\alpha](2r+d_\alpha) \simeq \\ \simeq \invlim{n}\tilde{j}_{\alpha !}\left(\D\delta_m\star \mathcal{L}_n\right)[r+d_\alpha](2r+d_\alpha)\simeq \tilde{j}_{\alpha !}\D\mathcal{L}_m[r+d_\alpha](2r+d_\alpha) \simeq \\ \simeq \D \tilde{j}_{\alpha *}\mathcal{L}_m[r+d_\alpha](2r+d_\alpha).
  \end{multline*} and so
  \[ \D\DD_\alpha = \text{``$\invlim{n}$''}\D(\D(\delta_n)\star\DD_{\alpha})[r] \simeq \text{``$\invlim{n}$''}\tilde{j}_{\alpha *}\mathcal{L}_n[2r+d_\alpha](d_\alpha) = \NN_{\alpha}[r].
  \] The second claim follows, since $\D\D \simeq \operatorname{Id}$.
\end{proof}
\subsection{t-structures.}
\label{t-structures} Recall from \cite{by} that $\cM$ admits a perverse t-structure \[\left(\cM^{\leq 0}, \cM^{\geq 0}\right),\] with respect to which the natural functor $\M\to\cM$ is perverse t-exact. We recall its definition.

Let $\cM_{\leq \alpha}$ be the full subcategory of sheaves supported on strata lying in the closure of $X_{\alpha}$. Let $\cM_{< \alpha}$ be the full subcategory of sheaves supported on the union of strata $\beta \neq \alpha$ lying in the closure of $X_{\alpha}$. Let $\cM_{\{\alpha\}}$ be the Verdier quotient category $\cM_{\leq \alpha}/\cM_{< \alpha}$. We have functors $i^*_\alpha, i^!_\alpha:\cM \to \cM_{\{\alpha\}}$ and the categories $\cM_{\{\alpha\}}$ were proved in loc. cit. to be equivalent to the bounded derived category of $\mathcal{D}_m := D^b(S,\Fr-\operatorname{mod})$ of finitely generated graded $S$-modules with the compatible Frobenius action. Let $(\mathcal{D}_m^{\leq 0}, \mathcal{D}_m^{\geq 0})$ stand for the standard t-structure on the latter category. Now define the t-structure on $\cM$ by gluing as follows:
\[ \cM^{\leq 0} = \{\F \in \cM,\forall \alpha \in \mathcal{S}\ i_\alpha^*\F \in \mathcal{D}_m^{\leq 0}\},
  \]
   \[ \cM^{\geq 0} = \{\F \in \cM,\forall \alpha \in \mathcal{S}\ i_\alpha^!\F \in \mathcal{D}_m^{\geq 0}\}.
 \]

Let $\Perv$ be its heart. We have $\hat{\Delta}_\alpha, \hat{\nabla}_\alpha \in \Perv$ for all $\alpha \in \mathcal{S}$, and $\omega\cM^{\leq 0}$ is generated under extensions by $\omega\DD_{\alpha}[n], n \geq 0$. We denote by $\tau^{\geq k}, \tau^{\leq k}, \mathcal{H}^k(-)$ the truncations and perverse cohomology functors with respect to this t-structure, respectively.

This t-structure is not self-dual with respect to $\D$, cf. Remark \ref{sec:t-structure-duality-2}. Consider the shifted duality functor, $\D' = \D[-r]$, chosen so that $\D'\DD_{\alpha} \simeq \NN_{\alpha}$, and consider the dual t-structure
\[ \left(\cM'^{\leq 0} = \D'\cM^{\geq 0}, \cM'^{\geq 0} = \D'\cM^{\leq 0}\right).
\] Let $\Perv'$ be its heart. We have $\DD_{\alpha}, \NN_{\alpha} \in \Perv'$ for all $\alpha \in \mathcal{S}$, and $\cM'^{\geq 0}$ is generated under extensions by $\NN_{\alpha}[n], n \leq 0$.
\section{Monodromic Hecke category}
\label{sec:monodr-hecke-categ} Let $G$ be a split reductive group over $\Fq$. Fix $B \subset G$ a split Borel subgroup, $T \subset B$ split maximal torus. Let $U \subset B$ be the unipotent radical. Let $\M \subset D_m^b(U\backslash G/U)$ be the mixed derived category of complexes that are unipotently monodromic with respect to the right $T$-action.

So, from now on $X = G/U, Y = G/B$, and we are considering the stratification of $Y$ by left $U$-orbits. In this setting, the set $\mathcal{S}$ of strata is identified with the Weyl group $W$. The stratum labeled by an element $w \in W$ has dimension $d_w = l(w)$.

\subsection{Convolution.}
\label{sec:convolution}
Categories $D^b_m(U\backslash G/U), \M, \cM, \omega\M, \omega\cM$ are equipped with monoidal structure via the $!$-convolution, which we denote by $\star$. We recall their definitions. Consider the diagram
\[
\begin{tikzcd} & G\times^U G/U \arrow[r, "q"] \arrow[ld, "\pi_1"'] \arrow[rd, "\pi_2"] & G/U \\ G/U & & U\backslash G/U
\end{tikzcd}
\] Here $\pi_1, \pi_2$ are projections and $q$ is the action map. Convolution operation is defined as
\begin{equation}
\label{eq:3} \F \star \mathcal{G} = q_!(\F \boxtimes \mathcal{G})[r].
\end{equation}

By Lemma 4.3.1 of \cite{by}, the convolution is well-defined on the completed category $\cM$.

We have the following
\begin{lemma} For $\F, \mathcal{G} \in \cM$
\[ \D(\mathcal{F}\star\mathcal{G}) \simeq \D\F\star\D\mathcal{G}[-r].
\]
\end{lemma}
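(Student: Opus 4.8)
The plan is to reduce the Verdier duality compatibility for convolution on $\cM$ to the corresponding statement on the uncompleted category $\M$, and then to prove the latter by the standard base-change manipulation with the convolution diagram. First I would recall that convolution on $\cM$ is computed via the diagram $G/U \xleftarrow{\pi_1} G\times^U G/U \xrightarrow{q} U\backslash G/U$ with the formula $\F\star\mathcal G = q_!(\F\boxtimes\mathcal G)[r]$, and that the left-hand map $\pi_1$ (or rather the auxiliary projection appearing in the definition) exhibits $G\times^U G/U$ as a $T$-torsor, so that Lemma \ref{*!} applies: $q_*\simeq q_![r']$ on the relevant monodromic subcategory, where here the torsor has rank $r$. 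The point is that for objects in the essential image of $\M$ the functors $q_!$ and $q_*$ differ only by a shift, so Verdier duality, which naturally intertwines $q_!$ with $q_*$, can be converted into a statement about $q_!$ alone.

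The key steps, in order: (1) verify the claim for $\F,\mathcal G\in\M$ (not yet completed) using $\D q_!\simeq q_*\D$, $\D(\F\boxtimes\mathcal G)\simeq \D\F\boxtimes\D\mathcal G$, and the identification $q_*\simeq q_![r]$ on the monodromic image — exactly parallel to the proof of Lemma \ref{sec:verdier-duality}, with the action morphism $a:T\times X\to X$ replaced by $q:G\times^U G/U\to U\backslash G/U$; the bookkeeping of shifts is where the $[-r]$ in the statement comes from. (2) Pass to $\cM$ using Definition \ref{sec:verdier-duality-1}: write $\F = \prolim\F_\bullet$, $\mathcal G=\prolim\mathcal G_\bullet$, and unwind $\D(\F\star\mathcal G)$ as $\prolim\D(\D(\delta_n)\star(\F\star\mathcal G))$. (3) Use associativity/commutativity of $\star$ (the action of $D^b_m(T)$ on $\cM$ versus the convolution on $\cM$ are compatible, since $T$ sits inside $G$ as the relevant torus direction, and $\delta_n$ is built from $\mathcal L_n$) to move $\D(\delta_n)\star-$ inside: $\D(\delta_n)\star(\F\star\mathcal G)\simeq(\D(\delta_n)\star\F)\star\mathcal G$, which now has its first factor in the image of $\M$ by Lemma \ref{sec:verdier-duality-2}. (4) Apply step (1) to this, producing $\D(\D(\delta_n)\star\F)\star\D\mathcal G[-r]$ up to shifts, and reassemble the pro-system to recognize $\prolim_n\D(\D(\delta_n)\star\F)[r]\simeq \D\F$ by Definition \ref{sec:verdier-duality-1}, leaving $\D\F\star\D\mathcal G[-r]$.

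The main obstacle I anticipate is step (3): making precise that $\D(\delta_n)\star(\F\star\mathcal G)\simeq(\D(\delta_n)\star\F)\star\mathcal G$, i.e.\ that the $D^b_m(T)$-action used to define the duality functor $\D$ on $\cM$ is compatible with the monoidal convolution $\star$. Concretely one must check that convolving on the left with a (pro-)unipotent local system pulled back from $T$ commutes, up to canonical isomorphism and a shift, with $!$-convolution of two monodromic complexes — this is essentially the statement that the monodromy torus action is ``central'' enough, and it should follow from base change applied to the appropriate cube built from $T\times(G\times^U G/U)$, but the diagram chase with all the $[r]$'s and Tate twists is the delicate bookkeeping. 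A secondary, more routine concern is ensuring that every object to which step (1) is applied genuinely lies in the essential image of $\M$ in $\cM$ (so that $q_*\simeq q_![r]$ is legitimate), which is exactly what Lemma \ref{sec:verdier-duality-2} guarantees once the first convolution factor is of the form $\D(\delta_n)\star\F$.
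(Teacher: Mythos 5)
Your step (1) coincides with the paper's first step: the statement for $\F,\mathcal{G}\in\M$ follows from Lemma \ref{*!} because $q$ factors as the $T$-torsor $G\times^UG/U\to G\times^BG/U$ followed by a proper map. The gap is in how you bootstrap to $\cM$. After your step (3) you are looking at $(\D\delta_n\star\F)\star\mathcal{G}$, whose \emph{first} factor lies in $\M$ but whose second factor $\mathcal{G}$ is still an arbitrary pro-object, so step (1) does not apply. To handle $\mathcal{G}$ you are forced (by Definition \ref{sec:verdier-duality-1}, which only gives access to $\D\mathcal{G}$ through the system $\D(\D\delta_m\star\mathcal{G})$) to insert a second regularizing factor, and then you must compare $(\D\delta_n\star\F)\star(\D\delta_m\star\mathcal{G})$ with $\D\delta_n\star\D\delta_m\star(\F\star\mathcal{G})$. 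That requires moving $\D\delta_m$ \emph{past} $\F$, i.e.\ a commutativity isomorphism $\F\star\D\delta_m\simeq\D\delta_m\star\F$ --- not the associativity isomorphism of your step (3), which is indeed formal. Commutativity is a genuine centrality statement: left and right convolution with a unipotent local system placed on the unit stratum act through the left and right monodromy respectively, and these differ by $w$ on the stratum indexed by $w$, so no base-change diagram chase on a cube built from $T\times(G\times^UG/U)$ will produce the required functorial isomorphism.

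The paper resolves exactly this point by replacing the system $\delta_n$ with the cofinal system $\varepsilon_n=p^{\dagger}\hc_!(\mathcal{E}_n)$ of images of character sheaves under the Harish-Chandra functor (Corollary \ref{centralunit}), whose centrality in $(\cM,\star)$ comes from the central structure of $\hc_!$ (Section \ref{sec:centr-struct-harish-1}, ultimately Chen's theorem). This is a substantive forward reference to the character-sheaf part of the paper, not bookkeeping; your sentence ``it should follow from base change applied to the appropriate cube'' is precisely where the argument breaks, and without the centrality input the proof is incomplete.
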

\begin{proof} To prove this fact we will need a representation of $\dd$ as a projective system of objects $\prolim\varepsilon_n$ that are central, that is such that we have a canonical isomorphism $\F\star\varepsilon_n \simeq \varepsilon_n\star\F$ for any $\F \in \cM$. We will thus use the following fact from Corollary \ref{centralunit} below.  Let $\varepsilon_n = p^*\hc_!(\mathcal{E}_n)$ (see Corollary \ref{centralunit} for notations). We have $\dd = \invlim{n}\varepsilon_n$ and $\varepsilon_n, \D\varepsilon_n$ are central in $\cM$, see Section \ref{sec:centr-struct-harish-1}. For the purposes of the Lemma, the only property of $\varepsilon_n$ we need is the commutativity isomorphism as above.
  
  The isomorphism above follows from Lemma \ref{*!} for $\mathcal{F}, \mathcal{G} \in \M$, since $q$ factors as the composition of a $T$-torsor projection $G \times^U G/U \to G \times^B G/U$ and a proper map $G \times^B G/U \to G/U$. On the other hand, from the proof of Lemma 4.3.1 in \cite{by} one deduces that, for $\mathcal{A} = \text{``$\invlim{m}$''}\mathcal{A}_m, \mathcal{B} = \text{``$\invlim{n}$''}\mathcal{B}_n$, we have $\mathcal{A} \star \mathcal{B}_n \in \M$ and $\mathcal{A}\star\mathcal{B} \simeq \text{``$\invlim{n}$''}\left(\mathcal{A} \star \mathcal{B}_n\right)$. By definition,
  \[ \D\F \star \D \mathcal{G} = \text{``$\invlim{m}$''}\D(\D\varepsilon_m\star\F)\star \text{``$\invlim{n}$''}\D(\D\varepsilon_n\star\mathcal{G})
  \] and we also have,
  \[
    \invlim{m}\left(\D(\D\varepsilon_m\star\F)\star\D(\D\varepsilon_n\star\mathcal{G})\right) \simeq \invlim{m}\D((\D\varepsilon_m\star\F)\star(\D\varepsilon_n\star\mathcal{G}))[r],
  \]
  since $\D\varepsilon_m\star\F, \D\varepsilon_n\star\mathcal{G} \in \M$. Using the central structure to rearrange the factors, we get that the last expression is isomorphic to 
\[
  \invlim{m}\D(\D\varepsilon_m\star(\D\varepsilon_n\star\F\star\mathcal{G}))[r] \simeq \invlim{m}\varepsilon_m\star\D(\D\varepsilon_n\star\F\star\mathcal{G}) \simeq \D(\D\varepsilon_n\star(\F\star\mathcal{G})).
\]
  Passing to the limit in $n$ we get the result.
\end{proof}
\subsection{Convolution of standard and costandard pro-objects.} Standard and costandard pro-objects satisfy the following properties with respect to convolution:
\begin{proposition}[\cite{by}, \cite{br}]~
 \label{braid_relations}
  \begin{enumerate}[label=(\arabic*)]
  \item\label{item:18} $\dd$ is the unit of the monoidal structure $\star$.
  \item\label{item:19} $\DD_v \star \DD_w \simeq \DD_{vw}, \NN_v \star \NN_w \simeq \NN_{vw}$ if $l(vw) = l(v) + l(w)$.
  \item\label{item:20} $\DD_v \star \NN_{v^{-1}} \simeq \dd.$
  \item \label{item:11} $\Hom(\DD_v, \NN_w[i]) = 0,$ unless $v = w$ and $i = 0$, and $\Hom(\DD_v, \NN_v) \simeq \hat{S}$, where $\hat{S}$ is the completion of $S$ with respect to the ideal $V_TS$.
  \end{enumerate}
\end{proposition}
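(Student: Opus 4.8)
The plan is to treat the four assertions in two groups. Parts \ref{item:18}--\ref{item:20} are geometric and reduce to the combinatorics of Bruhat cells together with the single analytic input of Lemma \ref{unit}, whereas \ref{item:11} is a cohomological statement that, after one adjunction and a recollement vanishing, comes down to the derived endomorphisms of the pro-local system $\LL$ on a single stratum. All of this is carried out in \cite{by} and \cite{br}; below I describe the shape of the argument I would follow.

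For \ref{item:18}, the object $\dd = \LL[r](2r)$ is supported on the closed point-stratum $X_e \simeq T$; restricting the convolution diagram to this support identifies $\dd\star(-)$ with the functor $\F\mapsto a_!(\LL\boxtimes\F)[2r](2r)$ for the torsor action map $a\colon T\times X\to X$, and Lemma \ref{unit} then gives a functorial isomorphism $\dd\star\F\simeq\F$ (the mirror computation handling $\F\star\dd$). For \ref{item:19}, the key geometric input is the classical fact that, when $l(vw)=l(v)+l(w)$, multiplication induces an isomorphism of twisted products of Bruhat cells $C_v\,\widetilde{\times}\,C_w\xrightarrow{\sim}C_{vw}$, and likewise on closures; combining this with open base change (for $\DD$) and proper base change (for $\NN$), and with the identity $\LL\star_T\LL\simeq\LL$ up to shift and twist (another instance of Lemma \ref{unit}), one identifies $\DD_v\star\DD_w\simeq\DD_{vw}$ and $\NN_v\star\NN_w\simeq\NN_{vw}$ --- the shifts and twists in the definitions of $\DD_\bullet$ and $\NN_\bullet$ being arranged exactly so that no correction term survives. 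Then \ref{item:20} follows by fixing a reduced expression $v=s_1\cdots s_k$: by \ref{item:19}, $\DD_v\simeq\DD_{s_1}\star\cdots\star\DD_{s_k}$ and $\NN_{v^{-1}}\simeq\NN_{s_k}\star\cdots\star\NN_{s_1}$, so it suffices to prove $\DD_s\star\NN_s\simeq\dd$ for a simple reflection $s$ and then telescope using \ref{item:18}; the rank-one identity $\DD_s\star\NN_s\simeq\dd$ is a direct computation on the $\mathbb{P}^1$-bundle $P_s/U\to P_s/B$.

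For \ref{item:11} I would write $\DD_v=\tilde{j}_{v!}\LL[r+d_v](2r+d_v)$ and $\NN_w=\tilde{j}_{w*}\LL[r+d_w](2r+d_w)$ and use the adjunction $\tilde{j}_{v!}\dashv\tilde{j}_v^!$, which gives $\Hom(\DD_v,\NN_w[i])\simeq\Hom_{X_v}(\LL,\tilde{j}_v^!\NN_w[i])$ up to shift and twist. If $v\not\leq w$, then $X_v$ is disjoint from $\overline{X_w}$, so $\tilde{j}_v^!\NN_w=0$; if $v<w$, then $X_v$ lies in the closed complement of the open stratum $X_w$ inside $\overline{X_w}$, and the recollement identity $i^!j_*=0$ --- that is, the $!$-costalk of a costandard extension along a smaller stratum vanishes, which holds in the completed setting by the formalism of \cite{by}, Appendix A --- gives again $\tilde{j}_v^!\NN_w=0$. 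When $v=w$, one has $\tilde{j}_v^!\NN_v\simeq\LL[r+d_v](2r+d_v)$, so $\Hom(\DD_v,\NN_v[i])\simeq\operatorname{Ext}^i_{X_v}(\LL,\LL)$. Since $X_v$ is a trivial $T$-torsor over the affine cell $Y_v\simeq\mathbb{A}^{d_v}$, the K\"unneth formula reduces this to $\operatorname{Ext}^i_T(\LL,\LL)$; under the equivalence $\cM_{\{v\}}\simeq D^b(S,\Fr-\operatorname{mod})$ of \cite{by} the free-monodromic object $\LL$ corresponds to the free rank-one module $S$, so this group vanishes for $i\neq0$, and passing to the associated Frobenius module (which records the weight-completion built into $\cM$) it equals $\hat{S}$ for $i=0$.

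The step I expect to be the main obstacle is precisely the one just invoked: pinning down $\operatorname{R}\!\Hom(\LL,\LL)$, and more generally making sure the stratum-by-stratum computations above genuinely compute $\Hom$-spaces in $\cM$ rather than in the naive pro-category. Concretely, one must verify that convolution and $\Hom$ in $\cM$ interact correctly with the defining inverse limits, and that $\LL$ behaves as a \emph{free} (projective) object --- so that $\operatorname{R}\!\Hom(\LL,\LL)$ is concentrated in cohomological degree zero, even though the self-Ext of each finite truncation $\mathcal{L}_n$ is not. This is exactly the content of the passage from unipotent local systems to the free-monodromic completion of \cite{by}, and it is where the completion $\hat{S}=\invlim{n}S/(V_TS)^n$ enters the picture.
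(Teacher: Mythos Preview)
Your proposal is correct and essentially matches the paper's treatment. The paper simply cites Lemma 4.3.3 of \cite{by} for \ref{item:18}--\ref{item:19}, Lemma 7.7 of \cite{br} for \ref{item:20}, and says \ref{item:11} is ``a standard computation using the adjunction $(\tilde{j}_{w}^*, \tilde{j}_{w*})$''; your sketch fleshes out precisely those cited arguments.

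The one small deviation worth noting is in \ref{item:11}: you invoke the adjunction $\tilde{j}_{v!}\dashv\tilde{j}_v^!$ and then appeal to the recollement vanishing $i^!j_*=0$, whereas the paper uses the dual adjunction $\tilde{j}_w^*\dashv\tilde{j}_{w*}$. The paper's choice is marginally slicker, since $\tilde{j}_w^*\tilde{j}_{v!}=0$ for $v\neq w$ is immediate from the fact that the stalks of a $!$-extension vanish off the stratum --- no case split on the Bruhat relation between $v$ and $w$ is needed, and no verification that the recollement identity survives in the completed category. Either route lands on the same endpoint computation $\operatorname{Ext}^i_{X_v}(\LL,\LL)$, and your discussion of why this is concentrated in degree zero with value $\hat{S}$ is the right one.
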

\begin{proof} \ref{item:18} and \ref{item:19} are Lemma 4.3.3 of \cite{by}, \ref{item:20} is Lemma 7.7 of \cite{br}, \ref{item:11} is a standard computation using the adjunction $(\tilde{j}_{w}^*, \tilde{j}_{w*})$.
\end{proof}
\subsection{Pro-unipotent tilting sheaves.}

Recall that in \cite{by} a category of free-monodromic tilting objects in $\cM$ was defined, coming with a collection of indecomposable objects $\T_w \in \Perv$ indexed by $w \in W$.

Let \[\mathbb{V}: \cM \to (\operatorname{End}(\T_{w_0})^f,\Fr)-\operatorname{mod}\] be the functor $\Hom(\T_{w_0}, -)^f$.

\begin{proposition}[\cite{by}]~
 \label{tiltprop}
  \begin{enumerate}[label=(\arabic*)]
  \item \label{tiltprop1} For all $w \in W$, $\T_w$ admits a filtration by objects of the form $\DD_v(m), m \in \mathbb{Z}$, referred to as the standard filtration, and also a filtration by objects of the form $\NN_v(m), m \in \mathbb{Z}$, referred to as the costandard filtration.
  \item\label{item:21} $\Hom(\T_w, \T_v[i]) = 0, v, w \in W, i > 0.$
  \item\label{item:23} $\Hom(\T_v, \T_w)^f \simeq \Hom(\mathbb{V}(\T_v),\mathbb{V}(\T_w)), v, w \in W,$ as $\Fr$-modules.
  \item\label{item:24} $\operatorname{End}(\T_{w_0})^f \simeq S\otimes_{S^W} S$ as $\Fr$-algebras.
  \end{enumerate}
\end{proposition}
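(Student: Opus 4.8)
The plan is to deduce the four assertions from the inductive construction of the free-monodromic tilting objects $\T_w$ in \cite{by} together with the formal properties of standard and costandard pro-objects collected in Proposition \ref{braid_relations}. The architecture is the standard one of tilting theory in a stratified (here, monodromic) highest-weight setting; the one genuinely delicate point is that everything must be checked to survive passage to the pro-completion $\cM$.

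\emph{Part \ref{tiltprop1}.} I would induct on $l(w)$. The object $\T_e = \dd$ is simultaneously standard and costandard. For $w = sw'$ with $l(w') = l(w) - 1$, realize $\T_w$ as the indecomposable summand of $\T_s \star \T_{w'}$ not occurring for shorter elements. The object $\T_s$ carries a two-step standard filtration with subquotients $\DD_e, \DD_s$ and a dual costandard filtration; using the length-additive braid relations of Proposition \ref{braid_relations}\ref{item:19} and the invertibility relation \ref{braid_relations}\ref{item:20}, convolution with $\T_s$ sends standardly filtered objects to standardly filtered ones, so $\T_s \star \T_{w'}$ is standardly filtered by the inductive hypothesis. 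A direct summand of a standardly filtered object is again standardly filtered by the usual argument, which uses the $\operatorname{Ext}$-orthogonality of standards and costandards from Proposition \ref{braid_relations}\ref{item:11}; hence $\T_w$ inherits a standard filtration. The costandard case is obtained by applying the Verdier-type duality $\D'$ of Section \ref{t-structures}, which exchanges $\DD_\alpha \leftrightarrow \NN_\alpha$.

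\emph{Parts \ref{item:21} and \ref{item:23}.} Given the filtrations of \ref{tiltprop1}, the group $\Hom(\T_w, \T_v[i])$ is computed by the spectral sequence attached to the standard filtration of $\T_w$ and the costandard filtration of $\T_v$, whose $E_1$-terms are the groups $\Hom(\DD_a(m), \NN_b(m')[i])$; these vanish unless $a = b$, $m = m'$, $i = 0$ by Proposition \ref{braid_relations}\ref{item:11}, giving \ref{item:21}. For \ref{item:23} I would first compute $\mathbb{V}$ on standards and costandards: the standard filtration of $\T_{w_0}$ contains each $\DD_w$ exactly once, so \ref{braid_relations}\ref{item:11} forces $\mathbb{V}(\DD_w) = \Hom(\T_{w_0}, \DD_w)^f$ and $\mathbb{V}(\NN_w) = \Hom(\T_{w_0}, \NN_w)^f$ to be free of rank one over $S$. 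Since $\operatorname{Ext}^{>0}$ between filtered objects vanishes by \ref{item:21}, the functor $\mathbb{V}$ is exact on the subcategory of filtered objects, and a term-by-term comparison of $\Hom(\T_v, \T_w)^f$ with $\Hom(\mathbb{V}\T_v, \mathbb{V}\T_w)$ along the two filtrations reduces, by the five lemma, to the rank-one computation; faithfulness reflects the projective--injective nature of $\T_{w_0}$.

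\emph{Part \ref{item:24}, and the main obstacle.} By the previous step $\operatorname{End}(\T_{w_0})^f = \mathbb{V}(\T_{w_0})$ is free of rank $|W|$ over $S$ acting through, say, the right monodromy. The commuting left and right $T$-monodromies give an algebra map $S \otimes S \to \operatorname{End}(\T_{w_0})^f$; I would check it factors through $S \otimes_{S^W} S$ --- equivalently, that on the ``big'' tilting the two monodromies agree after restriction to $S^W$, which is visible on the associated graded of the standard filtration --- and that the induced map $S \otimes_{S^W} S \to \operatorname{End}(\T_{w_0})^f$ is an isomorphism by comparing $S$-ranks (both sides free of rank $|W|$, the source by Chevalley's theorem) after verifying surjectivity onto the standard-flag associated graded. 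I expect the main obstacle throughout to be not this tilting-theoretic skeleton, which is classical, but the verification that the dévissages converge in $\cM$: the standard filtration of a general $\T_w$ is an infinite inverse system, so \ref{item:21} and \ref{item:23} are really statements about $\invlim{n}\dirlim{m}$ of $\Hom$-groups, and one must invoke the uniform boundedness conditions (in degrees and weights) of Definition \ref{sec:gener-compl} to control the limits and to see that $\mathbb{V} = \Hom(\T_{w_0},-)^f$ is exact on filtered pro-objects. This is precisely the technical heart of \cite{by}.
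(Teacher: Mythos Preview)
Your sketch is broadly correct as an outline of the tilting-theoretic argument underlying these statements, but it is worth noting that the paper does not attempt to reprove them: its proof consists entirely of citations to \cite{by} (A.7 for \ref{tiltprop1}, Propositions 4.5.7, 4.6.4, 4.7.3 and Lemma 4.6.3 for \ref{item:23}--\ref{item:24}), with only \ref{item:21} actually argued in situ as a consequence of \ref{tiltprop1} together with Proposition \ref{braid_relations}\ref{item:11}, exactly as you do. So your proposal is not a different route so much as an unpacking of what is black-boxed.

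Two small corrections to your unpacking. First, for \ref{tiltprop1} the paper's point is that it holds \emph{by definition}: in \cite{by}, A.7 a free-monodromic tilting object is defined as one admitting both a standard and a costandard filtration, and the content of loc.\ cit.\ is the existence and uniqueness of indecomposable such objects $\T_w$. Your inductive argument via $\T_s \star \T_{w'}$ is the existence proof, which is a separate (and harder) statement than what is asserted here. Second, your closing worry about ``infinite inverse systems'' is slightly misplaced: the standard filtration of $\T_w$ is a \emph{finite} filtration in $\cM$ (its subquotients are the pro-objects $\DD_v(m)$), so the spectral sequence for \ref{item:21} is finite and no convergence issue arises at that step. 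The genuine pro-completion subtleties you allude to --- that $\Hom$-groups in $\cM$ are computed as $\invlim{}\dirlim{}$ and that $\mathbb{V}$ behaves well --- are indeed handled in \cite{by}, but they enter at the level of Proposition \ref{braid_relations}\ref{item:11} and the construction of $\cM$ itself, not in the passage from \ref{tiltprop1} to \ref{item:21}.
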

\begin{proof} \ref{tiltprop1} follows directly from the definition, given in \cite{by}, A.7. \ref{item:21} follows from \ref{tiltprop1} and Proposition \ref{braid_relations}. \ref{item:23} is Proposition 4.5.7 together with Lemma 4.6.3 of loc. cit. \ref{item:24} is Proposition 4.7.3 (2) of loc. cit.
\end{proof} 
\begin{bdefinition}
  \label{sec:pro-unip-tilt-2} Define the shifted t-structure on $\cM$ as
  \[ \left(\cM_{w_0}^{\geq 0} = \DD_{w_0} \star \cM'^{\geq 0}, \cM_{w_0}^{\leq 0} = \DD_{w_0} \star \cM'^{\leq 0}\right)
\]
We denote by $\tau_{w_0}^{\geq k}, \tau_{w_0}^{\leq k}, \mathcal{H}^k_{w_0}(-)$ the truncations and perverse co\-ho\-mo\-lo\-gy functors with respect to this t-structure, respectively, and let $\Perv_{w_0}$ be its heart.
\end{bdefinition}
This is the pullback of the t-structure $\left(\cM'^{\geq 0}, \cM'^{\leq 0}\right)$ defined in Section \ref{t-structures} along the autoequivalence $(\NN_{w_0}\star -)$ of $\cM$: the complex $\F$ is the heart of the t-structure $\left(\cM^{\geq 0}_{w_0}, \cM^{\leq 0}_{w_0}\right)$ if and only if $\NN_{w_0}\star \F$ is in the heart of the t-structure $\left(\cM'^{\geq 0}, \cM'^{\leq 0}\right)$.

Note that by Proposition \ref{braid_relations} and Proposition \ref{tiltprop} \ref{tiltprop1}, $\T_w \in \Perv_{w_0}$: since $\T_w$ has a filtration by objects of the form $\DD_v, v \in W$, and $\NN_{w_0}\star \T_w$ has a filtration by objects of the form $\NN_{w_0}\star \DD_v \simeq \NN_{w_0v}$.

We will need the following
\begin{proposition}
  \label{sec:pro-unip-tilt} For any $\F \in \cM, v \in W, k\in \mathbb{Z}$, \[\Hom(\F, \T_v[k]) \simeq \Hom(\mathcal{H}_{w_0}^{-k}(\F), \T_v).\]
\end{proposition}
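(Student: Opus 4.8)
The plan is to show that the functor $\Hom(\F,\T_v[k])$, as a function of $\F$, is represented by the shifted perverse cohomology $\mathcal{H}_{w_0}^{-k}(\F)$ in the sense that one may replace $\F$ by $\mathcal{H}_{w_0}^{-k}(\F)$ without changing the $\Hom$. The key input is that tilting sheaves are \emph{injective} with respect to the shifted t-structure $\left(\cM^{\geq 0}_{w_0},\cM^{\leq 0}_{w_0}\right)$. Concretely, since $\NN_{w_0}\star-$ is an equivalence intertwining this t-structure with $\left(\cM'^{\geq 0},\cM'^{\leq 0}\right)$, and $\NN_{w_0}\star\T_v\simeq\NN_{w_0 v}$ by Proposition \ref{braid_relations}\ref{item:19}, the claim becomes: for any $\mathcal{G}\in\cM$, $\Hom(\mathcal{G},\NN_w[k])\simeq\Hom(\mathcal{H}^{-k}(\mathcal{G}),\NN_w)$, where now $\mathcal{H}$ is the cohomology for $\left(\cM'^{\geq 0},\cM'^{\leq 0}\right)$. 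So first I would reduce to this statement by applying the autoequivalence $\NN_{w_0}\star-$.

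Next I would establish two vanishing statements for the costandard objects $\NN_w$ with respect to the t-structure $\left(\cM'^{\geq 0},\cM'^{\leq 0}\right)$: namely that $\NN_w\in\Perv'$ (already recorded in Section \ref{t-structures}), and that $\Hom(\mathcal{A},\NN_w[i])=0$ for all $\mathcal{A}\in\cM'^{\leq -1}$ and all $i\le 0$, and dually $\Hom(\mathcal{A},\NN_w[i])=0$ for $\mathcal{A}\in\cM'^{\geq 1}$ and all $i\ge 0$. The first of these follows because $\cM'^{\leq 0}$ is generated under extensions by the $\DD_\alpha[n]$ with $n\ge 0$ (wait — more precisely by the description of $\cM'^{\geq 0}$ as generated under extensions by $\NN_\alpha[n]$, $n\le 0$, and applying $\D'$), together with Proposition \ref{braid_relations}\ref{item:11}: $\Hom(\DD_v,\NN_w[i])=0$ unless $v=w,i=0$. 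These two vanishing statements are exactly what is needed to run the standard argument that $\NN_w$ is injective in the heart and, more generally, that $\Hom(-,\NN_w[k])$ kills everything except the degree $-k$ cohomology.

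The final step is the truncation argument. Given $\F\in\cM$, I would use the truncation triangles $\tau^{\leq -k-1}_{w_0}\F\to\F\to\tau^{\geq -k}_{w_0}\F$ and $\tau^{\leq -k}_{w_0}\F\to\tau^{\geq -k}_{w_0}\F\to\mathcal{H}^{-k}_{w_0}(\F)[k]$ (the second rotated appropriately so that $\mathcal{H}^{-k}_{w_0}(\F)[k]$ appears), and apply $\Hom(-,\T_v[k])$. The vanishing statements above make the contributions of $\tau^{\leq -k-1}_{w_0}\F$ and of $\tau^{\geq -k+1}_{w_0}\F$ vanish, leaving $\Hom(\F,\T_v[k])\simeq\Hom(\mathcal{H}^{-k}_{w_0}(\F)[k],\T_v[k])=\Hom(\mathcal{H}^{-k}_{w_0}(\F),\T_v)$. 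One must be slightly careful that $\F\in\cM$ has bounded perverse amplitude (property \ref{item:1} of Definition \ref{sec:gener-compl}), so only finitely many truncation steps are involved and the long exact sequences behave. The main obstacle I anticipate is precisely checking that the Hom-vanishing $\Hom(\mathcal{A},\NN_w[i])=0$ for $\mathcal{A}\in\cM'^{\leq -1}$, $i\le 0$ holds in the completed (pro-object) setting — one needs that the generation-under-extensions description of $\cM'^{\leq 0}$ is compatible with the $\Hom$'s computed as inverse-over-direct limits in $\pro(\M)$, which is where the properties of the completion in \cite{by} (and Proposition \ref{braid_relations}\ref{item:11}, valid for pro-objects) have to be invoked carefully rather than just cited.
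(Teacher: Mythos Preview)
Your overall strategy---show that $\T_v$ has no higher $\Hom$'s from objects of $\Perv_{w_0}$, then use truncation triangles---is the same as the paper's. However, there is a concrete error: the claim $\NN_{w_0}\star\T_v\simeq\NN_{w_0 v}$ is false. Proposition~\ref{braid_relations}\ref{item:19} concerns convolutions of two (co)standard objects, but $\T_v$ is a tilting object, not a single $\NN_v$; for instance $\T_{w_0}$ has $|W|$ many costandard subquotients. What is true is that $\T_v$ admits a \emph{standard} filtration by $\DD_u$'s (Proposition~\ref{tiltprop}\ref{tiltprop1}), and then $\NN_{w_0}\star\DD_u\simeq\NN_{w_0u}$, so $\NN_{w_0}\star\T_v$ is filtered by costandard objects. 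With this correction your argument goes through: one is reduced to $\Hom(\mathcal{G},\NN_u[k])\simeq\Hom(\mathcal{H}'^{-k}(\mathcal{G}),\NN_u)$ for each $u$, and the vanishing $\Hom(\DD_v,\NN_u[i])=0$ for $i\neq 0$ together with the generation of $\cM'^{\geq 0}$ by $\NN_\alpha[n]$, $n\le 0$, (dually for $\cM'^{\leq 0}$) gives what you need.

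The paper avoids this misstep by not applying the equivalence $\NN_{w_0}\star-$ at all. It works directly in the shifted t-structure: for $\F\in\Perv_{w_0}$ and $k>0$, it uses that $\omega\cM^{\geq 0}_{w_0}$ is generated under extensions by $\omega(\DD_{w_0}\star\NN_w[n])\simeq\omega\DD_{w_0w}[n]$ with $n\le 0$, while $\T_v$ has a \emph{costandard} filtration, and then $\Hom(\DD_{w_0w}[n],\NN_u[k])=0$ since $-n+k>0$. The case $k<0$ is immediate from $\F,\T_v\in\Perv_{w_0}$. This is the same vanishing input (Proposition~\ref{braid_relations}\ref{item:11}) you cite, just organized without the intermediate reduction.
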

\begin{proof} It is enough to show that for any $\F \in \Perv_{w_0}$, $\Hom(\F, \T_v[k]) = 0$ unless $k = 0.$ Since $\F, \T_v \in \Perv_{w_0}, \Hom(\F, \T_v[k]) = 0$ for $k < 0$. It remains to consider the case $k > 0$. The category $\omega\cM'^{\geq 0}$ is generated under extensions by the objects $\omega\NN_w[n], w \in W, n \leq 0$, so the category $\omega\cM^{\geq 0}_{w_0}$ is generated under extensions by the objects $\omega\DD_{w_0}\star \omega\NN_{w}[n] \simeq \omega\DD_{w_0w}[n], w \in W, n \leq 0.$ On the other hand, by Proposition \ref{tiltprop} \ref{tiltprop1} $\omega\T_v$ admits a filtration by sheaves of the form $\omega\NN_v$. We have
  \[ \Hom(\DD_{w_0} \star \NN_{w}[n], \NN_{v}[k]) \simeq \Hom(\DD_{w_0w}, \NN_{v}[-n+k]) = 0,
  \] by Proposition \ref{braid_relations} \ref{item:11} using that $-n+k > 0$, which implies the result.
\end{proof}

We will also use the following
\begin{lemma}
  \label{sec:pro-unip-tilt-1} There is an isomorphism $\D'\T_v \simeq \T_v.$
\end{lemma}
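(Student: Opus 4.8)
The plan is to show that $\D'\T_v\simeq\T_v$ by combining the uniqueness characterization of indecomposable free-monodromic tilting objects with the fact that $\D'$ preserves the relevant structural properties. First I would check that $\D'\T_v$ is again a free-monodromic tilting object: by the preceding lemma $\D'\DD_\alpha\simeq\NN_\alpha$ and $\D'\NN_\alpha\simeq\DD_\alpha$, so the shifted duality $\D'$ exchanges standard and costandard pro-objects (without any homological shift, which is precisely why $\D'=\D[-r]$ is the right normalization). Since by Proposition \ref{tiltprop}\ref{tiltprop1} the object $\T_v$ admits both a standard filtration (by $\DD_w(m)$) and a costandard filtration (by $\NN_w(m)$), applying $\D'$ turns the standard filtration into a costandard filtration of $\D'\T_v$ and vice versa; hence $\D'\T_v$ has both kinds of filtration, which is the defining property of a free-monodromic tilting object in the sense of \cite{by}.

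Next I would pin down which indecomposable tilting object $\D'\T_v$ is. Duality is an (anti)autoequivalence, so it sends indecomposables to indecomposables; thus $\D'\T_v\simeq\T_w(m)$ for a unique $w\in W$ and a unique Tate twist $m$. To identify $w$, I would look at the support: $\T_v$ is supported on $\overline{X}_v$ with $i_v^*\T_v$ (equivalently $i_v^!\T_v$) the relevant rank-one local system on the open stratum, and Verdier duality commutes with the stratum-wise restriction functors up to swapping $i^*$ and $i^!$, so the highest stratum in the support of $\D'\T_v$ is again $v$. This forces $w=v$. To kill the Tate twist $m$, I would compare the "leading coefficients": the multiplicity of $\DD_v$ in the standard filtration of $\T_v$ is $1$ with no twist (this is part of the normalization in \cite{by}, A.7), and $\D'$ sends that subquotient $\DD_v$ to $\NN_v$, which must then be the top costandard subquotient of $\D'\T_v\simeq\T_v(m)$, again occurring with multiplicity one and no twist; comparing gives $m=0$.

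The main obstacle I expect is bookkeeping the normalizations: making sure that the shifts and Tate twists in the definitions of $\DD_\alpha=\tilde j_{\alpha!}\LL[r+d_\alpha](2r+d_\alpha)$, the duality $\D$, and the free-monodromic tilting objects $\T_v$ all line up so that genuinely no shift or twist survives — this is exactly the kind of place where an off-by-$[1]$ or an unwanted $(1)$ can creep in, and it is the reason the statement is phrased with $\D'=\D[-r]$ rather than $\D$. A cleaner alternative, which I would use if available, is to invoke the functor $\mathbb{V}$ and Proposition \ref{tiltprop}\ref{item:23}--\ref{item:24}: since $\D'$ is compatible with $\mathbb{V}$ and $\operatorname{End}(\T_{w_0})^f\simeq S\otimes_{S^W}S$ carries an obvious duality-fixed structure, one can transport the statement to a statement about $S$-bimodules, where the self-duality of the free-monodromic analogue of $B_w$ is the Soergel-bimodule fact $B_w^\vee\simeq B_w$ recalled in Section \ref{sec:t-structure-duality}. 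Either route reduces the lemma to a known normalization computation rather than anything substantive.
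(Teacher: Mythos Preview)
Your proposal is correct and follows essentially the same approach as the paper: the paper also argues that $\D'$ exchanges free-monodromic standard and costandard objects, hence $\D'\T_v$ is again an indecomposable free-monodromic tilting object with the correct restriction to the open stratum $v$, and then invokes the uniqueness statement (Lemma 5.2.2 of \cite{by}) to conclude. Your steps identifying $w=v$ via support and $m=0$ via the leading subquotient are just an unpacking of that uniqueness lemma, so the arguments coincide.
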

\begin{proof} By Lemma 5.2.2 of \cite{by}, any indecomposable free-monodromic tilting extension of $\LL[r+d_{v}](2r+d_{v})$ from the stratum labeled by $v$ is isomorphic to $\T_v$. Since $\D'$ exchanges free-monodromic standard and costandard objects, we get that $\D'\T_v$ is an indecomposable free-monodromic tilting extension of $\LL[r+d_{v}](2r+d_{v})$, and the Lemma follows.
\end{proof}
\subsection{Comparison with Soergel bimodules.} Let $\Tilt$ be the additive category generated by the twists of the free-monodromic tilting sheaves $\T_v$. By Proposition 4.3.4 of \cite{by}, $\Tilt$ is closed under the monoidal structure on $\cM$.

The connection between the setting of Soergel bimodules and monodromic Hecke categories can be summarized in the following diagram:
\[
\begin{tikzcd} \SBim_R(W) \arrow[d, shift right, "\iota_{\mathbb{S}}"'] \arrow[r, "\Lambda"] & \Tilt \arrow[d, "\iota_T"] \arrow[r, "\omega"] & \omega\Tilt \arrow[d, "\omega\iota_T"] \\ \Ho(\SBim_R(W)) \arrow[r, "\Ho(\Lambda)"'] & \Ho(\Tilt) \arrow[r, "\omega"] \arrow[d, "h"'] & \Ho(\omega Tilt) \arrow[d, "\wr", "\omega h"'] \\ & \cM \arrow[r, "\omega"'] & \omega \cM
\end{tikzcd}
\]

Here we consider the category $\SBim_R(W)$ of Soergel bimodules associated to the representation $\mathfrak{h}_0 = V_T^{\vee}$ of $W$, as in Section \ref{sec:khov-rozansky-homol}, $\iota_{\mathbb{S}}, \iota_T$ stand for the embedding of the additive categories to their homotopy categories, $h$ is the functor constructed in \cite{by}, Appendix B. The functor $\Lambda$ sends $B_w(k)$ to $\T_w(-k)$, and the action of $R \otimes_{\Qlbar} R$ is identified with the action of $S \otimes_{\Qlbar} S$ by the logarithms of the left and right monodromy automorphisms. The functor inverse to $\Lambda$ on its image is $\T \mapsto \underline{\mathbb{V}(\T)^{\Frinv}}$, with the above identification of $S$ and $R$ (recall that for a Frobenius module $M$, $M^{\Frinv}$ denotes the module with inverse Frobenius action). The grading on $\SBim_R(W)$ corresponds to the negative of the Frobenius weights on $\Tilt$.

\begin{proposition}
\label{sec:comp-with-soerg-4} The functor $h\Ho\Lambda$ is monoidal and satisfies the following properties.
  \begin{enumerate}[label=(\arabic*)]
\item \label{item:22} We have isomorphism
  \[ h\Ho\Lambda(\nabla_w) \simeq \NN_w, h\Ho\Lambda(\Delta_w) \simeq \DD_w.
  \]
\item\label{item:12} For $M^{\bullet}, N^{\bullet} \in \Ho(\SBim_R(W)),$
    \[ \Hom(M, N) \simeq \underline{\Hom(h\Ho(\Lambda)(M), h\Ho(\Lambda)(N))^{f, \Frinv}}.
    \]

\item \label{item:13}The functor $h\Ho(\Lambda)$ is t-exact with respect to the t-structure \[( \prescript{p}{}\SBim_R(W)^{\leq 0}, \prescript{p}{}\SBim_R(W)^{\geq 0})\] on $\Ho(\SBim)$ and the t-structure $(\cM^{\leq 0}, \cM^{\geq 0})$ on $\cM$.

\item \label{item:14}The functor $h\Ho(\Lambda)$ intertwines the duality functor $^\vee$ with $\D'$:
  \[ h\Ho(\Lambda)(M^\vee) \simeq \D'h\Ho(\Lambda)(M).
  \]
  \end{enumerate}

\end{proposition}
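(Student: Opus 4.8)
\textbf{Proof plan for Proposition \ref{sec:comp-with-soerg-4}.}

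The plan is to verify the four properties one at a time, leaning on the structural results already quoted from \cite{by} and on the Koszul-duality functor $\mathbb{V}$ and its inverse $\Lambda$. The key input throughout is that $\Lambda$ (hence $\Ho\Lambda$, hence $h\Ho\Lambda$ after passing through $h$, which is an equivalence onto its image by the diagram) carries the indecomposable Soergel bimodules $B_w$ to the free-monodromic tilting objects $\T_w$, and matches the $R\otimes R$-action with the logarithm-of-monodromy action of $S\otimes S$, with gradings negated. Monoidality of $h\Ho\Lambda$ follows from monoidality of $\Lambda$ on $\Tilt$ (Proposition 4.3.4 of \cite{by}, which also gives that $\Tilt$ is monoidal), the fact that $h$ is monoidal, and that the induced functor on homotopy categories of a monoidal additive category is automatically monoidal.

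For \ref{item:22}, I would recall from Section \ref{sec:t-structure-duality} that $\Delta_s = (\underline{B_s}\to R(1))$ and $\nabla_s = (R(-1)\to \underline{B_s})$, apply $\Lambda$ term by term using $\Lambda(B_s) = \T_s$ and $\Lambda(R(k)) = \dd(-k)$ (the unit), and then identify the resulting two-term complexes of free-monodromic sheaves with $\DD_s$ and $\NN_s$ respectively — this is essentially the computation that the free-monodromic tilting object $\T_s$ sits in the standard and costandard short exact sequences $\DD_e\to\T_s\to\DD_s$ and $\NN_s\to\T_s\to\NN_e$ inside $\Perv$, i.e. Proposition \ref{tiltprop}\ref{tiltprop1} in the rank-one case. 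The general case $w = s_{i_1}\cdots s_{i_k}$ (reduced) then follows by monoidality together with the braid relations $\DD_v\star\DD_w\simeq\DD_{vw}$, $\NN_v\star\NN_w\simeq\NN_{vw}$ of Proposition \ref{braid_relations}\ref{item:19} matching $\Delta_v\Delta_w\simeq\Delta_{vw}$, $\nabla_v\nabla_w\simeq\nabla_{vw}$ in $\Ho(\SBim_R(W))$.

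For \ref{item:12}, the statement for objects in the additive category $\SBim_R(W)$ itself is Proposition \ref{tiltprop}\ref{item:23} combined with the description of the inverse functor $\T\mapsto\underline{\mathbb{V}(\T)^{\Frinv}}$: one has $\Hom(B_v, B_w(i)) \simeq \hom(\Lambda(B_v), \Lambda(B_w)\tw{-i})$-type identities, and summing over $i$ and taking locally finite parts and underlining recovers the graded $\Hom$. To pass to the homotopy category I would use that both sides are computed from the same double complex: $h\Ho\Lambda$ sends a complex of Soergel bimodules to the corresponding complex of tilting sheaves, $\Hom$ in $\Ho(\SBim_R(W))$ is the $0$th cohomology of the $\Hom$-complex of bimodules, and by Proposition \ref{tiltprop}\ref{item:21} ($\Hom(\T_w,\T_v[i])=0$ for $i>0$) together with \ref{item:23} the analogous $\Hom$-complex of tilting sheaves has cohomology concentrated correctly, so $\Hom$ in $\cM$ between the images is again the $0$th cohomology of that $\Hom$-complex. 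Matching the two via the object-level isomorphism and tracking the Tate-twist-to-shift convention (the source of the $\Frinv$) finishes it.

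For \ref{item:13}, the t-structure on $\Ho(\SBim_R(W))$ was defined by gluing from the subquotient categories $\mathcal{D}_{\{w\}}\simeq D^b(R-\operatorname{mod})$ and that on $\cM$ by gluing from $\cM_{\{\alpha\}}\simeq D^b(S,\Fr-\operatorname{mod})$; since $\Lambda$ matches the recollement data stratum by stratum (the functors $i_w^*, i_w^!$ on the Soergel side correspond to $i_\alpha^*, i_\alpha^!$ on the monodromic side, and $R-\operatorname{mod}$ is matched with $(S,\Fr)-\operatorname{mod}$ with the grading-vs-weight identification), t-exactness reduces to t-exactness of the equivalence $\mathcal{D}_{\{w\}}\to\cM_{\{\alpha\}}$ with respect to the standard t-structures, which is immediate. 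For \ref{item:14}, I would check $h\Ho\Lambda(B_w^\vee)\simeq\D'h\Ho\Lambda(B_w)$ using $B_w^\vee\simeq B_w$, $\D'\T_w\simeq\T_w$ (Lemma \ref{sec:pro-unip-tilt-1}), and $\D'\DD_\alpha\simeq\NN_\alpha$ matching $\nabla_w^\vee\simeq\Delta_w$ via \ref{item:22}; since both $^\vee$ and $\D'$ are contravariant additive (anti)equivalences fixing the generators compatibly, they agree on all of $\Ho(\SBim_R(W))$. The main obstacle I anticipate is \ref{item:12}: the object-level statement is quoted, but upgrading to complexes requires the vanishing $\Hom(\T_w,\T_v[i])=0$ for $i>0$ to guarantee that the hypercohomology spectral sequence for the $\Hom$-complex degenerates in the relevant way on both sides simultaneously, and one must be careful that the locally-finite-part and underlining operations commute with taking cohomology of complexes of (pro-)objects — this bookkeeping, rather than any deep new idea, is where the real work lies.
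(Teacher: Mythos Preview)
Your plan for monoidality and for items \ref{item:22}, \ref{item:12}, \ref{item:14} is essentially the paper's proof: monoidality is cited from \cite{by} (Propositions 4.3.4, 4.6.4(2), Corollary 5.2.3); \ref{item:22} is reduced by monoidality to simple reflections and deferred to Appendix~C of \cite{by}; \ref{item:12} is simply Lemma~B.5.1 of \cite{by}, so your worry about the bookkeeping for complexes is legitimate but already absorbed into that reference; and \ref{item:14} is exactly Lemma~\ref{sec:pro-unip-tilt-1} plus the observation that both dualities fix the generating tilting objects.

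For \ref{item:13} the paper takes a different and more economical route than your gluing argument. You propose to check that $h\Ho\Lambda$ intertwines the recollement functors $i_w^*,i_w^!$ on the two sides and then reduce to t-exactness on each subquotient $\mathcal{D}_{\{w\}}\to\cM_{\{w\}}$. That is plausible, but it is not established anywhere in the paper: one would need to verify that $h\Ho\Lambda$ respects the support filtrations and induces the right functors on the Verdier quotients, which is extra work (and delicate in the mixed setting, where $h$ is only asserted to be an equivalence after applying $\omega$). The paper instead argues directly from \ref{item:22} and \ref{item:12}. Right t-exactness is immediate: $\prescript{p}{}\SBim_R(W)^{\leq 0}$ is generated under extensions by the $\Delta_w[n](m)$ with $n\ge 0$, and these land in $\cM^{\leq 0}$ by \ref{item:22}. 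For left t-exactness one takes the truncation triangle $\tau_{\leq 0}X\to X\to\tau_{\geq 1}X$ in $\Ho(\SBim_R(W))$, uses \ref{item:12} to translate $\Hom(\Delta_w[n],\tau_{\geq 1}X)=0$ (for $n\ge 0$) into vanishing of $\Hom(\DD_w[n],h\Ho\Lambda(\tau_{\geq 1}X))$, and concludes $h\Ho\Lambda(\tau_{\geq 1}X)\in\cM^{\geq 1}$; combined with right t-exactness this forces $h\Ho\Lambda$ to commute with truncations. The advantage of the paper's approach is that it uses only what has already been proved in the proposition and sidesteps any stratum-by-stratum compatibility check.
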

\begin{proof} The fact that $h\Ho(\Lambda)$ is monoidal follows from \cite{by} Propositions 4.3.4, 4.6.4 (2) and Corollary 5.2.3. It follows that it is enough to check \ref{item:22} on simple reflections, which is done in the Appendix C of loc. cit.

  The identity \ref{item:12} of $\Hom$-spaces is Lemma B.5.1 of \cite{by}.

Since the category $\SBim_R(W)^{\leq 0}$ is generated by the grading shifts of the objects $\Delta_w[n], n \geq 0$, and the category $\cM^{\leq 0}$ is generated by the twists of the objects $\DD_w[n], n \geq 0$, and we have $h\Ho\Lambda(\Delta_w) \simeq \DD_w$, we get that $h\Ho\Lambda$ is t-exact from the right.

For any object $X \in \Ho(\SBim_R(W))$ consider the triangle
\[ \tau_{\leq 0}X \to X \to \tau_{\geq 1}X \to \tau_{\leq 0}X[1].
\] Using \ref{item:12}, we get that
\[ \underline{\Hom(\DD_w[n],\hHoL(\tau_{\geq 1}X))^{f, \Frinv}} \simeq \Hom(\DD_w[n],\tau_{\geq 1}X) = 0,
\] for $n\geq 0$, so that $\hHoL(\tau_{\geq 1}X)\in \cM^{\geq 1}$. By right t-exactness of $\hHoL$, we also have $\hHoL(\tau_{\leq 0}X)\in \cM^{\leq 0}$, and so
\[\hHoL(\tau_{\geq 1}X) \simeq \tau_{\geq 1}\hHoL(X), \hHoL(\tau_{\leq 0}X) \simeq \tau_{\leq 0}\hHoL(X),\] which proves \ref{item:13}.

The fact that the functor intertwines the duality follows from the corresponding statment for $\Lambda$, see Lemma ~\ref{sec:pro-unip-tilt-1} .
\end{proof}
\begin{corollary}
  \label{sec:comp-with-soerg-5} The functor $hHo(\Lambda)$ is t-exact with respect to the t-structure $\left(\SBim_R(W)_{w_0}^{\geq 0},\SBim_R(W)_{w_0}^{\leq 0}\right)$ on $\Ho(\SBim_R(W))$, see Definition \ref{sec:t-structure-duality-1} and the t-structure $\left(\cM_{w_0}^{\geq 0}, \cM_{w_0}^{\leq 0}\right)$, see Definition \ref{sec:pro-unip-tilt-2}.
\end{corollary}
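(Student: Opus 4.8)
The plan is to reduce the assertion to the unshifted statement, part \ref{item:13} of Proposition~\ref{sec:comp-with-soerg-4}, by following how $h\Ho(\Lambda)$ interacts with the two operations that turn the perverse t-structures into the shifted t-structures of Definitions~\ref{sec:t-structure-duality-1} and~\ref{sec:pro-unip-tilt-2}: passing to the dual t-structure, and convolving on the left with $\Delta_{w_0}$ (respectively with $\DD_{w_0}$). Recall that t-exactness of a triangulated functor means it carries the connective part into the connective part and the coconnective part into the coconnective part, so these two inclusions are all I will verify.

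First I would check that $h\Ho(\Lambda)$ is t-exact for the \emph{dual} t-structures, namely $\bigl((\prescript{p}{}\SBim_R(W)^{\geq 0})^\vee,(\prescript{p}{}\SBim_R(W)^{\leq 0})^\vee\bigr)$ on $\Ho(\SBim_R(W))$ and $(\cM'^{\leq 0},\cM'^{\geq 0})$ on $\cM$. This is immediate from Proposition~\ref{sec:comp-with-soerg-4}: part \ref{item:13} gives t-exactness for the perverse t-structures, and part \ref{item:14} gives $h\Ho(\Lambda)(M^\vee)\simeq\D' h\Ho(\Lambda)(M)$; so for $N\in\prescript{p}{}\SBim_R(W)^{\leq 0}$ one gets $h\Ho(\Lambda)(N^\vee)\simeq\D' h\Ho(\Lambda)(N)\in\D'\cM^{\leq 0}=\cM'^{\geq 0}$, and dually $h\Ho(\Lambda)\bigl((\prescript{p}{}\SBim_R(W)^{\geq 0})^\vee\bigr)\subset\cM'^{\leq 0}$. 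Then I would use that $h\Ho(\Lambda)$ is monoidal with $h\Ho(\Lambda)(\Delta_{w_0})\simeq\DD_{w_0}$ (part \ref{item:22} of the same proposition), which yields a natural isomorphism $h\Ho(\Lambda)(\Delta_{w_0}\star X)\simeq\DD_{w_0}\star h\Ho(\Lambda)(X)$. Since $\Delta_{w_0}$ and $\DD_{w_0}$ are $\star$-invertible, with inverses $\nabla_{w_0}$ and $\NN_{w_0}$ (cf.\ Proposition~\ref{braid_relations}), left convolution with them is an autoequivalence on each side; hence the formulas of Definitions~\ref{sec:t-structure-duality-1} and~\ref{sec:pro-unip-tilt-2} really do define t-structures, obtained from the dual ones of the previous step by applying these autoequivalences. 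Combining the two steps,
\[
h\Ho(\Lambda)\bigl(\SBim_R(W)_{w_0}^{\geq 0}\bigr)\simeq\DD_{w_0}\star h\Ho(\Lambda)\bigl((\prescript{p}{}\SBim_R(W)^{\leq 0})^\vee\bigr)\subset\DD_{w_0}\star\cM'^{\geq 0}=\cM_{w_0}^{\geq 0},
\]
and symmetrically $h\Ho(\Lambda)\bigl(\SBim_R(W)_{w_0}^{\leq 0}\bigr)\subset\DD_{w_0}\star\cM'^{\leq 0}=\cM_{w_0}^{\leq 0}$, which is exactly the claimed t-exactness.

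Since the whole argument is formal once Proposition~\ref{sec:comp-with-soerg-4} is available, I do not expect a genuine obstacle. The only points deserving attention are that left convolution with $\Delta_{w_0}$ (resp.\ $\DD_{w_0}$) is genuinely an autoequivalence---so that the shifted families are t-structures at all---and that the monoidality isomorphism for $h\Ho(\Lambda)$ is compatible with the passages $\Tilt\to\Ho(\Tilt)\to\cM$, so that $h\Ho(\Lambda)(\Delta_{w_0}\star-)\simeq\DD_{w_0}\star h\Ho(\Lambda)(-)$ holds as an isomorphism of functors on the homotopy category; both facts are already contained in the results of \cite{by} recalled in Section~\ref{sec:convolution} and Proposition~\ref{sec:comp-with-soerg-4}.
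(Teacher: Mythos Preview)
Your proposal is correct and is precisely the intended unpacking: the paper states this as an immediate corollary of Proposition~\ref{sec:comp-with-soerg-4} without giving a separate argument, and your two-step reduction (pass to the dual t-structures via \ref{item:13} and \ref{item:14}, then transport along left convolution with $\Delta_{w_0}\mapsto\DD_{w_0}$ using monoidality and \ref{item:22}) is exactly how one verifies it.
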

\begin{bdefinition}
  \label{sec:comp-with-soerg-6}
Fix $w \in W$ and consider the following twisted adjoint action of $T$ on $G/U$: \[t\cdot xU = tx\operatorname{Ad}(w)(t^{-1})U.\] 
Let $\mathcal{Y}_w$ be the quotient stack $(U\backslash G/U)/{T}$ with respect to this action. We will write $\mathcal{Y} = \mathcal{Y}_1,$ with $p:U\backslash G/U \to \mathcal{Y}$ for the corresponding projection.
\end{bdefinition}

Let $p^{\dagger} = p^*[\dim T]$. Thus $p^{\dagger}$ is the t-exact functor forgetting the $T$-action.

We now give a geometric description of the complex $K_{\mathbb{S}} \in \Ho(\SBim)$.
\begin{proposition}
\label{sec:comp-with-soerg} There is an isomorphism
 \[ h\Ho(\Lambda)(K_{\mathbb{S}}) \simeq p^{\dagger}p_{!}\T_{w_0}\tw{2\dim T}.
  \]
\end{proposition}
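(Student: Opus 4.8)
The plan is to compute both sides by unwinding the definition of $K_{\mathbb{S}} = K^{\bullet}\otimes_{R\otimes_k R}B_{w_0}$ and matching it, term by term, with a Koszul-type complex built out of $\T_{w_0}$ under $h\Ho(\Lambda)$. First I would recall that $K_s = (R\otimes_k R)(-2)\xrightarrow{\alpha_s\otimes 1 - 1\otimes\alpha_s}\underline{R\otimes_k R}$, so that $K_s\otimes_{R\otimes_k R}B_{w_0}$ is the two-term complex $B_{w_0}(-2)\to B_{w_0}$ whose differential is multiplication by $\alpha_s\otimes 1 - 1\otimes\alpha_s$ acting through the left and right $R$-module structures on $B_{w_0}$; and $K_{\mathbb{S}}$ is the total complex of the $r$-fold tensor product (over $B_{w_0}$, equivalently $R$ on one side) of these, as $s$ ranges over a basis of $\h_0^\vee$. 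Since $\Lambda(B_{w_0}) = \T_{w_0}$ and $\Lambda$ identifies the $R\otimes_k R$-action with the action of $S\otimes_{\Qlbar}S$ by logarithms of the left and right monodromy (with grading shift $(k)\mapsto(-k)$), applying $h\Ho(\Lambda)$ term by term turns $K_{\mathbb{S}}$ into the Koszul complex on $\T_{w_0}$ for the $r$ commuting nilpotent operators $\log(\text{left monodromy}) - \log(\text{right monodromy})$, with appropriate Tate twists. The key point to record carefully is the twist bookkeeping: each $K_s$ contributes a shift by $(-2)$ on the Soergel side, hence by $(2)$, i.e. $\tw{2}$ up to the homological part, on the monodromic side, giving the total twist $\tw{2\dim T}$ (and $\dim T = r$) after accounting for all $r$ factors and the conventions of Section \ref{sec:gener-compl-categ}.

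Next I would identify this Koszul complex geometrically. The operators $\log(\text{left monodromy}) - \log(\text{right monodromy})$ on objects of $\cM$ are exactly the logarithms of the monodromy of the \emph{adjoint} (untwisted, $w=1$) $T$-action on $U\backslash G/U$: under the diagonal action $t\cdot xU = txt^{-1}U$, the monodromy of a unipotently monodromic complex is computed by the difference of left and right monodromies. The standard fact (as in the proof of Lemma \ref{*!} and Lemma \ref{unit}, applied to the $T$-torsor $p:U\backslash G/U\to\yy$) is that for $\F\in\cM$ one has $p^*p_!\F$ computed by tensoring $\F$ with $H^{\bullet}(T)$-type data, and more precisely that $p^*p_!\F$ is represented by the Koszul complex on $\F$ for these $r$ commuting monodromy logarithms, up to a shift and twist by $\tw{2r}$; this is the $\ell$-adic incarnation of the statement that $p^*p_!$ is convolution with (the !-pushforward to $\yy$ and pullback of) the constant sheaf on $T$, whose cohomology is an exterior algebra on $r$ generators in degree $1$ and weight $2$. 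Combining with the previous paragraph, $h\Ho(\Lambda)(K_{\mathbb{S}})$ and $p^{\dagger}p_!\T_{w_0}\tw{2\dim T}$ are both identified with the same Koszul complex on $\T_{w_0}$, with matching shifts and twists, hence isomorphic.

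The main obstacle I anticipate is not the formal structure but the precise matching of shifts, Tate twists, and the placement of the $\Frinv$ (inverse Frobenius) convention between the Soergel side and the monodromic side — the grading on $\SBim$ being the negative of the Frobenius weight means a sign error in any single $K_s$ propagates to $r$ of them. I would handle this by checking the $\dim T = 1$ case (type $\mathrm{A}_1$, or a one-dimensional torus) completely explicitly: there $K_{\mathbb{S}}$ is the single two-term complex $B_{w_0}(-2)\to B_{w_0}$, $h\Ho(\Lambda)$ sends it to $\T_{w_0}\tw{2}\xrightarrow{\mu}\T_{w_0}$ where $\mu$ is the monodromy-logarithm difference, and one verifies directly from Lemma \ref{*!} (for the torsor $p$) and the computation $H^{\bullet}(\LL)\simeq\Qlbar[-r](-2r)$ that $p^*p_!\T_{w_0}$ is exactly the cone shifted so as to produce the twist $\tw{2}$; the general case is the $r$-fold tensor product of this, using that the relevant monodromy operators commute and that $p$ for a rank-$r$ torus is an iterated rank-one torsor. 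A secondary technical point is that $K_{\mathbb{S}}$ is honestly a complex of Soergel bimodules (not just of $R\otimes_k R$-modules), which uses $B_{w_0}\simeq R\otimes_{R^W}R(l(w_0))$ from Remark \ref{RW} so that each term $B_{w_0}(-2j)^{\oplus\binom{r}{j}}$ indeed lies in $\SBim_R(W)$; this is needed for $h\Ho(\Lambda)$ to even be applicable term by term.
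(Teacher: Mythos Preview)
Your approach is essentially the same as the paper's: both identify $h\Ho(\Lambda)(K_{\mathbb{S}})$ with the Koszul complex on $\T_{w_0}$ for the antidiagonal (left-minus-right) monodromy logarithms, and then identify that complex with $p^{\dagger}p_!\T_{w_0}\tw{2\dim T}$. The paper packages the second identification into Lemma~\ref{sec:comp-with-soerg-3} (an equivalence $real_{eq}:D(P_m^{eq})\to D^b_m(X/T)$, adapted from \cite{2geometric}, under which $p_!$ becomes $\operatorname{Ind}_S^{\mathbb{K}_T}[-2r]$), so that $p^{\dagger}p_!\T_{w_0}\tw{2\dim T}$ is literally the realization of $\T_{w_0}\otimes_S\mathbb{K}_T$; your ``standard fact'' is exactly the content of that lemma, and Lemmas~\ref{*!} and~\ref{unit} alone do not supply it (they give $\pi_*\simeq\pi_![r]$ and the unit property of $\LL$, but not that the differentials in $p^*p_!\F$ are the monodromy logarithms rather than merely that the associated graded is an exterior algebra). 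Your rank-one check plus iteration is a plausible route to proving Lemma~\ref{sec:comp-with-soerg-3} by hand, but as written it is the one place where the argument leans on an unproved assertion; the paper's invocation of the $\mathbb{K}_T$-module formalism is what makes this step rigorous.
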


The proposition follows from the following Lemma \ref{sec:comp-with-soerg-3}, which is a straightforward adaptation of a result of \cite{2geometric} to the mixed setting.

Let $X$ be a scheme of finite type over $\mathbb{F}_q$ equipped with an action $T$. Consider a (unital) commutative dg-algebra $\mathbb{K}_T$ with Frobenius action freely generated by $V_T[1](4)\oplus V_T$ (recall that $\Fr$ acts on $V_T$ by $q^{-1}$ and on $V_T[1](4)$ by $q$).

Let $\mathcal{P}_m'(X) \subset D^b_m(X)$ be the category of mixed monodromic perverse sheaves on $X$. Let $\mathcal{P}_m^{eq}$ be the dg-category of bounded complexes $P^{\bullet}$ of objects in $\mathcal{P}_m'(X)$ equipped with an action of $\mathbb{K}_T$ compatible with the Frobenius action and monodromy: that is, equipped with a homomorphism of $\Fr$-dg-algebras $\mathbb{K}_T \to \Hom(\omega P^{\bullet},\omega P^{\bullet})$, and such that the action of $V_T$ is given by the logarithm of monodromy action. Let $D(P_m^{eq})$ be the corresponding derived category.

\begin{lemma}
  \label{sec:comp-with-soerg-3} There is an equivalence of triangulated categories $real_{eq}:D(P_m^{eq})\to D^b_m(X/T)$ such that the diagrams above commute up to natural isomorphism:
\[
  \begin{tikzcd} D(P^{eq}_m) \arrow[d, "real_{eq}"'] \arrow[r, "\operatorname{For}"] & D^b(P'_m) \arrow[d, "real"] \\ D^b_m(X/T) \arrow[r, "p^*"] & D_m^b(X)
\end{tikzcd}
\]
\[
  \begin{tikzcd} D(P^{eq}_m) \arrow[d, "real_{eq}"'] & D^b(P'_m) \arrow[d, "real"]\arrow[l, "\operatorname{Ind}_{S}^{{\mathbb{K}}_{T}}{[}-2r{]}"'] \\ D^b_m(X/T) & D_m^b(X) \arrow[l, "p_!"]
\end{tikzcd}
\] Here $real$ is the realization functor and $For$ is the functor forgetting the $\mathbb{K}_T$-action.
\end{lemma}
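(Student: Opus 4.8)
The plan is to treat this as the mixed refinement of the result of \cite{2geometric} and to carry its proof over, the one extra input being the Frobenius/weight bookkeeping, which is rigidly forced by the two compatibility squares. We begin with the framework. Since in our application $X = U\backslash G/U$ is Bruhat-stratified (so Assumption~S of \cite{by} holds), the realization functor $real$ identifies $D^b(\mathcal{P}'_m(X))$ with the monodromic subcategory of $D^b_m(X)$, exactly as in loc. cit.; moreover every monodromic perverse sheaf carries functorially the logarithm of its monodromy, so both categories are $S$-linear with $S = \operatorname{Sym}V_T$ acting this way, and $real$ is $S$-linear. Thus it suffices to produce an $S$-linear equivalence $D^b_m(X/T) \simeq D(\mathcal{P}^{eq}_m)$ commuting with the relevant operations, where we may read the right-hand side as the derived category of $\mathbb{K}_T$-dg-modules internal to the monodromic $D^b_m(X)$, with $V_T \subset \mathbb{K}_T$ acting by log-monodromy.

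The functor $real_{eq}$ is built, as in \cite{2geometric}, by gluing along the $T$-orbits on $X$: on a single orbit the equivalence is (a Frobenius-equivariant, $S$-linear enhancement of) the Bernstein--Gel'fand--Gel'fand Koszul duality between finitely generated dg-modules over the exterior algebra $\Lambda(V_T[1])$ and over $\operatorname{H}^{\bullet}(BT) = \operatorname{Sym}(V_T^{\vee}[-2])$ --- on the sheaf side one uses that $p\colon X \to X/T$ is a $T$-torsor of stacks, so that $D^b_m(X/T)$ is recovered from $D^b_m(X)$ by descent, the exterior generators of $\mathbb{K}_T$ being incarnated by the ($\ell$-adic) cohomology of the fibre $T$ and the monodromy contributing the remaining $S$-direction that promotes $\Lambda(V_T[1])$ to $\mathbb{K}_T$; the shift $[1]$ and the Tate twist $(4)$ on the exterior generators, as well as the overall shift $[-2r]$ in the second square, are then fixed once and for all by the normalisations $p^{\dagger} = p^*[\dim T]$ and $\tw{1} = [1](1)$ (a routine matching we omit). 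Because $\mathbb{K}_T$ has zero differential, a $\mathbb{K}_T$-action obtained this way can be rectified to a strict dg-action on a genuine complex of monodromic perverse sheaves (no higher coherences obstruct, and $real$ transports module structures), which is what puts the target on the nose equal to $D(\mathcal{P}^{eq}_m)$; and that $real_{eq}$ is an equivalence follows by d\'evissage --- both sides being local on $X$ and all functors in sight commuting with the gluing --- from the single-orbit case above.

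The two squares then hold by construction: $p^* \circ real_{eq}$ forgets the $\mathbb{K}_T$-module structure and applies $real$ (the top square), and $p_! \circ real$, via the projection formula, equals $real_{eq} \circ \operatorname{Ind}_S^{\mathbb{K}_T}$ up to the shift $[-2r]$ accounting for the relative dimension of $p$ and the convolution normalisation (the bottom square). I expect the main obstacle to be the middle step carried out \emph{compatibly with weights and with the monodromic $S$-action}: one must verify that descent along the stacky torsor $p$ is governed by the expected (co)monad on the bounded monodromic subcategories, pin down the Tate twist and shift decorating the exterior generators of $\mathbb{K}_T$, and --- crucially --- keep the log-monodromy action of $S$ correctly entangled with the equivariant parameters throughout; this entanglement, which makes $\mathbb{K}_T$ an $S$-algebra rather than a $\Qlbar$-algebra, is exactly what separates the present monodromic situation from the plain equivariant statement of \cite{2geometric}. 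Granting it, the rectification and both squares are routine.
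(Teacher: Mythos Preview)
Your approach is the same as the paper's: invoke \cite{2geometric} and carry its argument over to the mixed setting. The paper's proof is two sentences long --- it asserts that the proof of Lemma~44 in \cite{2geometric} applies verbatim, and notes a single modification: the second diagram in loc.\ cit.\ is stated with $p_*$ rather than $p_!$, and one passes from one to the other using Lemma~\ref{*!} (which gives $\pi_* \simeq \pi_![r]$ on the monodromic category).

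Your write-up is considerably more elaborate, sketching the internal mechanism of the argument from \cite{2geometric} (descent along the torsor, the Koszul picture on a single orbit, rectification of the dg-action). That is fine as an exposition, but you flag as the ``main obstacle'' precisely what the paper regards as routine: the Frobenius/weight bookkeeping and the monodromy--equivariance entanglement are already accounted for in the setup (the generators of $\mathbb{K}_T$ are given their weights by fiat, and $S$ acts through log-monodromy by definition), so there is nothing to check beyond what \cite{2geometric} does. Conversely, you do not isolate the one place where a genuine change is needed --- the passage from $p_*$ to $p_!$ in the second square --- which is the only content the paper adds to the reference. Your appeal to ``the projection formula'' and a shift for ``the relative dimension of $p$'' is in the right spirit, but the clean way to do it here is exactly Lemma~\ref{*!}.
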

\begin{proof} The proof of Lemma 44 in \cite{2geometric} applies verbatim to our situation. The only difference is that the second diagram in loc. cit. involves the functor $p_*$ which can be replaced by $p_!$ by Lemma \ref{*!}.
\end{proof}
\begin{proof}[Proof of Proposition \ref{sec:comp-with-soerg}] By Lemma \ref{sec:comp-with-soerg-3} applied to $X = G/U$ and adjoint $T$-action, $p^{\dagger}p_!\T_{w_0}\tw{2\dim T}$ is the image under the realization functor of the complex $\T_{w_0}\otimes_S \mathbb{K}_T$, where $\mathbb{K}_T$ is considered as a complex of free $\mathbb{K}_T^0 = S$-modules, and the action of $S$ on $\T_{w_0}$ is the antidiagonal action of the monodromy. By construction, we have $\Lambda(B_{w_0})\simeq \T_{w_0}$ and the complex $\T_{w_0}\otimes_S \mathbb{K}_T$ is the image of $K_{\mathbb{S}}$ under $h\operatorname{Ho}(\Lambda)$.
\end{proof} Let $\mathbb{K} = p^{\dagger}p_{!}\T_{w_0}\tw{2\dim T}$

The complex $\mathbb{K}$ satisfies the following properties.
\begin{lemma}
  \label{sec:comp-with-soerg-1} For any $w \in W$, we have $\DD_w\star\mathbb{K} \simeq \mathbb{K}^w(l(w)), \NN_w\star\mathbb{K} \simeq \mathbb{K}^w(-l(w)), \D'\mathbb{K} \simeq \mathbb{K}[-\dim T](-2\dim T)$. Here $\mathbb{K}^w$ stands for the complex $K$ with left monodromy action twisted by $w$.
\end{lemma}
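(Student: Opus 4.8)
The plan is to obtain (1) and (2) by transporting elementary computations with $B_{w_0}$ in $\Ho(\SBim_R(W))$ through the monoidal functor $h\Ho(\Lambda)$, keeping track of the monodromy twist, and to obtain (3) from self-duality of the Koszul complex.

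\emph{Parts (1) and (2).} First I would reduce to a simple reflection: writing $w=s_{i_1}\cdots s_{i_k}$ reduced, Proposition~\ref{braid_relations}~\ref{item:19} gives $\DD_w\simeq\DD_{s_{i_1}}\star\cdots\star\DD_{s_{i_k}}$ and $\NN_w\simeq\NN_{s_{i_1}}\star\cdots\star\NN_{s_{i_k}}$, and applying the simple factors from right to left the left-monodromy twists compose to the twist by $w$ while the grading shifts add up to $\pm l(w)$; so it suffices to prove $\DD_s\star\mathbb{K}\simeq\mathbb{K}^s(1)$ and $\NN_s\star\mathbb{K}\simeq\mathbb{K}^s(-1)$ for $s$ simple. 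By Proposition~\ref{sec:comp-with-soerg}, $\mathbb{K}\simeq h\Ho(\Lambda)(K_{\mathbb{S}})$, and since $h\Ho(\Lambda)$ is monoidal (Proposition~\ref{sec:comp-with-soerg-4}) one has $\DD_s\star\mathbb{K}\simeq h\Ho(\Lambda)(\Delta_s\otimes_R K_{\mathbb{S}})$ and $\NN_s\star\mathbb{K}\simeq h\Ho(\Lambda)(\nabla_s\otimes_R K_{\mathbb{S}})$. Here $B_s\otimes_R B_{w_0}\simeq B_{w_0}(1)\oplus B_{w_0}(-1)$ because $sw_0<w_0$, and in the two-term complex $\Delta_s\otimes_R B_{w_0}=(B_s\otimes_R B_{w_0}\to B_{w_0}(1))$ the component $B_{w_0}(-1)\to B_{w_0}(1)$ of the differential vanishes for degree reasons (as $\operatorname{End}(B_{w_0})\simeq R\otimes_{R^W}R$ sits in non-negative degrees, there are no degree-$0$ bimodule maps $B_{w_0}(-1)\to B_{w_0}(1)$), whence the component $B_{w_0}(1)\to B_{w_0}(1)$ is an isomorphism and $\Delta_s\otimes_R B_{w_0}\simeq B_{w_0}(-1)$ in $\Ho(\SBim_R(W))$; likewise $\nabla_s\otimes_R B_{w_0}\simeq B_{w_0}(1)$. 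Carrying this through $K_{\mathbb{S}}=K^{\bullet}\otimes_{R\otimes_k R}B_{w_0}$ replaces every term $B_{w_0}$ by $B_{w_0}(\mp1)$ and, crucially, conjugates by $s$ the Koszul differentials, which are commutators with linear elements of $R$ acting via the left $R$-structure; hence $\Delta_s\otimes_R K_{\mathbb{S}}$ (resp.\ $\nabla_s\otimes_R K_{\mathbb{S}}$) is homotopy equivalent to $K_{\mathbb{S}}$ with its differentials $s$-twisted and its internal grading shifted by $-1$ (resp.\ $+1$). Since $h\Ho(\Lambda)$ reverses the internal grading and sends this $s$-twisted complex to $\mathbb{K}^s$, this gives $\DD_s\star\mathbb{K}\simeq\mathbb{K}^s(1)$ and $\NN_s\star\mathbb{K}\simeq\mathbb{K}^s(-1)$. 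Geometrically the $s$-twist reflects the fact that left convolution with $\DD_s$ or $\NN_s$ (both supported over the $s$-th stratum) intertwines the adjoint $T$-averaging $p^{\dagger}p_!$ with the $s$-twisted averaging $p_s^{\dagger}(p_s)_!$ of Definition~\ref{sec:comp-with-soerg-6}; checking that the twist of the Koszul differential matches the twist of the equivariance is the step I expect to be the main obstacle.

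\emph{Part (3).} I would combine $\mathbb{K}\simeq h\Ho(\Lambda)(K_{\mathbb{S}})$ with the fact that $h\Ho(\Lambda)$ intertwines the Soergel duality $^{\vee}$ with $\D'$ (Proposition~\ref{sec:comp-with-soerg-4}~\ref{item:14}), which gives $\D'\mathbb{K}\simeq h\Ho(\Lambda)(K_{\mathbb{S}}^{\vee})$; it then remains to identify $K_{\mathbb{S}}^{\vee}$ in $\Ho(\SBim_R(W))$. The Koszul resolution $K^{\bullet}$ of the diagonal bimodule, built from a basis $v_1,\dots,v_{\dim T}$ of $\mathfrak{h}_0^{\vee}$, has $R\otimes_k R$-linear dual isomorphic to $K^{\bullet}$ up to the homological shift $[-\dim T]$ and the internal shift $(2\dim T)$ (the latter coming from $\wedge^{\dim T}\mathfrak{h}_0^{\vee}\simeq k(2\dim T)$); together with $B_{w_0}^{\vee}\simeq B_{w_0}$ this yields $K_{\mathbb{S}}^{\vee}\simeq K_{\mathbb{S}}[-\dim T](2\dim T)$. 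Applying $h\Ho(\Lambda)$ and again using that it reverses the internal grading, $\D'\mathbb{K}\simeq\mathbb{K}[-\dim T](-2\dim T)$, as asserted. (One could also deduce (3) directly in $\cM$ from $\D'\T_{w_0}\simeq\T_{w_0}$ (Lemma~\ref{sec:pro-unip-tilt-1}), Lemma~\ref{*!} applied to $p$, and the duality formalism of Section~\ref{sec:gener-compl-categ}, but the passage through Soergel bimodules is the most economical.)
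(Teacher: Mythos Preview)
Your Part~(3) is essentially the paper's argument: the paper invokes $\D'\T_{w_0}\simeq\T_{w_0}$ (Lemma~\ref{sec:pro-unip-tilt-1}) and the self-duality $(K^{\bullet})^{\vee}\simeq K^{\bullet}\tw{2\dim T}$, which is exactly what you do after transporting through $h\Ho(\Lambda)$.

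For Parts~(1) and~(2) the paper takes a shorter and more geometric route. Instead of reducing to simple reflections and computing $\Delta_s\otimes_R B_{w_0}$ in $\Ho(\SBim_R(W))$, it quotes directly the isomorphisms $\DD_w\star\T_{w_0}\simeq\T_{w_0}\star\DD_w\simeq\T_{w_0}(l(w))$ and $\NN_w\star\T_{w_0}\simeq\T_{w_0}(-l(w))$ for arbitrary $w$ (citing \cite{br}, Proposition~7.10), and then reads the statement off from $\mathbb{K}=p^{\dagger}p_!\T_{w_0}\tw{2\dim T}$. The monodromy twist by $w$ appears because left convolution with an object supported over the $w$-stratum intertwines $p^{\dagger}p_!$ with $p_w^{\dagger}(p_w)_!$, exactly the phenomenon you flag in your last sentence of Parts~(1)--(2). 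Your route is the Soergel-bimodule shadow of the same fact, so the two arguments are close in spirit; the paper's version simply avoids the verification you call ``the main obstacle'' by outsourcing the key isomorphism to \cite{br}.

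One genuine slip in your argument: the component $B_{w_0}(-1)\to B_{w_0}(1)$ does \emph{not} vanish for degree reasons. A graded map $B_{w_0}(-1)\to B_{w_0}(1)$ is the same as an element of $\hom(B_{w_0},B_{w_0}(2))\simeq(R\otimes_{R^W}R)_2$, which is the nonzero space of degree-$2$ endomorphisms. What you actually need is that the \emph{other} component $B_{w_0}(1)\to B_{w_0}(1)$ is an isomorphism; this is a degree-$0$ endomorphism, hence a scalar, and it is nonzero because one may take the summand $B_{w_0}(1)\hookrightarrow B_sB_{w_0}$ to be the image of the unit $R\to B_s(-1)$ tensored with $B_{w_0}$, on which the multiplication map $B_s\to R(1)$ restricts to the identity. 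With this correction your Gaussian elimination goes through and the rest of your argument is fine.
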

\begin{proof} This follows from the fact that
  \[\DD_w\star\T_{w_0}\simeq\T_{w_0}\star\DD_w\simeq \T_{w_0}(l(w)),\]
  \[\NN_w\star\T_{w_0}\simeq\T_{w_0}\star\NN_w\simeq \T_{w_0}(-l(w))\] (see e.g. Proposition 7.10 of \cite{br}), Lemma \ref{sec:pro-unip-tilt-1} and an isomorphism $(K^{\bullet})^{\vee}\simeq (K^{\bullet})\tw{2\dim T}$, where $(K^{\bullet})^{\vee}$ is the Koszul complex of the diagonal bimodule over the polynomial ring $R$, see Section~\ref{sec:hochsch-cohom}.
\end{proof} Combining Proposition \ref{sec:comp-with-soerg} with Proposition \ref{sec:pro-unip-tilt} we get the following monodromic model for Hoschschild cohomology of Soergel bimodules.
Recall the notation 
\[
  \mathfrak{E}_k = \mathcal{H}^{-k}_{w_0}(\mathbb{K}).
\]
\begin{theorem}
  \label{sec:comp-with-soerg-2}There is an isomorphism of functors on $\SBim_R(W)$
  \[ \operatorname{HH}^k(M)(-l(w_0)) \simeq \underline{\Hom(\mathfrak{E}_k, \Lambda(M))^{f,\Frinv}}.
  \] More generally, for any complex $\F \in \Ho(\SBim_R(W))$ we have
  \[ \operatorname{HH}^k(\F)(-l(w_0)) \simeq \underline{\Hom(\mathfrak{E}_k, h\Ho(\Lambda)(\F))^{f,\Frinv}}.
  \]
  
\end{theorem}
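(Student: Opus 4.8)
The plan is to string together, in the order below, four facts that are already available: the Koszul-type rewriting of Hochschild cohomology through $K_{\mathbb{S}}$, the $\Hom$-comparison of Proposition~\ref{sec:comp-with-soerg-4}\ref{item:12}, the geometric identification $h\Ho\Lambda(K_{\mathbb{S}})\simeq\mathbb{K}$ of Proposition~\ref{sec:comp-with-soerg}, and the injectivity of free-monodromic tilting objects for the Radon-shifted t-structure encoded in Proposition~\ref{sec:pro-unip-tilt}. First, for a single Soergel bimodule $M$, the Lemma of Section~\ref{sec:hochsch-cohom} gives $\operatorname{HH}^k(M)(-l(w_0))\simeq\operatorname{H}^k(\underline{\Hom}(K_{\mathbb{S}},M))$; since $\Ho(\SBim_R(W))$ is a homotopy category, the right-hand side is precisely $\Hom_{\Ho(\SBim_R(W))}(K_{\mathbb{S}},M[k])$, and this identification is natural in $M$.

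Next I would apply Proposition~\ref{sec:comp-with-soerg-4}\ref{item:12} to transport this $\Hom$-space to the monodromic side,
\[
  \Hom_{\Ho(\SBim_R(W))}(K_{\mathbb{S}},M[k])\ \simeq\ \underline{\Hom\bigl(h\Ho\Lambda(K_{\mathbb{S}}),\,h\Ho\Lambda(M)[k]\bigr)^{f,\Frinv}},
\]
and then rewrite the two arguments: $h\Ho\Lambda(K_{\mathbb{S}})\simeq\mathbb{K}$ by Proposition~\ref{sec:comp-with-soerg}, while $h\Ho\Lambda(M)=\Lambda(M)\in\Tilt$ for $M\in\SBim_R(W)$. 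Since $\Lambda(M)$ is a finite direct sum of Tate twists of the $\T_v$, and the Tate twist (without homological shift) is t-exact for the t-structure of Definition~\ref{sec:pro-unip-tilt-2}, Proposition~\ref{sec:pro-unip-tilt}, applied to each summand and collected additively, yields $\Hom(\mathbb{K},\Lambda(M)[k])\simeq\Hom(\mathcal{H}^{-k}_{w_0}(\mathbb{K}),\Lambda(M))=\Hom(\mathfrak{E}_k,\Lambda(M))$, again naturally in $M$. Composing the displayed isomorphisms gives the first formula as an isomorphism of functors on $\SBim_R(W)$.

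For the general statement, I would note that $M\mapsto\operatorname{HH}^k(M)(-l(w_0))$ and $M\mapsto\underline{\Hom(\mathfrak{E}_k,\Lambda(M))^{f,\Frinv}}$ are additive functors on $\SBim_R(W)$, so the isomorphism just obtained extends canonically to their termwise extensions to $\Ho(\SBim_R(W))$. To see that the termwise extension of the right-hand side is computed by $\Hom(\mathfrak{E}_k,h\Ho\Lambda(\F))$ with $h\Ho\Lambda(\F)$ regarded as a single object of $\cM$, I would use that $\mathfrak{E}_k=\mathcal{H}^{-k}_{w_0}(\mathbb{K})$ lies in the heart $\Perv_{w_0}$ and that the terms $\Lambda(\F^j)$ lie in $\Tilt\subset\Perv_{w_0}$, together with the vanishing $\Hom(\mathfrak{E}_k,\T_v[i])=0$ for $i\neq0$ that comes out of the proof of Proposition~\ref{sec:pro-unip-tilt}; this makes the hypercohomology spectral sequence for the complex $h\Ho\Lambda(\F)$ degenerate onto the complex obtained termwise. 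Equivalently, one may invoke the t-exactness of $h\Ho\Lambda$ for the shifted t-structures, Corollary~\ref{sec:comp-with-soerg-5}, and push the computation to the Soergel side.

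None of the individual steps is deep, since all the substantive input is already in place; the real work is bookkeeping. The points to watch are: the $(-l(w_0))$ shift introduced when passing from the Koszul resolution $K^\bullet$ to $K_{\mathbb{S}}=K^\bullet\otimes_{R\otimes R}B_{w_0}$; the fact, recorded after the first Proposition of the Introduction, that under Koszul duality $\Frinv$ turns the half Tate twist into the inverse twist composed with a homological shift, so the weight and shift normalizations must be kept aligned throughout; and, for the complex case, the precise meaning of $\operatorname{HH}^k(\F)$ — defined termwise on $\Ho(\SBim_R(W))$ — versus the single-object $\Hom(\mathfrak{E}_k,h\Ho\Lambda(\F))$ on the monodromic side. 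I expect the last of these to be the only genuinely subtle point, and it is exactly what the vanishing in Proposition~\ref{sec:pro-unip-tilt} takes care of.
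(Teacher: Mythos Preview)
Your proposal is correct and follows exactly the approach indicated in the paper, which simply states that the theorem is obtained by ``combining Proposition~\ref{sec:comp-with-soerg} with Proposition~\ref{sec:pro-unip-tilt}'' and gives no further details. Your write-up is precisely how one fills in those details: rewrite $\operatorname{HH}^k$ via $K_{\mathbb{S}}$, transport across $h\Ho\Lambda$ using Proposition~\ref{sec:comp-with-soerg-4}\ref{item:12} and Proposition~\ref{sec:comp-with-soerg}, and then invoke the injectivity of tilting objects for the shifted t-structure (Proposition~\ref{sec:pro-unip-tilt}) to extract the cohomology $\mathfrak{E}_k$; the extension to complexes via the vanishing $\Hom(\mathfrak{E}_k,\T_v[i])=0$ for $i\neq 0$ is also the intended mechanism.
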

In view of Proposition \ref{sec:comp-with-soerg-4} and Corollary \ref{sec:comp-with-soerg-5}, this theorem may be stated entirely in the homotopy category of Soergel bimodules. Recall the notation
\[
  \mathbb{E}_k = \mathcal{H}^{-k}_{\mathbb{S},w_0}(K_{\mathbb{S}}).
\]
\begin{corollary}
  \label{sec:comp-with-soerg-7}
  For any complex $\F \in \Ho(\SBim_R(W))$ we have the following isomorphism in $\Ho(R-\operatorname{mod})$
\[
  \operatorname{HH}^k(\F)(-l(w_0)) \simeq \Hom(\mathbb{E}_k, \F).
  \]
  In particular, for a braid $\beta$, taking $\F$ to be a Rouquier complex $F_\beta$, we get the following description of Khovanov-Rozansky homology:
  \[
\operatorname{HHH}^k({\beta})(-l(w_0)) \simeq \Hom(\mathbb{E}_k, F_\beta).
  \]
\end{corollary}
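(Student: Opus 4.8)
The plan is to deduce the statement purely formally from Theorem~\ref{sec:comp-with-soerg-2}, using the properties of the functor $h\Ho(\Lambda)$ collected in Proposition~\ref{sec:comp-with-soerg-4} and Corollary~\ref{sec:comp-with-soerg-5}; no new geometric input is required, the only point being to transport the isomorphism of Theorem~\ref{sec:comp-with-soerg-2} from the monodromic side back to $\Ho(\SBim_R(W))$.

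The first step is to identify $h\Ho(\Lambda)(\mathbb{E}_k)$ with $\mathfrak{E}_k$. By Proposition~\ref{sec:comp-with-soerg} we have $h\Ho(\Lambda)(K_{\mathbb{S}}) \simeq \mathbb{K}$, and by Corollary~\ref{sec:comp-with-soerg-5} the functor $h\Ho(\Lambda)$ is t-exact for the shifted t-structure of Definition~\ref{sec:t-structure-duality-1} on $\Ho(\SBim_R(W))$ and the shifted t-structure of Definition~\ref{sec:pro-unip-tilt-2} on $\cM$. A t-exact triangulated functor commutes with truncation, hence with the associated cohomology functors, so
\[
 h\Ho(\Lambda)(\mathbb{E}_k) = h\Ho(\Lambda)\bigl(\mathcal{H}^{-k}_{\mathbb{S},w_0}(K_{\mathbb{S}})\bigr) \simeq \mathcal{H}^{-k}_{w_0}\bigl(h\Ho(\Lambda)(K_{\mathbb{S}})\bigr) \simeq \mathcal{H}^{-k}_{w_0}(\mathbb{K}) = \mathfrak{E}_k .
\]

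The second step is to apply the $\Hom$-comparison \ref{item:12} of Proposition~\ref{sec:comp-with-soerg-4} with $M = \mathbb{E}_k$ and $N = \F$ (both objects of $\Ho(\SBim_R(W))$), which gives
\[
 \Hom(\mathbb{E}_k, \F) \simeq \underline{\Hom\bigl(h\Ho(\Lambda)(\mathbb{E}_k), h\Ho(\Lambda)(\F)\bigr)^{f,\Frinv}} \simeq \underline{\Hom\bigl(\mathfrak{E}_k, h\Ho(\Lambda)(\F)\bigr)^{f,\Frinv}} ;
\]
by the second isomorphism in Theorem~\ref{sec:comp-with-soerg-2} the right-hand side is $\operatorname{HH}^k(\F)(-l(w_0))$, which is the first assertion. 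The statement for Khovanov-Rozansky homology then follows by taking $\F = F_\beta$ and using $\operatorname{HHH}^k(\beta) = \operatorname{HH}^k(F_\beta)$.

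I do not expect a serious obstacle; what a careful write-up should spell out is (i) that $h\Ho(\Lambda)$ sends the heart of the shifted t-structure on $\Ho(\SBim_R(W))$ — which contains $\mathbb{E}_k$ but is a priori larger than $\SBim_R(W)$ — into $\Perv_{w_0}$, so that the intertwining of cohomology functors used above is legitimate, and (ii) that the isomorphism of Proposition~\ref{sec:comp-with-soerg-4}\,\ref{item:12} is natural enough that the resulting identification of $\operatorname{HH}^k(-)(-l(w_0))$ with $\Hom(\mathbb{E}_k,-)$ holds as functors on $\Ho(\SBim_R(W))$ and not merely objectwise; both are immediate from the t-exactness of $h\Ho(\Lambda)$ together with the naturality already built into Proposition~\ref{sec:comp-with-soerg-4}.
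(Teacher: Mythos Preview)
Your proposal is correct and follows exactly the route indicated in the paper: the corollary is stated immediately after Theorem~\ref{sec:comp-with-soerg-2} with the remark that, in view of Proposition~\ref{sec:comp-with-soerg-4} and Corollary~\ref{sec:comp-with-soerg-5}, the theorem can be restated entirely in $\Ho(\SBim_R(W))$; your two steps (t-exactness to get $h\Ho(\Lambda)(\mathbb{E}_k)\simeq\mathfrak{E}_k$, then the $\Hom$-comparison \ref{item:12}) are precisely this translation.
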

\subsection{Whittaker category.}\label{sec:whittaker-category-1} Fix a maximal unipotent subgroup $U^- \subset G$, opposite to $U$. We have an isomorphism
\[ U^{-}/[U^{-},U^{-}] \simeq \prod_{s \in S} \mathbb{G}_a,
\] where the product is of the negative root subgroups of $G$. Fix a non-trivial character $\psi:\Fq \to \Qlbar^{\times}$ and consider the corresponding Artin-Schreier local system $\operatorname{AS}_{\psi}$ on $\mathbb{G}_a$. Let $\xi$ be the composition
  \begin{equation}
  \xi: U^{-} \to U^{-}/[U^{-},U^{-}] \to \prod_{s \in S} \mathbb{G}_a \xrightarrow{+} \mathbb{G}_a,\label{eq:7}
\end{equation}
 where $+$ is the addition map. We consider the monodromic Iwahori-Whittaker category $\M_{\psi} \subset D_m^b(G/U)$ and its completion $\cM_{\psi}$ with respect to the right $T$-action on $G/U$, which, by Lemma 4.4.5 of \cite{by}, comes with the adjoint pair of functors
\[ \Av_{U!}:\cM_{\psi} \to \cM, \Av_{\psi}: \cM \to \cM_{\psi}.
\] We recall the definitions. Consider the action morphisms
\[ a_U: U \times G/U \to G/U, a_{U^-}: U^{-}\times G/U \to G/U.
\] The functor $\Av_{\psi}$ (before the completion) is defined as the composition
\[ \M \xrightarrow{\operatorname{For}} D_m^b(G/U) \xrightarrow{\operatorname{av}_{\psi}} \M_{\psi},
\] where For is the functor forgetting the left $U$-equivariant structure and
\[ \operatorname{av}_{\psi}\F = a_{U^-!}(\xi^{*}\operatorname{AS}_\psi\tw{l(w_0)} \boxtimes \F).
\] The functor $\Av_{U}$ (before the completion) is defined as
\[ \operatorname{Av}_{U}\F = a_{U!}(\underline{\Qlbar}\tw{l(w_0)} \boxtimes \F).
\]

We have the following
\begin{proposition}[\cite{by}, Corollary 5.2.4]
  \label{sec:whittaker-category}
There is an isomorphism  \[\T_{w_0} \simeq \Av_{U}\Av_{\psi}\dd.\]
\end{proposition}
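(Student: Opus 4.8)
The plan is to deduce the statement from the uniqueness of indecomposable free-monodromic tilting objects. By Lemma 5.2.2 of \cite{by} (the input to Lemma \ref{sec:pro-unip-tilt-1}), any indecomposable free-monodromic tilting object of $\cM$ whose $*$-restriction to the open stratum $X_{w_0}$ is $\LL[r+l(w_0)](2r+l(w_0))$ is isomorphic to $\T_{w_0}$. Hence it suffices to check that $\Av_U\Av_\psi\dd$ (i) admits both a standard and a costandard filtration in the sense of Proposition \ref{tiltprop} \ref{tiltprop1}; (ii) is indecomposable; and (iii) has exactly that restriction to $X_{w_0}$, up to the normalizing shift and Tate twist.

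First I would unwind the intermediate object $\Av_\psi\dd \in \cM_\psi$. Since $\Av_\psi$ commutes with right convolution by $\cM$ and $\dd$ is the monoidal unit, one has $\Av_\psi(\F) \simeq (\Av_\psi\dd)\star\F$ for all $\F \in \cM$, so $\Av_\psi\dd$ is the ``Whittaker sheaf''. Concretely, $\dd = \LL[r](2r)$ is, up to shift, the free-monodromic skyscraper on the base coset $B/U \subset G/U$, and $\operatorname{av}_{\psi}$ spreads it via the twisted $U^{-}$-action over the big cell $U^{-}B/U$; the crucial point is that the result is \emph{clean}, i.e.\ its $!$- and $*$-extensions from the unique open Iwahori--Whittaker orbit coincide, the Artin--Schreier twist forcing the vanishing of the relevant boundary cohomology. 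Thus $\Av_\psi\dd$ is simultaneously the free-monodromic standard and costandard Iwahori--Whittaker object on that orbit, up to a shift and twist to be pinned down by a direct support computation.

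Next I would push $\Av_\psi\dd$ forward. Besides the pair $(\Av_{U!},\Av_\psi)$ of Lemma 4.4.5 of \cite{by} there is a companion right-adjoint averaging functor $\Av_{U*}$, differing from $\Av_{U!}$ by an explicit shift and Tate twist (an instance of the torsor identity $\pi_*\simeq\pi_![r]$ applied to the orbit maps). A stratified argument shows that $\Av_{U!}$ sends costandard-filtered Iwahori--Whittaker objects to costandard-filtered objects of $\cM$, while $\Av_{U*}$ sends standard-filtered ones to standard-filtered objects of $\cM$. Applying this to the clean object $\Av_\psi\dd$, on which the two averagings agree up to shift, yields that $\Av_U\Av_\psi\dd$ carries both filtrations, which is (i). For (ii) and (iii) I would compute $*$-restrictions to the strata $X_w$: the open Iwahori--Whittaker orbit maps isomorphically onto $X_{w_0}$, so $i_{w_0}^*(\Av_U\Av_\psi\dd)$ is $\LL$ with precisely the normalization above; in particular the open stratum occurs with multiplicity one, which forces indecomposability. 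The uniqueness lemma then identifies $\Av_U\Av_\psi\dd$ with $\T_{w_0}$; the identity $\D'(\Av_U\Av_\psi\dd)\simeq\Av_U\Av_\psi\dd$ (cleanness is $\D'$-stable and $\D'$ commutes with averaging up to shift) gives an independent consistency check against $\D'\T_{w_0}\simeq\T_{w_0}$.

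The main obstacle is step (i): making the ``tilting $=$ clean $+$ averaging'' argument precise in the free-monodromic pro-completed setting. One must verify that $\Av_{U!}$ and $\Av_{U*}$ are both available on $\cM_\psi$ and are related by the expected shift, that cleanness of $\Av_\psi\dd$ survives the completion, and that averaging is compatible with the standard and costandard filtrations of pro-objects. Running alongside this is the bookkeeping of the shifts, the length $l(w_0)$, the torus rank $r$, and the half-Tate-twists, which must be tracked carefully enough that the leading term of $i_{w_0}^*$ lands exactly on $\LL[r+l(w_0)](2r+l(w_0))$; that normalization check, rather than any conceptual difficulty, is where most of the work lies.
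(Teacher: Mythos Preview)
The paper gives no proof of this proposition: it is simply quoted from \cite{by}, Corollary 5.2.4, so there is nothing in the present paper to compare your argument against. Your sketch is a reasonable outline of the underlying argument in \cite{by}---cleanness of the Whittaker object $\Av_\psi\dd$, preservation of (co)standard filtrations under $\Av_{U!}$ and $\Av_{U*}$, and the uniqueness of indecomposable free-monodromic tiltings from Lemma 5.2.2 of \cite{by}---and would be an appropriate way to reconstruct the result if a self-contained proof were required.
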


\section{Character sheaves and the Harish-Chandra functor}
\label{sec:char-sheav-harish} If $X$ is a stack with an action of the algebraic group $H$, we write $\Av_H$ for the functor $\pi_!$, where $\pi: X \to X/H$ is the canonical projection. If $H\subset H'$ are two algebraic groups with $H'$ acting on $X$, we write $\Av_H^{H'}$ for the functor $\pi'_!$, where $\pi':X/H \to X/H'$ is the canonical projection. We sometimes omit $H$ from the notation, if it is clear that the sheaf to which $\Av_H^{H'}$ is applied lies in the $H$-equivariant category. Write $\operatorname{For} = \pi^*[\dim H]$. We will also sometimes omit $\operatorname{For}$ to unburden the notation. We use the notation $\Av^{\Ad}_H$ to emphasize that the action taken is adjoint, where it is relevant.

\subsection{Harish-Chandra functor.}
\label{sec:harish-chandra-funct-1}
\label{sec:harish-chandra-funct} For a subgroup $H \subset G$, let $G/_{\Ad}H$ stand for the quotient stack of $G$ by the adjoint action of $H$. We have a pair of adjoint functors $\hc_!:D_m^b(G/_{\Ad}G) \rightleftarrows D_m^b(\mathcal{Y}):\chi$ defined below.

The Harish-Chandra functor $\hc_!$ is defined as the composition
\[ D_m^b(G/_{\Ad}G) \xrightarrow{\operatorname{For}} D_m^b(G/_{\Ad}B) \xrightarrow{\Av_U} D_m^b(\mathcal{Y}),
 \] and $\chi$ is defined as the composition
\[ D_m^b(\mathcal{Y}) \xrightarrow{\operatorname{For}} D_m^b(G/_{\Ad}B) \xrightarrow{\Av^{\Ad}_G} D_m^b(G/_{\Ad}G).
 \]

For any algebraic group $G$, $D^b_m(G)$ is equipped with the $!$-convolution operation, defined as
\begin{equation}
\label{eq:2} \F \star \mathcal{G} = m_!(\F \boxtimes \mathcal{G}).
\end{equation} Define the category $D\mathcal{CS}$ -- the derived category of unipotent character sheaves -- as a full subcategory with objects $\F \in D^b_m(G/_{\Ad}G)$ such that $p^{\dagger}\hc_!(\F) \in \M$. $D\mathcal{CS}$ is a monoidal subcategory of $D^b_m(G/_{\Ad}G)$ and $p^{\dagger}\hc_!$ is a monoidal functor $D^b_m(G/_{\Ad}G) \to \M.$
\begin{bremark}
  The fact that this definition, restricted to the category of semi-simple equivariant perverse sheaves on $G$, coincides with the original definition given by Lusztig in \cite{lcs1} is the result of \cite{mirkovic1988characteristic}. 
\end{bremark}
Let $\mathcal{CS}$ stand for the full subcategory of perverse sheaves in $D\mathcal{CS}$.
\subsection{Central structure of the Harish-Chandra functor}
\label{sec:centr-struct-harish-1}
The Harish-Chandra functor is equipped with the canonical central structure. We recall what this means. Let $F: \mathcal{C} \to \mathcal{D}$ be a functor to a monoidal category $\mathcal{D}$. $F$ is said to be equipped with central structure if there is a functor $F_Z: \mathcal{C} \to Z(\mathcal{D})$ such that $F = \phi \circ F_Z$, where $Z(\mathcal{D})$ stands for the Drinfeld center of $\mathcal{D}$, and $\phi:Z(\mathcal{D})\to\mathcal{D}$ is the canonical forgetful functor. In particular, for objects $A \in \mathcal{C}, X \in \mathcal{D},$ there is an isomorphism $\beta_{A, X} : F(A) \star X \to X \star F(A) $ natural in $A, X$. Here $-\star-$ denotes the product in the monoidal category $\mathcal{D}$.

We have the following well-known
\begin{lemma}[cf. \cite{boyarchenko2014character}, Definition A.4.3]
\label{sec:centr-struct-harish} The forgetful functor $F:D^b_m(G/_{\Ad}G) \to D^b_m(G)$ carries a natural central structure with respect to the !-convolution monoidal structure on $D^b_m(G)$.
\end{lemma}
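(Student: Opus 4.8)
The plan is to produce, for every $A\in D^b_m(G/_{\Ad}G)$ and every $X\in D^b_m(G)$, a natural isomorphism $\beta_{A,X}\colon F(A)\star X\xrightarrow{\sim}X\star F(A)$, and then to check that the assignment $A\mapsto(F(A),\{\beta_{A,X}\}_X)$ upgrades $F$ to a functor $F_Z\colon D^b_m(G/_{\Ad}G)\to Z(D^b_m(G))$ with $\phi\circ F_Z=F$. One uses that an object $A$ of $D^b_m(G/_{\Ad}G)$ amounts to a complex on $G$, which we again denote $A$, equipped with an equivariance isomorphism $\gamma_h\colon A_{\Ad(h)g}\xrightarrow{\sim}A_g$ (i.e.\ an isomorphism $\mathrm{act}^*A\xrightarrow{\sim}\mathrm{pr}_2^*A$ on $G\times G$, where $\mathrm{act}(h,g)=hgh^{-1}$), subject to the cocycle relation $\gamma_{h_1h_2}=\gamma_{h_2}\circ\gamma_{h_1}$ on $G\times G\times G$; the functor $F$ forgets $\gamma$.

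First I would construct the braiding $\beta_{A,X}$. Introduce the automorphism $\psi\colon G\times G\to G\times G$, $\psi(g,h)=(h,\,h^{-1}gh)$, whose inverse is $(a,b)\mapsto(aba^{-1},a)$. Since $h\cdot(h^{-1}gh)=gh$, one has $m\circ\psi=m$ for the multiplication map $m$; as $\psi$ is an isomorphism this gives
\[
  X\star F(A)=m_!(X\boxtimes A)\ \simeq\ m_!\,\psi^*(X\boxtimes A).
\]
By base change $\psi^*(X\boxtimes A)$ is the external product of $X$ pulled back along $(g,h)\mapsto h$ with $A$ pulled back along $(g,h)\mapsto h^{-1}gh=\Ad(h^{-1})g$; applying $\gamma_{h^{-1}}$ in the second factor together with the symmetry of $\boxtimes$ produces a canonical isomorphism $\psi^*(X\boxtimes A)\simeq A\boxtimes X$. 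Pushing forward along $m$ yields $\beta_{A,X}\colon F(A)\star X=m_!(A\boxtimes X)\xrightarrow{\sim}m_!(X\boxtimes A)=X\star F(A)$, and naturality in $A$ and $X$ is immediate from functoriality of $\psi^*$, $m_!$ and of the isomorphisms $\gamma_\bullet$.

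Next I would verify that $\beta_{A,-}$ is a half-braiding, so that $(F(A),\beta_{A,-})$ is an object of $Z(D^b_m(G))$: one needs $\beta_{A,\delta_e}=\mathrm{id}$ for the monoidal unit $\delta_e$ (the skyscraper sheaf at $e\in G$), which is clear since $\psi$ restricts to the identity on $G\times\{e\}$, and the hexagon $\beta_{A,X\star Y}=(\mathrm{id}_X\star\beta_{A,Y})\circ(\beta_{A,X}\star\mathrm{id}_Y)$ with the associativity constraints inserted. For the hexagon one passes to $G\times G\times G$ with triple product $m^{(3)}(g,h,k)=ghk$ and the $m^{(3)}$-compatible automorphisms $\psi_{12}(g,h,k)=(h,\,h^{-1}gh,\,k)$ and $\psi_{23}(g,h,k)=(g,\,k,\,k^{-1}hk)$: the right-hand side corresponds to the composite $\psi_{23}\circ\psi_{12}$ (braiding $A$ first past $X$, then past $Y$), which equals $(g,h,k)\mapsto(h,\,k,\,(hk)^{-1}g(hk))$, the substitution realizing the left-hand side, because $(hk)^{-1}g(hk)=k^{-1}(h^{-1}gh)k$; the two induced identifications of the $A$-factor then coincide by the instance $\gamma_{(hk)^{-1}}=\gamma_{h^{-1}}\circ\gamma_{k^{-1}}$ of the cocycle relation. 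Functoriality of $A\mapsto(F(A),\beta_{A,-})$ is then formal, giving $F_Z$ with $\phi\circ F_Z=F$. Alternatively one may simply invoke \cite{boyarchenko2014character}, Definition~A.4.3.

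The step I expect to be the real obstacle is not the geometric bookkeeping above but the homotopy-coherence underlying it: to obtain $\beta_{A,X}$ as a genuine natural transformation, and to verify the hexagon on the level of morphisms rather than merely objects, one must control the higher compatibilities of the equivariance datum $\gamma$ together with the base-change and composition isomorphisms for $m_!$. This is most transparent when $D^b_m(G/_{\Ad}G)$ is modelled by a differential-graded or $\infty$-categorical enhancement (or via the simplicial bar resolution of the conjugation action), in which $\gamma$ is packaged as a coherent datum from the outset; granting such an enhancement, the construction above carries through without further difficulty.
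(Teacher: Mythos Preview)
Your proposal is correct and follows essentially the same route as the paper: the paper uses the automorphism $\alpha(g,h)=(g,g^{-1}hg)$ of $G\times G$ (your $\psi$ is precisely $\alpha\circ\mathrm{swap}$), invokes the equivariance isomorphism $a'^*F(\mathcal{A})\simeq\pi_2^*F(\mathcal{A})$ to identify $\alpha_!(X\boxtimes F(\mathcal{A}))\simeq X\boxtimes F(\mathcal{A})$, and then pushes forward along $m$ using $m\circ\alpha=m\circ\mathrm{swap}$. The paper simply declares the verification of the hexagon and remaining axioms ``straightforward'' and does not carry it out, whereas you spell this out via the cocycle identity $\gamma_{(hk)^{-1}}=\gamma_{h^{-1}}\circ\gamma_{k^{-1}}$; your extra caution about homotopy-coherence is reasonable but goes beyond what the paper requires or addresses.
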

\begin{proof} We recall the construction of the commutativity constraint $\beta$, the verification of the required properties is straightforward. Let $a': G \times G \to G, a'(g,h) = g^{-1}hg$ be the (inverse) adjoint action map and let $p: G\times G \to G$ be the projection to the second factor. Define the map $\alpha:G\times G \to G\times G$ by $\alpha(g,h) = (g,a'(g,h))$. For any $\mathcal{A} \in D^b_m(G/_{\Ad}G),$ there is a canonical isomorphism $a'^*F(\mathcal{A}) \to \pi^*F(\mathcal{A})$ which gives a canonical isomorphism of complexes on $G\times G$ for an arbitrary $X \in D^b_m(G)$,
  \[ \alpha_!(X \boxtimes F(\mathcal{A})) \to X \boxtimes F(\mathcal{A}).
  \] Recall that $m: G\times G \to G$ stands for the multiplication map. Now, using the fact that $m \circ \alpha(g,h) = hg$ and applying $m_!$ to both sides of the above isomorphism, we get an isomorphism
  \[ F(\mathcal{A})\star X \simeq m_!\alpha_!(X \boxtimes F(\mathcal{A})) \to m_!\left(X \boxtimes F(\mathcal{A})\right) = X \star F(\mathcal{A}).
  \]
\end{proof} Since $U$ is isomorphic to an affine space, the category $D^b_m(U\backslash G/U)$ may be considered as a full triangulated subcategory of $D^b_m(G)$, consisting of the objects of the form $\cc_{U}\star\F\star\cc_{U}, \F \in D^b_m(G)$. We get the following
\begin{corollary}[cf. \cite{bfo}, 3.3]\label{sec:centr-struct-harish-2} The functor $p^{\dagger}\hc_!$ has a central structure.
\end{corollary}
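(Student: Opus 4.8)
The plan is to obtain the central structure on $p^{\dagger}\hc_!$ by transporting the one furnished by Lemma \ref{sec:centr-struct-harish} along the fully faithful embedding $\M\hookrightarrow D^b_m(G)$ recalled above, whose essential image consists of the objects $\cc_U\star\F\star\cc_U$ and under which the $!$-convolution $\star$ on $\M$ is induced, up to a fixed shift and Tate twist, from the $!$-convolution on $D^b_m(G)$. First I would record the (essentially definitional, cf. \cite{bfo}, 3.3) identification: unwinding the definitions of $\operatorname{For}$, $\Av_U$ and $p^{\dagger}$ shows that under this embedding $p^{\dagger}\hc_!(\F)$ corresponds to $\cc_U\star\omega\F\star\cc_U$, up to shift and twist, i.e. $p^{\dagger}\hc_!$ is the two-sided $\cc_U$-averaging of the forgetful functor $F=\omega$. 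This is the one geometric input of the argument and I would state it with care.

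Given this, the half-braiding is built directly from the commutativity constraint $\beta_{\F,X}\colon\omega\F\star X\xrightarrow{\ \sim\ }X\star\omega\F$ of Lemma \ref{sec:centr-struct-harish}. Because $\omega\F$ is $\Ad_U$-equivariant, $\beta_{\F,\cc_U}$ already yields $\cc_U\star\omega\F\simeq\omega\F\star\cc_U$, whence $p^{\dagger}\hc_!(\F)\simeq\cc_U\star\omega\F\simeq\omega\F\star\cc_U$; in particular these one-sided averages absorb the boundary copies of $\cc_U$ of any object of $\M$. For $X=\cc_U\star X'\star\cc_U\in\M$ I would then set $\gamma_{\F,X}$ to be the composite
\[
  p^{\dagger}\hc_!(\F)\star X\ \simeq\ \omega\F\star X\ \xrightarrow{\ \beta_{\F,X}\ }\ X\star\omega\F\ \simeq\ X\star p^{\dagger}\hc_!(\F),
\]
the outer isomorphisms coming from the identification above together with idempotency of $\cc_U$. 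Both ends lie in $\M$, since $\M$ is monoidal and contains $p^{\dagger}\hc_!(\F)$, so $\gamma_{\F,X}$ is a morphism of $\M$; naturality in $\F$ and in $X$ follows from that of $\beta$ and of the absorption isomorphisms.

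It then remains to check that $\gamma$ promotes $p^{\dagger}\hc_!$ to a functor into the Drinfeld center $Z(\M)$, i.e. that $\gamma$ is compatible with the associativity and unit constraints and satisfies the hexagon axioms relative to the monoidal structure on $\M$ and the monoidality of $p^{\dagger}\hc_!$ (Section \ref{sec:harish-chandra-funct-1}). I expect this last point to be the main, if routine, obstacle: pushed through the monoidal embedding $\M\hookrightarrow D^b_m(G)$ each axiom reduces to the corresponding one for $\beta$ — which holds by Lemma \ref{sec:centr-struct-harish} — combined with the coherence of the $\cc_U$-absorption isomorphisms and of the shift and twist comparing the two convolutions, a diagram chase of the sort deemed straightforward in the proof of that lemma. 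Once this is verified, the resulting functor $D^b_m(G/_{\Ad}G)\to Z(\M)$ is the desired central structure.
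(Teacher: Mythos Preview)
Your proposal is correct and follows exactly the approach the paper intends: the corollary in the paper has no separate proof beyond the sentence preceding it, which records the embedding of $D^b_m(U\backslash G/U)$ into $D^b_m(G)$ as the objects $\cc_U\star\F\star\cc_U$, and then invokes Lemma~\ref{sec:centr-struct-harish}; you have simply spelled out the transport of the half-braiding $\beta$ through this embedding and the accompanying coherence checks. One small caution: in the paper $\omega$ denotes the functor $D^b_m\to D^b$ forgetting the mixed structure, not the forgetful functor $D^b_m(G/_{\Ad}G)\to D^b_m(G)$, so you should use $F$ (as in Lemma~\ref{sec:centr-struct-harish}) rather than $\omega$ for the latter.
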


\subsection{Pro-unit in character sheaves.} The Harish-Chandra functor constructed in the previous subsection is monoidal with respect to $!$-convolution operations \eqref{eq:2} and \eqref{eq:3} on $D^b_m(G/_{\Ad G}G)$ and $\cM$. It turns out that $\dd$ is in the image of $\operatorname{pro}D_m^b(G/_{\Ad}G)$ under the functor $p^{\dagger}\hc_!$. This is implicit in \cite{bfo} in characteristic zero setting, since the unit object of the monoidal category is naturally central, and is proved in the $\ell$-adic setting in \cite{chen}.

Let $P$ be a parabolic subgroup of $G$ with the Levi quotient $P$. There are parabolic restriction and induction functors
\[\operatorname{Res}_P^G:D^b_m(G/_{\Ad}G) \to D^b_m(L/_{\Ad}L), \operatorname{Ind}_P^G:D^b_m(L/_{\Ad}L) \to D^b_m(G/_{\Ad}G),
  \] There is also a non-equivariant version of these functors, \[\operatorname{Res}_P^G:D^b_m(G) \to D^b_m(L), \operatorname{Ind}_P^G:D^b_m(L) \to D^b_m(G),\] which we denote in the same way.

We refer the reader to \cite{lcs1}, \cite{bezrukavnikov2018parabolic}, \cite{ginzburg1993induction} and references therein for their definition and further properties.

We record some of the properties we will need:
\begin{proposition}[\cite{ginzburg1993induction}, Theorem 4.2]
  \label{sec:pro-unit-character-1} Functors $\operatorname{Res}_P^G,\operatorname{Ind}_P^G$ are $t$-exact with respect to the perverse $t$-structure. The functor $\operatorname{Ind}_P^G$ is Verdier self-dual.
\end{proposition}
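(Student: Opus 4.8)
\emph{Plan.} This is Theorem~4.2 of \cite{ginzburg1993induction}, and I would reproduce its proof. Both functors are realised by the correspondence
\[
  L/_{\Ad}L \xleftarrow{\bar{q}} P/_{\Ad}P \xrightarrow{\bar{\mu}} G/_{\Ad}G ,
\]
where, writing $P/_{\Ad}P \cong \{(x,gP)\in G\times G/P:\ g^{-1}xg\in P\}/G$, the map $\bar{\mu}$ is induced by $P\hookrightarrow G$ and $\bar{q}$ by $P\twoheadrightarrow L$. Here $\bar{\mu}$ is representable and proper --- its fibre over $x\in G$ is the closed subvariety of $G/P$ of parabolics of type $P$ containing $x$ --- and $\bar{q}$ is smooth, with $\bar{q}^{*}$ and $\bar{q}^{!}$ agreeing up to shift and Tate twist. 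With the standard normalisation one has $\operatorname{Ind}_P^G \cong \bar{\mu}_{!}\,\bar{q}^{*}$ and $\operatorname{Res}_P^G \cong \bar{q}_{!}\,\bar{\mu}^{*}$, up to the shifts and twists fixed by the conventions.

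\emph{Verdier self-duality of $\operatorname{Ind}_P^G$.} This I would get formally: properness of $\bar{\mu}$ gives $\bar{\mu}_{!}\cong\bar{\mu}_{*}$, so $\D$ commutes with $\bar{\mu}_{!}$, while smoothness of $\bar{q}$ makes $\D$ commute with the (shifted, twisted) $\bar{q}^{*}$; composing yields $\D\operatorname{Ind}_P^G\cong\operatorname{Ind}_P^G\D$.

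\emph{$t$-exactness of $\operatorname{Ind}_P^G$.} Since the relevant shift of $\bar{q}^{*}$ is $t$-exact ($\bar{q}$ being smooth), it suffices to see that $\bar{\mu}_{!}$ is perverse $t$-exact on the essential image of this functor. This is the geometric core, and I would deduce it from the fact that the generalised Grothendieck--Springer morphism $\bar{\mu}$ is semismall with respect to the stratification of $G/_{\Ad}G$ by $\Ad$-orbits: for each $x\in G$ one has $\dim\{gP:\ g^{-1}xg\in P\}\le \tfrac{1}{2}\operatorname{codim}(\Ad(G)x)$. This estimate is classical --- via the Jordan decomposition $x=su$ it reduces to the Spaltenstein--Steinberg bound on the dimension of fixed-point subvarieties of a partial flag variety of $Z_G(s)$ under a unipotent element --- and it ensures that $\bar{\mu}_{!}$ takes perverse sheaves constructible along the source stratification (which includes the image under the shifted $\bar{q}^{*}$ of the heart) to perverse sheaves. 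Hence $\operatorname{Ind}_P^G$ is $t$-exact, and, by the same argument applied to every parabolic, so is $\operatorname{Ind}_{P^-}^G$ for the opposite parabolic $P^-$.

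\emph{$t$-exactness of $\operatorname{Res}_P^G$, and the main obstacle.} I would deduce this purely by adjunction. Base change along the correspondence exhibits $\operatorname{Res}_P^G$, up to shift and twist, as the right adjoint of the $t$-exact functor $\operatorname{Ind}_P^G$, so it is left $t$-exact; and the second-adjointness theorem for parabolic induction --- that $\operatorname{Res}_P^G$ is also, up to shift and twist, the \emph{left} adjoint of $\operatorname{Ind}_{P^-}^G$, provable either by a further base change across the two overlapping correspondences or by Braden's hyperbolic localisation theorem applied to a one-parameter subgroup of the centre of $L$ (whose attracting and repelling loci recover $P$ and $P^-$) --- shows $\operatorname{Res}_P^G$ is right $t$-exact as well, hence $t$-exact. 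The two non-formal inputs are thus the semismallness estimate for $\bar{\mu}$ and the second-adjointness theorem; I expect the semismallness step --- and more precisely the verification that it controls $\bar{\mu}_{!}$ on \emph{all} perverse sheaves in the image of $\bar{q}^{*}$, not merely on the simple ones --- to be the main obstacle.
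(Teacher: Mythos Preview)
Your sketch is correct and substantive, whereas the paper does not give a proof at all: it simply cites Theorem~5.4 of \cite{bezrukavnikov2018parabolic} (together with the older references \cite{lcs1}, \cite{ginzburg1993induction}). So there is nothing to compare at the level of argument --- you have supplied one where the paper defers to the literature.

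That said, it is worth noting that the reference the paper actually invokes, \cite{bezrukavnikov2018parabolic}, organises the proof differently from your outline, and in a way that dissolves precisely the obstacle you flag. Rather than first proving $t$-exactness of $\operatorname{Ind}_P^G$ via a semismallness estimate for $\bar\mu$ and then deducing $t$-exactness of $\operatorname{Res}_P^G$ by the two adjunctions, one proves $t$-exactness of $\operatorname{Res}_P^G$ \emph{first}, directly from Braden's hyperbolic localisation: choosing a generic cocharacter in the centre of $L$, one identifies $\operatorname{Res}_P^G$ with the $!$-hyperbolic restriction and $\operatorname{Res}_{P^-}^G$ with the $*$-hyperbolic restriction, and Braden's theorem makes these isomorphic. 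Since one description is manifestly right $t$-exact and the other left $t$-exact, $\operatorname{Res}_P^G$ is $t$-exact. Then $\operatorname{Ind}_P^G$, being both a left and a right adjoint of $t$-exact functors, is $t$-exact as well --- no semismallness needed. Your route through semismallness is the classical one and is fine, but your worry about controlling $\bar\mu_!$ on \emph{all} perverse sheaves in the image of $\bar q^*$ (rather than just simples) is exactly what the hyperbolic-localisation argument sidesteps.
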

\begin{proof} This is Theorem 5.4 of \cite{bezrukavnikov2018parabolic}, which generalizes the corresponding result proved for character sheaves in \cite{lcs1}, \cite{ginzburg1993induction}.
\end{proof}

Recall that according to \cite{chen_gl} Proposition 3.2, if $\F$ is a $W$-equivariant perverse sheaf on $T$, the sheaf $\ind_B^G\F$ is equipped with a $W$-action. It it will be convenient for us to use an action that differs from the action defined in loc. cit. by the sign representation of $W$. For such $\F$, let $\Phi_\F$ stand for the sheaf of $W$-invariants $(\ind_B^G\F)^W$.
 \begin{theorem}[\cite{chen}]
  \label{sec:pro-unit-character} Let $\F$ be a $W$-equivariant perverse local system on $T$. Assume that it is unipotent and corresponds to the $\Qlbar[W]\ltimes S$-module $S \otimes_{S^W} \F'$, for some $S^W$-module $\F'$. Then $\operatorname{Ind}_B^G\F$ has a natural $W$-action and $\Phi_{\F} = (\operatorname{Ind}_B^G\F)^W$ satisfies $p^{\dagger}\hc_!(\Phi_{\F}) \simeq \F$. Moreover, $\F$ is central in $\cM$.
 \end{theorem}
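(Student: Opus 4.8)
The plan is to deduce the centrality of $\F$ formally from the displayed isomorphism, and to prove that isomorphism by a base-change computation along the Grothendieck--Springer and horocycle correspondences.

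First the reduction. Since $|W|$ is invertible in $\Qlbar$ and $\operatorname{Ind}_B^G\F$ is perverse (parabolic induction is $t$-exact by Proposition \ref{sec:pro-unit-character-1} and $\F$ is a shifted local system), the $W$-action of \cite{chen_gl} on $\operatorname{Ind}_B^G\F$, twisted by $\sgn$ as in the paragraph preceding the theorem, splits off $\Phi_\F=(\operatorname{Ind}_B^G\F)^W$ as a direct summand; in particular $\Phi_\F$ is again a character sheaf once we know (from the computation below) that $p^{\dagger}\hc_!(\Phi_\F)$ is monodromic. Granting the isomorphism $p^{\dagger}\hc_!(\Phi_\F)\simeq\F$, the functor $p^{\dagger}\hc_!$ carries a central structure by Corollary \ref{sec:centr-struct-harish-2}, so its image $\F$ is a central object of $\M$; and this centrality propagates to $\cM$ since convolution there is computed as a pro-limit of convolutions in $\M$ (as in the proof of the duality lemma for $\star$ above, using $\mathcal A\star\mathcal B\simeq\invlim{n}(\mathcal A\star\mathcal B_n)$).

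For the isomorphism I would unwind both functors as pull--push along correspondences. Let $\widetilde{G/_{\Ad}G}$ be the group Grothendieck--Springer stack of pairs (Borel $B'$, element $g\in B'$), with the proper map $\mu:\widetilde{G/_{\Ad}G}\to G/_{\Ad}G$ forgetting $B'$ and the map $\nu:\widetilde{G/_{\Ad}G}\to T/_{\Ad}T$ recording $g\bmod U'$; then $\operatorname{Ind}_B^G\F\simeq\mu_!\nu^*\F$ up to shift and twist, while the identification $G/_{\Ad}B\simeq\widetilde{G/_{\Ad}G}$ exhibits $\hc_!=\Av_U\circ\operatorname{For}$ as $\pi_!\mu^*$ for the projection $\pi:G/_{\Ad}B\to\mathcal Y$ with fibres $\cong U$. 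Proper base change then gives
\[
 p^{\dagger}\hc_!(\operatorname{Ind}_B^G\F)\;\simeq\;p^*[\dim T]\,\pi_!\,(q_2)_!\,(q_1)^*\,\nu^*\F,
\]
where $q_1,q_2$ are the two projections from the group Steinberg stack $\widetilde{G/_{\Ad}G}\times_{G/_{\Ad}G}\widetilde{G/_{\Ad}G}=\{(B_1,B_2,g):g\in B_1\cap B_2\}/\Ad G$. This stack is stratified by the relative position $w\in W$ of $(B_1,B_2)$, and recollement equips $p^{\dagger}\hc_!(\operatorname{Ind}_B^G\F)$ with a $W$-indexed standard filtration whose $w$-th subquotient is the standard-type object $\DD_w\star\F$ supported on the $w$-stratum of $U\backslash G/U$ (with $\DD_e\star\F\simeq\F$), all in the heart of the shifted perverse $t$-structure, consistently with the $t$-exactness of $\hc_!$ on $D\mathcal{CS}$ recalled above.

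It remains to extract the sign-isotypic summand. Transporting the (sign-twisted) $W$-action of \cite{chen_gl} through the $W$-independent functor $p^{\dagger}\hc_!$, one has $p^{\dagger}\hc_!(\Phi_\F)\simeq\bigl(p^{\dagger}\hc_!(\operatorname{Ind}_B^G\F)\bigr)^W$, and I would show that this summand is $\F$ itself, sitting in the ``diagonal'' position $w=e$ of the standard filtration, all the other standard subquotients being killed by passage to the sign-isotypic part. This last point is the main obstacle: one must track the $W$-equivariant structure --- including the sign twist and the shifts and Tate twists --- through $\operatorname{For}$, $\Av_U$, $p^{\dagger}$ and the Grothendieck--Springer correspondence, carefully keeping $!$- apart from $*$-pushforwards at the non-proper projection $\pi$ (as elsewhere via Lemma \ref{*!}), and one must use the hypothesis that $\F$ corresponds to the \emph{induced} module $S\otimes_{S^W}\F'$: this is precisely what makes the $W$-action of \cite{chen_gl} well-defined and free enough that its sign-isotypic component is a single copy of $\F$ on the unit stratum rather than a proper sub- or quotient-object, or a monodromy-twisted variant, of it. This bookkeeping is the technical heart of \cite{chen}.
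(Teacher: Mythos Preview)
Your deduction of centrality from Corollary~\ref{sec:centr-struct-harish-2} is exactly what the paper does. For the isomorphism $p^{\dagger}\hc_!(\Phi_\F)\simeq\F$ the paper simply invokes Theorem~7.1 and Proposition~3.2 of \cite{chen} without further argument, so your sketch attempts strictly more than is required here---but it contains an error that derails the computation.

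The identification $G/_{\Ad}B\simeq\widetilde{G/_{\Ad}G}$ is false: $G/_{\Ad}B$ parametrises pairs (Borel $B'$, arbitrary $g\in G$) modulo $G$, while $\widetilde{G/_{\Ad}G}\simeq B/_{\Ad}B$ parametrises pairs (Borel $B'$, $g\in B'$); their stack dimensions differ by $\dim G/B$. Because of this, the fibre product you form is not the Steinberg stack, and the filtration you write down is wrong: the correct version (compare Lemma~\ref{sec:jucys-murphy-filtr-3} specialised to $P=B$) has subquotients $\DD_w\star\F\star\DD_{w^{-1}}$, not $\DD_w\star\F$. The missing right factor $\DD_{w^{-1}}$ is precisely what brings the support back toward the unit stratum, in agreement with Proposition~3.2(1) of \cite{chen} (quoted later in the proof of Proposition~\ref{sec:four-deligne-transf-6}), which gives $\operatorname{Res}_B^G\operatorname{Ind}_B^G(A)\simeq\bigoplus_{w\in W}w^*A$ as a sheaf on $T$. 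Your outline of the final step---that under the induced-module hypothesis the $W$-invariants single out one copy of $\F$---is the right intuition and is indeed the content of Theorem~7.1 of \cite{chen}; but in \cite{chen} it is reached through the direct-sum decomposition $\bigoplus_w w^*\F$ on $T$ rather than through a Bruhat-stratified standard filtration of the kind you describe.
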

 \begin{proof} This follows from Theorem 7.1 together with Proposition 3.2 of \cite{chen}. Centrality property of $\F$ follows from Corollary \ref{sec:centr-struct-harish-2}.
 \end{proof}
 \begin{corollary}
   \label{centralunit} There is a family of sheaves $\mathcal{E}_n \in D_m^b(G/_{\Ad}G)$ such that \[p^{\dagger}\hc_!(\text{``$\invlim{n}$''}\mathcal{E}_n) \simeq \dd.\] Moreover, sheaves $p^{\dagger}\hc_!(\mathcal{E}_n)$ are central in the monoidal category $\M$ (and so in $\cM$).
 \end{corollary}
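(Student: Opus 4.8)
The plan is to produce the $\mathcal{E}_n$ directly from Chen's theorem (Theorem \ref{sec:pro-unit-character}) and to assemble them into a pro-object whose Harish--Chandra image is the monoidal unit $\dd$. For each $n$, let $\varepsilon_n$ be the unipotent $W$-equivariant perverse local system on $T$ attached to the $\Qlbar[W]\ltimes S$-module $S/\mathfrak{m}_W^nS\simeq S\otimes_{S^W}(S^W/\mathfrak{m}_W^n)$, normalized by a shift and a Tate twist so that the underlying local system has the same normalization as the one underlying $\dd$. Then $\varepsilon_n$ has exactly the form to which Theorem \ref{sec:pro-unit-character} applies, with $\F'=S^W/\mathfrak{m}_W^n$, and I set
\[
  \mathcal{E}_n:=\Phi_{\varepsilon_n}=(\ind_B^G\varepsilon_n)^W,
\]
a well-defined perverse sheaf on $G/_{\Ad}G$ since $\ind_B^G$ is perverse $t$-exact (Proposition \ref{sec:pro-unit-character-1}). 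The theorem gives $p^{\dagger}\hc_!(\mathcal{E}_n)\simeq\varepsilon_n$. For centrality, recall that $p^{\dagger}\hc_!$ carries a central structure (Corollary \ref{sec:centr-struct-harish-2}), hence factors through the Drinfeld center $Z(\M)$, so every object in its image --- in particular $p^{\dagger}\hc_!(\mathcal{E}_n)$ --- is central in $\M$; this passes to $\cM$ because convolution there is computed levelwise on pro-objects. This settles the second assertion.

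Next I assemble the pro-object. The $S$-linear, $W$-equivariant surjections $S/\mathfrak{m}_W^{n+1}S\twoheadrightarrow S/\mathfrak{m}_W^nS$ induce maps $\varepsilon_{n+1}\to\varepsilon_n$, hence (by functoriality of $\ind_B^G$, of the $W$-action of \cite{chen}, and of $(-)^W$) maps $\mathcal{E}_{n+1}\to\mathcal{E}_n$, so that $(\mathcal{E}_n)$ is a projective system. Its term-wise image under $p^{\dagger}\hc_!$ tautologically computes $p^{\dagger}\hc_!\bigl(\prolim\mathcal{E}_n\bigr)$ and equals $\prolim\varepsilon_n$, so it remains to identify $\prolim\varepsilon_n$ with $\dd=\LL[r](2r)$, $\LL=\invlim{m}\mathcal{L}_m$, where $\mathcal{L}_m$ corresponds to $S/(V_TS)^m$. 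Since $\mathfrak{m}_W\subseteq S_{>0}=V_TS$ we have $\mathfrak{m}_W^nS\subseteq(V_TS)^n$; conversely, by Chevalley's theorem $S$ is finite over $S^W$, so the coinvariant algebra $S/\mathfrak{m}_WS$ is finite-dimensional and the image of $V_TS$ in it is nilpotent, giving $(V_TS)^N\subseteq\mathfrak{m}_WS$ for some $N$ and hence $(V_TS)^{Nn}\subseteq\mathfrak{m}_W^nS$. Thus the $\mathfrak{m}_W$-adic and $(V_TS)$-adic filtrations of $S$ are mutually cofinal, so the projective systems of quotients are cofinal and $\prolim\varepsilon_n\simeq\prolim\mathcal{L}_m[r](2r)=\dd$ as pro-objects. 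Combining, $p^{\dagger}\hc_!\bigl(\prolim\mathcal{E}_n\bigr)\simeq\dd$, which proves the first assertion.

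The only non-formal input is Chen's theorem, which we are allowed to invoke; given it, the remaining work is verifying its hypotheses for the $\varepsilon_n$ (that $S/\mathfrak{m}_W^nS$ genuinely has the shape $S\otimes_{S^W}\F'$, and fixing normalizations so that an honest isomorphism --- not merely one up to shift and twist --- results), the elementary cofinality of the two filtrations of $S$, and --- the point I expect to demand the most care --- checking that $p^{\dagger}\hc_!\bigl(\prolim\mathcal{E}_n\bigr)$ really lies in $\cM\subset\pro(\M)$ rather than merely in $\pro(\M)$. The latter amounts to verifying the uniform boundedness conditions \ref{item:1}--\ref{item:3} of Definition \ref{sec:gener-compl} for $\prolim\varepsilon_n$, which is automatic once that pro-object is identified with $\dd$, already known to lie in $\cM$; one could also check directly that $\mathcal{E}_n\simeq\IC(\varepsilon_n^G)$ by restriction to $G^{rss}$, but this is not needed here.
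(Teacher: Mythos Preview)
Your proposal is correct and follows essentially the same approach as the paper: define $\varepsilon_n$ as the $W$-equivariant local system on $T$ corresponding to $S/\mathfrak{m}_W^nS \simeq S\otimes_{S^W}S^W/\mathfrak{m}_W^n$, set $\mathcal{E}_n=(\ind_B^G\varepsilon_n)^W$, and invoke Chen's theorem. Your write-up is in fact more detailed than the paper's own proof, which simply records the construction and cites \cite{chen}; in particular, you supply the cofinality argument between the $\mathfrak{m}_W$-adic and $V_TS$-adic filtrations (which the paper leaves implicit) and you flag the verification that the resulting pro-object lands in $\cM$.
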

 \begin{proof} In \cite{chen}, Section 4.5, the construction of the sheaf $\mathcal{E}_1$ is given. It is such that, if we denote $\phi_1 = \hc_!(\mathcal{E}_1)$, $\varepsilon_1 = p^{\dagger}\phi_1$ corresponds to the $W$-coinvariants module $S/\mathfrak{m}_WS = S \otimes_{S^W} \Qlbar$ over $S$, where $\mathfrak{m}_W \subset S^W$ stands for an ideal generated by homogeneous $W$-invariant polynomials of positive degree. $\mathcal{E}_1$ is recovered as $\operatorname{Ind}_B^G(\phi_1)^W$. Sheaves $\mathcal{E}_n$ are similarly defined as $\mathcal{E}_n = \operatorname{Ind}_B^G(\phi_n)^W$, where $\varepsilon_n=p^{\dagger}\phi_n$ is the $W$-equivariant perverse local system on $T/_{\Ad}T$ corresponding to the module $S/(\mathfrak{m}_WS)^n = S \otimes_{S^W} S^W/(\mathfrak{m}_W)^n$.
 \end{proof}
 \begin{bremark}
 Note that $\mathcal{E}_1$ is, by construction, a Goresky-MacPherson extension of a certain (shifted) local system of rank $|W|$ on the set $G^{rss}$ of regular semisimple elements in $G$. It is not, however, isomorphic to the Grothendieck-Springer sheaf. For example, the sheaf $\mathcal{E}_1$ is not pure.   
 \end{bremark}
 Write formally in $\operatorname{pro}D_m^b(G/_{\Ad}G),$
\[ \dd_G = \prolim\mathcal{E}_n.
\]
\subsection{Fourier-Deligne transform and Springer action.}
\label{sec:four-deligne-transf-4} Let $E$ be an affine space of dimension $r$ defined over $\Fq$. Let $E^{\vee}$ be the dual affine space. Let $\mu: E \times E^{\vee} \to \mathbb{G}_a$ be the natural pairing, and let $\pi:E\times E^{\vee} \to E, \pi^{\vee}:E\times E^{\vee} \to E^{\vee}$ be the projections. Recall the Fourier-Deligne transform functors \[\FT_{,!}(\F), \FT_{,*}(\F): D^b_m(E) \to D^b_m(E^{\vee}),\]
\[ \FT_{,!}(\F) = \pi^{\vee}_!(\mu^*\operatorname{AS}_{\psi}\otimes\pi^*\mathcal{F})\tw{r},
\]
\[ \FT_{,*}(\F) = \pi^{\vee}_*(\mu^*\operatorname{AS}_{\psi}\otimes\pi^*\mathcal{F})\tw{r}.
 \] We write $\FT := \FT_{,!}$.

If we consider $E$ and $E^{\vee}$ as algebraic groups with respect to addition, the categories $D^b_m(E), D^b_m(E)$ become equipped with the convolution operation, as in the Section \ref{sec:harish-chandra-funct}.

For any $\xi \in E^{\vee}(\Fq)$ defining a map $\xi: E \to \mathbb{G}_a$, write $\mathcal{L}_\xi = \xi^*\operatorname{AS}_{\psi}$. We record the basic properties of the Fourier-Deligne transform.
\begin{lemma}[\cite{katz1985transformation}, \cite{brylinski1986transformations}]
 \label{sec:four-deligne-transf}
  \begin{enumerate}[label=(\arabic*)]
 \item \label{item:4} The natural transformation given by forgetting the support  $\FT_{,!} \to \FT_{,*}$ is an isomorphism.
  \item \label{item:5} $\FT$ is an equivalence of triangulated categories.
  \item \label{item:6} $\FT$ sends perverse sheaves to perverse sheaves, and sends pure complexes of weight $w$ to pure complexes of weight $w$.
  \item
    \label{item:9} For any $\mathcal{A},\mathcal{B} \in D^b_m(E)$, we have
    \[ \FT(\mathcal{A}\otimes_{\Qlbar}\mathcal{B}) \simeq \FT(\mathcal{A})\star\FT(\mathcal{B}).
    \]
  \item
\label{item:10} $\FT(\mathcal{L}_\xi\tw{r}) \simeq \delta_\xi$, where $\delta_\xi$ stands for the rank one punctual sheaf at $\xi$.
  \end{enumerate}
\end{lemma}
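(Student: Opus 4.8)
These are all classical facts about the Fourier--Deligne transform, due to Deligne, Brylinski, Katz--Laumon and Laumon, and the plan is to reprove them in the order $(5)$, then the convolution/tensor identity, then $(1)$ together with Fourier inversion (which yield $(2)$), then $(4)$, and finally $(3)$; each statement feeds the next.

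I would begin with $(5)$ as a direct computation. Unwinding the definition, $\mu^*\operatorname{AS}_{\psi}\otimes\pi^*\mathcal{L}_{\xi}$ is the pullback of $\operatorname{AS}_{\psi}$ along the map $E\times E^{\vee}\to\mathbb{G}_a$, $(x,y)\mapsto\langle x,y\rangle+\langle x,\xi\rangle=\langle x,y+\xi\rangle$ (the sign of $\xi$ depending on normalization). Applying $\pi^{\vee}_!$ computes, fibrewise over $y\in E^{\vee}$, the cohomology with compact support of $\mathbb{A}^r$ against the Artin--Schreier sheaf attached to the linear functional $x\mapsto\langle x,y+\xi\rangle$; this vanishes identically unless $y+\xi=0$, since $H^{\bullet}_c$ of an affine space against a nontrivial additive character is zero, and over the single point $y=-\xi$ it is $\Qlbar$ in top degree with a Tate twist. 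The shifts $\tw{r}$ in the definition of $\FT$ and in the normalization $\mathcal{L}_{\xi}\tw{r}$ are arranged precisely so that this skyscraper is $\delta_{\xi}$ on the nose.

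Next, the key structural input for $(2)$ and $(4)$ is the multiplicativity $m^*\operatorname{AS}_{\psi}\simeq\operatorname{AS}_{\psi}\boxtimes\operatorname{AS}_{\psi}$ on $\mathbb{G}_a\times\mathbb{G}_a$. Combined with base change and the projection formula, a short diagram chase on $E\times E\times E^{\vee}$ gives $\FT(\mathcal{A}\star\mathcal{B})\simeq\FT(\mathcal{A})\otimes_{\Qlbar}\FT(\mathcal{B})\tw{-r}$, i.e. $\FT$ intertwines convolution with pointwise tensor product. The same fibrewise vanishing used in $(5)$, now applied on $E\times E^{\vee}\times E$ and extracting the diagonal contribution, yields the Fourier inversion formula $\FT\circ\FT\simeq\iota^*(-)$ up to a shift and a Tate twist, where $\iota$ is the inversion $x\mapsto -x$ of $E$; granting $(1)$, this shows $\FT$ is an equivalence, which is $(2)$. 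Statement $(4)$ then follows by applying the convolution/tensor identity to $\FT(\mathcal{A})$ and $\FT(\mathcal{B})$ and invoking inversion.

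This leaves $(1)$ and $(3)$, which carry the only non-formal ingredients. For $(1)$ — that the forget-supports morphism $\FT_{,!}\to\FT_{,*}$ is an isomorphism — I would cite Laumon directly; its proof proceeds by dévissage (to affine open immersions and to lisse sheaves, where the comparison is explicit) and a stationary-phase type argument, and this is the step I expect to be the main obstacle, since essentially everything else bootstraps off $(5)$ and $(1)$. For $(3)$, perversity follows once $(1)$ is available: $\pi\colon E\times E^{\vee}\to E$ is smooth of relative dimension $r$, so $\pi^*[r]$ is perverse t-exact; tensoring with the lisse rank-one sheaf $\mu^*\operatorname{AS}_{\psi}$ in degree $0$ is t-exact; and $\pi^{\vee}\colon E\times E^{\vee}\to E^{\vee}$ is affine, so by Artin's vanishing theorem $\pi^{\vee}_!$ is right perverse t-exact and $\pi^{\vee}_*$ is left perverse t-exact, and $(1)$ identifies the two — tracking the shift $\tw{r}$ built into $\FT$, it is perverse t-exact, hence preserves perverse sheaves. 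For the weight assertion, $\mu^*\operatorname{AS}_{\psi}$ is lisse and pure of weight $0$, so $\mu^*\operatorname{AS}_{\psi}\otimes\pi^*\F$ has the same weights as $\F$; Deligne's Weil II then gives that $\pi^{\vee}_!$ does not raise weights and, via $(1)$, $\pi^{\vee}_*=\pi^{\vee}_!$ does not lower them, the normalization $\tw{r}$ being chosen so that a pure complex of weight $w$ is sent to one of weight $w$. The appeal to Weil II here is the one other genuinely deep ingredient.
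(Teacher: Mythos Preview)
Your proposal is correct, but it does considerably more than the paper does. The paper's own ``proof'' of this lemma is a list of citations: item (1) is referred to Katz--Laumon, Theorem 2.4.1; item (2) to Brylinski, Proposition 9.3; item (3) to Katz--Laumon, Corollary 2.1.5 and Theorem 2.2.1; item (4) to Brylinski, Corollary 6.3; and item (5) is dismissed as ``a straightforward computation.'' In other words, the paper treats this lemma as a summary of well-known facts from the literature, with no argument given. Your write-up, by contrast, actually sketches the standard proofs: the fibrewise vanishing of $H^{\bullet}_c$ against a nontrivial additive character for (5), the multiplicativity of $\operatorname{AS}_\psi$ plus base change for the convolution identity, Fourier inversion for (2), Artin vanishing combined with (1) for t-exactness, and Weil II for purity. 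These are the right arguments and are logically ordered sensibly. The only cosmetic wrinkle is the sign in (5), where you land at $-\xi$ rather than $\xi$; as you note, this is purely a convention issue and does not affect anything downstream.
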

\begin{proof}  \ref{item:4} is \cite{katz1985transformation} Theorem 2.4.1, \ref{item:5} follows directly from \cite{brylinski1986transformations} Proposition 9.3. \ref{item:6} is \cite{katz1985transformation} Corollary 2.1.5 and Theorem 2.2.1. \ref{item:9} is \cite{brylinski1986transformations} Corollary 6.3. \ref{item:10} is a straightforward computation.
\end{proof}

Let $\g$ be the Lie algebra of $G$, $\mathfrak{b} \subset \g$ the Lie algebra of $B$. Fix a $G$-invariant non-singular bilinear form $\langle , \rangle$ on $\g$, thus identifying the affine space underlying $\g$ with its dual.

Let $\g^{rss}$ denote the open subset of regular semisimple elements of $\g$.

Let $\tilde{\g} = G \times^B \mathfrak{b}$ be the Grothendieck-Springer resolution, with the standard projection $p: \tilde{\g} \to \g$, and denote by $\tilde{\g}^{rss}$ the preimage of $\g^{rss}$ under $p$. It is well-known that $\g^{rss}$ is a Galois cover of $\g^{rss}$ with the Galois group $W$. Moreover, $p$ is a small map, so that the Grothendieck-Springer sheaf $\mathfrak{gspr}: = p_*\cc_{\g}\tw{\dim \g}$ is identified with the Goresky-MacPherson extension of its restriction to $\g^{rss}$. We have an identification \( \Qlbar [W] \simeq \operatorname{End}(\mathfrak{gspr}), \) which, in turn, defines a functor $S_{\g}:\Rep_{\Qlbar}(W) \to D^b_m(\g)$ from the category of finite dimensional representations of $W$ over $\Qlbar$.

Let $\mathfrak{n} \subset \mathfrak{b}$ be the Lie algebra of $U$, and let $\mathcal{N}_{\g}$ be the nilpotent variety of $\g$. Let $\tilde{\mathcal{N}_{\g}} = G \times^B \mathfrak{n}$ be the Springer resolution, and $q: \tilde{\mathcal{N}_{\g}} \to \mathcal{N}_{\g}$ be the standard projection. Denote by $\mathfrak{spr}:= q_*\cc_{\tilde{\mathcal{N}_{\g}}}\tw{\dim \mathcal{N}_{\g}}$ the Springer sheaf. We have $\FT(\mathfrak{gspr}) \simeq \mathfrak{spr}$, which defines $W$-action on $\mathfrak{spr}$ and the functor $S_{\mathcal{N}_{\g}}: \Rep_{\Qlbar}(W) \to D^b_m(\mathcal{N}_{\g})$. This action of $W$ on the Springer sheaf was constructed in \cite{brylinski1986transformations}. It is easy to see that we have $S_{\mathcal{N}_{\g}}(\operatorname{triv}_W) = \iota_{0*}\cc_{\Spec(\Fq)}$, where $\operatorname{triv}_W$ stands for the trivial representation of $W$, and $\iota_{0}:\Spec(\Fq)\to \g$ is the inclusion of $\{0\}$ to $\g$.

There is another action of $W$ on $\mathfrak{spr}$ obtained from the base change isomorphism $\iota_{\mathcal{N}_{\g}}^*\mathfrak{gspr}[-\dim T]\simeq\mathfrak{spr}$, considered in \cite{borho1981representations}. Let $S'_{\mathcal{N}_{\g}}: \Rep_{\Qlbar}(W) \to D^b_m(\mathcal{N}_{\g})$ be the corresponding functor. Let $\operatorname{sgn}_W$ stand for the sign representation of $W$. We have the following
\begin{theorem}[Theorem 1.1 of \cite{achar2014weyl}]
  \label{sec:four-deligne-transf-1} Two actions of $W$ on $\mathfrak{spr}$ differ by the sign representation of $W$: we have a natural isomorphism
 \[ S_{\mathcal{N}_{\g}}(-) = S'_{\mathcal{N}_{\g}}(-\otimes \operatorname{sgn}_W).
 \]
\end{theorem}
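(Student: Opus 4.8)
The plan is to reduce the statement to an explicit rank-one computation, so I will first recast what has to be proved. Both $S_{\mathcal{N}_{\g}}$ and $S'_{\mathcal{N}_{\g}}$ are built from the single algebra $\operatorname{End}(\mathfrak{spr})\simeq\Qlbar[W]$: the first transports the Galois $W$-action on $\mathfrak{gspr}$ through the isomorphism $\FT(\mathfrak{gspr})\simeq\mathfrak{spr}$ recalled above, the second restricts the \emph{same} Galois action through the base-change isomorphism $\iota_{\mathcal{N}_{\g}}^{*}\mathfrak{gspr}[-\dim T]\simeq\mathfrak{spr}$. Hence there is a single algebra automorphism $\theta$ of $\operatorname{End}(\mathfrak{spr})\simeq\Qlbar[W]$ with $S'_{\mathcal{N}_{\g}}(V)\simeq S_{\mathcal{N}_{\g}}(\theta^{*}V)$ naturally in $V$, and the assertion is equivalent to $\theta(w)=\sgn(w)\,w$. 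Since $\theta$ is an algebra automorphism it is determined by its values on the simple reflections $s\in\Sigma$, so it suffices to prove $\theta(s)=-s$ for every such $s$.

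To localize at a single reflection I would use that $S_{\g}$, $S_{\mathcal{N}_{\g}}$ and $S'_{\mathcal{N}_{\g}}$ are all compatible with parabolic restriction: for a Levi $L\subset G$ the functor $\operatorname{Res}^{G}_{L}$ carries the Grothendieck--Springer sheaf (resp.\ the Springer sheaf) of $\g$ to that of $\mathfrak{l}$, intertwining the $W$-action with the $W_{L}$-action, and this is compatible with $\FT$ (which commutes with parabolic restriction up to a shift and Tate twist) and with $\iota^{*}[-\dim T]$ (a base-change statement); here one invokes the parabolic restriction formalism of \cite{ginzburg1993induction}, \cite{bezrukavnikov2018parabolic}. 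Applying this to the minimal Levi $L_{s}$ containing the root subgroups of $\pm\alpha_{s}$ gives $\theta|_{W_{L_{s}}}=\theta_{L_{s}}$, and since $\sgn_{W}|_{W_{L_{s}}}=\sgn_{W_{L_{s}}}$ it is enough to establish the theorem for $L_{s}$, i.e.\ for a group of semisimple rank one, where $W=\{1,s\}\simeq\mathbb{Z}/2$, $\mathfrak{spr}\simeq\IC(\mathcal{N}_{\g},\cc)\oplus\iota_{0*}\cc$ up to shifts, and $\operatorname{End}(\mathfrak{spr})\simeq\Qlbar\times\Qlbar$ has exactly two primitive idempotents.

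In this rank-one case everything is computable by hand. On one side, $S_{\g}(\operatorname{triv}_{W})=(\mathfrak{gspr})^{W}$ agrees with $\cc_{\g}\tw{\dim\g}$ over $\g^{rss}$ and hence, $\mathfrak{gspr}$ being an intersection cohomology extension, equals $\cc_{\g}\tw{\dim\g}$; its Fourier transform is the skyscraper $\iota_{0*}\cc$ by Lemma~\ref{sec:four-deligne-transf}\ref{item:10} (with $\xi=0$), so $S_{\mathcal{N}_{\g}}(\operatorname{triv}_{W})=\iota_{0*}\cc$, recovering the observation made above. On the other side, over the regular nilpotent orbit $\mathcal{N}_{\g}^{reg}$ the map $p$ is set-theoretically bijective, so the stalk of $\mathfrak{gspr}$ there is one-dimensional with trivial $W$-action; therefore $S'_{\mathcal{N}_{\g}}(\operatorname{triv}_{W})$ is nonzero over $\mathcal{N}_{\g}^{reg}$ and hence equals the full-support summand $\IC(\mathcal{N}_{\g},\cc)$, whence $S'_{\mathcal{N}_{\g}}(\sgn_{W})=\iota_{0*}\cc=S_{\mathcal{N}_{\g}}(\operatorname{triv}_{W})$. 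Thus the primitive idempotent cutting out the skyscraper corresponds to $\operatorname{triv}_{W}$ for $S_{\mathcal{N}_{\g}}$ and to $\sgn_{W}$ for $S'_{\mathcal{N}_{\g}}$; since the two primitive idempotents $(1\pm s)/2$ of $\Qlbar[\mathbb{Z}/2]$ are interchanged precisely by $s\mapsto-s$, this yields $\theta(s)=-s$, and the theorem follows.

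The step I expect to be the main obstacle is the precise $W$-equivariance of these comparisons: verifying that the isomorphism $\FT(\mathfrak{gspr})\simeq\mathfrak{spr}$ and the parabolic-restriction compatibility above hold with the correct normalization. The geometric origin of the discrepancy is that $\FT$ turns the rank-$\dim\mathfrak{b}$ vector bundle $\tilde\g\to\g$ into the rank-$\dim\mathfrak{n}$ bundle $\tilde{\mathcal{N}_{\g}}\to\mathcal{N}_{\g}$, and the ``missing'' $B$-module is $\mathfrak{b}/\mathfrak{n}\simeq\mathfrak{t}$, on whose top exterior power $W$ acts by $\sgn_{W}$; arranging the $\tw{\dim T}$ normalization of $\FT$ so that this sign lands in the right place, uniformly across the reduction to rank one, is the delicate point. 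This is exactly what is carried out in \cite{achar2014weyl}.
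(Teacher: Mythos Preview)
The paper does not prove this theorem: it is quoted as Theorem~1.1 of \cite{achar2014weyl} and immediately followed by a corollary, with no proof environment in between. So there is no ``paper's own proof'' to compare your attempt against.

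That said, your sketch is a reasonable outline, and you correctly flag the normalization issues as the delicate part. The argument in the cited reference \cite{achar2014weyl} proceeds somewhat differently from your parabolic reduction to rank one: rather than restricting to a Levi, it uses that taking the stalk at $0$ and passing to cohomology (i.e., restricting to $H^{\bullet}(G/B)$) is faithful on $\operatorname{End}(\mathfrak{spr})$, so both $W$-actions are determined by what they do on $H^{\bullet}(G/B)$, where the comparison can be carried out directly. This is in fact the mechanism the present paper invokes a few lines later (``By Theorem~4.8~(1) of \cite{achar2014weyl}, it is enough to check that the induced actions on the cohomology of the fiber \dots\ are the same''). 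Your approach via Levi reduction would also work, and the rank-one computation you give is correct; the trade-off is that you must verify compatibility of parabolic restriction with both the Fourier-transform construction and the base-change construction, which is exactly the bookkeeping you identify as the obstacle.
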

\begin{corollary}
 \label{sec:four-deligne-transf-2} We have $S_{\mathcal{N}_{\g}}(\operatorname{sgn}_W) \simeq \cc_{\mathcal{N}_{\g}}\langle\dim \mathcal{N}_{\g}\rangle$.
\end{corollary}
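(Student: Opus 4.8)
The plan is to reduce the statement to the value of $S'_{\mathcal{N}_{\g}}$ on the trivial representation and then read that off from the base-change description of the Springer sheaf. Since $\operatorname{sgn}_W \otimes \operatorname{sgn}_W \simeq \operatorname{triv}_W$, Theorem \ref{sec:four-deligne-transf-1} gives immediately
\[
  S_{\mathcal{N}_{\g}}(\operatorname{sgn}_W) \simeq S'_{\mathcal{N}_{\g}}(\operatorname{sgn}_W \otimes \operatorname{sgn}_W) \simeq S'_{\mathcal{N}_{\g}}(\operatorname{triv}_W),
\]
so it suffices to identify $S'_{\mathcal{N}_{\g}}(\operatorname{triv}_W)$ with $\cc_{\mathcal{N}_{\g}}\langle \dim \mathcal{N}_{\g}\rangle$.

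For this I would unwind the definition of $S'_{\mathcal{N}_{\g}}$: it is transported from the natural $W$-action on $\mathfrak{gspr}$ (the one coming from $\operatorname{End}(\mathfrak{gspr}) \simeq \Qlbar[W]$) through the base change isomorphism $\iota_{\mathcal{N}_{\g}}^*\mathfrak{gspr}[-\dim T] \simeq \mathfrak{spr}$, which in turn is proper base change along the cartesian square expressing $\tilde{\mathcal{N}_{\g}} = p^{-1}(\mathcal{N}_{\g})$. Because $p$ is small and restricts to an \'etale $W$-Galois cover over $\g^{rss}$, one has the isotypic decomposition $\mathfrak{gspr} \simeq \bigoplus_{\rho} \rho \otimes \IC_{\g}(\mathcal{L}_{\rho})$, where $\mathcal{L}_{\rho}$ is the local system on $\g^{rss}$ attached to $\rho$ via $\pi_1(\g^{rss}) \twoheadrightarrow W$ (up to the relevant normalizing shift and twist). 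The $\rho = \operatorname{triv}_W$ summand is $\IC_{\g}(\cc_{\g^{rss}})$, which is just $\cc_{\g}$ up to shift and twist since $\g$ is smooth and irreducible. Restricting a constant sheaf to a closed subvariety gives again a constant sheaf, so $\iota_{\mathcal{N}_{\g}}^*\cc_{\g} = \cc_{\mathcal{N}_{\g}}$ up to shift and twist; combining with the shift $[-\dim T]$ and the identity $\dim \g = \dim \mathcal{N}_{\g} + \dim T$ produces $\cc_{\mathcal{N}_{\g}}\langle \dim \mathcal{N}_{\g}\rangle$. Alternatively, the identity $S'_{\mathcal{N}_{\g}}(\operatorname{triv}_W) \simeq \cc_{\mathcal{N}_{\g}}\langle \dim \mathcal{N}_{\g}\rangle$ is exactly the trivial-representation case of the Borho--MacPherson description of the Springer sheaf \cite{borho1981representations}, and can be quoted directly.

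There is no real obstacle here, as all the essential input is Theorem \ref{sec:four-deligne-transf-1}; the proof is bookkeeping. The two points that deserve care are: (i) checking that the trivial isotypic component of $\mathfrak{gspr}$ for the $W$-action used in defining $S'_{\mathcal{N}_{\g}}$ is the generically supported summand $\cc_{\g}$ (and not the skyscraper $\iota_{0*}\cc$), which is immediate from the description over $\g^{rss}$; and (ii) matching the normalization of shifts and Tate twists across the definitions of $\mathfrak{gspr}$, $\mathfrak{spr}$, and $S'_{\mathcal{N}_{\g}}$ so that the answer is precisely $\cc_{\mathcal{N}_{\g}}\langle \dim \mathcal{N}_{\g}\rangle$ rather than merely an isomorphism up to twist.
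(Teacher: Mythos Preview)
Your proposal is correct and follows exactly the approach the paper has in mind: the corollary is stated immediately after Theorem~\ref{sec:four-deligne-transf-1} with no proof, and your reduction $S_{\mathcal{N}_{\g}}(\operatorname{sgn}_W) \simeq S'_{\mathcal{N}_{\g}}(\operatorname{triv}_W) \simeq \cc_{\mathcal{N}_{\g}}\langle\dim\mathcal{N}_{\g}\rangle$ is precisely the intended one-line argument. Your care about the normalizations is well placed, since the paper's base-change identification $\iota_{\mathcal{N}_{\g}}^*\mathfrak{gspr}[-\dim T]\simeq\mathfrak{spr}$ already suppresses a Tate twist.
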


Since the kernel of the Fourier-Deligne transform on $\g$ is $G$-equivariant with respect to the diagonal adjoint action, one can similarly define an equivariant version $\FT:D^b_m(\g/_{\Ad}G) \to D^b_m(\g/_{\Ad}G)$. It is easy to see that it commutes with the !-$\Ad_G$ averaging.

Most of the constructions described above have a group version.

Let $\tilde{G} = G \times^B B$ be the group version of the Grothendieck-Springer resolution, with the standard projection $p': \tilde{G} \to G$, and denote by $\tilde{G}^{rss}$ the preimage of $G^{rss}$ under $p'$. It is well-known that $\tilde{G}^{rss}$ is a Galois cover of $G^{rss}$ with the Galois group $W$. Moreover, $p'$ is a small map, so that the Grothendieck-Springer sheaf $\operatorname{GSpr} : = p'_*\cc_{\g}\tw{\dim \g}$ is identified with the Goresky-MacPherson extension of its restriction to $G^{rss}$. We have an identification \( \Qlbar [W] \simeq \operatorname{End}(\operatorname{GSpr}), \) which, in turn, defines a functor $S_{G}:\Rep_{\Qlbar}(W) \to D^b_m(G)$ from the category of finite dimensional representations of $W$ over $\Qlbar$.

 Let $\mathcal{N}_G$ stand for the variety of unipotent elements in $G$. $\tilde{\mathcal{N}_{G}} = G \times^B U$ be the Springer resolution, and $q: \tilde{\mathcal{N}_{G}} \to \mathcal{N}_{G}$ be the standard projection. Denote by $\operatorname{Spr}:= q_*\cc_{\tilde{\mathcal{N}_{G}}}\tw{\dim \mathcal{N}_{G}}$ the Springer sheaf. Completeley analogous to the case of $\g$, we have an action of $W$ on $\operatorname{Spr}$ obtained from the base change isomorphism $\iota_{\mathcal{N}_{G}}^*\operatorname{GSpr}[-\dim T]\simeq\operatorname{Spr}$, and the corresponding functor $S'_{\mathcal{N}_{G}}:\Rep_{\Qlbar}(W) \to D^b_m(G).$

 For a good prime $p$, there are $G$-equivariant isomorphisms
\[\varphi:\mathcal{N}_\g\to\mathcal{N}_G, \tilde{\varphi}:\tilde{\mathcal{N}}_\g\to\tilde{\mathcal{N}}_G,\] making the following diagram commutative
\begin{equation}
  \label{sec:four-deligne-transf-3}
  \begin{tikzcd} \tilde{\mathcal{N}}_\g\arrow[r, "\tilde{\varphi}"] \arrow[d, "p"']& \tilde{\mathcal{N}}_G \arrow[d, "p'"]\\ {\mathcal{N}}_\g\arrow[r, "\varphi"] & {\mathcal{N}}_G \\
   \end{tikzcd}
\end{equation} See \cite{humphreys2011conjugacy} Chapter 6.20 and references therein.

Base change formula gives the following formula for the composed functor $\chi\circ\hc_!$:
\begin{lemma}[\cite{gi} Lemma 8.5.4]
  \label{sec:four-deligne-transf-5}
  We have an isomorphism of functors
  \[
    \chi\hc_!(-) \simeq \operatorname{Spr} \star (-).
  \]
\end{lemma}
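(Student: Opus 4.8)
The plan is to realise both sides as pull--push functors along correspondences over $G/_{\Ad}G$ and then to identify these correspondences; the point is that the one attached to the right-hand side is the group version of the Steinberg stack, which is precisely what manufactures the Springer sheaf $\operatorname{Spr}=q_{*}\cc_{\tilde{\mathcal{N}_{G}}}\tw{\dim\mathcal{N}_{G}}$.

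First I would unwind the definitions. Let $a\colon G/_{\Ad}B\to G/_{\Ad}G$ be the proper projection forgetting the Borel, and $\pi\colon G/_{\Ad}B\to\yy$ the map underlying $\Av_{U}$. Since $\hc_{!}=\Av_{U}\circ\operatorname{For}$ and $\chi=\Av^{\Ad}_{G}\circ\operatorname{For}$, the composite $\chi\hc_{!}$ is, up to a shift and Tate twist, the functor $a_{!}\,\pi^{*}\pi_{!}\,a^{*}$. Writing $\yy=[G/\Gamma]$ with $\Gamma=(U\times U)\rtimes T$ and $G/_{\Ad}B=[G/B_{\Delta}]$ with $B_{\Delta}=\{(u,u,t):u\in U,\ t\in T\}\subset\Gamma$, one sees that $\pi$ is a Zariski-locally trivial bundle with fibre $\Gamma/B_{\Delta}\cong\mathbb{A}^{\dim U}$, hence smooth. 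Smooth base change along the cartesian square defining $\mathcal{Z}:=(G/_{\Ad}B)\times_{\yy}(G/_{\Ad}B)$ then gives $\pi^{*}\pi_{!}\cong q_{2!}\,q_{1}^{*}$ for the two projections $q_{i}\colon\mathcal{Z}\to G/_{\Ad}B$, so that
\[
  \chi\hc_{!}(\F)\;\cong\;\tilde a_{2!}\,\tilde a_{1}^{\,*}\F\,\tw{c},\qquad \tilde a_{i}:=a\circ q_{i}\colon\mathcal{Z}\to G/_{\Ad}G,
\]
for a fixed $\tw{c}$ (a shift and Tate twist) independent of $\F$.

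Next I would identify the correspondence $\bigl(G/_{\Ad}G\xleftarrow{\tilde a_{1}}\mathcal{Z}\xrightarrow{\tilde a_{2}}G/_{\Ad}G\bigr)$. In the model $G/_{\Ad}B\cong[(G\times G/B)/_{\Ad}G]$, with $a(x,\bar g)=x$ and $\pi(x,\bar g)=[\,Ug^{-1}xg\,U\,]$ read modulo the residual adjoint $T$-action, a direct computation on points shows that the assignment
\[
  \bigl([g,u],h\bigr)\ \longmapsto\ \bigl((h,\bar g),\ (gug^{-1}h,\ \bar g)\bigr)
\]
identifies $[(\tilde{\mathcal{N}_{G}}\times G)/_{\Ad}G]$, where $\tilde{\mathcal{N}_{G}}=G\times^{B}U$, $[g,u]$ denotes its points, $q([g,u])=gug^{-1}$ is the Springer resolution, and $\bar g\in G/B$ the induced coset, with $\mathcal{Z}$, up to directions that are affine-space bundles and therefore affect $(-)_{!}$ only by a further shift and twist. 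Under this identification $\tilde a_{1}$ becomes $([g,u],h)\mapsto h$ and $\tilde a_{2}$ becomes $([g,u],h)\mapsto q([g,u])\cdot h$. On the other hand, expanding $\operatorname{Spr}\star(-)=m_{!}(\operatorname{Spr}\boxtimes(-))$ on $D^{b}_{m}(G/_{\Ad}G)$ and substituting $\operatorname{Spr}=q_{*}\cc_{\tilde{\mathcal{N}_{G}}}\tw{\dim\mathcal{N}_{G}}$ shows that $\operatorname{Spr}\star\F$ is the pull--push of $\F$ along exactly the maps $([g,u],h)\mapsto h$ and $([g,u],h)\mapsto q([g,u])\cdot h$ on this same stack, again up to a shift and twist. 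Comparing the two expressions gives $\chi\hc_{!}(\F)\cong(\operatorname{Spr}\star\F)\tw{c'}$, and a final accounting of the perverse and Tate normalisations in the definitions of $\hc_{!}$, $\chi$ and $\operatorname{Spr}$ forces $c'=0$.

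The main obstacle is precisely this geometric identification together with the shift/twist bookkeeping: one has to recognise the group Steinberg stack inside the fibre product $\mathcal{Z}$, check that the residual $U$-directions really are affine-space bundles (so that they disappear, up to a shift and twist, under $!$-pushforward), and verify that the normalisations cancel. A more structural route, bypassing the explicit fibre product: since $\hc_{!}$ is monoidal, $D^{b}_{m}(\yy)$ becomes a module category over $(D^{b}_{m}(G/_{\Ad}G),\star)$ and $\hc_{!}$ a module functor, so the right adjoint $\chi$ acquires a lax module-functor structure $\F\star\chi(\mathcal{K})\to\chi\bigl(\hc_{!}(\F)\star\mathcal{K}\bigr)$; a projection-formula/base-change argument (using that $a$ is proper and $\pi$ an affine bundle, and the central structure of Corollary~\ref{sec:centr-struct-harish-2} to control the order of convolution) shows this map is an isomorphism, and combined with the special case $\F=\delta_{e}$, which identifies $\chi\hc_{!}(\delta_{e})=\chi(\dd)$ with $\operatorname{Spr}$ by a simpler direct computation, this yields the lemma for all $\F$.
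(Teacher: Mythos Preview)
Your approach is correct and matches the paper's: the paper does not give a detailed proof but simply states that the isomorphism follows from base change and cites \cite{gi}, Lemma~8.5.4. Your argument spells out precisely this base-change computation --- rewriting $\chi\hc_!$ as pull--push along the fibre product $(G/_{\Ad}B)\times_{\yy}(G/_{\Ad}B)$ and identifying that correspondence with the one defining convolution by $\operatorname{Spr}$ --- so there is nothing to add beyond noting that the explicit point-level identification of $\mathcal{Z}$ and the cancellation of shifts, which you flag as the main obstacle, are routine once set up and are exactly what the cited reference supplies.
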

This equips the functor $\chi\hc_!$ with $W$-action coming from the $W$-action on $\operatorname{Spr}$.  
We have the following compatibility result.
\begin{lemma} The $W$-action on $\mathfrak{spr}$ corresponding to the functor $S'_{\mathcal{N}_{\g}}$ coincides with the base change along the diagram \eqref{sec:four-deligne-transf-3} of the $W$-action on $\operatorname{Spr}$ corresponding to the functor $S'_{\mathcal{N}_G}$.
\end{lemma}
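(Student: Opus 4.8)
The plan is to trace the two $W$-actions back to the deck-transformation actions on the Grothendieck--Springer sheaves and to match those by enlarging the square \eqref{sec:four-deligne-transf-3}. By definition, the action attached to $S'_{\mathcal{N}_{\g}}$ is the transport of the $W$-action on $\mathfrak{gspr}$ along the base change isomorphism $\iota_{\mathcal{N}_{\g}}^*\mathfrak{gspr}[-\dim T]\simeq\mathfrak{spr}$; and, by smallness of $p\colon\tilde{\g}\to\g$, the $W$-action on $\mathfrak{gspr}$ is the $\IC$-extension of the deck-transformation action on the $W$-Galois cover $\tilde{\g}^{rss}\to\g^{rss}$, i.e. on the local system $(p|_{\g^{rss}})_*\cc_{\tilde{\g}^{rss}}$. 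The identical description holds for $S'_{\mathcal{N}_G}$, $\operatorname{GSpr}$ and $p'\colon\tilde{G}\to G$. Since $\tilde\varphi$ is an isomorphism and \eqref{sec:four-deligne-transf-3} commutes, there is a canonical isomorphism $\varphi_*\mathfrak{spr}\simeq\operatorname{Spr}$, and the content of the lemma is that it intertwines the two $W$-actions just recalled.

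First I would upgrade \eqref{sec:four-deligne-transf-3}, using that $p$ is very good, to a $G$-equivariant isomorphism $\Phi$ between the étale (equivalently, henselian) neighbourhood of $\mathcal{N}_{\g}$ in $\g$ and that of $\mathcal{N}_G$ in $G$, restricting to $\varphi$ on the closed subschemes, lying over an identification of the corresponding neighbourhoods of $0$ in $\g/\!/G$ and of $1$ in $G/\!/G$, and compatible with reductions to Borel subgroups, so that it lifts, compatibly with $\tilde\varphi$, to an isomorphism $\tilde\Phi$ of the corresponding neighbourhoods in $\tilde{\g}$ and $\tilde{G}$ lying over a fixed identification of the neighbourhoods of $0\in\mathfrak{b}/\mathfrak{n}$ and of $1\in B/U$. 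Such a ``quasi-logarithm'' is available in very good characteristic by the circle of results around \cite{humphreys2011conjugacy}, Ch.~6.20, together with the standard compatibility of Springer isomorphisms with the flag variety. Restricting $\tilde\Phi$ to the regular semisimple loci then identifies the $W$-coverings $\tilde{\g}^{rss}\to\g^{rss}$ and $\tilde{G}^{rss}\to G^{rss}$ over the common regular semisimple part of the two neighbourhoods, hence identifies $\mathfrak{gspr}$ and $\operatorname{GSpr}$, restricted to these neighbourhoods, as $W$-equivariant perverse sheaves, both being $\IC$-extensions of the identified $W$-equivariant local systems.

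Granting this, the conclusion is a diagram chase: restricting the last $W$-equivariant identification along $\iota_{\mathcal{N}_{\g}}$ and $\iota_{\mathcal{N}_G}$ and invoking the two base change isomorphisms produces exactly the isomorphism $\varphi_*\mathfrak{spr}\simeq\operatorname{Spr}$ coming from \eqref{sec:four-deligne-transf-3}, now manifestly $W$-equivariant; unwinding the identification $\operatorname{End}(\operatorname{Spr})\simeq\Qlbar[W]$ then gives the statement. I expect the one genuine difficulty to be the construction of the compatible extension $(\Phi,\tilde\Phi)$ of \eqref{sec:four-deligne-transf-3} to neighbourhoods; once it is in place everything else is formal. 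As an alternative route that avoids $\Phi$, one could first identify the Borho--MacPherson actions $S'_{\mathcal{N}_{\g}}$ and $S'_{\mathcal{N}_G}$ with the actions defined through the respective Steinberg varieties $\tilde{\mathcal{N}}_{\g}\times_{\mathcal{N}_{\g}}\tilde{\mathcal{N}}_{\g}$ and $\tilde{\mathcal{N}}_G\times_{\mathcal{N}_G}\tilde{\mathcal{N}}_G$ and their irreducible components, which are transparently matched by the isomorphism $(\varphi,\tilde\varphi)$ of Springer resolutions; but then the comparison of the restriction action with the Steinberg action becomes the crux instead.
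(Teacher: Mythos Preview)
Your outline is sound in principle but takes a substantially heavier route than the paper does. The paper's proof is a one-liner: it invokes Theorem~4.8(1) of \cite{achar2014weyl}, a rigidity result asserting that a $W$-action on the Springer sheaf is determined by the induced action on the cohomology of the fibre over $0$ (respectively $e$), i.e.\ on $H^\bullet(G/B)$. Since both actions under comparison are, on that fibre, the classical Weyl group action on the cohomology of the flag variety, the verification is immediate. No extension of $\varphi$ off the nilpotent cone is needed.

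By contrast, your approach requires building a quasi-logarithm $(\Phi,\tilde\Phi)$ compatible with the given $(\varphi,\tilde\varphi)$ and with the Grothendieck--Springer picture; as you yourself note, this is the genuine work, and it is not entirely trivial to arrange that the extension restricts to the \emph{specific} $\varphi$ fixed in the paper (different Springer isomorphisms exist, and not every one is the restriction of a quasi-logarithm with all the desired compatibilities). Your alternative via Steinberg varieties is cleaner on the nilpotent side but, again as you say, relocates the difficulty to matching the Borho--MacPherson action with the convolution action. Either route can be made to work, and has the virtue of being self-contained; the paper's route trades self-containment for brevity by importing the Achar--Henderson--Riche rigidity statement, which collapses the whole comparison to a single fibre.
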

\begin{proof} By Theorem 4.8 (1) of \cite{achar2014weyl}, it is enough to check that the induced actions on the cohomology of the fiber of $p$ (respectively $p'$) over $0 \in \g$ (respectively $e \in G$) are the same, which is a straightforward verification.
\end{proof}

Let $S_{\mathcal{N}_G} = S_{\mathcal{N}_G}' \otimes \operatorname{sgn}_W$.

We also record the following compatibility result for future use.
\begin{proposition}
\label{sec:four-deligne-transf-6}
  Let $\F\in D^b_m(T)$ be a $W$-equivariant perverse sheaf satisfying the assumptions of Theorem \ref{sec:pro-unit-character}, so that $\hc_!(\Phi_\F) \simeq \operatorname{Res}_B^G(\Phi_\F)$. Two $W$-actions on the perverse sheaf $Ind_B^G(\F)$, one coming from the construction of \cite{chen_gl}, see the discussion before Theorem \ref{sec:pro-unit-character}, and another coming from the isomorphism
  \[ \ind_B^G\res_B^G(\Phi_\F) \simeq \chi\hc(\F) \simeq \operatorname{Spr}\star\F,
  \] and the $W$-action on $\operatorname{Spr}$ defined above, coincide.
\end{proposition}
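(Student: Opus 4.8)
The plan is to imitate the proof of the preceding lemma: localize the comparison of the two $W$-actions to a locus on which both are manifestly geometric, and then reconcile the sign representations appearing in the two constructions.

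First I would reduce to the regular semisimple locus. Since $\F$ is a local system on $T$ and $p'\colon\tilde G\to G$ is small, the perverse sheaf $\operatorname{Ind}_B^G(\F)$ (perverse by Proposition \ref{sec:pro-unit-character-1}) is the Goresky--MacPherson extension of its restriction to $G^{rss}$, over which $p'$ restricts to the finite Galois cover $\tilde G^{rss}\to G^{rss}$ with group $W$ and $\operatorname{Ind}_B^G(\F)|_{G^{rss}}\simeq p'_*\tilde\F$, with $\tilde\F$ the pull-back of $\F$ along $\tilde G^{rss}\to T$. Because any homomorphism $\Qlbar[W]\to\operatorname{End}(j_{!*}M)$ is determined by its composite with the injection $\operatorname{End}(j_{!*}M)\hookrightarrow\operatorname{End}(M)$ given by restriction to $G^{rss}$, it then suffices to compare the two induced $W$-actions on $p'_*\tilde\F$. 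This object carries a tautological "geometric" action $\rho_{\mathrm{geom}}$, combining the deck transformations of $\tilde G^{rss}/G^{rss}$ with the $W$-equivariant structure of $\F$, and the goal becomes to show that each of the two actions equals $\rho_{\mathrm{geom}}$, the $\sgn_W$-discrepancies present on the two sides cancelling.

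Next I would handle the action of \cite{chen_gl}, Proposition 3.2: this is essentially built into its construction, which is defined through the $W$-symmetry of the Grothendieck--Springer correspondence and restricts over $G^{rss}$ to $\rho_{\mathrm{geom}}$ up to a twist by $\sgn_W$; by definition our normalization differs from that of loc. cit. by exactly this $\sgn_W$. For the action coming from $\chi\hc_!$, I would use the hypothesis $\hc_!(\Phi_\F)\simeq\operatorname{Res}_B^G(\Phi_\F)$, which together with Lemma \ref{sec:four-deligne-transf-5} gives the isomorphism chain $\operatorname{Ind}_B^G(\F)\simeq\operatorname{Ind}_B^G\operatorname{Res}_B^G(\Phi_\F)\simeq\chi\hc_!(\Phi_\F)\simeq\operatorname{Spr}\star\Phi_\F$ of the statement, and then trace the base-change isomorphism underlying $\chi\hc_!\simeq\operatorname{Spr}\star(-)$. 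The $W$-action on the factor $\operatorname{Spr}$ here is the base-change action $S'_{\mathcal{N}_G}$, i.e. the restriction to $\mathcal{N}_G$ of the $\operatorname{End}(\operatorname{GSpr})=\Qlbar[W]$-action on $\operatorname{GSpr}$, whose restriction to $G^{rss}$ is precisely the deck-transformation action; tracing the base-change isomorphism should therefore identify the action transported to $\operatorname{Ind}_B^G(\F)|_{G^{rss}}$ with $\rho_{\mathrm{geom}}$ up to one twist by $\sgn_W$, accounted for by Theorem \ref{sec:four-deligne-transf-1} (equivalently Corollary \ref{sec:four-deligne-transf-2}), which compares the base-change normalization of the Springer action with the one used to identify $\operatorname{Spr}$ and $\operatorname{GSpr}$.

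Combining the last two steps, both actions of the statement restrict to $\rho_{\mathrm{geom}}$ on $p'_*\tilde\F$ — the $\sgn_W$ in our normalization of the Chen action cancelling the one on the Springer side — and hence coincide on $\operatorname{Ind}_B^G(\F)$ by the intermediate-extension argument. The step I expect to be the main obstacle is exactly this reconciliation of the two sign representations: one must verify that the isomorphism of Lemma \ref{sec:four-deligne-transf-5} is $W$-equivariant in the expected sense and that the conventional $\sgn_W$-twist in our version of the Chen action is precisely what compensates the twist in Theorem \ref{sec:four-deligne-transf-1}. As in the previous lemma, this bookkeeping can alternatively be reduced to a finite computation by applying $\operatorname{Res}_B^G$ and working with $\operatorname{Res}_B^G\operatorname{Ind}_B^G(\F)\simeq\bigoplus_{w\in W}w^*\F$ (equivalently, by comparing the induced actions on the appropriate stalk over the unipotent locus) and invoking the rigidity statement \cite{achar2014weyl}, Theorem 4.8.
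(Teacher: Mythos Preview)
Your reduction to $G^{rss}$ is a valid alternative to the paper's reduction via $\res_B^G$, and the two are closely related: both exploit that endomorphisms of the intermediate extension inject into endomorphisms over the open locus, and the paper's observation that $\res_B^G$ is faithful on the image of $\ind_B^G$ (using $\res_B^G\ind_B^G(A)\simeq\bigoplus_{w}w^*A$) is essentially a refinement of your intermediate-extension argument. Where the approaches diverge is after this reduction. The paper does not attempt to compare both actions directly to a common geometric action; instead it uses the monoidal compatibilities
\[
\res_B^G(-\star A)\simeq\res_B^G(-)\star\res_B^G(A),\qquad \ind_B^G(-\star\res_B^G A)\simeq\ind_B^G(-)\star A
\]
to peel off the factor $\Phi_\F$ from both sides, reducing the question to the tautological case $\F=\delta_e$, where the two actions agree by definition. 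This sidesteps any sign bookkeeping entirely.

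Your approach is plausible but the sign discussion is where it wobbles. The $W$-action on $\operatorname{Spr}$ used in the statement is the base-change action $S'_{\mathcal{N}_G}$ (the one inherited from $\operatorname{GSpr}$), so Theorem \ref{sec:four-deligne-transf-1} --- which compares the Fourier-transform action with the base-change action --- is not obviously the relevant ingredient here; there is no Fourier transform in the group-side comparison. What you would actually need is a direct check that the isomorphism of Lemma \ref{sec:four-deligne-transf-5}, restricted to $G^{rss}$, transports the base-change action on $\operatorname{Spr}$ to the deck-transformation action on $p'_*\tilde\F$, and that this matches the sign-twisted Chen action. This can certainly be done, but it is exactly the delicate step you flagged, and the paper's monoidal reduction avoids it completely. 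The alternative you mention at the end --- applying $\res_B^G$ and working with $\bigoplus_w w^*\F$ --- is in fact the paper's argument.
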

\begin{proof} By Proposition 3.2 (1) of \cite{chen}, for any complex $A$ on $D^b_m(T)$, we have an isomorphism
  \[ \oplus_{w \in W}w^*A \simeq \operatorname{Res}_B^G\operatorname{Ind}_B^G(A).
  \] This implies that the functor $\res_B^G$ is faithful on the essential image of $\ind_B^G$ in $D^b_m(G)$, and hence its equivariant version $\res_B^G: D^b_m(T/_{\Ad}T) \to D^b_m(G/_{\Ad}G)$ is faithful on the essential image of the restriction of $\ind_B^G$ to the subcategory of perverse equivariant sheaves.

  It follows that it is enough to check the compatibility of $W$- actions on
  \[ \res_B^G\ind_B^G\res_B^G(\Phi_\F).
  \] By Proposition 3.2 (2) of \cite{chen}, for any $W$-equivariant complex $A$ on $D^b_m(T)$, we have an isomorphism
  \[ \Qlbar[W]\otimes A \simeq \operatorname{Res}_B^G\operatorname{Ind}_B^G(A),
  \] compatible with $W$-action. Applying this to $A = \F \simeq \res_B^G(\Phi_\F)$, we get that it is enough to check the compatibility of $W$-actions on $\Qlbar[W]\otimes\res_B^G(\Phi_\F)$ and $\res_B^G(\operatorname{Spr}\star\Phi_\F)$.

    To do this, we note that we have, for $A\in D^b_m(G)$ satisfying $\hc_!(A) \simeq \operatorname{Res}_B^G(A)$, canonical natural isomorphisms of functors
    \[ \res_B^G(-\star A)\simeq \res_B^G(-)\star\res_B^G(A),
    \] and
    \[ \ind_B^G(-\star\res_B^GA)\simeq \ind_B^G(-)\star A.
    \] Here the first property follows straightforwardly by base change and $\hc$ being a monoidal functor, and the second property is Proposition 3.3 in \cite{chen}.

    It follows that it is enough to check the compatibility of $W$-actions for $\F = \delta_e$ a skyscraper sheaf at the unit $e \in T$, for which it is true by definition.
\end{proof}
\subsection{Whittaker averaging.}
See Section \ref{sec:whittaker-category-1} for notations.
Further properties of the Whittaker averaging of conjugation equivariant sheaves will be described in a forthcoming paper \cite{bezrukavnikovdeshpande}.

Recall that $\operatorname{AS}_{\psi}$ stands for the Artin-Schreier local system on $\mathbb{G}_a$ 

 Let $i_{-}:U^{-} \to G$ be the embedding map.

Write $\Xi_\psi := \Av_{G}^{\Ad}i_{-*}\xi^*\operatorname{AS}_{\psi}\tw{2l(w_0)}$.

\begin{theorem}
  \label{sec:whittaker-averaging-5}
  There is an isomorphism
   \[ \hc_!(\Xi_\psi\star\dd_G) \simeq p_!\T_{w_0}[\dim T](l(w_0)).
   \]
 \end{theorem}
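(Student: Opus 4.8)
The plan is to reduce the statement to two inputs already available in the paper: the identification $\T_{w_0}\simeq\Av_U\Av_\psi\dd$ from Proposition~\ref{sec:whittaker-category}, and the central/monoidal properties of $p^\dagger\hc_!$ together with the description of $\dd_G$ as $\prolim\mathcal{E}_n$. First I would unwind the definition of $\Xi_\psi$ and of the Harish-Chandra functor $\hc_!=\Av_U\circ\operatorname{For}$. Since $\Xi_\psi=\Av_G^{\Ad}i_{-*}\xi^*\operatorname{AS}_\psi\tw{2l(w_0)}$ is obtained by $\Ad_G$-averaging a sheaf on $U^-$, I would use the standard fact (essentially the defining property of parabolic induction / the Harish-Chandra transform, cf.\ the base change formula behind Lemma~\ref{sec:four-deligne-transf-5}) that $\hc_!$ applied to an $\Ad_G$-averaged sheaf $\Av_G^{\Ad}(\mathcal{F})$ can be rewritten as an integral over $G/B$-type spaces, so that on the level of sheaves on $U\backslash G/U$ the composite $\hc_!(\Xi_\psi\star-)$ becomes the composition of left $U$-averaging with the Whittaker-type averaging $\Av_\psi$ against $\xi^*\operatorname{AS}_\psi$ along $U^-$. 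Concretely I expect an isomorphism of functors of the form $\hc_!(\Xi_\psi\star\chi(-))\cong \Av_{U!}\Av_\psi(-)$ (up to the specified shift $\tw{2l(w_0)}$ and a homological shift $[\dim T]$ coming from $p^\dagger$ versus $p_!$), exploiting that convolution on $D^b_m(G/_{\Ad}G)$ intertwines with convolution on $\cM$ under the monoidal functor $p^\dagger\hc_!$.

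Next I would feed in the pro-unit. By Corollary~\ref{centralunit}, $p^\dagger\hc_!(\dd_G)\simeq\dd$, and $\dd$ is the monoidal unit of $\star$ on $\cM$ by Proposition~\ref{braid_relations}\ref{item:18}; moreover the $\mathcal{E}_n$ (hence $\dd_G$) are central. So $p^\dagger\hc_!(\Xi_\psi\star\dd_G)\simeq p^\dagger\hc_!(\Xi_\psi)\star\dd\simeq p^\dagger\hc_!(\Xi_\psi)$, and the only real content is to identify $p^\dagger\hc_!(\Xi_\psi)$, or rather $\hc_!(\Xi_\psi)$, with the averaging $\Av_{U!}\Av_\psi\dd$ up to the stated shift and twist. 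But $\Av_U\Av_\psi\dd\simeq\T_{w_0}$ by Proposition~\ref{sec:whittaker-category}, so combining these gives $\hc_!(\Xi_\psi\star\dd_G)\simeq\T_{w_0}$ up to shift and twist; matching $p^\dagger=p^*[\dim T]$ against the bare $\hc_!$ on the left-hand side of the theorem then produces exactly the claimed $p_!\T_{w_0}[\dim T](l(w_0))$ after bookkeeping the Tate twists in $\Xi_\psi=(\cdots)\tw{2l(w_0)}$ and the normalizations $\tw{l(w_0)}$ hidden in $\Av_\psi$ and $\Av_U$.

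The main obstacle is the first step: carefully identifying $\hc_!(\Xi_\psi\star-)$, where $\Xi_\psi$ lives on $G/_{\Ad}G$, with the composition $\Av_{U!}\Av_\psi$ of functors between $\cM$ and the Whittaker category $\cM_\psi$. This requires a genuine geometric computation — writing out the correspondence $U\backslash G/U \leftarrow \{(\text{flag},\,\text{Whittaker datum})\} \rightarrow \mathcal{Y}$, checking that $\Ad_G$-averaging of a sheaf supported on $U^-$ translates, after the Harish-Chandra $\Av_U$, into integration against $\xi^*\operatorname{AS}_\psi$ over the opposite unipotent, and keeping track of all shifts and Frobenius twists. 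I would handle this by proper (and smooth) base change along an explicit diagram of stacks, reducing to the affine-space fibrations $U$, $U^-$ whose cohomology is controlled; the centrality of $\dd_G$ is what allows one to insert $\dd_G$ on the $G$ side and still recover the monoidal unit after $\hc_!$. Once the functor identification is in place, the rest is the formal manipulation above together with Proposition~\ref{sec:whittaker-category}.
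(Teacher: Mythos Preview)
Your overall strategy matches the paper's: both arguments unwind $\hc_!$ applied to the $\Ad_G$-averaged Whittaker kernel and compare with the formula $\T_{w_0}\simeq\Av_U\Av_\psi\dd$ from Proposition~\ref{sec:whittaker-category}, using monoidality of $p^\dagger\hc_!$ and the identification $p^\dagger\hc_!(\dd_G)\simeq\dd$. However, there are two places where your write-up is imprecise in a way that matters.

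First, you repeatedly conflate $\T_{w_0}$ with $p_!\T_{w_0}$. The functor $\hc_!$ lands in $D^b_m(\mathcal{Y})$, not in $\cM$, so the statement ``identify $\hc_!(\Xi_\psi)$ with $\Av_{U!}\Av_\psi\dd$'' and the conclusion ``$\hc_!(\Xi_\psi\star\dd_G)\simeq\T_{w_0}$'' are category errors; the missing ingredient is precisely an extra $\Av_T^{\Ad}$ (i.e.\ $p_!$). In the paper this comes out cleanly because the ``main obstacle'' you identify is dispatched by a much shorter observation than the base-change diagram chase you propose: by definition $\hc_!(\Av_G^{\Ad}\mathcal F)\dot{=}\Av_U\Av_B^{\Ad}\mathcal F$, the kernel $i_{-*}\xi^*\operatorname{AS}_\psi$ is already $\Ad U^-$-equivariant, and one has $\Av_U\Av_B^{\Ad}\dot{=}\Av_T^{\Ad}\Av_U$. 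This immediately yields $\hc_!(\Xi_\psi)\dot{=}\Av_T^{\Ad}\Av_U(i_{-*}\xi^*\operatorname{AS}_\psi)$, which is the convolution kernel for $\Av_T^{\Ad}\Av_U\Av_\psi$.

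Second, your reduction ``$p^\dagger\hc_!(\Xi_\psi\star\dd_G)\simeq p^\dagger\hc_!(\Xi_\psi)$ and now just identify the right-hand side with $\T_{w_0}$'' collapses the pro-structure too early. The object $\T_{w_0}$ is a genuine pro-object not in the essential image of $\M$, so it cannot be isomorphic to the honest object $p^\dagger\hc_!(\Xi_\psi)$. The paper avoids this by keeping $\mathcal{E}_n$ (equivalently $\varepsilon_n$) explicit on \emph{both} sides: it matches $\hc_!(\Xi_\psi)\star\hc_!(\mathcal{E}_n)$ with $\Av_T^{\Ad}\Av_U\Av_\psi\varepsilon_n$ for each $n$, and only then passes to the limit. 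The $\varepsilon_n$ on the right does not come from inserting a unit and cancelling it; it comes from the very definition $\T_{w_0}=\Av_U\Av_\psi(\prolim\varepsilon_n)$. So the role of $\dd_G$ is not merely to project to the unipotent category and disappear --- it supplies the pro-direction that makes the right-hand side a free-monodromic object.
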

 \begin{proof} Recall from Corollary \ref{centralunit}, that
   \[ p^{\dagger}\hc_!(\dd_G)\simeq p^{\dagger}\hc_!(\prolim \mathcal{E}_n) = \prolim \varepsilon_n.
   \]

   By Proposition \ref{sec:whittaker-category}, we need to prove that
  \[ \hc_!(\Av_{G}^{\Ad}i_{-*}\xi^*\operatorname{AS}_{\psi}\star\mathcal{E}_n) \dot{=} \Av_T^{\Ad}\Av_U\Av_\psi\varepsilon_n.
 \] Rewrite the right-hand side as
 \[ \Av_T^{\Ad}\Av_U\Av_\psi\varepsilon_n \dot{=} \Av_T^{\Ad}\Av_U(i_{-*}\xi^*\operatorname{AS}_\psi)\star\hc_!(\mathcal{E}_n).
 \] Here we used the fact that the Whittaker averaging can be expressed as convolution with $i_{-*}\xi^*\operatorname{AS}_\psi$, see Section \ref{sec:whittaker-category-1}:
 \[ \Av_\psi(-) \simeq \operatorname{For}(i_{-*}\xi^*\operatorname{AS}_\psi \star -)
 \] and that $\varepsilon_n \dot{=} p^{\dagger}\hc_!(\mathcal{E}_n)$.

 For the left-hand side we have
 \begin{multline*} \hc_!(\Av_{G}^{\Ad}i_{-*}\xi^*\operatorname{AS}_{\psi}\star\mathcal{E}_n) \simeq \hc_!(\Av_{G}^{\Ad}i_{-*}\xi^*\operatorname{AS}_{\psi})\star\hc_!(\mathcal{E}_n) \dot{=}\\\dot{=} \Av_U(\Av_B^{\Ad}i_{-*}\xi^*\operatorname{AS}_{\psi})\star\hc_!(\mathcal{E}_n)\dot{=} \Av_T^{\Ad}\Av_U(i_{-*}\xi^*\operatorname{AS}_\psi)\star\hc_!(\mathcal{E}_n).
 \end{multline*} Here we used that $i_{-*}\xi^*\operatorname{AS}_\psi$ is $\Ad U^-$-equivariant, and that \[\Av_U\Av_B^{\Ad} \dot{=} \Av_T^{\Ad}\Av_U.\]

 Shift and twist can now be recovered by considering the stalk at the generic point.
\end{proof}
Let $\mathcal{N}_G^{reg} \subset \mathcal{N}_G^{reg}$ be the regular unipotent orbit of the adjoint $G$-action. Let $j_{reg}:\mathcal{N}_G^{reg}/_{\Ad}G \to G/_{\Ad}G$ be the corresponding embedding.

 \begin{proposition}
   \label{sec:whittaker-averaging-2} Assume that $G$ is of adjoint type. Then
   \[ \Xi_\psi \simeq j_{reg*}\underline{\Qlbar}_{\mathcal{N}_G^{reg}}\tw{2l(w_0)}.
   \]
 \end{proposition}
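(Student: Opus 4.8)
\emph{The plan.} I would verify the claimed isomorphism in three steps — support, restriction to the open orbit, and the $\ast$-extension property — forgetting the $\Ad G$-equivariance on the target and ignoring Tate twists and shifts throughout (the latter recovered at the end from the stalk at the generic point of $\mathcal{N}_G^{reg}$, exactly as in the proof of Theorem~\ref{sec:whittaker-averaging-5}). For the support: every element of $U^-$ is unipotent and in fact $\Ad(G)\,U^- = \mathcal{N}_G$, with $\mathcal{N}_G^{reg}\cap U^-$ the locus of regular elements of $U^-$; hence $\Xi_\psi$ is supported on $\mathcal{N}_G$, whose unique dense orbit is $\mathcal{N}_G^{reg}$.

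\emph{Restriction to $\mathcal{N}_G^{reg}$.} Fix a regular unipotent $u_0\in U^-$. Because a regular unipotent lies in a unique Borel subgroup, the fibre over $u_0$ of the adjoint action map $G\times U^-\to G$ is $\{(g,g^{-1}u_0g):g\in B^-\}\cong B^-$, and the pullback of $\xi^*\operatorname{AS}_\psi$ to it is constant along the $U^-$-factor of $B^-=TU^-$ and equals, on the $T$-factor, the pullback of $\operatorname{AS}_\psi$ along $t\mapsto\sum_{s}c_s\,\alpha_s(t)$, the $c_s\neq 0$ being the simple-root coordinates of $u_0$. Here the hypothesis that $G$ is of adjoint type enters crucially: the simple roots form a basis of $X^*(T)$, so $T\cong\mathbb{G}_m^{r}$ via $(\alpha_s)_s$ and this map becomes $\sum_s c_s x_s$ with all $c_s$ invertible; after rescaling coordinates the relevant fibrewise compactly supported cohomology becomes $\bigl(\bigotimes_{s}\operatorname{H}^{\bullet}_c(\mathbb{G}_m,\operatorname{AS}_\psi)\bigr)\otimes\operatorname{H}^{\bullet}_c(U^-,\cc)$, which is one-dimensional. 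Thus $j_{reg}^{*}\Xi_\psi$ is a rank-one local system on $\mathcal{N}_G^{reg}$ placed in a single degree; it is the trivial one, since $Z_G(u_0)$ is connected for $G$ adjoint, and the twist $\tw{2l(w_0)}$ in the definition of $\Xi_\psi$ is arranged precisely so that $j_{reg}^{*}\Xi_\psi\simeq\cc_{\mathcal{N}_G^{reg}}\tw{2l(w_0)}$.

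\emph{The $\ast$-extension property}, i.e.\ $i^{!}\Xi_\psi=0$ for the embedding $i$ of the complement of $\mathcal{N}_G^{reg}$, is the heart of the matter; here is the approach I would take. Passing to Verdier duals and using $\D\circ\Av_G^{\Ad}\simeq\pi_{*}\circ\D$ together with $\D(i_{-*}\xi^*\operatorname{AS}_\psi)\simeq i_{-*}\xi^*\operatorname{AS}_{\bar\psi}\tw{2l(w_0)}$, one obtains $\D\Xi_\psi\simeq\pi_{*}(i_{-*}\xi^*\operatorname{AS}_{\bar\psi})$ up to shift and twist, where $\pi\colon G/_{\Ad}U^-\to G/_{\Ad}G$ is the averaging morphism; it then suffices to show that this $\ast$-averaging is supported on $\mathcal{N}_G^{reg}$, equivalently that its $\ast$-stalk vanishes at every non-regular unipotent $x_0$. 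Such a stalk is computed by the cohomology of the fibre of the averaging over $x_0$ — essentially $\mathcal{O}_{x_0}\cap U^-$ modulo conjugation — with coefficients in $\xi^{*}\operatorname{AS}_{\bar\psi}$, and the point is that for $x_0$ non-regular the restriction of $\xi$ there is a degenerate character: it is invariant under conjugation by some simple positive root subgroup $U_{\alpha_s}$ acting on the fibre with a free $\mathbb{G}_a$-orbit direction, so the cohomology acquires a tensor factor $\operatorname{H}^{\bullet}(\mathbb{A}^{1},\cc)=0$. Granting this, $\D\Xi_\psi$ is $j_{reg!}$ of a rank-one local system, hence $\Xi_\psi$ is $j_{reg*}$ of one, and by the second step it equals $j_{reg*}\cc_{\mathcal{N}_G^{reg}}\tw{2l(w_0)}$.

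\emph{Main obstacle.} The delicate point is precisely the last step: one must carry out the base-change and cleanness bookkeeping for the non-proper averaging $\pi$ with care, and verify the degenerate-Whittaker vanishing uniformly over all non-regular unipotent orbits — for which I would decompose the non-regular locus of $U^-$ along its codimension-one subgroups $\ker(U^-\to\mathbb{G}_a,\ u\mapsto u_{-\alpha_s})$ and run a Mayer--Vietoris argument. An alternative route, better suited to the Fourier machinery of Section~\ref{sec:four-deligne-transf-4}, is to transport the question to the nilpotent cone of $\g$ via the Springer isomorphism $\varphi$ and apply $\FT$: since $\FT$ commutes with $\Av_G^{\Ad}$ and, by Lemma~\ref{sec:four-deligne-transf}\ref{item:10}, sends $i_{-*}\xi^*\operatorname{AS}_\psi$ to the constant sheaf on the Kostant--Slodowy slice $e+\mathfrak{b}^-$ (with $e\in\n$ a regular nilpotent and $\mathfrak{b}^-$ the opposite Borel subalgebra), the proposition is reduced to identifying $\Av_G^{\Ad}(\cc_{e+\mathfrak{b}^-})$, which one does using Kostant's theorems on the regular orbit and the flatness of the adjoint quotient restricted to that slice.
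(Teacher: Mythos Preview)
Your three-step plan (support, generic stalk, $*$-extension) is sound, and steps one and two are correct --- but the paper bypasses this decomposition entirely with a much cleaner trick that also closes the gap you yourself flag in step three. Rather than computing stalks or dualizing, the paper factors the averaging as $\Av_G^{\Ad}=\Av_{B^-}^{G}\circ\Av_{B^-}^{\Ad}$; since $\xi^*\operatorname{AS}_\psi$ is already $\Ad U^-$-equivariant and the projection $U^-\to U^-/[U^-,U^-]\cong\prod_s\mathbb{G}_a$ is $\Ad T$-equivariant, the inner averaging reduces (via the adjoint-type hypothesis, exactly where you use it) to the one-dimensional computation of $m_!\pi_2^*\operatorname{AS}_\psi$ for the $\mathbb{G}_m$-scaling action on $\mathbb{G}_a$. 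A change of coordinates identifies this with the Fourier--Deligne transform of $j_!\cc_{\mathbb{G}_m}$, which is $j_*\cc_{\mathbb{G}_m}$ up to twist. So already after the $T$-averaging one has $j^{-,reg}_*\cc$ on $U^-$; the remaining $B^-$-to-$G$ averaging is a \emph{proper} pushforward (fibre $G/B^-$), hence $*=!$, and $*$-pushforward of a $j_*$-extension along the Springer map gives $j_{reg*}\cc_{\mathcal{N}_G^{reg}}$ by plain functoriality.

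The genuine gap in your proposal is step three: the ``degenerate Whittaker vanishing'' is asserted but not proved, and your sketch (find a free $\mathbb{G}_a$-direction, run Mayer--Vietoris over codimension-one subgroups) is not obviously uniform over the whole non-regular locus --- you would need to control how the vanishing interacts with the stratification by unipotent orbits, and the base-change step for the non-proper $\pi_*$ needs justification. Your alternative Fourier route is the one the paper uses \emph{later} (for Theorem~\ref{sec:whittaker-averaging-1}) to compute the perverse cohomology of $\Xi_\psi$, but it does not directly yield the $*$-extension property here: knowing $\FT(\Xi_\psi)\simeq\Av_G^{\Ad}\cc_{e+\mathfrak{b}^-}$ still leaves you with identifying the inverse Fourier transform of that averaging as a $j_*$-sheaf, which is no easier than the original problem. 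The paper's factor-through-$B^-$ argument sidesteps all of this.
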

 \begin{proof} To unburden the notations, we will write $\cc$ instead of $\cc_X$ for a constant sheaf on the stack $X$, where it does not cause confusion. We have $\Av_{G}^{\Ad} \simeq \Av_{B^-}^{G}\Av_{B^-}^{\Ad}$. We first compute the averaging with respect to $B^-$. Note that $\xi$ is $\Ad U^-$-equivariant, so it is enough to compute $\Av_{T}^{\Ad}i_{-*}\xi^*\operatorname{AS}_{\psi}$.  Since the projection $U^- \to U^-/[U^-,U^-]$ is $\Ad T $-equivariant, it is enough to compute the averaging of $+^*\operatorname{AS}_{\psi}$, where $+$ stands for the addition map from \eqref{eq:7}. This, in turn, reduces to the one-dimensional computation, namely of the averaging of $\operatorname{AS}_{\psi}$ with respect to the scaling action of $\mathbb{G}_m$. To do this, consider the diagram
   \[
     \begin{tikzcd} \mathbb{G}_m\times\mathbb{G}_a \arrow[r, "\pi_2"] \arrow[rd, "m"] & \mathbb{G}_a \\ & \mathbb{G}_a
\end{tikzcd}
\] Here $m$ is the action (multiplication) morphism, and $\pi_2$ is the projection to the second factor. We wish to compute the sheaf $m_!\pi_2^*\operatorname{AS}_\psi$. The diagram above is isomorphic, via the map $(a,b) \mapsto (a^{-1},ab), \mathbb{G}_m\times\mathbb{G}_a \to \mathbb{G}_m\times\mathbb{G}_a,$ to the diagram
   \[
     \begin{tikzcd} \mathbb{G}_m\times\mathbb{G}_a \arrow[r, "m"] \arrow[rd, "\pi_2"] & \mathbb{G}_a \\ & \mathbb{G}_a
\end{tikzcd}
\] which fits into the following larger diagram
\[
\begin{tikzcd} \mathbb{G}_m\times\mathbb{G}_a \arrow[r, "j'"] \arrow[d, "\pi_1"] & \mathbb{G}_a\times\mathbb{G}_a \arrow[r, "m'"] \arrow[rd, "\pi_2"] \arrow[d, "\pi_1"] & \mathbb{G}_a \\ \mathbb{G}_m \arrow[r, "j"] & \mathbb{G}_a & \mathbb{G}_a
\end{tikzcd}
\] Here $j,j'$ are the canonical open embeddings, $\pi_1, \pi_2$ are projections to the first and second factors, respectively, and $m'$ is the multiplication map. It is easy to see that we have a natural isomorphism
\[ \pi_{2!}m^*\operatorname{AS}_{\psi} \simeq \pi_{2!}j'_!(j'^*m'^*\operatorname{AS}_\psi) \simeq \pi_{2!}(\pi_1^*j_!\underline{\Qlbar}\otimes m'^*\operatorname{AS}_\psi),
\] and so we need to compute the Fourier-Deligne transform of $j_!\underline{\Qlbar}$, which can be easily seen to be isomorphic to $j_*\underline{\Qlbar}(1)$. Thus, we get that
\[ \Av_{T}^{\Ad}\xi^*\operatorname{AS}_\psi\tw{\dim T} \simeq j^{-,reg}_*\underline{\Qlbar}(\dim T),
\] where $j^{-,reg}$ is the open embedding of the subset of regular elements in $U^-$. The complex $j^{-,reg}_*\underline{\Qlbar}(\dim T)$ is $\Ad B^- = \Ad U^-T$-equivariant, and, after averaging from $B^-$ to $G$ (note such an operation is given by a proper pushforward) we get the result.
\end{proof}
We have the following immediate corollary for an arbitrary connected reductive group $G$. Let $Z(G)$ be the center of $G$.  
\begin{corollary}
  \label{sec:whittaker-averaging-8}
  There is an isomorphism
\[
  \Xi_\psi \simeq \Av_{Z(G)}j_{reg*}\underline{\Qlbar}_{\mathcal{N}_G^{reg}}\tw{2l(w_0)}.
\]
\end{corollary}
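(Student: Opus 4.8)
The plan is to reduce the statement to the case of the adjoint group $\bar{G} := G/Z(G)$, where Proposition~\ref{sec:whittaker-averaging-2} applies, and to account for the center by a formal manipulation of the averaging functor. First I would note that the adjoint action of $G$ on itself is trivial on $Z(G)$, hence factors through $\bar G$; therefore the projection $G \to G/_{\Ad}G$ factors as $G \to G/_{\Ad}\bar G \to G/_{\Ad}G$, where the second morphism is the quotient by the trivial $Z(G)$-action and exhibits $G/_{\Ad}G$ as $(G/_{\Ad}\bar G)\times \mathrm{B}Z(G)$. Applying $\pi_!$ to this factorization yields an isomorphism of functors $\Av_G^{\Ad} \simeq \Av_{Z(G)}\circ\Av_{\bar G}^{\Ad}$ on $D_m^b(G)$, where on the right $\bar G$ acts on $G$ through $G\to\bar G$ and $\Av_{Z(G)}$ is the averaging for the trivial $Z(G)$-action.

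Next I would transport the relevant data along the central isogeny $\pi: G \to \bar G$. It restricts to an isomorphism $U^- \xrightarrow{\sim} \bar U^-$ onto the unipotent radical of the opposite Borel of $\bar G$, compatibly with the decompositions $U^-/[U^-,U^-]\simeq\prod_{s\in S}\mathbb{G}_a\simeq\bar U^-/[\bar U^-,\bar U^-]$ and hence with the characters $\xi$ and its analogue for $\bar G$; and, $p$ being a good prime, it restricts to an isomorphism $\mathcal{N}_G^{reg}\xrightarrow{\sim}\mathcal{N}_{\bar G}^{reg}$ of regular unipotent loci (cf. the isomorphisms in the diagram \eqref{sec:four-deligne-transf-3} and \cite{humphreys2011conjugacy}).

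With this in hand I would re-run the proof of Proposition~\ref{sec:whittaker-averaging-2} verbatim, but for the adjoint group $\bar G$ acting on $G$. The only step in that proof which uses the hypothesis ``of adjoint type'' is the reduction of $\Av_{\bar T}^{\Ad}(+^*\operatorname{AS}_\psi)$ to a product of one-dimensional $\mathbb{G}_m$-scaling averagings; this is available here precisely because $\bar T = T/Z(G)$ is the maximal torus of the adjoint group $\bar G$, so that the simple roots identify $\bar T$ with $\prod_{s\in S}\mathbb{G}_m$. All remaining steps --- the $\Ad U^-$-equivariance of $\xi^*\operatorname{AS}_\psi$, the one-dimensional Fourier--Deligne computation, and the concluding proper averaging along $G/_{\Ad}\bar B^- \to G/_{\Ad}\bar G$ whose fibre is the flag variety of $\bar G$, which equals that of $G$ --- are unaffected once $U^-$ is identified with $\bar U^-$ and $\mathcal{N}_G^{reg}$ with $\mathcal{N}_{\bar G}^{reg}$. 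This gives $\Av_{\bar G}^{\Ad}(i_{-*}\xi^*\operatorname{AS}_\psi)\tw{2l(w_0)} \simeq j_{reg*}\underline{\Qlbar}_{\mathcal{N}_G^{reg}}\tw{2l(w_0)}$, and combining with the first step $\Xi_\psi = \Av_G^{\Ad}(i_{-*}\xi^*\operatorname{AS}_\psi)\tw{2l(w_0)} \simeq \Av_{Z(G)}\Av_{\bar G}^{\Ad}(i_{-*}\xi^*\operatorname{AS}_\psi)\tw{2l(w_0)} \simeq \Av_{Z(G)}j_{reg*}\underline{\Qlbar}_{\mathcal{N}_G^{reg}}\tw{2l(w_0)}$.

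The only point requiring care is the bookkeeping in the first paragraph: making precise that $\Av_G^{\Ad}$ factors as $\Av_{Z(G)}\circ\Av_{\bar G}^{\Ad}$ and, equivalently, that inside the proof of Proposition~\ref{sec:whittaker-averaging-2} the averaging $\Av_T^{\Ad}$ over the full torus decomposes as $\Av_{Z(G)}\circ\Av_{\bar T}^{\Ad}$ with the central factor commuting past all subsequent functors and surfacing as the $\Av_{Z(G)}$ in the statement. Everything else is either formal or a direct citation, which is why the result is ``immediate''.
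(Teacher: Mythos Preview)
Your approach is exactly the one the paper has in mind: rerun the proof of Proposition~\ref{sec:whittaker-averaging-2} for general $G$, noting that the only step using adjointness is the identification of $T$ with $\prod_{s\in S}\mathbb{G}_m$ via the simple roots, and that in general this map has kernel precisely $Z(G)$, so an extra $\Av_{Z(G)}$ (for the trivial action) appears and commutes past the remaining functors.

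One technical point deserves a correction. Your claimed product decomposition $G/_{\Ad}G \simeq (G/_{\Ad}\bar G)\times \mathrm{B}Z(G)$ is not true in general: comparing isotropy groups at a point $g$, one would need $Z_G(g)\simeq \bigl(Z_G(g)/Z(G)\bigr)\times Z(G)$, and already for $g=e$ this requires $G\simeq \bar G\times Z(G)$, which fails e.g.\ for $G=\operatorname{SL}_n$. Correspondingly the factorization $G\to G/_{\Ad}\bar G\to G/_{\Ad}G$ need not exist as a map of stacks. The right bookkeeping is the one you indicate in your last paragraph: factor the torus averaging as $\Av_T^{\Ad}=\Av^{\bar T}\circ\Av_{Z(G)}$ via the normal subgroup $Z(G)\subset T$ (this direction always makes sense, since $[U^-/T]\simeq [[U^-/Z(G)]/\bar T]$), observe that $\Av_{Z(G)}$ on $U^-$ is just tensoring with $H^\bullet_c(Z(G))$ and therefore commutes with the subsequent $\Av_{\bar T}$ and $\Av_{B^-}^G$, and conclude. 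After pulling back the equivariant structure the two formulations agree, so your argument goes through once the order of the factorization is fixed; this is also why the paper calls the corollary ``immediate''.
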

\subsection{Perverse cohomology of $\Xi_{\psi}$ and Hochschild cohomology.} We now compute the perverse cohomology of the sheaf $\Xi_\psi$. Recall from Section \ref{sec:four-deligne-transf-4} that the irreducible constituents of the Springer sheaf are labeled by the irreducible representations of $W$, so that the IC-sheaf of $\mathcal{N}_G^{reg}$ corresponds to the sign representation. Let $\mathfrak{t}$ be the representation of $W$ on $H^1(T,\Qlbar)$, and let \[\mathfrak{W}_k = S_{\mathcal{N}_G}({\Lambda^k \mathfrak{t}}).\] Let $n = \dim T$.
\begin{theorem}
  \label{sec:whittaker-averaging-1}
  There is an isomorphism
\[ \mathcal{H}^{k}(\Xi_\psi) \simeq \mathfrak{W}_{n-k}(-2k), k = 0,\dots, n.
\]
\end{theorem}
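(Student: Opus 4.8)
The plan is to resolve the regular unipotent orbit, compute the extension of the constant sheaf there explicitly, push it down, and recognize the result as a sum of Springer sheaves; the exterior powers $\Lambda^{k}\mathfrak{t}$ will enter through $H^{\bullet}(T)=\Lambda^{\bullet}\mathfrak{t}$. By Proposition~\ref{sec:whittaker-averaging-2} and Corollary~\ref{sec:whittaker-averaging-8} the theorem concerns the perverse cohomology of $j_{reg*}\cc_{\mathcal{N}_G^{reg}}\tw{2l(w_0)}$, so I would first reduce to computing $\mathcal{H}^k(j_{reg*}\cc_{\mathcal{N}_G^{reg}})$ and restore the shift $\tw{2l(w_0)}$ --- and, for non-adjoint $G$, the averaging $\Av_{Z(G)}$ --- at the end. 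Write $N=l(w_0)=\dim U$ and $r=\dim T$.

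Consider the Springer resolution $q\colon\tilde{\mathcal{N}_{G}}=G\times^B U\to\mathcal{N}_G$. A regular unipotent lies in a unique Borel and its centralizer is contained in it, so $B$ acts transitively on the open subvariety $U^{reg}\subset U$ of regular unipotents and $q$ restricts to an isomorphism of the open subvariety $\widetilde{\mathcal{N}_G^{reg}}:=G\times^B U^{reg}$ onto $\mathcal{N}_G^{reg}$. Hence, for $j'\colon\widetilde{\mathcal{N}_G^{reg}}\hookrightarrow\tilde{\mathcal{N}_{G}}$ the open embedding, $j_{reg*}\cc_{\mathcal{N}_G^{reg}}\simeq q_*j'_*\cc_{\widetilde{\mathcal{N}_G^{reg}}}$. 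Ordering the positive roots with the simple ones first, the product of root subgroups identifies $U$ with $\mathbb{A}^N$ so that $U^{reg}$ is the complement of the $r$ coordinate hyperplanes $H_s=\{u_{\alpha_s}=0\}$, $s\in\Sigma$; the simple‑root coordinates being $B$‑semiinvariant, this arrangement is $B$‑stable, and the elementary formula $j_{0*}\cc_{\mathbb{G}_m}\simeq\cc_{\mathbb{A}^1}\oplus\cc_{\{0\}}[-1](-2)$ for $j_0\colon\mathbb{G}_m\hookrightarrow\mathbb{A}^1$, taken externally over the $r$ factors and globalized over $G/B$, gives
\[
 j'_*\cc_{\widetilde{\mathcal{N}_G^{reg}}}\;\simeq\;\bigoplus_{S\subseteq\Sigma}\cc_{G\times^B(\bigcap_{s\in S}H_s)}[-|S|](-2|S|).
\]

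Applying $q_*=q_!$ (properness), which commutes with the finite direct sum, it remains to understand the perverse cohomology of the pure complexes $q_*\cc_{G\times^B(\bigcap_{s\in S}H_s)}$. Each $G\times^B(\bigcap_{s\in S}H_s)$ is smooth and closed in $\tilde{\mathcal{N}_{G}}$, so by the decomposition theorem this pushforward is a direct sum of shifts of simple $G$‑equivariant perverse sheaves on $\mathcal{N}_G$, i.e.\ of $\IC$‑sheaves of nilpotent orbits; one identifies the occurring simples as Springer summands by factoring $q$ through the partial Springer resolution attached to the standard parabolic $P_S$ omitting the simple reflections in $S$, and using the (known) compatibility of parabolic induction with the Springer construction together with Corollary~\ref{sec:four-deligne-transf-2} applied to the Levi of $P_S$. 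Sorting the resulting (pure, semisimple) perverse cohomology by degree and weight, one is reduced to a purely representation‑theoretic identity: an alternating/graded combination over subsets $S\subseteq\Sigma$ of the modules $\operatorname{Ind}_{W_S}^W\cc$ produces precisely the exterior powers $\Lambda^{r-k}\mathfrak{t}$, the grading matching the Tate twist. This is the $W$‑equivariant shadow of the Koszul‑type decomposition $\bigoplus_k\Lambda^k\mathfrak{t}=H^{\bullet}(T)$ (the Poincaré series of the coordinate‑hyperplane complement $\mathbb{G}_m^r\subset U^{reg}$ being $(1+t)^r$), and yields $\mathcal{H}^k(\Xi_\psi)\dot{=}S_{\mathcal{N}_G}(\Lambda^{r-k}\mathfrak{t})(-2k)=\mathfrak{W}_{r-k}(-2k)$. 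The overall normalization --- the shift $\tw{2l(w_0)}$ and the exact Tate twist --- is then fixed by restricting to the generic point of $\mathcal{N}_G^{reg}$, exactly as at the end of the proof of Theorem~\ref{sec:whittaker-averaging-5}.

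The main obstacle lies in the last steps. First, $q$ is \emph{not} semismall on the subvarieties $G\times^B(\bigcap_{s\in S}H_s)$, so each pushforward $q_*\cc_{G\times^B(\bigcap_{s\in S}H_s)}$ a priori spreads over several perverse degrees, and one must control exactly which Springer summands occur in which degrees and with which weights. Second, the ensuing multigraded bookkeeping of $W$‑characters, shifts and twists has to conspire so that everything outside the $\Lambda^{r-k}\mathfrak{t}$‑summands cancels across the sum over $S$ and the twist comes out as $(-2k)$. Finally, for general reductive $G$ one must push the averaging $\Av_{Z(G)}$ through the computation --- equivalently, descend the statement from the adjoint group using the compatibility of $\mathcal{N}^{reg}$, the Springer sheaves and $\hc_!$ with the isogeny $G\to G_{ad}$; this last point is presumably where the idea of A.~Braverman acknowledged in the introduction is used, to treat all reductive groups at once. (One can also run the whole argument on the Lie algebra: transport via the isomorphisms $\varphi,\tilde{\varphi}$ of~\eqref{sec:four-deligne-transf-3}, apply the Fourier--Deligne transform, which is perverse $t$‑exact and carries $S_{\mathcal{N}_{\g}}(\rho)$ to $S_{\g}(\rho)$, and argue with the Grothendieck--Springer resolution over $\g^{reg}$, where the relevant maps become finite.)
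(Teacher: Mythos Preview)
Your approach is genuinely different from the paper's, and as written it has two concrete gaps. First, the claimed splitting $j_{0*}\cc_{\mathbb{G}_m}\simeq\cc_{\mathbb{A}^1}\oplus\cc_{\{0\}}[-1](-2)$ is false: after the perverse shift this is the short exact sequence $0\to\cc_{\mathbb{A}^1}[1]\to j_{0*}\cc_{\mathbb{G}_m}[1]\to\cc_{\{0\}}(-2)\to 0$, and the connecting map lives in $\Hom(\cc_{\{0\}}(-2),\cc_{\mathbb{A}^1}[2])\simeq\Qlbar$ with trivial Frobenius action, so the non-split extension survives in the mixed category. You therefore only get a \emph{filtration} of $j'_*\cc$ by the sheaves $\cc_{G\times^B(\bigcap_{s\in S}H_s)}$, and after applying $q_*$ the associated graded pieces will generically cancel against each other rather than contribute a direct summand each; the bookkeeping you worry about at the end is not merely tedious but genuinely obstructed. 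Second, the subvariety $\bigcap_{s\in S}H_s\subset U$ is \emph{not} the nilradical of the parabolic $P_S$ (for $G=\GL_3$ and $S=\{s_1\}$ one is $\{u_{12}=0\}$, the other $\{u_{23}=0\}$), so there is no map $G\times^B(\bigcap_{s\in S}H_s)\to G\times^{P_S}U_{P_S}$ through which $q$ factors, and the appeal to parabolic Springer theory does not go through.

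The paper's proof avoids all of this by following precisely the route you sketch in your final parenthetical. One first invokes the result of Rider (\cite{rider2016perverse}, Theorem~3.5) that every perverse cohomology sheaf of $j_{reg*}\cc$ lies in the category $D_{\operatorname{Spr}}$ generated by summands of $\operatorname{Spr}$; since $S_{\mathcal{N}_G}$ is fully faithful on $D_{\operatorname{Spr}}$, it then suffices to identify the $W$-module attached to each $\mathcal{H}^k(\Xi_\psi)$, and for that it suffices to look on $\g^{rss}$ after Fourier--Deligne transform. One computes $\FT(\Xi_\psi)\dot{=}\Av_G^{\Ad}\cc_{e+\mathfrak{b}^-}$, restricts to the Kostant slice $\mathcal{S}=e+\ker\operatorname{ad}f$ (which meets each regular orbit transversally and once), and uses Lemma~\ref{sec:perv-cohom-xi_psi} to identify the result with $\pi_{\mathcal{S}!}\cc_{\mathcal{I}_{\mathcal{S}}}$ for the universal centralizer $\mathcal{I}_{\mathcal{S}}\to\mathcal{S}$. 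Over a regular semisimple point the centralizer is a maximal torus, so the stalk is $H^\bullet_c(T)\simeq\wedge^{2n-\bullet}\mathfrak{t}$ with its natural $W$-action, and this gives $\mathcal{H}^k(\Xi_\psi)\simeq\mathfrak{W}_{n-k}(-2k)$ directly, with no cancellation to manage. The idea attributed to Braverman is this passage through the Kostant slice and the universal centralizer, not the reduction to the adjoint group.
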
 To prove Theorem \ref{sec:whittaker-averaging-1}, we shall express the Fourier transform of $\Xi_\psi$ as an averaging of a constant sheaf on the Kostant slice.

  We recall several facts about the Kostant slice in large characteristic, see \cite{riche2017kostant}.

  Let $e$ be a regular nilpotent element dual to the character $\psi$. Let $f, h \in \mathfrak{g}$ be such that $[h,e] = 2e, [h,f]=-2f, [e,f] = h$ (such a pair exists by the Jacobson-Morozov theorem). Let $\mathcal{S} = e + \ker \operatorname{ad}f$ be the Kostant slice to $e$ in $\g$.

  Let $\mathfrak{b} \subset \g$ be the Borel subalgebra containing $e$, and let $\mathfrak{b}^-$ be an opposite Borel subalgebra. Let $U^-$ be the unipotent radical of the Borel subgroup with Lie algebra $\mathfrak{b}^-$. Let $\mathfrak{n}^-\subset \mathfrak{b}^-$ be its nilpotent radical.

  Let $a,\pi_2: G \times \g \to \g $ be the adjoint action and projection maps, respectively, let $\delta_\g: \g \to \g \times \g$ be the diagonal embedding, and let $\iota_{\mathcal{S}}$ be a closed embedding of $\mathcal{S}$ to $\g$. Consider the following diagram, where both squares are Cartesian:

  \begin{equation}
\label{eq:1}
    \begin{tikzcd} \mathcal{I_S} \arrow[d] \arrow[r] & \mathcal{I} \arrow[d] \arrow[r] & G\times \g \arrow[d, "a \times \pi_2"] \\ \mathcal{S} \arrow[r, "\iota_{\mathcal{S}}"] & \g \arrow[r, "\delta_\g"] & \g \times \g
    \end{tikzcd}
  \end{equation}
  
  The scheme $\mathcal{I}$ is known as the universal centralizer. It is easy to see from the definition that the scheme-theoretic fiber of $\mathcal{I}$ over $x \in \g$ is the centralizer $G_x$ of $x$ in $G$.

  The following Lemma summarizes some properties of $\mathcal{S}$ we will use.
  \begin{lemma}
\label{sec:perv-cohom-xi_psi}
    \begin{enumerate}[label=(\arabic*)]
    \item
      \label{item:7} The following diagram, where the unlabeled maps are restrictions of those in \eqref{eq:1}, is Cartesian.

      \[
        \begin{tikzcd}
          \mathcal{I_S} \arrow[d] \arrow[r] & G\times \mathcal{S} \arrow[d, "a"] \\ \mathcal{S} \arrow[r, "\iota_{\mathcal{S}}"] & \g
        \end{tikzcd}
      \]
      
  \item
    \label{item:8} The adjoint action map $U^- \times \mathcal{S} \to e+\mathfrak{b}^-$ is an isomorphism.
    \end{enumerate}
  \end{lemma}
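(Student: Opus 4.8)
The plan is to derive both statements from the standard structure theory of the Kostant slice, available in very good characteristic by \cite{riche2017kostant}; with those inputs the arguments are formal. For \ref{item:7}: the first two squares of \eqref{eq:1} identify $\mathcal{I_S}$ with the universal centralizer restricted to $\mathcal{S}$, that is, with the closed subscheme $\{(g,s)\in G\times\mathcal{S}:\Ad(g)s=s\}$; composing the two Cartesian squares one may also write $\mathcal{I_S}=\mathcal{S}\times_{\delta_{\mathfrak{g}}\iota_{\mathcal{S}},\,\mathfrak{g}\times\mathfrak{g},\,a\times\pi_2}(G\times\mathfrak{g})$. The square in \ref{item:7} instead computes $Z:=\mathcal{S}\times_{\iota_{\mathcal{S}},\,\mathfrak{g},\,a}(G\times\mathcal{S})$, and there is a tautological morphism $\mathcal{I_S}\to Z$ compatible with the maps to $\mathcal{S}$ and to $G\times\mathcal{S}$. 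To see it is an isomorphism I would show that the two projections $\mathrm{pr}_1,\mathrm{pr}_2\colon Z\to\mathcal{S}$ — to the first factor, and through $Z\to G\times\mathcal{S}\to\mathcal{S}$ — coincide: then $Z\to G\times\mathcal{S}$ factors through the locus $\{\Ad(g)s=s\}=\mathcal{I_S}$ and provides the inverse. Now the composite $G\times\mathcal{S}\xrightarrow{a}\mathfrak{g}\to\mathfrak{g}/\!/G$ with the adjoint quotient equals $G\times\mathcal{S}\to\mathcal{S}\xrightarrow{q_{\mathcal{S}}}\mathfrak{g}/\!/G$, since the quotient is $\Ad$-invariant, where $q_{\mathcal{S}}\colon\mathcal{S}\to\mathfrak{g}/\!/G$ is the restriction of the quotient map; pulling this back to $Z$ and using the defining identity $\iota_{\mathcal{S}}\circ\mathrm{pr}_1=a\circ(Z\to G\times\mathcal{S})$ gives $q_{\mathcal{S}}\circ\mathrm{pr}_1=q_{\mathcal{S}}\circ\mathrm{pr}_2$. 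The only nontrivial ingredient is that $q_{\mathcal{S}}$ is an isomorphism (Kostant's theorem), hence a monomorphism, and the claim follows.

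For \ref{item:8} I would first check that the adjoint action maps into $e+\mathfrak{b}^-$: for the height grading $\mathfrak{g}=\bigoplus_d\mathfrak{g}_d$ (so $\mathfrak{g}_0=\mathfrak{h}$, $\mathfrak{b}^-=\bigoplus_{d\le 0}\mathfrak{g}_d$, $e\in\mathfrak{g}_1$) and $u\in U^-$, $\Ad(u)$ sends $\mathfrak{g}_d$ into $\bigoplus_{d'\le d}\mathfrak{g}_{d'}$ with identity leading term, so $\Ad(u)s\in e+\mathfrak{b}^-$ for $s\in\mathcal{S}=e+\mathfrak{g}^f\subseteq e+\mathfrak{b}^-$. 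This produces a morphism $f\colon U^-\times\mathcal{S}\to e+\mathfrak{b}^-$ of affine spaces of the common dimension $\dim\mathfrak{n}^-+\dim T$. I would then put the Kostant $\mathbb{G}_m$-action on both sides: for the cocharacter $\gamma$ with $d\gamma(1)=h$, let $t$ act on $\mathfrak{g}$ by $t^{-2}\Ad(\gamma(t))$ and on $U^-$ by conjugation through $\gamma(t)$; then $e$ is the unique fixed point, the subvarieties $\mathcal{S}$, $e+\mathfrak{b}^-$, $U^-$ are stable and contracted to their fixed points (all relevant weights being negative), and $f$ is $\mathbb{G}_m$-equivariant. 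The differential of $f$ at $(1,e)$ is the map $\mathfrak{n}^-\oplus\mathfrak{g}^f\to\mathfrak{b}^-$, $(\xi,\eta)\mapsto[\xi,e]+\eta$, injective because $[\xi,e]=-\eta$ forces $\eta\in[\mathfrak{g},e]\cap\mathfrak{g}^f=0$ and then $\xi\in\mathfrak{g}^e\cap\mathfrak{n}^-\subseteq\mathfrak{n}^+\cap\mathfrak{n}^-=0$ (using that $\operatorname{ad}(h)$ is semisimple with strictly positive weights on $\mathfrak{g}^e$, so $\mathfrak{g}^e\subseteq\mathfrak{n}^+$), hence an isomorphism by the dimension count. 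Finally I would invoke the rigidity principle that a $\mathbb{G}_m$-equivariant morphism between affine spaces with contracting actions which induces an isomorphism of tangent spaces at the fixed point is an isomorphism — the induced graded homomorphism of coordinate rings is surjective modulo the square of the irrelevant ideal, hence surjective by graded Nakayama, and hence an isomorphism since source and target have the same Krull dimension and the target is a domain.

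The genuine obstacle in both parts is the passage to positive characteristic: one needs that $e$ is regular with $\dim\mathfrak{g}^e=\dim T$ and $\mathfrak{g}=[\mathfrak{g},e]\oplus\mathfrak{g}^f$, that the $\mathfrak{sl}_2$-triple $(e,h,f)$ is as well-behaved as in characteristic zero (so that $\operatorname{ad}(h)$ is semisimple and $\mathfrak{g}^e\subseteq\mathfrak{n}^+$), and that $q_{\mathcal{S}}$ is an isomorphism. These hold under the running hypothesis that $p$ is very good and are assembled in \cite{riche2017kostant}; granting them, the two statements follow by the formal arguments above — and \ref{item:8} is, of course, essentially Kostant's classical slice theorem.
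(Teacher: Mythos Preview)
Your proposal is correct. The paper's own proof consists entirely of two citations --- to \cite{riche2017kostant}, Lemma~3.3.5 for \ref{item:7} and to \cite{gan2002quantization}, Lemma~2.1 for \ref{item:8} --- and you have supplied precisely the arguments that those references contain: for \ref{item:7}, the observation that two points of $\mathcal{S}$ in the same adjoint orbit coincide because the Kostant section $\mathcal{S}\to\mathfrak{g}/\!/G$ is an isomorphism; for \ref{item:8}, equivariance for the contracting Kazhdan $\mathbb{G}_m$-action together with the tangent-space computation at the fixed point.

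One small correction: in the reductive (as opposed to semisimple) case the weights of $\operatorname{ad}(h)$ on $\mathfrak{g}^e$ are only non-negative, not strictly positive, since the center $\mathfrak{z}(\mathfrak{g})$ sits inside $\mathfrak{g}^e$ with weight~$0$. Your conclusion is unaffected: one still has $\mathfrak{g}^e\subseteq\mathfrak{b}^+$, hence $\mathfrak{g}^e\cap\mathfrak{n}^-\subseteq\mathfrak{b}^+\cap\mathfrak{n}^-=0$, and the differential is injective as claimed. Your closing paragraph correctly identifies that the only substantive issue is the validity of the $\mathfrak{sl}_2$-theory and Kostant's theorem in very good characteristic, and correctly points to \cite{riche2017kostant} for this.
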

  \begin{proof}[Proof of the Lemma.] For \ref{item:7} see \cite{riche2017kostant}, proof of Lemma 3.3.5. For \ref{item:8} see \cite{gan2002quantization}, Lemma 2.1.
  \end{proof}
  Let $D_{\operatorname{Spr}}\subset D^b(\mathcal{N}_G/G)$ be the full triangulated subcategory category generated by the summands of $\operatorname{Spr}$.
\begin{proof}[Proof of the Theorem \ref{sec:whittaker-averaging-1}] Under our assumption on $p$, we can choose a $G$-equivariant identification $\mathcal{N}_\g \to \mathcal{N}_G$ and the compatible identification of $\mathfrak{n}^-$ with $U^-$. For the rest of the proof, we will be working with $\mathcal{N}_\g$, denoting in the same way objects over $\mathcal{N}_G$ pulled back via this identification. 

  By the result of \cite{rider2016perverse} Theorem 3.5 (see also references therein), all perverse cohomology of $\omega j_{reg*}\underline{\Qlbar}_{\mathcal{N}_G^{reg}}$ are in the category $D_{\operatorname{Spr}}$ generated by the summands of the Springer sheaf, since $\omega j_{reg*}\underline{\Qlbar}_{\mathcal{N}_G^{reg}}$ is indecomposable. By Corollary \ref{sec:whittaker-averaging-8} the same is true for $\Xi_\psi$. Hence, to identify $\mathcal{H}^{k}(\Xi_\psi)$, it is enough to identify the restriction of its Fourier transform to $\g^{rss}$.

  Note that, by Lemma \ref{sec:four-deligne-transf}, \ref{item:9} and \ref{item:10}, we have
  \[ \FT(\mathcal{L}_\psi^{\mathfrak{n}^-}\tw{\dim \mathfrak{n}^-})\simeq \cc_{e+\mathfrak{b}^-}\tw{\dim\mathfrak{b}^-}.
  \]
  We have
\[ \FT(\Xi_\psi) \dot{=} \FT(\Av_G^{\Ad}\mathcal{L}_\psi^{\mathfrak{n}^-}) \dot{=} \Av_G^{\Ad}\cc_{e+\mathfrak{b}^-}.\ 
\] By Lemma \ref{sec:perv-cohom-xi_psi},
\[ \iota_{\mathcal{S}}^*\Av_G^{\Ad}\cc_{e+\mathfrak{b}^-} \dot{=} \pi_{\mathcal{S}_!}\cc_{\mathcal{I_S}},
\] where $\pi_{\mathcal{S}}:\mathcal{I_S} \to \mathcal{S}$ is the canonical projection. It is easy to see that the stalk of $\pi_{\mathcal{S}_!}\cc_{\mathcal{I_S}}$ over $x \in \g^{rss}$ is $W$-equivariantly identified with $H_c^\bullet(T) \simeq \wedge^{2n-\bullet}\mathfrak{t}$, hence the result.
\end{proof} 

The above theorem allows us to compute the perverse cohomology of $\mathbb{K}.$
\begin{lemma}
 \label{sec:whittaker-averaging} Let $\F \in \mathcal{CS}$ be such that $\hc_!(\F)$ is supported on $T$. Then the functor of convolution with $\F$ is a t-exact functor $D_{\operatorname{Spr}} \to \mathcal{CS}$.
\end{lemma}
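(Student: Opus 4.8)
The plan is to reduce the assertion, via the Harish--Chandra functor, to a ``linear algebra'' statement about convolution with objects supported on the unit stratum, and ultimately to the freeness of the (transform of the) Springer sheaf over $S^W$.

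First I would record the elementary reductions. Since the summands of $\operatorname{Spr}$ are (shifts of) simple unipotent character sheaves, $D_{\operatorname{Spr}}\subseteq D\mathcal{CS}$, and because $D\mathcal{CS}$ is a monoidal subcategory of $D^b_m(G/_{\Ad}G)$ the functor $\F\star(-)$ restricts to a triangulated functor $D_{\operatorname{Spr}}\to D\mathcal{CS}$. Now $D_{\operatorname{Spr}}$ is generated as a triangulated category by the simple summands of $\operatorname{Spr}$, all of which lie in the heart $\mathcal{CS}$; since a triangulated functor between categories with bounded t-structures that sends the heart of the source into the heart of the target is automatically t-exact, and since $\mathcal{CS}$ is closed under extensions, it is enough to show that $\F\star M\in\mathcal{CS}$ (i.e. is perverse) for each direct summand $M$ of $\operatorname{Spr}$, and by additivity it suffices to treat $M=\operatorname{Spr}$.

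Next I would transport the problem to the Hecke category through $\hc_!$. By \cite{bfo}, \cite{chenyd} the Harish--Chandra functor is t-exact from the perverse t-structure on $D\mathcal{CS}$ to the shifted perverse t-structure, it is monoidal, and it is faithful (Lemma~\ref{sec:four-deligne-transf-5}: $\chi\hc_!\simeq\operatorname{Spr}\star(-)$, and $\operatorname{Spr}$ contains the monoidal unit $\delta_e$ as a direct summand). A faithful exact functor of triangulated categories is conservative, hence reflects the property of lying in the heart, so $\F\star\operatorname{Spr}$ is perverse if and only if $p^{\dagger}\hc_!(\F)\star p^{\dagger}\hc_!(\operatorname{Spr})\simeq p^{\dagger}\hc_!(\F\star\operatorname{Spr})$ lies in the heart $\Perv_{w_0}$ of the shifted t-structure on $\cM$. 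Here $p^{\dagger}\hc_!(\operatorname{Spr})\in\Perv_{w_0}$ because $\operatorname{Spr}\in\mathcal{CS}$, while $A:=p^{\dagger}\hc_!(\F)$ lies in $\Perv_{w_0}$ and is supported on the \emph{closed} stratum $X_e\cong T$, since $\hc_!(\F)$ is supported on $T$ by hypothesis and $p^{\dagger}$ does not change supports. Thus the claim becomes: convolving a $\Perv_{w_0}$-object supported on $X_e$ with $p^{\dagger}\hc_!(\operatorname{Spr})$ yields an object of $\Perv_{w_0}$.

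For this last point I would use the description (Section~\ref{t-structures}, following \cite{by}) of objects of $\cM$ supported on $X_e\cong T$ in terms of $D^b(\hat S,\Fr\text{-mod})$, with $\dd$ corresponding to $\hat S$: an object supported there that lies in a heart corresponds to an honest finitely generated $\hat S$-module $M_A$, and $A\star(-)$ is computed by $M_A\otimes^{L}_{\hat S}(-)$, the tensor being taken along the monodromy contracted in the convolution. Every $\F$ as in the statement is built by extensions from the toral character sheaves $\Phi_{\mathcal K}=(\operatorname{Ind}_B^G\mathcal K)^W$ of Theorem~\ref{sec:pro-unit-character} (precisely the class cut out by the condition ``$\hc_!(\F)$ supported on $T$''), and convolution with an iterated extension of heart-preserving objects is again heart-preserving, so one reduces to $\F=\Phi_{\mathcal K}$, where $M_A\cong\hat S\otimes_{\hat S^W}\mathcal K'$ and $A\star(-)\simeq \mathcal K'\otimes^{L}_{\hat S^W}\operatorname{Res}_{\hat S^W}(-)$. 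Hence everything comes down to the flatness --- in fact freeness --- of $p^{\dagger}\hc_!(\operatorname{Spr})$ over the central subalgebra $\hat S^W$ of its monodromy endomorphisms, which forces the relevant $\operatorname{Tor}$-groups to vanish and keeps the convolution in the heart; this is the Hecke-category counterpart of the freeness of $B_{w_0}=R\otimes_{R^W}R$ over $R^W$ underlying the computation of $\operatorname{HH}^{\bullet}$ in Section~\ref{sec:hochsch-cohom}. I expect this freeness, together with the careful identification of $A\star(-)$ with a tensor product over the correct copy of $\hat S^W$ inside the monodromy algebra, to be the main technical obstacle; an alternative route for the freeness would be a direct geometric computation in the spirit of the proof of Theorem~\ref{sec:whittaker-averaging-1}, realizing $p^{\dagger}\hc_!(\operatorname{Spr})$ through the universal centralizer and the Kostant slice so that its fibres over the regular semisimple locus become the cohomology of centralizers of regular semisimple elements --- i.e. of maximal tori --- which is manifestly free over $S^W$.
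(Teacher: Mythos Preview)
Your opening reduction --- that it suffices to show $\F\star\operatorname{Spr}$ is perverse --- is correct and is exactly how the paper begins. From there, however, you detour through the completed Hecke category, and this both overcomplicates the argument and ends at a claim that is false as stated.

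The paper's proof never leaves $G/_{\Ad}G$. By Lemma~\ref{sec:four-deligne-transf-5} one has $\chi\hc_!(-)\simeq\operatorname{Spr}\star(-)$, so together with the central structure on $\F$ this gives $\F\star\operatorname{Spr}\simeq\chi\hc_!(\F)$. The hypothesis that $\hc_!(\F)$ is supported on $T$ says precisely that $\hc_!(\F)=\operatorname{Res}_B^G(\F)$, and on $\Ad T$-equivariant sheaves supported on $T$ the functor $\chi$ coincides with $\operatorname{Ind}_B^G$. Since $\operatorname{Res}_B^G$ and $\operatorname{Ind}_B^G$ are both perverse t-exact (Proposition~\ref{sec:pro-unit-character-1}), $\F\star\operatorname{Spr}\simeq\operatorname{Ind}_B^G\operatorname{Res}_B^G(\F)$ is perverse. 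That is the entire argument; no passage to $\cM$, no freeness statement, and no classification of the possible $\F$ are required.

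The concrete gap in your route is the freeness claim. The object $p^{\dagger}\hc_!(\operatorname{Spr})$ lies in the uncompleted category $\M$ (since $\operatorname{Spr}$ is a genuine perverse sheaf, not a pro-object), so the monodromy, and in particular $\hat S^W$, acts nilpotently: the object is $\hat S^W$-torsion, not free. Your analogy with $B_{w_0}=R\otimes_{R^W}R$ being free over $R^W$ is misplaced --- under $\Lambda$ that corresponds to the pro-object $\T_{w_0}$, not to any Harish--Chandra image of $\operatorname{Spr}$. The reduction to $\F=\Phi_{\mathcal K}$ is also unjustified (it presupposes a classification of perverse character sheaves with $\hc_!$ supported on $T$), and your description of $A\star(-)$ as a derived tensor product over $\hat S$ is only literally correct on a single stratum and does not by itself control the global shifted t-structure. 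All of this is bypassed by recognising $\F\star\operatorname{Spr}$ as $\chi\hc_!(\F)$ and invoking t-exactness of parabolic induction and restriction.
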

\begin{proof}  It is enough to show that $\F\star\operatorname{Spr}$ is perverse. We have, by Lemma \ref{sec:four-deligne-transf-5},
  \[ \F \star\operatorname{Spr} \simeq \chi\hc_!\F = \chi\operatorname{Res}^G_B\F,
  \] since $\operatorname{Res}^G_B\F = \hc_!\F$ for sheaves $\F$ such that $\hc_!\F$ is supported on $T$. On the other hand, $\chi$ coincides with the t-exact parabolic induction functor when applied to $\Ad T$-equivariant sheaves supported on $T$, so $\F \star\operatorname{Spr}$ is perverse.
\end{proof}

We will use the following result, proved in characteristic 0 setting in \cite{bfo} and in $\ell$-adic setting in \cite{chenyd}.

Recall the notations of Definition \ref{sec:comp-with-soerg-6}. Note that for any $w \in W$, objects $\DD_w, \NN_w$ descend to the stack $\mathcal{Y}_w$. It follows that, if $\F\in\cM$ descends to an object on the stack $\mathcal{Y}_v, v \in W$, the objects $\DD_{w}\star \F, \NN_{w}\star \F$ descend to the objects on the stack $\mathcal{Y}_{wv}$. It follows that convolution with $\DD_{w},\NN_{w}$ define functors $D^b_m(\mathcal{Y}_v) \to D^b_m(\mathcal{Y}_{wv})$, which we denote by $\DD_w\star -, \NN_w\star -$, abusing notation.
\begin{proposition}[\cite{chenyd}]
\label{sec:whittaker-averaging-3} The functor \[\F \mapsto \NN_{w_0}\star\hc_!(\F), D\mathcal{CS} \to D^b_m(\yy_{w_0})\] is t-exact and commutes with Verdier duality.
\end{proposition}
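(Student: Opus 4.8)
The plan is to reduce the statement to two properties: (i) the functor $\F\mapsto\NN_{w_0}\star\hc_!(\F)$ is right t-exact, and (ii) it commutes with Verdier duality. These already give the full claim, since (ii) is half of what is asserted and left t-exactness then follows formally: if $\mathcal{G}\in D\mathcal{CS}^{\geq 0}$ then $\D\mathcal{G}\in D\mathcal{CS}^{\leq 0}$, hence $\NN_{w_0}\star\hc_!(\D\mathcal{G})\in D_m^b(\yy_{w_0})^{\leq 0}$ by (i), and applying $\D$ and invoking (ii) puts $\NN_{w_0}\star\hc_!(\mathcal{G})$ in $D_m^b(\yy_{w_0})^{\geq 0}$.

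For (i), I would verify right t-exactness on a generating set of $D\mathcal{CS}^{\leq 0}$ under extensions and nonnegative shifts, namely the simple unipotent character sheaves. Every such sheaf is a direct summand of $\operatorname{Ind}_P^G(\mathcal{C})$ for some parabolic $P$ with Levi $L$ and cuspidal unipotent character sheaf $\mathcal{C}$ on $L$ (in the principal series case $P=B$, $L=T$, and $\mathcal{C}$ is a unipotent $W$-equivariant local system $\mathcal{L}$ on $T$). Since $\operatorname{Ind}_P^G$ is perverse t-exact (Proposition~\ref{sec:pro-unit-character-1}) these inductions are perverse, and since perverse sheaves are closed under summands it suffices to show $\NN_{w_0}\star\hc_!(\operatorname{Ind}_P^G\mathcal{C})$ lies in the heart. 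By transitivity of Harish--Chandra descent one reduces to $P=B$, where a geometric Mackey/base-change computation in the correspondence defining $\hc_!$ exhibits $\hc_!(\operatorname{Ind}_B^G\mathcal{L})$ as an iterated extension, indexed by the Bruhat cells, of the suitably shifted standard pro-objects $\DD_w$ twisted by $\mathcal{L}$. Using the braid relations for standard and costandard pro-objects (Proposition~\ref{braid_relations}) together with the identity $l(w_0w)=l(w_0)-l(w)$, left convolution with $\NN_{w_0}$ carries each such graded piece to a single shifted standard or costandard pro-object on $\yy_{w_0}$, which lies in the heart; hence so does the whole extension. The cuspidal summands are handled the same way, using cleanness of cuspidal character sheaves to identify $\hc_!^L(\mathcal{C})$ and then inducing inside the parabolic Hecke subcategory.

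For (ii), write $\hc_!=\Av_U\circ\operatorname{For}$ with $\operatorname{For}=\pi^*[\dim]$ for a smooth projection, so that $\D\operatorname{For}\,\D\simeq\operatorname{For}$ and $\D\Av_U\,\D\simeq\Av_{U*}$ up to shifts and twists; hence $\D\hc_!\D\simeq\hc_*$, the $*$-version of the Harish--Chandra functor, up to a fixed shift and twist. The essential input is that on $D\mathcal{CS}$ the functors $\hc_!$ and $\hc_*$ agree up to a fixed shift and twist --- equivalently, $\hc_!$ sends self-dual unipotent character sheaves to self-dual objects --- an $\ell$-adic cleanness statement that is part of what is established in \cite{bfo} in characteristic $0$ and in \cite{chenyd} in the $\ell$-adic setting. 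Feeding this into the convolution--duality identity $\D(\NN_{w_0}\star X)\simeq\D\NN_{w_0}\star\D X[-r]$ together with $\D\NN_{w_0}\simeq\DD_{w_0}[r]$, and using that the objects in the image of $\hc_!|_{D\mathcal{CS}}$ carry both standard and costandard filtrations (being $\hc_!$ of self-dual perverse character sheaves), so that the $\DD_{w_0}$- and $\NN_{w_0}$-Radon transforms agree on them up to shift, yields the natural isomorphism $\D(\NN_{w_0}\star\hc_!(\F))\simeq\NN_{w_0}\star\hc_!(\D\F)$.

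The main obstacle is the non-formal content hidden in (ii) and in the last step of (i): the cleanness of unipotent character sheaves and the control of the perverse amplitude of $\hc_!$ as one passes from the transparent standard and costandard pro-objects to an arbitrary simple character sheaf. This is precisely the input imported from \cite{bfo} and \cite{chenyd}, and it is where the real work lies; everything else is a formal manipulation within the six-functor and convolution formalism. A secondary, purely bookkeeping difficulty is keeping the many cohomological shifts and Tate twists aligned across the perverse, dual-perverse and shifted-perverse conventions on $\cM$, $D_m^b(\mathcal{Y})$ and $D_m^b(\yy_{w_0})$, and against the chosen normalization of $\hc_!$ and of the forgetful functor $p^{\dagger}$.
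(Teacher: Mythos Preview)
The paper does not give its own proof of this proposition: it is stated as a citation, introduced by ``We will use the following result, proved in characteristic $0$ setting in \cite{bfo} and in $\ell$-adic setting in \cite{chenyd}.'' So there is nothing in the paper to compare your argument against beyond the attribution itself.

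That said, your sketch deserves a brief comment on its own terms. You correctly isolate that the substantive content---cleanness of $\hc_!$ on unipotent character sheaves, and the exactness claim---is precisely what is being imported from \cite{bfo} and \cite{chenyd}; your last paragraph says as much. The formal reduction of t-exactness to right t-exactness plus self-duality is fine. But the final step of your part (ii) has a gap: you assert that objects in the image of $\hc_!|_{D\mathcal{CS}}$ ``carry both standard and costandard filtrations \dots\ so that the $\DD_{w_0}$- and $\NN_{w_0}$-Radon transforms agree on them up to shift.'' Having both filtrations is the tilting condition, and it is not true in general that $\hc_!$ of a simple character sheaf is tilting; even if it were, that would not by itself force $\DD_{w_0}\star(-)$ and $\NN_{w_0}\star(-)$ to coincide on such an object. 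What one actually needs is that $\NN_{w_0}\star\hc_!(\F)$ is itself clean (its $!$- and $*$-extensions from each stratum coincide), which is again exactly the nontrivial input from \cite{chenyd}. So your (ii) is, in the end, circular: the formal manipulation with $\D\NN_{w_0}\simeq\DD_{w_0}[r]$ only finishes the job once you already have the cleanness that the cited references supply.

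In short: the paper treats this as a black box, and your honest summary of where the difficulty lies is accurate; but the purported bridge in your last step does not stand on its own.
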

\begin{corollary}
\label{sec:perv-cohom-xi_psi-2}
  We have \[\mathcal{H}^{k}(\NN_{w_0}\star p^{\dagger}p_!\T_{w_0}[n](l(w_0)) \simeq \NN_{w_0}\star p^{\dagger}\hc_!(\mathfrak{W}_{n-k}(-2k)\star\dd_G).\]
\end{corollary}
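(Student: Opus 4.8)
The plan is to substitute the identification of Theorem~\ref{sec:whittaker-averaging-5}, compute the perverse cohomology of $\Xi_\psi\star\dd_G$ directly from Theorem~\ref{sec:whittaker-averaging-1}, and then commute $\mathcal{H}^k$ past the functor $\NN_{w_0}\star p^{\dagger}\hc_!(-)$ using Proposition~\ref{sec:whittaker-averaging-3}. For the first step, Theorem~\ref{sec:whittaker-averaging-5} gives $p_!\T_{w_0}[\dim T](l(w_0))\simeq\hc_!(\Xi_\psi\star\dd_G)$, so the left-hand side of the Corollary equals $\mathcal{H}^k\bigl(\NN_{w_0}\star p^{\dagger}\hc_!(\Xi_\psi\star\dd_G)\bigr)$, and everything reduces to moving $\mathcal{H}^k$ inside.

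Next I would compute $\mathcal{H}^k(\Xi_\psi\star\dd_G)$. Writing $\dd_G=\prolim\mathcal{E}_\bullet$ and using that convolution with a fixed object distributes over projective limits (Section~\ref{sec:convolution}), one has $\Xi_\psi\star\dd_G\simeq\prolim(\Xi_\psi\star\mathcal{E}_\bullet)$. Each $\hc_!(\mathcal{E}_m)$ is supported on $T$ (Corollary~\ref{centralunit}), so Lemma~\ref{sec:whittaker-averaging} shows $(-)\star\mathcal{E}_m$ is t-exact from $D_{\operatorname{Spr}}$ to $\mathcal{CS}$. By the proof of Theorem~\ref{sec:whittaker-averaging-1} each perverse cohomology object $\mathcal{H}^j(\Xi_\psi)\simeq\mathfrak{W}_{n-j}(-2j)$ lies in $D_{\operatorname{Spr}}$ and $\Xi_\psi$ has perverse amplitude $[0,n]$; hence, since $(-)\star\mathcal{E}_m$ is t-exact on the subcategory carrying all the perverse cohomology of $\Xi_\psi$, the Postnikov tower of $\Xi_\psi$ is carried by $(-)\star\mathcal{E}_m$ to that of $\Xi_\psi\star\mathcal{E}_m$, giving $\mathcal{H}^k(\Xi_\psi\star\mathcal{E}_m)\simeq\mathcal{H}^k(\Xi_\psi)\star\mathcal{E}_m\simeq\mathfrak{W}_{n-k}(-2k)\star\mathcal{E}_m$, a single unipotent character sheaf, with amplitude bounded uniformly in $m$. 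This uniform boundedness lets one commute $\mathcal{H}^k$ with the projective limit, giving $\mathcal{H}^k(\Xi_\psi\star\dd_G)\simeq\mathfrak{W}_{n-k}(-2k)\star\dd_G$.

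Finally, by Proposition~\ref{sec:whittaker-averaging-3} the functor $\F\mapsto\NN_{w_0}\star\hc_!(\F)$ is t-exact $D\mathcal{CS}\to D^b_m(\yy_{w_0})$, and $p^{\dagger}$ is t-exact; using that $\NN_{w_0}\star-$ is compatible with the pullbacks $p^{\dagger}$ (as recalled in the discussion preceding Proposition~\ref{sec:whittaker-averaging-3}), the composite $\NN_{w_0}\star p^{\dagger}\hc_!(-)$ is t-exact for the relevant shifted perverse t-structures, hence commutes with $\mathcal{H}^k$. Applying it to the conclusion of the previous paragraph gives
\[
  \mathcal{H}^k\bigl(\NN_{w_0}\star p^{\dagger}p_!\T_{w_0}[\dim T](l(w_0))\bigr)\simeq\NN_{w_0}\star p^{\dagger}\hc_!\bigl(\mathfrak{W}_{n-k}(-2k)\star\dd_G\bigr),
\]
as claimed.

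The step I expect to be the main obstacle is the second one: one must justify commuting $\mathcal{H}^k$ with the projective limit defining $\dd_G$, and, relatedly, that the t-exactness statements of Proposition~\ref{sec:whittaker-averaging-3} and Lemma~\ref{sec:whittaker-averaging}, phrased for honest complexes, pass to the pro-completions in which $\Xi_\psi\star\dd_G$ and $\mathfrak{W}_{n-k}(-2k)\star\dd_G$ live. Both are consequences of the uniform perverse boundedness of the system $\Xi_\psi\star\mathcal{E}_\bullet$ noted above, so no genuinely new input is needed, but this bookkeeping is the point that requires care; the remaining manipulations are formal.
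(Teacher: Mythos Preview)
Your proposal is correct and follows the same route as the paper: the paper's proof simply states that the corollary follows directly from Theorem~\ref{sec:whittaker-averaging-5}, Theorem~\ref{sec:whittaker-averaging-1}, Proposition~\ref{sec:whittaker-averaging-3}, and Lemma~\ref{sec:whittaker-averaging}, which are exactly the ingredients you invoke in the same order. Your extra care with the pro-object bookkeeping (commuting $\mathcal{H}^k$ with $\prolim$) is not spelled out in the paper but is the correct justification for that implicit step.
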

\begin{proof} This is a direct corollary of Theorem \ref{sec:whittaker-averaging-5}, Theorem \ref{sec:whittaker-averaging-1}, Proposition \ref{sec:whittaker-averaging-3} and Lemma \ref{sec:whittaker-averaging}.
\end{proof} Finally, we compute the objects representing $k$-th Hochschild cohomology in the monodromic category.

Recall the notation 
\[
  \mathfrak{E}_k = \mathcal{H}^{-k}_{w_0}(\mathbb{K}).
\]
\begin{theorem}
  \label{sec:whittaker-averaging-4}
  There is an isomorphism
  \[ \mathfrak{E}_k \simeq \DD_{w_0}\star\D'(\NN_{w_0}\star p^{\dagger}\hc_!(\mathfrak{W}_{n-k}\star\dd_G)(-2k+l(w_0))).
  \]
\end{theorem}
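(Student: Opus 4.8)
The plan is to reduce the statement to Corollary~\ref{sec:perv-cohom-xi_psi-2}, which already expresses the ordinary perverse cohomology of $\NN_{w_0}\star p^{\dagger}p_!\T_{w_0}$ (suitably shifted and twisted) in terms of the character sheaves $\mathfrak{W}_{n-k}$. All that remains is to translate between the shifted t-structure $(\cM^{\leq 0}_{w_0},\cM^{\geq 0}_{w_0})$ defining $\mathfrak{E}_k$, the dual t-structure $(\cM'^{\leq 0},\cM'^{\geq 0})$ of Section~\ref{t-structures}, and the ordinary perverse t-structure on $\cM$, using the duality functor $\D'$ and Lemma~\ref{sec:comp-with-soerg-1}.

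First I would unwind the definition of $\mathcal{H}^{-k}_{w_0}$. By Definition~\ref{sec:pro-unip-tilt-2}, $(\cM^{\leq 0}_{w_0},\cM^{\geq 0}_{w_0})$ is the image of $(\cM'^{\leq 0},\cM'^{\geq 0})$ under the strictly t-exact autoequivalence $\DD_{w_0}\star(-)$, whose quasi-inverse $\NN_{w_0}\star(-)$ is supplied by Proposition~\ref{braid_relations}; hence $\mathcal{H}^{-k}_{w_0}(\F)\simeq\DD_{w_0}\star\mathcal{H}'^{-k}(\NN_{w_0}\star\F)$ for $\F\in\cM$, where $\mathcal{H}'$ denotes the cohomology functor of the dual t-structure. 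Next, since $(\cM'^{\leq 0},\cM'^{\geq 0})=(\D'\cM^{\geq 0},\D'\cM^{\leq 0})$ and $\D'$ is exact with $\D'[1]\simeq[-1]\D'$, the functor $\D'$ interchanges the ordinary and dual t-structures with a sign reversal on the homological index, so $\mathcal{H}'^{-k}(-)\simeq\D'\mathcal{H}^{k}(\D'(-))$ with $\mathcal{H}^k$ the ordinary perverse cohomology. Combining these, $\mathfrak{E}_k\simeq\DD_{w_0}\star\D'\mathcal{H}^{k}\bigl(\D'(\NN_{w_0}\star\mathbb{K})\bigr)$.

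It then suffices to identify $\D'(\NN_{w_0}\star\mathbb{K})$. Using the identity $\D'(\F\star\mathcal{G})\simeq\D'\F\star\D'\mathcal{G}$, a direct reformulation of the Verdier-duality lemma for $\star$ on $\cM$, together with $\D'\NN_{w_0}\simeq\DD_{w_0}$ and Lemma~\ref{sec:comp-with-soerg-1} — which supplies $\D'\mathbb{K}\simeq\mathbb{K}[-\dim T](-2\dim T)$ and identifies both $\DD_{w_0}\star\mathbb{K}$ and $\NN_{w_0}\star\mathbb{K}$ with $\mathbb{K}^{w_0}$ up to a Tate twist — one rewrites $\D'(\NN_{w_0}\star\mathbb{K})$ as $\NN_{w_0}\star p^{\dagger}p_!\T_{w_0}[\dim T](l(w_0))$ up to a Tate twist (recall $\mathbb{K}=p^{\dagger}p_!\T_{w_0}\tw{2\dim T}$). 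Now Corollary~\ref{sec:perv-cohom-xi_psi-2} computes the ordinary perverse cohomology of this object, $\mathcal{H}^{k}\bigl(\NN_{w_0}\star p^{\dagger}p_!\T_{w_0}[n](l(w_0))\bigr)\simeq\NN_{w_0}\star p^{\dagger}\hc_!(\mathfrak{W}_{n-k}(-2k)\star\dd_G)$ with $n=\dim T$; substituting back into the formula for $\mathfrak{E}_k$ and collecting all the accumulated shifts and twists — which, after simplification, reduce to the single Tate twist $(-2k+l(w_0))$ in the statement — yields the claimed isomorphism.

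The entire conceptual content sits in Corollary~\ref{sec:perv-cohom-xi_psi-2} and the structural results it relies on; accordingly, the only genuine difficulty is the careful bookkeeping of homological shifts and Tate twists through the three t-structures and the two occurrences of $\D'$, and this is what I expect to demand the most care, even though it is purely mechanical. Two minor points should be noted along the way: every convolution that appears is a \emph{left} convolution with $\DD_{w_0}$ or $\NN_{w_0}$, matching the definition of the shifted t-structure, so no left/right discrepancy arises; and $\mathbb{K}$ genuinely lies in $\cM$ and is bounded for every t-structure in sight, being $h\Ho(\Lambda)$ of the bounded complex $K_{\mathbb{S}}$ up to shift and twist by Proposition~\ref{sec:comp-with-soerg}, so that $\mathcal{H}^{-k}_{w_0}(\mathbb{K})$ is defined and vanishes for all but finitely many $k$.
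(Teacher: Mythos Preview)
Your proposal is correct and follows essentially the same route as the paper: unwind $\mathcal{H}^{-k}_{w_0}$ as $\DD_{w_0}\star\D'\mathcal{H}^{k}(\D'(\NN_{w_0}\star\mathbb{K}))$, then use Lemma~\ref{sec:comp-with-soerg-1} to identify $\D'(\NN_{w_0}\star\mathbb{K})$ with $\NN_{w_0}\star\mathbb{K}$ up to shift and twist, and finish with Corollary~\ref{sec:perv-cohom-xi_psi-2}. The only cosmetic difference is that the paper applies Lemma~\ref{sec:comp-with-soerg-1} three times in sequence (first $\NN_{w_0}\star\mathbb{K}\simeq\mathbb{K}^{w_0}(-l(w_0))$, then $\D'\mathbb{K}^{w_0}$, then back to $\NN_{w_0}\star\mathbb{K}$), whereas you first invoke the monoidality $\D'(\F\star\mathcal{G})\simeq\D'\F\star\D'\mathcal{G}$ and then Lemma~\ref{sec:comp-with-soerg-1}; both chains of isomorphisms are equivalent and yield the same twist $(-2k+l(w_0))$.
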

\begin{proof} By Definition \ref{sec:pro-unip-tilt-2} of the shifted t-structure, we have
  \[ \mathcal{H}^{-k}_{w_0}(\mathbb{K}) = \DD_{w_0}\star\mathcal{H}'^k(\NN_{w_0}\star\mathbb{K}) \simeq \DD_{w_0}\star\D'\mathcal{H}^{k}(\D'(\NN_{w_0}\star\mathbb{K})).
  \]
  Applying Lemma \ref{sec:comp-with-soerg-1} three times, we get that the latter is isomorphic to
\begin{multline*} \DD_{w_0}\star\D'\mathcal{H}^{k}(\D'(\mathbb{K}^{w_0}(-l(w_0))) \simeq \\ \simeq \DD_{w_0}\star\D'\mathcal{H}^{k}(\mathbb{K}^{w_0}[-n](-2n + l(w_0)))\simeq\\ \simeq \DD_{w_0}\star\D'\mathcal{H}^{k}(\NN_{w_0}\star \mathbb{K}[-n](-2n + 2l(w_0))).
\end{multline*} By Corollary \ref{sec:perv-cohom-xi_psi-2} , the last expression is given by
\[ \DD_{w_0}\star\D'(\NN_{w_0}\star p^{\dagger}\hc_!(\mathfrak{W}_{n-k}\star\dd_G)(-2k+l(w_0))).
\]
     \end{proof}
\subsection{Character sheaves and the full twist.} 
In this section we simplify the expression for $\mathfrak{E}_k$ given in Theorem \ref{sec:whittaker-averaging-4} and also prove several results about the pro-object in $D\mathcal{CS}$ sent to the full twist object $\DD_{w_0}^2 = \DD_{w_0}\star \DD_{w_0}$ by the functor $\hc_!$. Most of the section is occupied with a proof of the following result.
     \begin{proposition}
   \label{sec:whittaker-averaging-9} There is an isomorphism
       \[ p^{\dagger}\hc_!(\mathfrak{W}_k\star\dd_G) \simeq \DD_{w_0} \star \D'(\NN_{w_0}\star p^{\dagger}\hc_!(\mathfrak{W}_{n-k}\star\dd_G)).
       \]
     \end{proposition}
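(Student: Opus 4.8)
The plan is to unwind the right-hand side through the Harish-Chandra functor, using the Radon transform by $\NN_{w_0}$ to control the discrepancy between $\hc_!$ and Verdier duality. Write $\Psi_j=p^{\dagger}\hc_!(\mathfrak{W}_j\star\dd_G)$, so the claim is $\Psi_k\dot{=}\DD_{w_0}\star\D'(\NN_{w_0}\star\Psi_{n-k})$. First I would note that convolution with $\NN_{w_0}$ commutes with the forgetful functor $p^{\dagger}$ (Definition~\ref{sec:comp-with-soerg-6} and the convolution functors $D^b_m(\mathcal{Y}_v)\to D^b_m(\mathcal{Y}_{wv})$), and that, $p$ being a $T$-torsor and hence smooth, $\D'$ on $\cM$ corresponds through $p^{\dagger}$ to Verdier duality on $D^b_m(\mathcal{Y}_{w_0})$ up to shift and twist. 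Hence
\[
\D'(\NN_{w_0}\star\Psi_{n-k})\dot{=}p^{\dagger}\D\bigl(\NN_{w_0}\star\hc_!(\mathfrak{W}_{n-k}\star\dd_G)\bigr).
\]
Now Proposition~\ref{sec:whittaker-averaging-3} says precisely that $\F\mapsto\NN_{w_0}\star\hc_!(\F)$ commutes with Verdier duality, so this is $\dot{=}\,p^{\dagger}\bigl(\NN_{w_0}\star\hc_!(\D(\mathfrak{W}_{n-k}\star\dd_G))\bigr)=\NN_{w_0}\star p^{\dagger}\hc_!(\D(\mathfrak{W}_{n-k}\star\dd_G))$. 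Convolving on the left with $\DD_{w_0}$ and using $\DD_{w_0}\star\NN_{w_0}\simeq\dd$ (Proposition~\ref{braid_relations}) collapses the whole right-hand side of the proposition to $p^{\dagger}\hc_!\bigl(\D(\mathfrak{W}_{n-k}\star\dd_G)\bigr)$. So the statement is reduced to the identity
\[
\D(\mathfrak{W}_{n-k}\star\dd_G)\dot{=}\mathfrak{W}_k\star\dd_G
\]
in (the pro-completion of) $D\mathcal{CS}$.

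For this identity I would analyze the two tensor factors. The sheaf $\mathfrak{W}_{n-k}$ is a direct summand of the self-dual perverse sheaf $\operatorname{Spr}=q_*\cc_{\widetilde{\mathcal{N}}_G}\tw{\dim\mathcal{N}_G}$, so $\D\mathfrak{W}_{n-k}\dot{=}\mathfrak{W}_{n-k}$. The content is in the pro-unit: the key point, and in my view the technical heart of the whole argument, is that $\D\dd_G$ is \emph{not} $\dd_G$ but the sign-twisted pro-object built from the $W$-equivariant local systems $\sgn_W\otimes\varepsilon_n$. This stems from the classical fact that the Chevalley module $S$ is free over $S^W$ with $S^W$-linear dual $\operatorname{Hom}_{S^W}(S,S^W)\simeq\sgn_W\otimes S$ up to a degree shift: feeding this through the Verdier self-duality of $\operatorname{Ind}_B^G$ (Proposition~\ref{sec:pro-unit-character-1}) and the description $\mathcal{E}_n=(\operatorname{Ind}_B^G\phi_n)^W$ of Corollary~\ref{centralunit}, one matches $\D\dd_G$ with the pro-system $\prolim\mathcal{E}_n^{\sgn}$. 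I expect that making this compatible with the projective limit — so that the dual stays inside the appropriate completion rather than becoming an ind-object — will be the most delicate bookkeeping; it is the monodromic incarnation of the fact, visible on the Soergel side, that $(K^{\bullet})^{\vee}$ is $K^{\bullet}$ twisted by the full length $2\dim T$.

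Granting $\D\dd_G\dot{=}\prolim\mathcal{E}_n^{\sgn}$, I would next identify $\prolim\mathcal{E}_n^{\sgn}\dot{=}\cc_{\mathcal{N}_G}\star\dd_G$ — by the same $S^W$-module computation applied levelwise together with the projection formula $\operatorname{Ind}_B^G(-\star\res_B^G A)\simeq\operatorname{Ind}_B^G(-)\star A$ used in the proof of Proposition~\ref{sec:four-deligne-transf-6} — and then invoke $\cc_{\mathcal{N}_G}\tw{\dim\mathcal{N}_G}\simeq S_{\mathcal{N}_G}(\sgn_W)$ (the group analogue of Corollary~\ref{sec:four-deligne-transf-2}) together with the compatibility of convolution of Springer summands with tensor product of $W$-representations, giving $\mathfrak{W}_{n-k}\star\cc_{\mathcal{N}_G}\dot{=}S_{\mathcal{N}_G}(\Lambda^{n-k}\mathfrak{t}\otimes\sgn_W)$. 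Since $\mathfrak{t}$ is self-dual and $\sgn_W\simeq\Lambda^n\mathfrak{t}$, the perfect pairing $\Lambda^{n-k}\mathfrak{t}\otimes\Lambda^k\mathfrak{t}\to\Lambda^n\mathfrak{t}$ yields $\Lambda^{n-k}\mathfrak{t}\otimes\sgn_W\simeq\Lambda^k\mathfrak{t}$, whence
\[
\D(\mathfrak{W}_{n-k}\star\dd_G)\dot{=}\mathfrak{W}_{n-k}\star\cc_{\mathcal{N}_G}\star\dd_G\dot{=}S_{\mathcal{N}_G}(\Lambda^k\mathfrak{t})\star\dd_G=\mathfrak{W}_k\star\dd_G
\]
up to shifts and twists. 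Substituting back into the first paragraph proves the proposition; tracking the accumulated shift then reconciles it with the explicit normalization in Theorem~\ref{sec:whittaker-averaging-4}, so that together they give the desired description $\mathfrak{E}_k\dot{=}p^{\dagger}\hc_!(\mathfrak{W}_k\star\dd_G)$.
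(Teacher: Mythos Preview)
Your reduction in the first paragraph is correct in spirit and matches how the paper begins: both sides are related by $\DD_{w_0}\star\NN_{w_0}\simeq\dd$ together with Proposition~\ref{sec:whittaker-averaging-3}. The essential content is indeed an identity relating $\mathfrak{W}_k\star\dd_G$ to a dual of $\mathfrak{W}_{n-k}\star\dd_G$.

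However, your proof of that identity has a genuine error. The step
\[
\mathfrak{W}_{n-k}\star\cc_{\mathcal{N}_G}\ \dot{=}\ S_{\mathcal{N}_G}(\Lambda^{n-k}\mathfrak{t}\otimes\sgn_W)
\]
invokes a ``compatibility of convolution of Springer summands with tensor product of $W$-representations'' which is \emph{false}. A clean counterexample: taking $n-k=n$ would give $\mathfrak{W}_n\star\mathfrak{W}_n\dot{=}\mathfrak{W}_0=\delta_e$; applying $p^{\dagger}\hc_!(-\star\dd_G)$ this becomes $\DD_{w_0}^4\dot{=}\dd$, i.e.\ the fourth power of the half-twist is trivial in $\cM$, which fails already in type~$A_1$. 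Relatedly, your use of $\D(\mathfrak{W}_{n-k}\star\dd_G)\dot{=}\mathfrak{W}_{n-k}\star\D\dd_G$ is unjustified: Verdier duality does not distribute over $!$-convolution on $G/_{\Ad}G$, since the multiplication map is not proper. And, as you yourself flag, $\D\dd_G$ is not a pro-object: the $\D\mathcal{E}_n$ form an inductive system, and levelwise the module underlying $\D\varepsilon_n$ is $\sgn_W\otimes S\otimes_{S^W}(S^W/\mathfrak{m}_W^n)^{\vee}$, not $\sgn_W\otimes S/S\mathfrak{m}_W^n$, so your identification with $\prolim\mathcal{E}_n^{\sgn}$ does not go through.

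The paper circumvents all three issues simultaneously by never distributing $\D$ over $\star$ in $D\mathcal{CS}$. Instead it introduces a second family $\mathcal{E}_n'$ of genuine perverse sheaves (built from the dual local systems $\sgn_W\otimes C_n^{\vee}$) and proves, for each finite $n$, the single-object identity $\D\mathcal{E}_n'^{V}\simeq S_{\mathcal{N}_G}(\sgn_W\otimes V)\star\mathcal{E}_n$ (Proposition~\ref{sec:char-sheav-full-1}). With $V=\wedge^{n-k}\mathfrak{t}$ this reads $\mathfrak{W}_k\star\mathcal{E}_n\simeq\D(\mathfrak{W}_{n-k}\star\mathcal{E}_n')$. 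One then applies $\NN_{w_0}\star\hc_!$ and Proposition~\ref{sec:whittaker-averaging-3} at each level, uses that $\hc_!(\mathcal{E}_n')\simeq\D\varepsilon_n$ forgetting the $W$-structure, and recognises the resulting pro-system $\prolim\D(\,\cdots\star\D\varepsilon_n)$ as the \emph{definition} of $\D'$ on $\cM$ (Definition~\ref{sec:verdier-duality-1}). In short: the sign-twist insight you identified is exactly right, but it enters through Lemma~\ref{sec:char-sheav-full} and the isotypic decomposition of $\ind_B^G\varepsilon_n$, not through a monoidal compatibility of $S_{\mathcal{N}_G}$.
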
 To prove Proposition \ref{sec:whittaker-averaging-9} we will use the following algebraic statement. Recall some notations from the proof of Corollary \ref{centralunit}. Let $\mathfrak{m}, \mathfrak{m}_W$ be the maximal graded ideals of $S$ and $S^W$, respectively, and let $I$ be the ideal $\mathfrak{m}_WS$ of $S$. For a finite-dimensional $\Qlbar[W]\ltimes S$-module (respectively, $S^W$-module) $M$ let $M^{\vee}$ be the dual $\Qlbar[W]\ltimes S$-module (respectively, $S^W$-module). 
     \begin{lemma}\label{sec:char-sheav-full} Let $C_n = S/I^n \simeq S \otimes_{S^W} S^W/\mathfrak{m}_W^n$. We have an isomorphism of graded $\Qlbar[W]\ltimes S$-modules
       \[ C_n^{\vee} \simeq \operatorname{sign}_W \otimes_{\Qlbar} S \otimes_{S^W}(S^W/\mathfrak{m}_W^n)^{\vee}(2l(w_0)).
       \]
     \end{lemma}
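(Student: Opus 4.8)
The plan is to reduce the statement to the classical Gorenstein property of $S$ over its invariant subring $S^W$, after two elementary manipulations with duals. First I would record that $I^n=(\mathfrak m_WS)^n=\mathfrak m_W^nS$, since $\mathfrak m_W$ is an ideal of $S^W$, so that $C_n=S\otimes_{S^W}A$ with $A:=S^W/\mathfrak m_W^n$. As $A$ is a finite-dimensional graded $\Qlbar$-algebra and $S$ is a finite $S^W$-module, $C_n$ is finite-dimensional over $\Qlbar$, hence $(-)^{\vee}$ is honest graded linear duality and there are no completion subtleties; I would then dualize in two steps.

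Step one: by currying, a $\Qlbar$-linear functional on $S\otimes_{S^W}A$ is the same datum as an $S^W$-linear map $S\to A^{\vee}$, which gives a canonical isomorphism of graded $\Qlbar[W]\ltimes S$-modules
\[ C_n^{\vee}=\Hom_{\Qlbar}(S\otimes_{S^W}A,\Qlbar)\;\simeq\;\Hom_{S^W}(S,A^{\vee}), \]
where $S$ acts through its action on the source copy of $S$ and $W$ acts diagonally, trivially on $A^{\vee}$; $S$-linearity and $W$-equivariance are a direct check. Step two: by Chevalley's theorem $S$ is a free $S^W$-module of finite rank, hence finitely generated projective, so the natural map
\[ \Hom_{S^W}(S,S^W)\otimes_{S^W}A^{\vee}\;\longrightarrow\;\Hom_{S^W}(S,A^{\vee}) \]
is an isomorphism, again of graded $\Qlbar[W]\ltimes S$-modules. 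Combining the two steps, $C_n^{\vee}\simeq\Hom_{S^W}(S,S^W)\otimes_{S^W}A^{\vee}$.

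The one substantive input is the identification of the relative dualizing module,
\[ \Hom_{S^W}(S,S^W)\;\simeq\;\operatorname{sign}_W\otimes_{\Qlbar}S(2l(w_0)) \]
as graded $\Qlbar[W]\ltimes S$-modules. I would deduce this from the classical Gorenstein property of $S$ over $S^W$: there is a non-degenerate $S^W$-bilinear pairing $S\otimes_{S^W}S\to S^W$ that is homogeneous of degree $-2l(w_0)$ and semi-invariant of $W$-weight $\operatorname{sign}_W$, and it yields the displayed $S$-linear, $W$-equivariant isomorphism. Concretely, under Chevalley's decomposition $S\simeq S^W\otimes_{\Qlbar}H$, with $H$ a $W$-stable graded complement to $I$ isomorphic as a graded $W$-module to the coinvariant algebra $S/I$, this is Poincar\'e duality for $H$: its socle is one-dimensional, sits in degree $2l(w_0)=2\sum_i(d_i-1)$ (with $\deg V_T=2$ and $d_1,\dots,d_r$ the fundamental degrees of $W$), and affords the sign character, being spanned by the image of the Jacobian $\det(\partial f_i/\partial x_j)$, equivalently of $\prod_{\alpha>0}\alpha$. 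Substituting into the previous display,
\[ C_n^{\vee}\simeq\bigl(\operatorname{sign}_W\otimes_{\Qlbar}S(2l(w_0))\bigr)\otimes_{S^W}A^{\vee}\simeq\operatorname{sign}_W\otimes_{\Qlbar}\bigl(S\otimes_{S^W}(S^W/\mathfrak m_W^n)^{\vee}\bigr)(2l(w_0)), \]
which is the assertion.

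The main obstacle is purely bookkeeping: pinning down the degree shift and the sign twist in the dualizing-module identification so that they match the grading conventions in force here. I would settle this by checking the case $n=1$ against the formula, where the claim specializes exactly to Poincar\'e duality $C_1^{\vee}\simeq\operatorname{sign}_W\otimes C_1(2l(w_0))$ for the coinvariant algebra $C_1=S/I$. The dualizing-module fact itself is classical and could simply be quoted (for instance from Kane's monograph on reflection groups, or from work of Demazure or Broer), but I would prefer to spell out the harmonics argument, since the sign twist is precisely the content of the lemma.
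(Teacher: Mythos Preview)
Your proof is correct and follows essentially the same route as the paper's. The paper packages the currying step as the Grothendieck--Serre duality isomorphism $Dp^*M\simeq p^!DM$ for the finite map $p:\Spec S\to\Spec S^W$, and then invokes the same $W$-invariant perfect $S^W$-pairing on $S$ with values in $\operatorname{sgn}_W\otimes S^W(-2l(w_0))$ that you spell out via the coinvariant-algebra Poincar\'e duality; your tensor--hom and freeness manipulations are precisely what $p^!$ and freeness give in this affine situation, so the two arguments are the same unwound to different levels of abstraction.
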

     \begin{proof}
       Let $X = \Spec(S), Y = \Spec(S^W), p:X \to Y$ be the map corresponding to the embedding $S^W \to S$ and let $M = S^W/\mathfrak{m}_W^n$ considered as an $S_W$-module. Let $D$ stand for the Grothendieck-Serre duality functor. We have
       \[
         DC_n = Dp^*M \simeq p^!DM,
       \]
       with $p^!DM \simeq \Hom_{S^W}(S, DM)$. Note that $S$ is a free $S^W$-module equipped with a $W$-invariant perfect paring over $S^W$ with values in \[\operatorname{sgn}_W\otimes_{\Qlbar} S^W(-2l(w_0)),\] and so
       \[
         \Hom_{S^W}(S, DM) \simeq \operatorname{sgn}_W\otimes_{\Qlbar} S \otimes_{S^W}DM.
       \]
     \end{proof}
Let $\varepsilon'_n$ be the $W$-equivariant perverse local system corresponding to the module $\sgn_W\otimes C_n^{\vee}$ from Lemma~\ref{sec:char-sheav-full}, cf. also Corollary \ref{centralunit}. By Theorem \ref{sec:pro-unit-character}, there is an $\Ad G$-equivariant perverse sheaf $\mathcal{E}_n' = (\operatorname{Ind}_B^G\varepsilon'_n)^W$ on $G$, such that $p^{\dagger}\hc_!\mathcal{E}_n' = \varepsilon_n'$. For an arbitrary representation $V$ of $W$ write $\mathcal{E}_n^V, \mathcal{E}'^V_n$ for the isotypic component $\left(\operatorname{Ind}_B^G\varepsilon_n\right)^V, \left(\operatorname{Ind}_B^G\varepsilon'_n\right)^V$ of $V$ in $\operatorname{Ind}_B^G\varepsilon_n, \operatorname{Ind}_B^G\varepsilon'_n$, respectively.
     \begin{proposition}
       \label{sec:char-sheav-full-1}
There are isomorphisms       
\[
  \D\mathcal{E}_n'^V \simeq \mathcal{E}_n^{\sgn_W\otimes V}\simeq S_{\mathcal{N}_G}(\sgn_W\otimes V)\star\mathcal{E}_n.
\]
     \end{proposition}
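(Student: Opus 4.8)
The plan is to prove the two isomorphisms separately. The second, $\mathcal{E}_n^{\sgn_W\otimes V}\simeq S_{\mathcal{N}_G}(\sgn_W\otimes V)\star\mathcal{E}_n$, is a formal consequence of the identification of the $W$-action on parabolic induction with the Springer action; the first, $\D\mathcal{E}_n'^{V}\simeq\mathcal{E}_n^{\sgn_W\otimes V}$, rests on Verdier self-duality of $\operatorname{Ind}_B^G$ together with the algebraic input of Lemma~\ref{sec:char-sheav-full}.

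For the second isomorphism, note that $\sgn_W\otimes V$ runs over all representations of $W$ together with $V$, so it suffices to prove $\mathcal{E}_n^{U}\simeq S_{\mathcal{N}_G}(U)\star\mathcal{E}_n$ for every $U$. Since $\varepsilon_n\simeq\res_B^G\mathcal{E}_n$ (Theorem~\ref{sec:pro-unit-character}), Lemma~\ref{sec:four-deligne-transf-5} yields $\operatorname{Ind}_B^G\varepsilon_n\simeq\operatorname{Ind}_B^G\res_B^G\mathcal{E}_n\simeq\chi\hc_!(\mathcal{E}_n)\simeq\operatorname{Spr}\star\mathcal{E}_n$ (equivalently, one applies Chen's identity $\operatorname{Ind}_B^G(-\star\res_B^G A)\simeq\operatorname{Ind}_B^G(-)\star A$ to the unit $\delta_e$ and uses $\operatorname{Ind}_B^G\delta_e\simeq\operatorname{Spr}$). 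By Proposition~\ref{sec:four-deligne-transf-6} the $W$-action on $\operatorname{Ind}_B^G\varepsilon_n$ defining $\mathcal{E}_n^{U}$ corresponds, under this isomorphism, to the Springer action on $\operatorname{Spr}$ in the normalization $S_{\mathcal{N}_G}$; passing to $U$-isotypic components and recalling that $S_{\mathcal{N}_G}(U)$ is the $U$-isotypic summand of $\operatorname{Spr}$ gives $\mathcal{E}_n^{U}\simeq S_{\mathcal{N}_G}(U)\star\mathcal{E}_n$.

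For the first isomorphism, I would first use that $\operatorname{Ind}_B^G$ is $t$-exact and Verdier self-dual (Proposition~\ref{sec:pro-unit-character-1}), that this self-duality is compatible with the $W$-action, and that $W$-representations over $\Qlbar$ are self-dual, to get $\D\mathcal{E}_n'^{V}=\D\bigl((\operatorname{Ind}_B^G\varepsilon_n')^{V}\bigr)\simeq(\operatorname{Ind}_B^G\D\varepsilon_n')^{V}$. It then remains to identify $\D\varepsilon_n'$. The local system $\varepsilon_n'$ corresponds to the $\Qlbar[W]\ltimes S$-module $\sgn_W\otimes C_n^{\vee}$, which by Lemma~\ref{sec:char-sheav-full} equals $S\otimes_{S^W}(S^W/\mathfrak{m}_W^n)^{\vee}$ up to a Tate twist; Verdier duality on $T$ replaces the associated module by its $\Qlbar$-dual up to a further Tate twist, and a second application of the $W$-invariant perfect pairing on $S$ over $S^W$ (valued in $\sgn_W\otimes S^W$, as in the proof of Lemma~\ref{sec:char-sheav-full}) identifies this dual with $\sgn_W\otimes C_n$ up to a Tate twist. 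Hence $\D\varepsilon_n'\simeq\sgn_W\otimes\varepsilon_n$ as $W$-equivariant perverse local systems, up to twist, where $\sgn_W\otimes(-)$ denotes twisting the $W$-equivariant structure by the character $\sgn_W$. Finally, twisting the $W$-equivariant structure of the argument of $\operatorname{Ind}_B^G$ by $\sgn_W$ twists the induced $W$-action by $\sgn_W$, so $(\operatorname{Ind}_B^G(\sgn_W\otimes\varepsilon_n))^{V}\simeq(\operatorname{Ind}_B^G\varepsilon_n)^{\sgn_W\otimes V}=\mathcal{E}_n^{\sgn_W\otimes V}$; combining these steps and checking that the accumulated Tate twists cancel produces the asserted isomorphism. (Since $\varepsilon_n'$ is, after Lemma~\ref{sec:char-sheav-full}, itself of the standard form of Theorem~\ref{sec:pro-unit-character}, one may alternatively apply the second isomorphism to $\varepsilon_n'$ and then dualize, reducing to the computation of $\D\mathcal{E}_n'$.)

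The step I expect to be the main obstacle is the bookkeeping of $W$-equivariant structures and Tate twists through Verdier duality: one must check that the canonical isomorphism $\D\operatorname{Ind}_B^G\simeq\operatorname{Ind}_B^G\D$ intertwines the $W$-actions coming from Chen's construction, that the sign twist genuinely appears (this is forced by the socle of the coinvariant algebra being $\sgn_W$, which is the source of Lemma~\ref{sec:char-sheav-full}), and that the half-Tate-twists introduced by the perverse normalizations all cancel, so that one gets an honest isomorphism rather than one only up to shift and twist. A subsidiary point is to confirm that Proposition~\ref{sec:four-deligne-transf-6} refers to the normalization $S_{\mathcal{N}_G}=S'_{\mathcal{N}_G}\otimes\sgn_W$ of the Springer action, in which $S_{\mathcal{N}_G}(\sgn_W)\simeq\cc_{\mathcal{N}_G}\tw{\dim\mathcal{N}_G}$ (Corollary~\ref{sec:four-deligne-transf-2}).
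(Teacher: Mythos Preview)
Your proposal is correct and follows essentially the same route as the paper: commute Verdier duality past $\operatorname{Ind}_B^G$ (Proposition~\ref{sec:pro-unit-character-1}), identify $\D\varepsilon_n'\simeq\sgn_W\otimes\varepsilon_n$ via Lemma~\ref{sec:char-sheav-full}, shift the isotypic component by $\sgn_W$, and then invoke $\operatorname{Ind}_B^G\varepsilon_n\simeq\operatorname{Spr}\star\mathcal{E}_n$ together with Proposition~\ref{sec:four-deligne-transf-6} to pass to $S_{\mathcal{N}_G}(\sgn_W\otimes V)\star\mathcal{E}_n$. The only differences are organizational: you prove the two isomorphisms separately while the paper runs them as one chain, and you are more explicit than the paper about the Tate-twist and Springer-normalization bookkeeping (the paper in fact writes $\dot{=}$ at the corresponding step).
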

     \begin{proof}
        By Lemma~\ref{sec:char-sheav-full} we have
       \begin{equation}
         \label{eq:5}
 \operatorname{Ind}_B^G\D\varepsilon_n' \simeq \operatorname{Ind}_B^G(\varepsilon_n\otimes\operatorname{sign}_W(-2l(w_0)-2\dim T)).
\end{equation}
       We also have
       \begin{equation}
         \label{eq:6}
 \operatorname{Ind}_B^G\varepsilon_n \simeq \operatorname{Ind}_B^G\operatorname{Res}_B^G\mathcal{E}_n \simeq \chi\hc_!\mathcal{E}_n \simeq \operatorname{Spr}\star\mathcal{E}_n,
\end{equation}

with $W$-action on $\operatorname{Ind}_B^G\varepsilon_n$ compatible with the $W$-action on $\operatorname{Spr}$ by Proposition~\ref{sec:four-deligne-transf-6}. Since the Verdier duality $\D$ commutes with the parabolic induction, we have 
\[\D\mathcal{E}_n'^V \simeq \D(\operatorname{Ind}_B^G\varepsilon'_n)^V \simeq (\operatorname{Ind}_B^G\D\varepsilon'_n)^V.\]
By \eqref{eq:5}, up to a Tate twist, we have
\[
(\ind_B^G\D\varepsilon'_n)^V \dot{=} \left(\operatorname{Ind}_B^G(\varepsilon_n\otimes\operatorname{sign}_W)\right)^V \simeq\left( \operatorname{Ind}_B^G\varepsilon_n\right)^{\operatorname{sgn}_W\otimes V}.
\]
By \eqref{eq:6} and Proposition \ref{sec:four-deligne-transf-6},
\[\left(\operatorname{Ind}_B^G\varepsilon_n\right)^{\operatorname{sgn}_W\otimes V}\simeq \left(\operatorname{Spr}\star \mathcal{E}_n\right)^{\operatorname{sgn}_W\otimes V} \simeq S_{\mathcal{N}_G}(\sgn_W\otimes V)\star\mathcal{E}_n.\]
 Combining the above isomorphisms, we get
\[
  \D\mathcal{E}_n'^V \simeq S_{\mathcal{N}_G}(\sgn_W\otimes V)\star\mathcal{E}_n.
\]
\end{proof}
\begin{proof}[Proof of the Proposition \ref{sec:whittaker-averaging-9}.] By Proposition~\ref{sec:char-sheav-full-1}, using that $\wedge^k\mathfrak{t} \simeq \sgn_W\otimes\wedge^{n-k}\mathfrak{t}$ (as representations of $W$), we have
  \[
\hc_!(\mathfrak{W}_k\star\mathcal{E}_n) \simeq \hc_!(\D\mathcal{E}_n'^{\wedge^{n-k}\mathfrak{t}}).
  \]
  By Proposition \ref{sec:four-deligne-transf-6},
  \[
\mathcal{E}_n'^{\wedge^{n-k}\mathfrak{t}} \simeq \mathfrak{W}_{n-k}\star\mathcal{E}_n'.
  \]
    Combining this with Proposition \ref{sec:whittaker-averaging-3},  we get
    \begin{multline*} \NN_{w_0}\star \hc_!(\mathfrak{W}_k\star\mathcal{E}_n) \dot{=} \NN_{w_0}\star \hc_!(\D\mathcal{E}_n'^{\wedge^{n-k}\mathfrak{t}})\simeq \\ \simeq \D\left(\NN_{w_0}\star\hc_!(\mathcal{E}_n'^{\wedge^{n-k}\mathfrak{t}})\right) \simeq \D (\NN_{w_0}\star \hc_!(\mathfrak{W}_{n-k}\star\mathcal{E}_n')).
    \end{multline*}
    Since, by definition, $\hc_!(\mathcal{E}_n')\simeq \varepsilon_n' \simeq \D(\varepsilon_n)$,  where in the last isomorphism we forget the $W$-equivariant structure, we get
    \[
      \D (\NN_{w_0}\star \hc_!(\mathfrak{W}_{n-k}\star\mathcal{E}_n'))\simeq \D (\NN_{w_0}\star\hc_!(\mathfrak{W}_{n-k}) \star\D\varepsilon_n ).
    \]
    So, passing to the limit (recall the Definition~\ref{sec:verdier-duality-1}), we get
       \[ \NN_{w_0}\star\hc_!(\mathfrak{W}_k\star \dd_G) \simeq \D'(\NN_{w_0}\star\hc_!(\mathfrak{W}_{n-k})\star\dd).
       \] Since, by Proposition \ref{braid_relations}, we have $\DD_{w_0}\star \NN_{w_0} \simeq \dd$, $\dd$ being a monoidal unit of $\cM$, we get the result.
     \end{proof}
      
     Combining Proposition \ref{sec:whittaker-averaging-9} with Theorem \ref{sec:whittaker-averaging-4} we get a concise description of objects representing the monodromic model of Hoch\-schild cohomology of Soergel bimodules:
     \begin{theorem}
       \label{sec:char-sheav-full-3}
       There is an isomorphism
       \[
         \mathfrak{E}_k \simeq p^{\dagger}\hc_!(\mathfrak{W}_k\star\dd_G)(2k-l(w_0)).
       \]
     \end{theorem}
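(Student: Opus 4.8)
The plan is to obtain the formula by simply assembling the two results proved just above, namely Theorem \ref{sec:whittaker-averaging-4} and Proposition \ref{sec:whittaker-averaging-9}, and carefully tracking the Tate twists. First I would recall that Theorem \ref{sec:whittaker-averaging-4} gives
\[
\mathfrak{E}_k \simeq \DD_{w_0}\star\D'\bigl(\NN_{w_0}\star p^{\dagger}\hc_!(\mathfrak{W}_{n-k}\star\dd_G)(-2k+l(w_0))\bigr).
\]
Since the half-Tate twist $(1)$ shifts Frobenius weights by $-1$ and Verdier duality negates weights, the functor $\D' = \D[-r]$ intertwines the twist $(m)$ with $(-m)$; moreover convolution with the (pro-)unipotent objects $\DD_{w_0}$ and $\NN_{w_0}$ commutes with Tate twists, these being tensorings by constant rank-one Frobenius modules. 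I would therefore pull the twist $(-2k+l(w_0))$ out through $\D'$ and through $\DD_{w_0}\star(-)$, rewriting the right-hand side as
\[
\bigl(\DD_{w_0}\star\D'(\NN_{w_0}\star p^{\dagger}\hc_!(\mathfrak{W}_{n-k}\star\dd_G))\bigr)(2k-l(w_0)).
\]

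Next I would invoke Proposition \ref{sec:whittaker-averaging-9}, which asserts precisely the identification $\DD_{w_0}\star\D'(\NN_{w_0}\star p^{\dagger}\hc_!(\mathfrak{W}_{n-k}\star\dd_G)) \simeq p^{\dagger}\hc_!(\mathfrak{W}_k\star\dd_G)$. Substituting this into the expression above yields the desired isomorphism $\mathfrak{E}_k \simeq p^{\dagger}\hc_!(\mathfrak{W}_k\star\dd_G)(2k-l(w_0))$. The only point requiring attention is bookkeeping: one must check that the normalizations of the shifted t-structure of Definition \ref{sec:pro-unip-tilt-2}, of the duality $\D'$ from Section \ref{t-structures}, and of the twist $\tw{\cdot} = [1](1)$ are handled consistently, so that the accumulated homological shifts and Tate twists indeed collapse to the single twist $(2k-l(w_0))$ with no residual shift; I would verify this term by term.

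There is no substantive obstacle in this last step, as the real work has already been carried out: the difficulty resides in Theorem \ref{sec:whittaker-averaging-4} (where $\mathfrak{E}_k$ is identified via the Harish-Chandra image of the Whittaker-averaged sheaf $\Xi_\psi$, the perverse cohomology computation of Theorem \ref{sec:whittaker-averaging-1}, and the t-exactness of $\NN_{w_0}\star\hc_!(-)$ from Proposition \ref{sec:whittaker-averaging-3}), and in Proposition \ref{sec:whittaker-averaging-9} (which rests on the symmetry $\wedge^k\mathfrak{t}\simeq\sgn_W\otimes\wedge^{n-k}\mathfrak{t}$, the algebraic duality statement of Lemma \ref{sec:char-sheav-full}, and the compatibility of $W$-actions in Proposition \ref{sec:four-deligne-transf-6}). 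If anything could go wrong here it is an off-by-a-shift error in matching weight and homological degrees; as a sanity check I would test the resulting formula in rank one against the $\operatorname{A}_1$ computation in the Appendix, where $\mathfrak{W}_1$ is the skyscraper at $e$ and $\mathfrak{W}_0$ is (a twist of) $\cc_{\mathcal{N}_G}$.
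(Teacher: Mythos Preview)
Your argument is correct and is exactly the paper's own proof: the theorem is stated there as an immediate consequence of combining Theorem \ref{sec:whittaker-averaging-4} with Proposition \ref{sec:whittaker-averaging-9}, and your only additional contribution is to spell out the (straightforward) bookkeeping for how the Tate twist $(-2k+l(w_0))$ passes through $\D'$ to become $(2k-l(w_0))$. One small slip in your final parenthetical sanity check: you have $\mathfrak{W}_0$ and $\mathfrak{W}_1$ reversed in the $\operatorname{A}_1$ example, since $\mathfrak{W}_0 = S_{\mathcal{N}_G}(\wedge^0\mathfrak{t}) = S_{\mathcal{N}_G}(\operatorname{triv}_W)$ is the skyscraper at $e$ while $\mathfrak{W}_1 = S_{\mathcal{N}_G}(\operatorname{sgn}_W)$ is (a twist of) $\cc_{\mathcal{N}_G}$.
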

     \begin{bremark}
      From Proposition \ref{sec:four-deligne-transf-6} one recovers a description of $\mathfrak{W}_k \star \dd_G$ as a projective system of Goresky-MacPherson extensions of explicit local systems on the set $G^{rss}$ of regular semisimple elements in $G$.  
     \end{bremark}
     We also make the following observation, of an independent interest. We have, by Corollary \ref{sec:four-deligne-transf-2}, \[\mathfrak{W}_n \simeq \IC_{\mathcal{N}_G^{reg}} = \underline{\Qlbar}_{\mathcal{N}_G^{reg}}\tw{\dim \mathcal{N}_G}.\]
     From Propositions \ref{sec:whittaker-averaging-9} and \ref{sec:char-sheav-full-1} we get 
     \begin{corollary}
       \label{sec:char-sheav-full-2}
      We have an isomorphism 
      \[
      \prolim p^{\dagger}\hc_!(\mathcal{E}_n^{\sgn_W}) \simeq p^{\dagger}\hc_!(\cc_{\mathcal{N}_G^{reg}}\tw{\dim\mathcal{N}_G}\star\dd_G)\simeq \DD_{w_0}^2.
      \]
     \end{corollary}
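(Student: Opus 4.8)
The statement contains two isomorphisms, and the plan is to reduce each to results already established: the left one to Proposition~\ref{sec:char-sheav-full-1} and the right one to Proposition~\ref{sec:whittaker-averaging-9}, specialised to the extreme values $k=n$ and $k=0$ (with $n=\dim T$). For the left isomorphism, note first that $\wedge^{n}\mathfrak{t}\simeq\sgn_W$ as $W$-representations, so that $S_{\mathcal{N}_G}(\sgn_W)=\mathfrak{W}_{n}$, which by Corollary~\ref{sec:four-deligne-transf-2} (transported to the group via $\mathcal{N}_\g\simeq\mathcal{N}_G$, as recorded before the statement) equals $\cc_{\mathcal{N}_G^{reg}}\tw{\dim\mathcal{N}_G}$. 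Applying Proposition~\ref{sec:char-sheav-full-1} with $V=\operatorname{triv}_W$ gives, level-wise, $\mathcal{E}_m^{\sgn_W}\simeq S_{\mathcal{N}_G}(\sgn_W)\star\mathcal{E}_m\simeq\cc_{\mathcal{N}_G^{reg}}\tw{\dim\mathcal{N}_G}\star\mathcal{E}_m$. Since convolution with a fixed object is computed termwise on pro-objects and $p^{\dagger}\hc_!$ is applied termwise, taking the pro-limit over $m$ yields $\prolim p^{\dagger}\hc_!(\mathcal{E}_m^{\sgn_W})\simeq p^{\dagger}\hc_!(\cc_{\mathcal{N}_G^{reg}}\tw{\dim\mathcal{N}_G}\star\dd_G)$.

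For the right isomorphism I would feed $k=n$ into Proposition~\ref{sec:whittaker-averaging-9}, giving
\[ p^{\dagger}\hc_!(\mathfrak{W}_n\star\dd_G)\simeq\DD_{w_0}\star\D'\bigl(\NN_{w_0}\star p^{\dagger}\hc_!(\mathfrak{W}_0\star\dd_G)\bigr), \]
and then collapse the right-hand side. The crucial identification is $\mathfrak{W}_0=S_{\mathcal{N}_G}(\operatorname{triv}_W)=\delta_e$, the skyscraper at the identity, which is the unit of the convolution monoidal structure on $D^b_m(G/_{\Ad}G)$; hence $\mathfrak{W}_0\star\dd_G\simeq\dd_G$ and $p^{\dagger}\hc_!(\dd_G)\simeq\dd$ by Corollary~\ref{centralunit}. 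Using that $\dd$ is the unit of $\cM$ (Proposition~\ref{braid_relations}) and that $\D'$ is normalised so that $\D'\NN_{w_0}\simeq\DD_{w_0}$ (Section~\ref{t-structures}), the right-hand side becomes $\DD_{w_0}\star\DD_{w_0}=\DD_{w_0}^2$. Since $\mathfrak{W}_n=\cc_{\mathcal{N}_G^{reg}}\tw{\dim\mathcal{N}_G}$, this is exactly the middle object of the corollary, and the two isomorphisms combine into the statement.

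This is really a formal assembly, so I do not expect a serious obstacle; the only points needing care are (i) that convolution of the fixed character sheaf $\mathfrak{W}_k$ with the pro-object $\dd_G$ stays inside the relevant completed categories and commutes with the pro-limit --- handled exactly as in the proof of Theorem~\ref{sec:whittaker-averaging-4}, where $\mathfrak{W}_{n-k}\star\dd_G$ is manipulated in the same fashion --- and (ii) the bookkeeping of cohomological and Tate shifts, so that all the displayed isomorphisms above should in fact be read up to shift and twist ($\dot{=}$); the precise normalisation can be recovered by restricting to the regular semisimple locus and computing the stalk at the generic point, as in the proof of Theorem~\ref{sec:whittaker-averaging-5}.
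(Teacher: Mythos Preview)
Your proposal is correct and follows exactly the route the paper intends: the paper records the corollary as an immediate consequence of Propositions~\ref{sec:whittaker-averaging-9} and~\ref{sec:char-sheav-full-1} together with the identification $\mathfrak{W}_n\simeq\cc_{\mathcal{N}_G^{reg}}\tw{\dim\mathcal{N}_G}$ stated just before, and you have unpacked precisely that --- Proposition~\ref{sec:char-sheav-full-1} with $V=\operatorname{triv}_W$ for the left isomorphism, Proposition~\ref{sec:whittaker-averaging-9} with $k=n$ and $\mathfrak{W}_0=\delta_e$ for the right.
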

     Substituting $k = n$ in Theorem~\ref{sec:char-sheav-full-3}, we recover the computation done in type A in \cite{ghmn} for the object representing the highest Hochschild cohomology of Soergel bimodules.
     \begin{corollary}
       \label{sec:whittaker-averaging-6}
       We have an isomorphism
       \[ \mathfrak{E}_n = \mathcal{H}^{-n}_{w_0}(\mathbb{K}) \simeq \DD_{w_0}^2(2n-l(w_0)).
       \]
     \end{corollary}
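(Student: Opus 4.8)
The plan is to specialize Theorem~\ref{sec:char-sheav-full-3} to the top index $k=n$ and then identify the character sheaf that appears. Setting $k=n=\dim T$ in Theorem~\ref{sec:char-sheav-full-3} immediately gives $\mathfrak{E}_n\simeq p^{\dagger}\hc_!(\mathfrak{W}_n\star\dd_G)(2n-l(w_0))$, so the whole statement reduces to computing $p^{\dagger}\hc_!(\mathfrak{W}_n\star\dd_G)$.

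The first step is to identify $\mathfrak{W}_n=S_{\mathcal{N}_G}(\wedge^n\mathfrak{t})$ explicitly. Since $\mathfrak{t}$ is the $n$-dimensional reflection representation of $W$, its top exterior power $\wedge^n\mathfrak{t}$ is the sign representation $\sgn_W$, so $\mathfrak{W}_n=S_{\mathcal{N}_G}(\sgn_W)$. Transporting Corollary~\ref{sec:four-deligne-transf-2} from $\mathcal{N}_\g$ to $\mathcal{N}_G$ along the $G$-equivariant isomorphism $\varphi$ of diagram~\eqref{sec:four-deligne-transf-3}, this is $\cc_{\mathcal{N}_G}\tw{\dim\mathcal{N}_G}$, i.e. the constant sheaf on the nilpotent cone, which is the Goresky--MacPherson extension $\IC_{\mathcal{N}_G^{reg}}=\cc_{\mathcal{N}_G^{reg}}\tw{\dim\mathcal{N}_G}$ of the trivial local system on the regular unipotent orbit.

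With this identification, the second step is simply to invoke Corollary~\ref{sec:char-sheav-full-2}, which gives $p^{\dagger}\hc_!(\cc_{\mathcal{N}_G^{reg}}\tw{\dim\mathcal{N}_G}\star\dd_G)\simeq\DD_{w_0}^2$. Substituting this back into the formula from the first paragraph yields $\mathfrak{E}_n\simeq\DD_{w_0}^2(2n-l(w_0))$, which is the claim. There is no genuine obstacle here: the essential geometric input --- the computation of $p^{\dagger}\hc_!$ of the full-twist character sheaf --- was already carried out in the proof of Corollary~\ref{sec:char-sheav-full-2}, and the only remaining points are the standard fact that the top exterior power of the reflection representation is $\sgn_W$ and the matching of Tate twists, both of which are routine bookkeeping.
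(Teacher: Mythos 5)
Your proposal is correct and takes essentially the same route as the paper. The paper states the corollary immediately after observing (by Corollary \ref{sec:four-deligne-transf-2}) that $\mathfrak{W}_n \simeq \cc_{\mathcal{N}_G^{reg}}\tw{\dim\mathcal{N}_G}$, and after proving Corollary \ref{sec:char-sheav-full-2}, and proves it exactly by substituting $k=n$ in Theorem \ref{sec:char-sheav-full-3} — the only extra detail you supply is the (correct) observation that $\wedge^n\mathfrak t$ is the sign representation because $W$ acts by reflections, a point the paper leaves implicit.
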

     \begin{bremark} It is shown in \cite{beilinson2004tilting}, \cite{ghmn} that the convolution with $\DD_{w_0}^2$ (tensor product with the corresponding Rouquier complex $\Delta_{w_0}^2$) is the inverse to the Serre functor on the category $D^b(U\backslash G/B)$ (the homotopy category of Soergel bimoduels $\operatorname{Ho}(\SBim(W))$ in type $A$, respectively).
     \end{bremark}
     
     \begin{bremark} One may also recover Corollary \ref{sec:whittaker-averaging-6} as follows: since the functor $h\Ho(\Lambda)$ is t-exact and $\mathbb{K}$ is in its image, we get that $\F:= p^{\dagger}\hc_!(\IC_{n}\star\dd_G)$ is also in its image. Since $\DD_{w_0}\star\D'(\NN_{w_0}\star\F)(-l(w_0))$ represents the 0th Hochschild cohomology functor, by Yoneda lemma it is isomorphic to $R(-l(w_0))$. Hence, by Theorem \ref{sec:whittaker-averaging-4}, we get the result.
     \end{bremark}

\section{Jucys-Murphy filtrations in type A}
\label{sec:jm-filtr}
In this Section we work with $G = \GL_n$. 
\subsection{Jones-Ocneanu traces.}
We first recall the decategorified picture. Let $H_n$ be the Iwahori-Hecke algebra attached to the Weyl group of $G$: $H_n$ is a unital algebra over $\mathbb{Q}(v)$ generated by the elements
\[t_s, s \in \Sigma = \{s_1, \dots, s_{n-1}\}\] satisfying the braid relations along with the quadratic relations \[t_{s}^2 = (v-v^{-1})t_s+1.\]

It is well-known that $H_n$ has a basis $t_w, w \in W,$ with $t_w = t_{r_1}\dots t_{r_{k}}, r_i \in \Sigma,$ for a reduced expression $w = r_1 \dots r_k$.

Write $H_0 = H_1 = \mathbb{Q}(v,a)$, and let $v^2 = q$, where $q$ is treated as a formal variable.

Write $\iota: H_{n-1} \to H_{n}$ for the embedding sending $t_i$ to $t_i$ for $i = 1,\dots, n-2$.

By the results of Ocneanu and Jones, see \cite{jones}, there is a system of traces $\operatorname{Tr}_n:H_{n} \to \mathbb{Q}(a,v)$, uniquely determined by the following properties:
\begin{enumerate}[label=(\arabic*)]
\item \(\operatorname{Tr}_n(ab) = \operatorname{Tr}_n(ba), a,b \in H_{n},\)
\item \(\operatorname{Tr}_n(\iota(a)) = \dfrac{1+a}{1-q}\operatorname{Tr}_{n-1}(a), a \in H_{n-1},\)
\item \(\operatorname{Tr}_n(\iota(a)t_{n-1})=-v^{-1}\operatorname{Tr}_{n-1}\iota(a)\)
\item \(\operatorname{Tr}_0(1) = 1\).
\end{enumerate} We have a homomorphism $\phi:\mathbb{Z}[v,v^{-1}][\operatorname{Br}_n] \to H_n$, sending the generator $\sigma_w$ to $t_w$.

It follows from the trace-like property of $\operatorname{Tr}_n$ that it is completely determined by the coefficients $W_\lambda$ in the decomposition
\[ \operatorname{Tr}_n = \sum_{\lambda}W_{\lambda}\operatorname{Tr}_\lambda,
\] where $\lambda$ runs through the Young diagrams with $n$ cells that label the irreducible representations of $H_n$, and $\operatorname{Tr}_\lambda$ stands for the character of the corresponding representations. The standard labeling is chosen such that the rank one representation where $t_s$ acts by $v$ corresponds to the diagram with one row. We have the following
\begin{proposition}[\cite{wenzl}, \cite{jones}]\label{sec:jones-ocne-trac} The weights $W_\lambda$ are given by
  \[ W_\lambda = q^{n'(\lambda)} \prod_{\square \in \lambda}\dfrac{1+aq^{-c(\square)}}{1-q^{h(\square)}}
  \]
\end{proposition}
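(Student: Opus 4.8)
The plan is to deduce the formula from the uniqueness of the Markov trace. First I would recall the standard fact that properties (1)--(4) determine the family $\operatorname{Tr}_n$ uniquely: since $H_n$ decomposes as $\iota(H_{n-1}) \oplus \iota(H_{n-1})\,t_{n-1}\,\iota(H_{n-1})$ over $\iota(H_{n-1})$, cyclicity (1) rewrites $\operatorname{Tr}_n(\iota(x)\,t_{n-1}\,\iota(y)) = \operatorname{Tr}_n(\iota(yx)\,t_{n-1})$, so, together with (2) and (3), the axioms reduce every value of $\operatorname{Tr}_n$ to values of $\operatorname{Tr}_{n-1}$; this is the standard verification that the four properties determine the $\operatorname{Tr}_n$ uniquely, and induction on $n$ with base case (4) finishes. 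Since $H_n$ is split semisimple over $\mathbb{Q}(v)$ (Hoefsmit) with irreducible characters $\operatorname{Tr}_\lambda$, $\lambda \vdash n$, linearly independent, the coefficients $W_\lambda$ in $\operatorname{Tr}_n = \sum_\lambda W_\lambda \operatorname{Tr}_\lambda$ are well defined and uniquely pinned down by (1)--(4). Thus it is enough to check that the numbers $W_\lambda$ produced by the product formula satisfy the same constraints: (1) holds automatically because each $\operatorname{Tr}_\lambda$ is a trace, and (4) forces $W_{\emptyset} = 1$, matching the empty product.

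Next I would translate (2) and (3) into recursions on the $W_\lambda$ using the branching rule $\operatorname{Res}^{H_n}_{H_{n-1}} V_\lambda \cong \bigoplus V_\mu$, the sum over partitions $\mu \vdash n-1$ obtained from $\lambda$ by removing a box; this restriction is multiplicity free. Evaluating $\operatorname{Tr}_\lambda$ on $\iota(x)$ and on $\iota(x)\,t_{n-1}$ for $x \in H_{n-1}$, and using Hoefsmit's seminormal realization of $V_\lambda$ to compute the relevant matrix entries of $t_{n-1}$ --- which, by the seminormal formulas, are explicit rational functions of $q$ and of the contents of the boxes involved --- properties (2) and (3) become two linear recursions expressing the collection $\{W_\lambda : \lambda \vdash n\}$ in terms of $\{W_\mu : \mu \vdash n-1\}$, of the schematic shape $\sum_{\lambda = \mu + \square} c_\bullet(q,a)\,W_\lambda = c'_\bullet(q,a)\,W_\mu$, where the coefficients are the ``content'' rational functions occurring in those formulas.

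It then remains to verify that the product formula solves these recursions. When a box of content $c$ is added to $\mu$, only the hook lengths of boxes in its row and column change, so each ratio $W_{\mu+\square}/W_\mu$ is an explicit finite product, and the required identities become rational-function identities in $q$ and $a$; I would prove them either by the standard partial fractions / residue argument in the parameter $a$, or by identifying $W_\lambda$ (up to the prefactor $q^{n'(\lambda)}$) with a principal specialization of the Schur function $s_\lambda$, i.e. with a quantum Weyl dimension, and invoking the Pieri rule for multiplication by $p_1$ in that specialization. The main obstacle is the bookkeeping: keeping the hook-length changes, the content shifts, the $q^{n'(\lambda)}$ twist, and the overall normalization of $\operatorname{Tr}_n$ mutually consistent, which is precisely where the statement of the formula is delicate. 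This computation is exactly the content of \cite{wenzl}, \cite{jones}, to which one may alternatively simply refer.
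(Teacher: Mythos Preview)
The paper does not give its own proof of this proposition; it is stated as a citation from \cite{wenzl} and \cite{jones} and used as a black box. Your proposal sketches precisely the standard argument from those references --- uniqueness of the Markov trace, branching rules, and verification via Hoefsmit's seminormal form --- and you correctly note at the end that one may simply refer to the literature, which is exactly what the paper does.
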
 Here the products runs through the cells $\square \in \lambda$, and the following conventions for the diagram parameters are used. The diagram is assumed to be positioned in the lower-right quadrant of the plane, all cells given non-negative coordinates as on the Figure \ref{fig:1}. For a cell $\square = (i,j)$, $c(\square) = i-j$, the content, $h(\square)$ is the hook length, and $n'(\lambda) = \sum_i(i-1)\lambda'_i$, where $\lambda'_i$ are lengths of the rows of the transposed diagram.
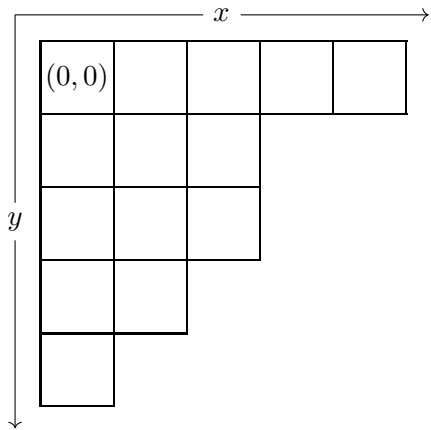
\begin{figure}[h]
    \caption{Coordinates of cells in the Young diagram}
    \label{fig:1}
    \centering
    \stackon[-147pt]
    {
      \ytableausetup{boxsize=2.5em} \ytableaushort{{(0,0)},\none,\none,\none,\none}*{5,3,3,2,1} } {
      \begin{tikzpicture}
        \draw (0,0) -- (2.5,0) node[right] {$x$};
        \draw[->] (3,0) -- (5.5,0);
        \draw (0,0) -- (0,-2.5) node[below] {$y$};
        \draw[->] (0,-3) -- (0,-5.5);
      \end{tikzpicture}
    }
\end{figure}

Write \[j_0 = 1, j_k = s_kj_{k-1}s_k, k = 0, \dots, n-1.\] Elements $j_i$ are known as Jucys-Murphy elements in the Hecke algebra $H_n$. It is easy to check that $j_kj_l = j_lj_k$ for all $k, l$. Let $P$ be any symmetric polynomial in $n$ variables. Then it is well-known that an element $P(j_0^{-1},\dots,j_{n-1}^{-1})$ acts on the representation of $H_n$ labeled by $\lambda$ as a constant $P(q^{c_\lambda})$, where $c_\lambda$ is a vector of contents of cells of $\lambda$ (defined up to permutation). See \cite{isaev2005representations}, \cite{okounkov1996new}.

Let $\operatorname{Tr}^{(k)}_n \in \mathbb{Q}(v)$ stand for the coefficient of $\operatorname{Tr}_n$ near $a^k$. From this discussion and Proposition \ref{sec:jones-ocne-trac} we get the following
\begin{corollary}
\label{sec:jucys-murphy-filtr-2} $\operatorname{Tr}^{(k)}_n$ and $\operatorname{Tr}^{(0)}_n$ are related by the following
  \[ \operatorname{Tr}_n^{(k)}(x) = \operatorname{Tr}_n^{(0)}(xE_k(j_0^{-1},\dots,j_{n-1}^{-1})),
  \] where $E_k$ stands for the $k$-th elementary symmetric polynomial.
\end{corollary}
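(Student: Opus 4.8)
The plan is to reduce the identity to the decomposition $\operatorname{Tr}_n = \sum_\lambda W_\lambda \operatorname{Tr}_\lambda$ together with the recalled action of symmetric functions in the Jucys--Murphy elements on the irreducible $H_n$-modules. First I would read off the coefficient of $a^k$ in $W_\lambda$ from Proposition \ref{sec:jones-ocne-trac}. Only the factor $\prod_{\square\in\lambda}(1+aq^{-c(\square)})$ depends on $a$, and its expansion is
\[
  \prod_{\square\in\lambda}(1+aq^{-c(\square)}) \;=\; \sum_{k=0}^{n} a^k\, E_k\big((q^{-c(\square)})_{\square\in\lambda}\big),
\]
where $E_k$ is the $k$-th elementary symmetric polynomial evaluated at the $n$ numbers $q^{-c(\square)}$, $\square \in \lambda$. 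Writing $W_\lambda^{(0)} := q^{n'(\lambda)}\big/\prod_{\square\in\lambda}(1-q^{h(\square)})$ for the $a^0$-coefficient of $W_\lambda$, this gives $[a^k]W_\lambda = W_\lambda^{(0)}\,E_k\big((q^{-c(\square)})_{\square\in\lambda}\big)$, hence
\[
  \operatorname{Tr}_n^{(k)} \;=\; \sum_\lambda W_\lambda^{(0)}\, E_k\big((q^{-c(\square)})_{\square\in\lambda}\big)\,\operatorname{Tr}_\lambda,
  \qquad
  \operatorname{Tr}_n^{(0)} \;=\; \sum_\lambda W_\lambda^{(0)}\,\operatorname{Tr}_\lambda .
\]

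Next I would invoke the cited fact (\cite{isaev2005representations}, \cite{okounkov1996new}) that a symmetric polynomial $P$ in $j_0^{-1},\dots,j_{n-1}^{-1}$ acts on the $\lambda$-labelled irreducible $H_n$-module by the scalar obtained by evaluating $P$ at the contents; with the normalizations fixed above, $E_k(j_0^{-1},\dots,j_{n-1}^{-1})$ acts by the scalar $E_k\big((q^{-c(\square)})_{\square\in\lambda}\big)$. In particular $E_k(j_0^{-1},\dots,j_{n-1}^{-1})$ is central (it is scalar on every irreducible, and $H_n$ is split semisimple over $\mathbb{Q}(a,v)$), so that for any $x \in H_n$
\[
  \operatorname{Tr}_\lambda\big(x\,E_k(j_0^{-1},\dots,j_{n-1}^{-1})\big) \;=\; E_k\big((q^{-c(\square)})_{\square\in\lambda}\big)\,\operatorname{Tr}_\lambda(x).
\]
Multiplying by $W_\lambda^{(0)}$, summing over $\lambda$, and comparing with the two displayed formulas above,
\[
  \operatorname{Tr}_n^{(0)}\big(x\,E_k(j_0^{-1},\dots,j_{n-1}^{-1})\big)
  \;=\; \sum_\lambda W_\lambda^{(0)}\, E_k\big((q^{-c(\square)})_{\square\in\lambda}\big)\,\operatorname{Tr}_\lambda(x)
  \;=\; \operatorname{Tr}_n^{(k)}(x),
\]
which is the assertion.

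The only genuinely delicate point, and the one I would check most carefully, is the bookkeeping of conventions: one must match the sign of the content $c(\square)$ entering the Jones--Ocneanu weight of Proposition \ref{sec:jones-ocne-trac} with the eigenvalues of the \emph{inverse} Jucys--Murphy elements $j_i^{-1}$ on the Young (Gelfand--Tsetlin) basis --- i.e.\ verify that $j_i^{-1}$ contributes the factor $q^{-c(\square)}$ rather than $q^{c(\square)}$, and that the powers of $v$ versus $q = v^2$ are consistent between the two sides. Once the scalar by which $E_k(j_0^{-1},\dots,j_{n-1}^{-1})$ acts on the $\lambda$-module is aligned term by term with the $a^k$-coefficient produced by expanding $\prod_{\square\in\lambda}(1+aq^{-c(\square)})$, the corollary follows at once from the linear independence of the irreducible characters $\operatorname{Tr}_\lambda$.
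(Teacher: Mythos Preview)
Your argument is exactly the one the paper intends: the text does not give a separate proof but simply says the corollary follows ``from this discussion and Proposition~\ref{sec:jones-ocne-trac}'', and your expansion of $\prod_{\square}(1+aq^{-c(\square)})$ together with the scalar action of symmetric functions in the Jucys--Murphy elements is precisely that discussion made explicit. Your caution about matching the sign of $c(\square)$ in the weight formula with the eigenvalue convention for $j_i^{-1}$ is well placed---indeed the paper's line ``acts \dots\ as a constant $P(q^{c_\lambda})$'' and the factor $q^{-c(\square)}$ in $W_\lambda$ only cohere once one fixes this convention, so this is the one point that genuinely needs checking.
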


Let $\mathfrak{j}_k \in \operatorname{Br}_n$ stand for the Jucys-Murphy braids defined by
\[{\mathfrak{j}}_0 = 1, \mathfrak{j}_k = \sigma_k{\mathfrak{j}}_{k-1}\sigma_k, k = 1, \dots, n-1.\] Note that the Jucys-Murphy elements $\mathfrak{j}_k$ are images of braid group elements $\mathfrak{j}_k$ under the homomorphism $\mathbb{Z}[\operatorname{Br}_n]\to H_n, \sigma_s \mapsto t_s$. It is easy to check that the elements ${\mathfrak{j}}_k$ also commute with each other in $\operatorname{Br}_n$.

\subsection{Jucys-Murphy filtrations.}
For a subset $\mu \subset \{0,\dots,n-1\}$ write $\mathfrak{j}^\mu = \prod_{i \in \mu}\mathfrak{j}_i$ and let \[\mathfrak{J}^\mu = h\operatorname{Ho}\Lambda(F_{\mathfrak{j}^\mu}) \in \cM,\] the geometric categorification of the product of Jucys-Murphy braids.

For an objects $X, \{C_i\}_{i = 1}^k$ of a triangulated category, we write $X \in \langle C_1, C_2, ..., C_k\rangle$ if there exists a sequence of objects $\{X_i\}_{i=1}^k, X_1 = X, X_k = C_k$, such that for all $i < k$, there is a distinguished triangle
\[ C_i \to X_i \to X_{i+1} \to C_i[1].\]

The main result of this subsection is the following
\begin{theorem}
\label{sec:jucys-murphy-filtr-1} The objects representing Hochschild cohomology in $\cM$ satisfy
  \[ \mathfrak{E}_k(-2k+l(w_0))\in\langle\mathfrak{J}^\mu\rangle_{|\mu| = k}
  \] The indices are ordered so that $\mu = (\mu_1, \dots, \mu_k) \geq \mu' = (\mu'_1,\dots,\mu'_k)$ if $\mu_1 \leq \mu_1', \dots, \mu_k \leq \mu_k'.$
\end{theorem}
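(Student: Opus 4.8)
The plan is to transport the statement to the character-sheaf side via Theorem~\ref{sec:char-sheav-full-3}, and then to read off the iterated extension from a ``tautological'' flag whose subquotients are the Jucys--Murphy objects. By Theorem~\ref{sec:char-sheav-full-3} we have $\mathfrak{E}_k(-2k+l(w_0)) \simeq p^{\dagger}\hc_!(\mathfrak{W}_k\star\dd_G)$ on the nose, and since $p^{\dagger}$, $\hc_!$ and $-\star\dd_G$ are all triangulated functors it suffices to exhibit $\mathfrak{W}_k\star\dd_G$, as an object of $\pro D\mathcal{CS}$, as an iterated extension (in the componentwise order of the theorem) of objects that are carried by $p^{\dagger}\hc_!(-\star\dd_G)$ to $\mathfrak{J}^{\mu}$, $|\mu|=k$. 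The decategorified shadow of what we want is the identity $\operatorname{Tr}_n^{(k)}(x) = \operatorname{Tr}_n^{(0)}\!\left(x\,E_k(\mathfrak{j}_0^{-1},\dots,\mathfrak{j}_{n-1}^{-1})\right)$ of Corollary~\ref{sec:jucys-murphy-filtr-2}; expanding $E_k$ as a sum of the $\binom{n}{k}$ products $\prod_{i\in\mu}\mathfrak{j}_i^{-1}$ is what identifies the correct subquotients as the $\mathfrak{J}^{\mu} = h\Ho\Lambda(F_{\mathfrak{j}^{\mu}})$, and Proposition~\ref{sec:jones-ocne-trac} records the precise $q$-powers, i.e.\ the Tate twists, that must appear.

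The representation-theoretic input I would use is the identity in $\Rep_{\Qlbar}(S_n)$
\[
  \wedge^k\mathfrak{t} \;\simeq\; \operatorname{Ind}_{S_k\times S_{n-k}}^{S_n}\!\left(\sgn_{S_k}\boxtimes\operatorname{triv}_{S_{n-k}}\right),
\]
which together with the standard compatibility of Springer functors with parabolic induction and with Corollary~\ref{sec:four-deligne-transf-2} realizes $\mathfrak{W}_k$, up to shift and twist, as the parabolic Springer sheaf $\operatorname{Ind}_{P}^{\GL_n}\!\left(\cc_{\mathcal{N}_{\GL_k}}\boxtimes\iota_{e*}\cc\right)$, for $P$ the parabolic with Levi $\GL_k\times\GL_{n-k}$. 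More refined, running along the chain $\GL_1\subset\GL_2\subset\cdots\subset\GL_n$ and using the adjunction triangles between parabolic restriction and induction along each $\GL_{m-1}\subset\GL_m$ --- where at the $m$-th stage the ``new'' generator is exactly the Jucys--Murphy braid $\mathfrak{j}_{m-1}$ --- produces a flag $0=\mathbb{T}^{(0)}\subset\mathbb{T}^{(1)}\subset\cdots\subset\mathbb{T}^{(n)}=\mathbb{T}$ on a \emph{tautological} object $\mathbb{T}$ with successive cones $\mathfrak{J}_0,\dots,\mathfrak{J}_{n-1}$ (with suitable Tate twists). This is the geometric avatar of the flag $\mathcal{L}_1\subset\cdots\subset\mathcal{L}_n$ on the tautological bundle recalled in Conjecture~\ref{sec:relat-conj-citeg}, and is in the spirit of \cite{tolmachov2018towards}; indeed $\mathbb{T}$ should be $\mathfrak{E}_1$ up to a twist.

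The third step is to ``take $\wedge^k$'' of this flag. Standard multilinear algebra in a triangulated category turns a flag with graded pieces $\mathfrak{J}_0,\dots,\mathfrak{J}_{n-1}$ into an iterated triangle on the degree-$k$ exterior power whose graded pieces are the convolutions $\mathfrak{J}_{i_1}\star\cdots\star\mathfrak{J}_{i_k}$ over $0\le i_1<\cdots<i_k\le n-1$; since the braids $\mathfrak{j}_i$ pairwise commute and $h\Ho\Lambda$ and $\hc_!$ are monoidal, this convolution is precisely $\mathfrak{J}^{\mu}$ for $\mu=\{i_1,\dots,i_k\}$, and one arranges the factors so that the $\mathfrak{J}^{\mu}$ appear in the componentwise order of the theorem. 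Identifying this exterior power with $\mathfrak{W}_k\star\dd_G$ (up to twist) reuses the identity of the second step, and then applying $p^{\dagger}\hc_!(-\star\dd_G)$ --- and comparing the outcome with Proposition~\ref{sec:jones-ocne-trac} and Corollary~\ref{sec:jucys-murphy-filtr-2} to pin down the remaining Tate twists so that the shifts cancel against $(-2k+l(w_0))$ --- finishes the argument.

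I expect the second and third steps to be the real obstacles. As a $W$-representation $\wedge^k\mathfrak{t}$ decomposes only into the two hook summands $S_{(n-k,1^k)}\oplus S_{(n-k+1,1^{k-1})}$, far too coarse to see a $\binom{n}{k}$-step filtration; the fine flag has to be extracted from the Gelfand--Tsetlin/Jucys--Murphy tower structure, that is, from iterating parabolic induction along $\GL_{m-1}\subset\GL_m$, and it genuinely survives only as a filtration (not a direct sum) after convolution with the pro-unit $\dd_G$. Consequently the delicate points are: making the ``$\wedge^k$'' of the resulting filtered pro-object canonical enough to be identified with the intrinsically defined $\mathfrak{W}_k\star\dd_G$; controlling the internal Tate twists of the subquotients so that the $\mathfrak{J}^{\mu}$ occur literally rather than up to twist; and checking that the order in which they occur is the stated componentwise order on multi-indices.
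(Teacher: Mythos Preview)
Your opening moves are the paper's moves: pass to $p^{\dagger}\hc_!(\mathfrak{W}_k\star\dd_G)$ via Theorem~\ref{sec:char-sheav-full-3}, and use
\[
  \wedge^k\mathfrak{t} \;\simeq\; \operatorname{Ind}_{S_k\times S_{n-k}}^{S_n}\!\left(\sgn_{S_k}\boxtimes\operatorname{triv}_{S_{n-k}}\right)
\]
together with the compatibility of Springer functors with parabolic induction to realize $\mathfrak{W}_k$ as $\operatorname{Ind}_{P_k}^G$ of $\cc_{\mathcal{N}_{\GL_k}}\boxtimes\iota_{e*}\cc$ (equivalently $\mathfrak{W}^{\GL_k}_k\boxtimes\mathfrak{W}^{\GL_{n-k}}_0$). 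Up to here you are exactly on the paper's track.

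The divergence, and the gap, is what you do next. You propose to build a length-$n$ flag on a ``tautological'' object by iterating along $\GL_1\subset\cdots\subset\GL_n$ and then to \emph{take $\wedge^k$} of that flag in a triangulated category. As you yourself note, making sense of exterior powers of a filtered object in a bare triangulated category is genuinely problematic, and identifying such an ad hoc $\wedge^k$ with the intrinsically defined $\mathfrak{W}_k\star\dd_G$ is a further leap. The paper avoids this entirely: having written $\mathfrak{W}_k$ as $\chi_{P_k}^G$ of a sheaf on the single maximal parabolic $P_k$, it obtains the whole $\binom{n}{k}$-step filtration in one stroke from the \emph{Bruhat stratification of $G/P_k$}. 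Concretely, by base change $\hc_!\circ\chi_{P_k}^G$ is computed by a correspondence over $G\times G/P_k\times G/B$, and filtering by the relative position $w\in W^{J_k}$ of $xP_k$ and $yB$ yields
\[
  \hc_!\chi_{P_k}^G(\F)\in\bigl\langle\,\DD_w\star\hc_B^{P_k}(\F)\star\DD_{w^{-1}}\,\bigr\rangle_{w\in W^{J_k}},
\]
with the $w$'s in a Bruhat non-increasing order. Since $\hc_{P_k}^G(\dd_G)\simeq\dd_{\GL_k}\boxtimes\dd_{\GL_{n-k}}$ and Corollary~\ref{sec:whittaker-averaging-6} applied to $\GL_k$ gives $\hc_B^{P_k}\bigl((\mathfrak{W}^{\GL_k}_k\boxtimes\mathfrak{W}^{\GL_{n-k}}_0)\star\hc_{P_k}^G(\dd_G)\bigr)\simeq\DD_{w_0^{(k)}}^2$, the subquotients are $\DD_w\star\DD_{w_0^{(k)}}^2\star\DD_{w^{-1}}$ for $w\in W^{J_k}$. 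A short combinatorial check then shows that $\sigma_w\sigma_{w_0^{(k)}}^2\sigma_{w^{-1}}=\mathfrak{j}^{\mu}$ when $w$ is the minimal coset representative sending $\{0,\dots,k-1\}$ to $\mu$, and the Bruhat order on $W^{J_k}$ becomes the componentwise order on $\mu$.

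In short: replace your ``chain $\GL_1\subset\cdots\subset\GL_n$ plus $\wedge^k$'' mechanism by the Bruhat filtration of $\hc_!\chi_{P_k}^G$ for the fixed maximal parabolic $P_k$. That single step produces all $\binom{n}{k}$ pieces at once, already identified with conjugates of the small full twist, and the combinatorial lemma matches them to the $\mathfrak{J}^{\mu}$ in the stated order; no exterior-power construction and no twist bookkeeping beyond what is already packaged in Theorem~\ref{sec:char-sheav-full-3} is needed.
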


Fix a braid $\beta \in \operatorname{Br}_n$. Khovanov-Rozansky homology $\operatorname{HHH}(\beta)$ is a triply graded vector space, with grading pieces $\operatorname{HHH}^{k,i,j}(\beta) = \operatorname{HH}^k(F_\beta)$, where $k$ stands for the Hochschild grading, $i$ for the cohomological grading on the Rouquier complex $F_\beta$ and $j$ for the inner grading on the bimodules. It is proved in \cite{khovanov2007triply} that, taking the Euler characteristic with respect to the second granding, one recovers the Jones-Ocneanu trace as the Hilbert-Poincar\'{e} series
\[ \operatorname{Tr}_n = \sum_{k,i,j}(-1)^i\dim \operatorname{HHH}^{k,i,j}(\beta)(-2k)a^kq^j.
\]

By Theorem \ref{sec:comp-with-soerg-2}, we have
\[ \operatorname{HH}^k(F_\beta) = \underline{\Hom(\mathcal{H}_{w_0}^{-k}(\mathbb{K}),h\operatorname{Ho}(\Lambda)(F_\beta))^{f,\Frinv}}.
\]

From the additivity of Euler characteristic in distinguished triangles, using the Theorem \ref{sec:jucys-murphy-filtr-1}, we obtain
\[ \operatorname{Tr}_n^{(k)} =\sum_{|\mu|=k}\sum_{i,j}(-1)^i\dim \operatorname{HH}^{0,i,j}(F_\beta F_{\mathfrak{j}^\mu}^{-1})q^j,
\] which is exactly the categorification of Corollary \ref{sec:jucys-murphy-filtr-2}.
\begin{proof}[Proof of Theorem \ref{sec:jucys-murphy-filtr-1}]

  For a parabolic $P \subset G$ with unipotent radical $U_P$ and Levi $L_P$, write
  \[ \mathcal{Y}_P = \dfrac{G/U_P\times G/U_P}{L_P},
  \] where $L_P$ acts on $G/U_P \times G/U_P$ diagonally on the right. This $L_P$-action commutes with the diagonal $G$-action on the left. Note that $D_G^b(\mathcal{Y}_B)\simeq D(\mathcal{Y})$, so the spaces $\mathcal{Y}_P$ are symmetric parabolic generalizations of $\mathcal{Y}$. Note also that $D^b_G(\mathcal{Y}_G) = D^b_{\Ad G}(G)$.

  For parabolics $P \subset Q$ we have functors $\hc_{P}^Q, \chi_{P}^Q$, defined by Lusztig in \cite{lpc1}. We recall the definition. Let \[\tilde{Z}_{P, Q} = \{(xU_Q, yU_Q, zU_P): x^{-1}z \in Q\},\] and let $Z_{P,Q}$ be its quotient by $L_Q \times L_P$ action on the right. We have maps
  \[
    \begin{tikzcd} & Z_{P,Q} \arrow[ld, "{f_{P,Q}}"'] \arrow[rd, "{g_{P,Q}}"] & \\ \mathcal{Y}_Q & & \mathcal{Y}_P
    \end{tikzcd}
  \]
  \[ f_{P,Q}(xU_Q, yU_Q, zU_P) = (xU_Q, yU_Q),
  \]
  \[ g_{P,Q}(xU_Q, yU_Q, zU_P) = (zU_P, yx^{-1}zU_P).
  \] Write
  \[ \chi_{P}^{Q} = f_{P,Q!}g_{P,Q}^*[\dim G/P-\dim G/Q],
  \]
  \[ \hc_P^Q=g_{P,Q!}f_{P,Q}^*[\dim G/Q - \dim G/P].
  \] In our previous notations, we have $\hc_! = \hc_B^G, \chi = \chi_B^G$. It was proved in loc. cit. that if $P \subset Q \subset R$, we have
  \[ \chi_Q^R\chi_P^Q \simeq \chi_{P}^R, \hc^Q_P\hc^R_Q \simeq \hc^R_P.
  \] The functors $\hc_{P}^Q$ are monoidal with respect to convolution.
  
  Finally, let $\Delta_P \subset \mathcal{Y}_P, \Delta_P = \{(xU_P, yU_P)L: xP = yP\}$. Then $D_G^b(\Delta_P) \simeq D_{\Ad L_P}^b(L_P)$, and the functor $\chi_P^G$ (respectively $\hc^P_B$ for $P \supset B$), restricted to sheaves supported on $\Delta_P$, coincides with the functor $\operatorname{Ind}_{L_P}^G$ (respectively $\res^{L_P}_{B\cap L_P})$.

 Let $P_G(\mathcal{N}_G)$ be the category of $G$-equivariant mixed perverse sheaves on the unipotent variety of $G$. Recall the functor $S_{\mathcal{N}_G}:\operatorname{Rep}(W)\to P_G(\mathcal{N}_G/G)$ from Section \ref{sec:four-deligne-transf-4}. Let $H_G:P_G(\mathcal{N}_G)\to \operatorname{Rep}(W)$ be the bi-adjoint functor $H_G = \Hom(\operatorname{Spr}, -).$ We will need the following compatibility result.
 \begin{lemma}
  \label{sec:jucys-murphy-filtr-5} Let $P\supset B$ be a standard parabolic subgroup with the Levi quotient $L$. Assume that $P$ corresponds to a subset $J \subset \Sigma$ of the set of simple reflections, and let $W_L\subset W$ be the corresponding parabolic subgroup.
  \begin{enumerate}[label=(\arabic*)]
  \item \label{item:15} There are natural isomorphisms
    \[ \res^G_{P}\circ S_{\mathcal{N}_G} \simeq S_{ \mathcal{N}_L }\circ \res^W_{W_L}, \res^W_{W_L}\circ H_G \simeq H_L\circ \res^G_{P}.
    \]
  \item\label{item:16} There are natural isomorphisms
    \[ \ind^G_{P}\circ S_{ \mathcal{N}_L } \simeq S_{ \mathcal{N}_G }\circ \ind^W_{W_L}, \ind^W_{W_L}\circ H_L \simeq H_G\circ \ind^G_{P}.
    \]
  \item\label{item:17} Let $L_1, L_2$ be two reductive groups, let $W_{L_1}, W_{L_2}$ be their Weyl groups. Let $X \in \operatorname{Rep}(W_{L_1}), Y \in \operatorname{Rep}(W_{L_2}).$ We have
    \[ S_{\mathcal{N}_{ L_1\times L_2 }} (X \boxtimes Y)\simeq S_{\mathcal{N}_{ L_1 }}(X)\boxtimes S_{\mathcal{N}_{ L_2 }}(Y).
    \]
  \end{enumerate}
 \end{lemma}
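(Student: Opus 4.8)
The plan is to reduce the two isomorphisms involving $H_\bullet$ to the two involving $S_\bullet$ by bi-adjunction, to prove the $S_\bullet$-statements by transporting them to the Grothendieck--Springer sheaf and checking one compatibility of $W$-actions on the regular semisimple locus, and to dispatch part \ref{item:17} by Künneth.

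For the reduction, note that $S_{\mathcal{N}_G}$ and $H_G=\Hom(\operatorname{Spr},-)$ form a bi-adjoint pair: $D_{\operatorname{Spr}}$ and $\operatorname{Rep}(W)$ are semisimple, $\operatorname{Spr}$ and every $W$-representation are self-dual, and $\operatorname{End}(\operatorname{Spr})\simeq\Qlbar[W]$, so $\Hom(S_{\mathcal{N}_G}(V),M)\simeq\Hom_W(V,H_G(M))$ holds on both sides; likewise $\res^W_{W_L}$ is bi-adjoint to $\ind^W_{W_L}$ by Frobenius reciprocity, and $\res^G_P$ is bi-adjoint to $\ind^G_P$ ($\ind^G_P$ being Verdier self-dual by Proposition \ref{sec:pro-unit-character-1} and, up to the Springer subcategory, independent of the parabolic). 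Taking left adjoints throughout, the first isomorphism in part \ref{item:16} yields the second in part \ref{item:15}, and the first in part \ref{item:15} yields the second in part \ref{item:16}; so it remains to prove
\[
 \res^G_{P}\circ S_{\mathcal{N}_G}\simeq S_{\mathcal{N}_L}\circ\res^W_{W_L}\qquad\text{and}\qquad\ind^G_{P}\circ S_{\mathcal{N}_L}\simeq S_{\mathcal{N}_G}\circ\ind^W_{W_L}.
\]
(Alternatively, all four isomorphisms drop out of the single geometric input below.)

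Since $S_{\mathcal{N}_G}=S'_{\mathcal{N}_G}\otimes\sgn_W$ with $S'_{\mathcal{N}_G}$ built from $\operatorname{Spr}_G\simeq\iota_{\mathcal{N}_G}^*\operatorname{GSpr}_G[-\dim T]$ and the base-change $W$-action, and $\sgn_W|_{W_L}=\sgn_{W_L}$, the $\sgn$-twist is compatible with passage to $L$ and one may argue with the Grothendieck--Springer sheaves. The input I would establish is the pair of $W$-equivariant isomorphisms on $L$: $\res^G_P\operatorname{GSpr}_G\simeq\ind^W_{W_L}\operatorname{GSpr}_L$, and $\ind^G_P\operatorname{GSpr}_L\simeq\operatorname{GSpr}_G$ with its $W_L$-equivariant structure equal to the restriction along $W_L\subset W$ of the canonical $W$-structure. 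As $p'\colon\tilde G\to G$ is small and $\operatorname{GSpr}_G$ is the IC-extension of its restriction to $G^{rss}$, and $\ind^G_P$, $\res^G_P$ are exact and compatible with restriction to the regular-semisimple loci, it suffices to check both over $G^{rss}$, where $\operatorname{GSpr}_G$ is the pushforward of the constant sheaf along the $W$-Galois cover $\tilde G^{rss}\to G^{rss}$ with $W$ acting by deck transformations: this cover factors through a $W_L$-Galois cover whose pushforward computes $\operatorname{GSpr}_L$, the functor $\ind^G_P$ amounts over $G^{rss}$ to the remaining pushforward (inducing the deck-group inclusion $W_L\subset W$) and $\res^G_P\operatorname{GSpr}_G$ is correspondingly the $W$-equivariant complex $\ind^W_{W_L}\operatorname{GSpr}_L$; both assertions are manifest from the deck-transformation description. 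Granting this, write $\operatorname{GSpr}_G\simeq\bigoplus_\rho\rho\otimes\IC_\rho$; then $S'_{\mathcal{N}_G}(V)$ is the $V$-isotypic part, $\ind^G_P$ and $\res^G_P$ are exact, and the two displayed isomorphisms follow from ordinary Frobenius reciprocity for $W_L\subset W$ applied to the $\IC$-multiplicities.

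Part \ref{item:17} is immediate from $\mathcal{N}_{L_1\times L_2}\simeq\mathcal{N}_{L_1}\times\mathcal{N}_{L_2}$, $\tilde{\mathcal{N}}_{L_1\times L_2}\simeq\tilde{\mathcal{N}}_{L_1}\times\tilde{\mathcal{N}}_{L_2}$, whence $\operatorname{Spr}_{L_1\times L_2}\simeq\operatorname{Spr}_{L_1}\boxtimes\operatorname{Spr}_{L_2}$ by Künneth; the identification $\operatorname{End}(\operatorname{Spr}_{L_1}\boxtimes\operatorname{Spr}_{L_2})\simeq\Qlbar[W_{L_1}]\otimes\Qlbar[W_{L_2}]\simeq\Qlbar[W_{L_1}\times W_{L_2}]$ matches the canonical $W_{L_1\times L_2}$-action (for the base-change action, and then after the twist since $\sgn_{W_{L_1}\times W_{L_2}}\simeq\sgn_{W_{L_1}}\boxtimes\sgn_{W_{L_2}}$), and taking $(X\boxtimes Y)$-isotypic parts is multiplicative under $\boxtimes$. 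The main obstacle is the geometric input of the previous paragraph: verifying that parabolic induction and restriction transport the Springer $W$-action on $\operatorname{GSpr}_G$ to that of $L$ exactly as claimed, keeping track of the $\sgn$-twist (and, if one routes the argument through $\g$ via \eqref{sec:four-deligne-transf-3}, of the comparison of $W$-actions there); once the Galois-cover picture on $G^{rss}$ is in place the rest is formal.
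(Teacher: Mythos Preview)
Your proposal is correct and follows essentially the same approach as the paper: reduce the $H_\bullet$-statements to the $S_\bullet$-statements by bi-adjunction, pass via the sign twist to the Grothendieck--Springer sheaf, and check the compatibility of $W$-actions on the regular semisimple locus (the paper cites Ginzburg's Theorem 4.1(ii) for the fact that $\res^G_P$ is ordinary restriction along $L^{rss}\hookrightarrow G^{rss}$ there); part \ref{item:17} via K\"unneth is exactly what the paper has in mind. The only difference is organizational: the paper proves both isomorphisms in \ref{item:15} directly and deduces all of \ref{item:16} by adjunction, whereas you prove the first isomorphism in each of \ref{item:15} and \ref{item:16} directly and obtain the remaining two by adjunction --- equivalent routes.
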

 \begin{proof} We prove properties \ref{item:15} and \ref{item:17}, \ref{item:16} follows by adjunction. Recall that the functor $S_{\mathcal{N}_G} = S_{\mathcal{N}_G}'\otimes \operatorname{sgn}_W$ is defined from (the sign twist of the) restriction of the functor $S'_G$. It is well known that, restricted to the open subset of regular semisimple elements $G^{rss}\subset G$, parabolic restriction functor $\res^G_P|_{G^{rss}}$ coincides, up to a homological shift and Tate twist, with the regular restriction along the embedding $L^{rss}\to G^{rss}$, see e.g. \cite{ginzburg1993induction}, Theorem 4.1 (ii). For the regular restriction the claim is immediate, and the Lemma follows.
 \end{proof}
 Let $P_k \subset G$ be a maximal parabolic subgroup of operators preserving a $k$-dimensional subspace, and let $L_k$ be its Levi. We assume that $P_k \supset B$ and that this parabolic corresponds to the subset $J_k := \{s_1, \dots, \hat{s}_k, \dots, s_{n-1}\} \subset \Sigma$. Note that $L_k \simeq \GL_k \times \GL_{n-k}$.

 \begin{lemma}
   \label{sec:jucys-murphy-filtr-6}
   We have an isomorphism
   \[
     \hc_{P_k}^G(\dd_{\GL_n})\simeq \dd_{\GL_k}\boxtimes\dd_{\GL_{n-k}}.
   \]
 \end{lemma}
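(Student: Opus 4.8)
The plan is to reduce the identity to the corresponding statement for the Borel subgroup, namely Corollary~\ref{centralunit}, by exploiting transitivity of the parabolic Harish-Chandra functors. Write $B_{L_k}:=B\cap P_k$ for the Borel of $\GL_n$ contained in $P_k$, with Levi quotient $T$. The composition rule $\hc_B^{P_k}\circ\hc_{P_k}^G\simeq\hc_B^G=\hc_!$ recalled above reduces the problem to understanding the object $\hc_{P_k}^G(\dd_{\GL_n})$ through the functor $\hc_B^{P_k}$. Since $\hc_{P_k}^G$ is pull--push along the finite-type correspondence $Z_{P_k,G}$, it commutes with the inverse limit defining $\dd_{\GL_n}=\prolim\mathcal{E}_n$; and since parabolic restriction carries (pro-)character sheaves to (pro-)character sheaves on the Levi, which are exactly the objects supported on the diagonal $\Delta_{P_k}\subset\mathcal{Y}_{P_k}$ (where $D^b_G(\Delta_{P_k})\simeq D^b_{\Ad L_k}(L_k)$), the pro-object $\hc_{P_k}^G(\dd_{\GL_n})$ lies in the essential image of $D^b_{\Ad L_k}(L_k)$ --- which is precisely the category in which the right-hand side $\dd_{\GL_k}\boxtimes\dd_{\GL_{n-k}}$ naturally sits. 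On diagonally supported objects $\hc_B^{P_k}$ coincides, by construction (cf.\ the discussion $\hc_B^P|_{\Delta_P}=\res^{L_P}_{B\cap L_P}$ above), with the Harish-Chandra functor $\hc_!^{L_k}$ of the Levi.

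Next I would carry out the (routine) computation identifying both sides after applying $\hc_B^{P_k}$. As $L_k\simeq\GL_k\times\GL_{n-k}$ with maximal torus $T=T_k\times T_{n-k}$, the unipotent averaging defining $\hc_!^{L_k}$ splits across the two factors, so $\hc_!^{L_k}(\F_1\boxtimes\F_2)\simeq\hc_!^{\GL_k}(\F_1)\boxtimes\hc_!^{\GL_{n-k}}(\F_2)$. Applying Corollary~\ref{centralunit} to $\GL_k$ and to $\GL_{n-k}$ gives
\[
  p^{\dagger}\hc_!^{L_k}\bigl(\dd_{\GL_k}\boxtimes\dd_{\GL_{n-k}}\bigr)\;\simeq\;p^{\dagger}\hc_!^{\GL_k}(\dd_{\GL_k})\boxtimes p^{\dagger}\hc_!^{\GL_{n-k}}(\dd_{\GL_{n-k}}),
\]
which is the external product of the standard pro-objects on $T_k$ and on $T_{n-k}$, hence the standard pro-object $\dd$ on $T$. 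By Corollary~\ref{centralunit} for $\GL_n$ this is also $p^{\dagger}\hc_!(\dd_{\GL_n})\simeq p^{\dagger}\hc_B^{P_k}\bigl(\hc_{P_k}^G(\dd_{\GL_n})\bigr)$. Since $p^{\dagger}$, which forgets the $T$-action along the torsor $U\backslash G/U\to\mathcal{Y}$, is faithful, this shows that $\hc_B^{P_k}$ sends $\hc_{P_k}^G(\dd_{\GL_n})$ and $\dd_{\GL_k}\boxtimes\dd_{\GL_{n-k}}$ to isomorphic pro-objects on $\mathcal{Y}$.

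To upgrade this to an isomorphism of the two pro-objects on the Levi, I would use uniqueness of the pro-unit: since $\hc_{P_k}^G$ is monoidal for $!$-convolution and $\dd_{\GL_n}$ is the pro-unit on $\GL_n$, the object $\hc_{P_k}^G(\dd_{\GL_n})$ is an idempotent acting as the identity on the essential image of $\hc_{P_k}^G$, and a short Mackey argument ($\res_{P_k}^G\operatorname{Ind}_B^G$ is a sum of functors $\operatorname{Ind}^{L_k}_{B_{L_k}}\circ w^*$) shows this image generates $D\mathcal{CS}(L_k)$, so $\hc_{P_k}^G(\dd_{\GL_n})$ is a pro-unit for $\pro D\mathcal{CS}(L_k)$ and therefore canonically isomorphic to $\dd_{\GL_k}\boxtimes\dd_{\GL_{n-k}}$ (uniqueness of the pro-unit following, as in Lemma~\ref{sec:four-deligne-transf-5}, from faithfulness of $\hc_!^{L_k}$). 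The main obstacle is making this last circle of ideas rigorous in the pro-completed mixed setting: one has to verify that $\hc_{P_k}^G$ genuinely intertwines parabolic restriction of character sheaves with the abstract span functor on the doubled spaces $\mathcal{Y}_P$ --- so that $\hc_{P_k}^G(\dd_{\GL_n})$ is diagonally supported and inherits the pro-unit property --- a compatibility that is standard in the character-sheaf literature (cf.\ \cite{bfo}, \cite{lpc1}, \cite{lcs1}) but needs care here. Once it is in place, the remaining content is precisely the bookkeeping with multiplicativity of $\hc_!$ over a product of groups and with Corollary~\ref{centralunit} performed above.
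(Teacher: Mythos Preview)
Your core computation matches the paper's: both of you use transitivity $\hc_B^{P_k}\hc_{P_k}^G\simeq\hc_B^G$ together with Corollary~\ref{centralunit} (for $\GL_n$ and for each factor of $L_k$) to see that $\hc_B^{P_k}$ sends both $\hc_{P_k}^G(\dd_{\GL_n})$ and $\dd_{\GL_k}\boxtimes\dd_{\GL_{n-k}}$ to $\dd$. The divergence is entirely in how you lift this to an isomorphism on the Levi side.

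The paper does not go through a pro-unit uniqueness argument. Instead it writes down a \emph{canonical morphism} $r\colon\hc_{P_k}^G(\dd_G)\to\res_{P_k}^G(\dd_G)$, notes that $\res_{P_k}^G(\dd_G)\simeq\dd_{\GL_k}\boxtimes\dd_{\GL_{n-k}}$ directly from the definition of the pro-objects $\mathcal{E}_n$, and then observes that $\hc_B^{P_k}(r)$ is an isomorphism by the transitivity computation above. To conclude that $r$ itself is an isomorphism it invokes conservativity of $\hc_B^{P_k}$: a parabolic analogue of Lemma~\ref{sec:four-deligne-transf-5} gives $\chi_B^{P_k}\hc_B^{P_k}(-)\simeq\operatorname{Spr}_{L_k}\star(-)$, so the identity functor is a direct summand of $\chi_B^{P_k}\hc_B^{P_k}$, and $\hc_B^{P_k}(\operatorname{cone}(r))=0$ forces $\operatorname{cone}(r)=0$.

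This short argument sidesteps exactly the obstacle you flag. Your route never produces a map between the two objects, so you are pushed into the idempotent/Mackey argument; but that argument already presupposes that $\hc_{P_k}^G(\dd_{\GL_n})$ is supported on $\Delta_{P_k}$ --- a claim you justify by citing properties of $\res_{P_k}^G$, which is a \emph{different} functor. That step is a genuine gap (and is essentially the content of the lemma), whereas the paper's canonical morphism plus conservativity handles it in two lines.
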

 \begin{proof}
   Let $\dd_{k}:=\dd_{\GL_k}\boxtimes\dd_{\GL_{n-k}}$.
   
   By definition of $\dd_{G}$ for an arbitrary reductive $G$, it is easy to see that
   \[
     \res^G_{P_k}(\dd_{G}) \simeq \dd_{k},
   \]
   and
   \[\hc_B^P(\dd_k)\simeq \res^L_{B\cap L_k}(\dd_k) \simeq \dd.\] 
   Consider the canonical morphism $r:\hc^G_P\dd_G \to \res^G_P\dd_G$. Applying $\hc^P_B$ to it, we get an isomorphism
   \[
     \dd \simeq \hc^G_B\dd_G \simeq \hc^{P_k}_B\hc^G_{P_k}\dd_G \simeq \hc^{P_k}_B\res^G_{P_k}\dd_k = \res^{L_k}_{B \cap L_K}\res^G_{P_k}\dd_k \simeq \dd_G.
   \]
   By a straightforward parabolic analogue of Lemma \ref{sec:four-deligne-transf-5}, one sees that the functor $\chi_B^{P_k}\hc^{P_k}_B$ contains an identity functor as a direct summand. It follows that, since $\hc^{P_k}_Br$ is an isomorphism, $r$ must also be an isomorphism, since $\hc^{P_k}_B$ sends its cone to $0$.  
 \end{proof}

 Summarizing the discussion above, we get the following expression.
  \begin{lemma}
    \label{sec:jucys-murphy-filtr-4} There is an isomorphism
    \[ \hc_!(\mathfrak{W}_k \star \dd_G) \simeq \hc_B^{P_k}\hc_{P_k}^G\chi_{P_k}^G((\mathfrak{W}^{\GL_{k}}_{k}\boxtimes \mathfrak{W}_0^{\GL_{n-k}})\star\hc_P^G(\dd_G)).
    \]
  \end{lemma}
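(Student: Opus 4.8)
The plan is to deduce the lemma from two inputs: an identification of $\mathfrak{W}_k$ with the parabolic induction, from the Levi $L_{P_k}\simeq\GL_k\times\GL_{n-k}$, of $\mathfrak{W}^{\GL_k}_k\boxtimes\mathfrak{W}_0^{\GL_{n-k}}$, and a projection formula relating Lusztig's induction functor $\chi_{P_k}^G$ to the convolution $\star$. Granting these, the lemma becomes formal: transitivity of the Harish-Chandra functors gives $\hc_!=\hc_B^G\simeq\hc_B^{P_k}\hc_{P_k}^G$, so it suffices to exhibit the chain
\[ \mathfrak{W}_k\star\dd_G\;\simeq\;\chi_{P_k}^G(\mathfrak{W}^{\GL_k}_k\boxtimes\mathfrak{W}_0^{\GL_{n-k}})\star\dd_G\;\simeq\;\chi_{P_k}^G\big((\mathfrak{W}^{\GL_k}_k\boxtimes\mathfrak{W}_0^{\GL_{n-k}})\star\hc_{P_k}^G(\dd_G)\big) \]
and apply $\hc_!$ to it.

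The first input is representation-theoretic. For $G=\GL_n$ we have $W=S_n$, $W_{L_{P_k}}=S_k\times S_{n-k}$, and $\mathfrak{t}=H^1(T,\Qlbar)$ is the permutation representation $\Qlbar^n$ of $S_n$. The vectors $e_{i_1}\wedge\dots\wedge e_{i_k}$, indexed by $k$-element subsets of $\{1,\dots,n\}$, form a basis of $\wedge^k\mathfrak{t}$ on which $S_n$ acts transitively up to sign, and the stabilizer of $e_1\wedge\dots\wedge e_k$ is the Young subgroup $S_k\times S_{n-k}=W_{L_{P_k}}$, acting on this vector through $\sgn_{S_k}\boxtimes\operatorname{triv}_{S_{n-k}}$; hence $\wedge^k\mathfrak{t}\simeq\ind_{W_{L_{P_k}}}^{W}(\sgn_{S_k}\boxtimes\operatorname{triv}_{S_{n-k}})$. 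Since $\wedge^k\mathfrak{t}_{\GL_k}=\wedge^k(\Qlbar^k)=\sgn_{S_k}$, we get $\mathfrak{W}^{\GL_k}_k=S_{\mathcal{N}_{\GL_k}}(\sgn_{S_k})$ and $\mathfrak{W}_0^{\GL_{n-k}}=S_{\mathcal{N}_{\GL_{n-k}}}(\operatorname{triv}_{S_{n-k}})$, so Lemma~\ref{sec:jucys-murphy-filtr-5}\ref{item:17} gives $\mathfrak{W}^{\GL_k}_k\boxtimes\mathfrak{W}_0^{\GL_{n-k}}\simeq S_{\mathcal{N}_{L_{P_k}}}(\sgn_{S_k}\boxtimes\operatorname{triv}_{S_{n-k}})$. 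This sheaf is supported on $\Delta_{P_k}$, so $\chi_{P_k}^G$ acts on it as $\ind_{P_k}^G$; combining Lemma~\ref{sec:jucys-murphy-filtr-5}\ref{item:16} with the character identity above yields $\chi_{P_k}^G(\mathfrak{W}^{\GL_k}_k\boxtimes\mathfrak{W}_0^{\GL_{n-k}})\simeq S_{\mathcal{N}_G}(\ind_{W_{L_{P_k}}}^{W}(\sgn_{S_k}\boxtimes\operatorname{triv}_{S_{n-k}}))\simeq\mathfrak{W}_k$, which is the first isomorphism in the chain.

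For the second input I would use the projection formula $\chi_{P_k}^G(A)\star B\simeq\chi_{P_k}^G\big(A\star\hc_{P_k}^G(B)\big)$ for $A\in D^b_G(\mathcal{Y}_{P_k})$ and $B\in D^b_m(G/_{\Ad}G)$. This is the parabolic counterpart of the identity $\ind_B^G(-\star\res_B^G(-))\simeq\ind_B^G(-)\star(-)$ invoked in the proof of Proposition~\ref{sec:four-deligne-transf-6} (cf.\ \cite{chen}, Proposition~3.3, and Lusztig's constructions in \cite{lpc1}); it follows by base change in the correspondence $Z_{P_k,G}$ between $\mathcal{Y}_{P_k}$ and $G/_{\Ad}G$ that defines $\chi_{P_k}^G$, together with the monoidality of $\hc_{P_k}^G$. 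Since $\dd_G=\prolim\mathcal{E}_n$ is a pro-object one applies this at each level $\mathcal{E}_n$ and passes to the inverse limit; in that limit $\hc_{P_k}^G(\dd_G)\simeq\dd_{\GL_k}\boxtimes\dd_{\GL_{n-k}}$ by Lemma~\ref{sec:jucys-murphy-filtr-6}, which lies in $D^b_{\Ad L_{P_k}}(L_{P_k})=D^b_G(\Delta_{P_k})$, so the convolution on the right-hand side of the lemma takes place inside the character-sheaf category of the Levi. Taking $A=\mathfrak{W}^{\GL_k}_k\boxtimes\mathfrak{W}_0^{\GL_{n-k}}$ and $B=\dd_G$ produces the second isomorphism in the chain, and applying $\hc_!=\hc_B^{P_k}\hc_{P_k}^G$ finishes the argument.

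The routine part is the bookkeeping of the cohomological shifts and Tate twists built into $S_{\mathcal{N}}$, $\chi_{P_k}^G$, $\hc_{P_k}^G$ and $\ind_{P_k}^G$, which I expect to cancel so that the stated isomorphism is literal; this can be verified on the open locus $G^{rss}$ of regular semisimple elements, where $\chi_{P_k}^G$ and $\hc_{P_k}^G$ reduce, up to a shift, to ordinary restriction along $L_{P_k}^{rss}\hookrightarrow G^{rss}$ (see \cite{ginzburg1993induction}, Theorem~4.1). The main obstacle is the projection formula for $\chi_{P_k}^G$ in the precise form required, in particular its compatibility with the inverse limit defining $\dd_G$; the remaining ingredients are branching rules for $S_n$ or formal consequences of the monoidality and transitivity of the parabolic $\hc/\chi$ functors recalled in this section.
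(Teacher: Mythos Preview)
Your proposal is correct and follows essentially the same route as the paper: both identify $\wedge^k\mathfrak{t}\simeq\ind_{S_k\times S_{n-k}}^{S_n}(\sgn_{S_k}\boxtimes\operatorname{triv}_{S_{n-k}})$, translate this via Lemma~\ref{sec:jucys-murphy-filtr-5} into $\mathfrak{W}_k\simeq\chi_{P_k}^G(\mathfrak{W}^{\GL_k}_k\boxtimes\mathfrak{W}_0^{\GL_{n-k}})$, invoke the projection formula $\chi_{P_k}^G(\F)\star\mathcal{A}\simeq\chi_{P_k}^G(\F\star\hc_{P_k}^G(\mathcal{A}))$, and finish with transitivity $\hc_!\simeq\hc_B^{P_k}\hc_{P_k}^G$. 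Your extra remarks on the pro-object compatibility and on Lemma~\ref{sec:jucys-murphy-filtr-6} are not needed for this lemma itself (the paper uses the latter only afterwards, in \eqref{eq:4}), but they do no harm.
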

  \begin{proof}
 We have $\h$ isomorphic to the $n$-dimensional permutation representation of $W = S_n$,  $W_{L_k}\simeq S_k\times S_{n-k}$ and, for $k \geq 1$, \[\ind_{W_{L_k}}^W(\operatorname{sgn}_{S_k}\boxtimes\operatorname{triv}_{S_{n-k}}) \simeq \wedge^k\h.\]

 Since, by definition, $\mathfrak{W}_{k} = S_{\mathcal{N}_G}(\wedge^k\h)$,  
 we get, from Lemma \ref{sec:jucys-murphy-filtr-5} \ref{item:16} and \ref{item:17}, \[\mathfrak{W}_{k} \simeq \operatorname{Ind}_{L_{k}}^G(\mathfrak{W}^{\GL_{k}}_{k}\boxtimes \mathfrak{W}_0^{\GL_{n-k}}) \simeq \chi_{P_k}^G(\mathfrak{W}^{\GL_{k}}_{k}\boxtimes \mathfrak{W}_0^{\GL_{n-k}}).\]
 Applying $\hc = \hc_P^G\hc_B^P$ to both sides, and using the fact that \[\chi_{P_k}^G(\F \star \hc_{P_k}^G(\mathcal{A})) \simeq \chi_{P_k}^G(\F)\star\mathcal{A}\] we get the result.
  \end{proof}
  Let $w_0^{(k)}$ be the longest element of the subgroup $S_k \subset S_n = W$ generated by $s_1, \dots, s_{k-1}$. By Proposition \ref{sec:whittaker-averaging-9} and Lemma \ref{sec:jucys-murphy-filtr-6}, we have
  \begin{equation}
\label{eq:4} \hc_B^P((\mathfrak{W}^{\operatorname{GL}_{k}}_{k}\boxtimes \mathfrak{W}_0^{\operatorname{GL_{n-k}}})\star\hc_P^G(\dd_G)) \simeq \DD_{w_0^{(k)}}^2.
  \end{equation}
  
  The following lemma is a straightforward consequence of standard distinguished triangles for 6 functors for constructible sheaves.
  \begin{lemma}
    \label{shrieck_filtr} Let $X$ be a variety stratified by locally closed subvarieties $\{S_t\}_{t=1}^n$, and let $j_t: S_t \to X$ be the corresponding locally-closed embeddings. Assume that $S_k \subset \overline{S}_l$ implies $k > l$. Then for every $\mathcal{F} \in D^b(X)$, $\mathcal{F} \in \langle j_{t!}j^*_t\mathcal{F}\rangle_{t=1}^n$.
  \end{lemma}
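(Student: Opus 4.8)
The plan is to rephrase the hypothesis as saying that the given linear order on strata refines the closure partial order, so that the closed subsets $\overline{S_l}$ assemble into a complete flag of closed subvarieties, and then to build the required filtration of $\mathcal{F}$ by iterating the standard recollement triangle. First I would observe that ``$S_k\subseteq\overline{S_l}\Rightarrow k>l$'' is equivalent to $\overline{S_l}\subseteq\bigsqcup_{k\ge l}S_k$ for every $l$. Hence $V_m:=\bigsqcup_{k\ge m}S_k=\bigcup_{l\ge m}\overline{S_l}$ is closed in $X$ for each $m$, and we get a decreasing chain $X=V_1\supseteq V_2\supseteq\cdots\supseteq V_n\supseteq V_{n+1}=\emptyset$ with $V_m\setminus V_{m+1}=S_m$; in particular $V_n=S_n$ is closed, so $j_{n!}=j_{n*}$. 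Write $i_m:V_m\hookrightarrow X$ for the closed embedding. Inside $V_m$, the stratum $S_m$ is open with closed complement $V_{m+1}$, and I denote the resulting embeddings $\tilde\jmath_m:S_m\hookrightarrow V_m$ (open) and $\tilde\imath_{m+1}:V_{m+1}\hookrightarrow V_m$ (closed), noting $i_m\circ\tilde\jmath_m=j_m$ and $i_m\circ\tilde\imath_{m+1}=i_{m+1}$.

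Next I would set $X_m:=i_{m*}i_m^*\mathcal{F}$ for $m=1,\dots,n$, so that $X_1=\mathcal{F}$ and $X_n=j_{n*}j_n^*\mathcal{F}=j_{n!}j_n^*\mathcal{F}$. Applying the standard recollement triangle $\tilde\jmath_{m!}\tilde\jmath_m^*(-)\to(-)\to\tilde\imath_{m+1*}\tilde\imath_{m+1}^*(-)\to[1]$ on $D^b(V_m)$ to the complex $i_m^*\mathcal{F}$, and then pushing forward along the closed (hence exact, fully faithful) embedding $i_{m*}$, the identities $i_{m*}\tilde\jmath_{m!}=j_{m!}$, $\tilde\jmath_m^*i_m^*=j_m^*$, $i_{m*}\tilde\imath_{m+1*}=i_{m+1*}$, $\tilde\imath_{m+1}^*i_m^*=i_{m+1}^*$ yield, for each $m=1,\dots,n-1$, a distinguished triangle
\[ j_{m!}j_m^*\mathcal{F}\longrightarrow X_m\longrightarrow X_{m+1}\longrightarrow j_{m!}j_m^*\mathcal{F}[1]. \]
This is precisely the chain of triangles demanded by the definition of $\mathcal{F}\in\langle j_{t!}j_t^*\mathcal{F}\rangle_{t=1}^{n}$, with $C_t=j_{t!}j_t^*\mathcal{F}$, $X_1=\mathcal{F}$, and $X_n=C_n$.

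The argument is essentially bookkeeping, so there is no substantial obstacle; the one point requiring care is to check that the hypothesis is exactly what makes every $V_m$ closed and that the terminal term of the filtration is genuinely $j_{n!}j_n^*\mathcal{F}$ (rather than $j_{n*}j_n^*\mathcal{F}$ for a non-closed $S_n$, which would fail), together with keeping straight the direction of the order and the base-change identities among the six functors. I would also remark that only the recollement formalism is used, so the same proof applies verbatim in the equivariant and mixed derived categories appearing elsewhere in the paper.
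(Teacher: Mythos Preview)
Your proof is correct and uses essentially the same idea as the paper's: iterate the open/closed recollement triangle along the flag determined by the stratification. The only difference is organizational --- the paper works with the increasing chain of open subsets $X_k = \bigcup_{i\le k}S_i$ and proves by induction on $k$ that $s_k^*\mathcal{F}\in\langle j_{t!}^{(k)}j_t^*\mathcal{F}\rangle_{t=1}^k$, whereas you work with the complementary decreasing chain of closed subsets $V_m=\bigcup_{k\ge m}S_k$ and set $X_m=i_{m*}i_m^*\mathcal{F}$. Your dual organization has the mild advantage that the required sequence of triangles $C_m\to X_m\to X_{m+1}$ falls out directly in $D^b(X)$, without needing to push the inductive filtration forward along $r_{k!}$ and reassemble (which, strictly speaking, uses the octahedral axiom to splice the filtration of $r_{k!}s_k^*\mathcal{F}$ with the triangle $r_{k!}s_k^*\mathcal{F}\to s_{k+1}^*\mathcal{F}\to C_{k+1}$).
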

  \begin{proof} Let $X_k = X \backslash (\cup_{i > k}S_i), k = 1, \dots ,n$. The Lemma is obvious for the stratification consisting of the single stratum. Write $r_k:X_k \to X_{k+1}, s_k:X_k \to X$ for the corresponding open embeddings. Assume that it is known for $X$ replaced with $X_k, k < n$, namely that $s_k^*\F \in \langle j^{(k)}_{t!}j^{*}_t\F\rangle_{t=1}^k$, where we denote by $j_t^{(k)}$ an embedding $S_t \to X_k, t \leq k$. We have a standard distinguished triangle corresponding to complementary open and closed embeddings $r_k, j^{(k+1)}_{k+1}$
   \[ r_{k!}r_k^*s_{k+1}^*\F \to s_{k+1}^*\F \to j^{(k+1)}_{k+1*}j^{(k+1)*}_{k+1}s_{k+1}^*\F \to r_{k!}r_k^*s_{k+1}^*\F[1].
   \] Note that $s_{k+1} \circ r_k = s_{k}, s_{k+1}\circ j_t^{(k+1)} = j_t$. It follows that
   \[ s_{k+1}^*\F \in \langle r_{k!}j^{(k)}_{1!}j^{*}_1\F, \dots, r_{k!}j^{(k)}_{t!}j^{*}_t\F, j^{(k+1)}_{k+1*}j^{*}_{k+1}\F \rangle.
   \] Since $r_k\circ j_t^{(k)} = j_t^{(k+1)}$ and $j^{(k+1)}_{k+1*} = j^{(k+1)}_{k+1!}$,
\[s_{k+1}^*\F \in \langle j^{(k+1)}_{t!}j^{*}_t\F\rangle_{t=1}^{k+1}\] which completes the induction step.
  \end{proof} Let $P \supset B$ be the parabolic corresponding to the subset $J \subset \Sigma$. Let $W_J \subset W$ be the subgroup generated by $J$, and let $W^J$ be the set of minimal length representatives of $W/W_J$. We have the following generalization of \cite{lusztig2013truncated}, Proposition 2.6 (see also \cite{grojnowski1992character} for the corresponding statement on the level of the Grothendieck group).
  \begin{lemma}\label{sec:jucys-murphy-filtr-3} For $\F \in D^b_G(\mathcal{Y}_P)$ such that $p^{\dagger}\hc_B^P(\F) \in \omega\M$, we have \[\hc_!\chi_P^G(\F) \in \langle\DD_w\star\hc_B^P(\F)\star\DD_{w^{-1}}\rangle_{w \in W^J},\] $w$ taken in some non-increasing order with respect to the Bruhat partial ordering.
  \end{lemma}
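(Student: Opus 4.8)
The plan is to realize the composite functor $\hc_!\chi_P^G=\hc_B^G\chi_P^G$ as a single $G$-equivariant push--pull along one correspondence, to stratify the correspondence space by relative position, and to feed the outcome into Lemma~\ref{shrieck_filtr}. First I would compose the correspondences defining $\chi_P^G$ and $\hc_B^G$. Set $W:=Z_{B,G}\times_{\mathcal{Y}_G}Z_{P,G}$, with projections $\pi_1:W\to Z_{P,G}$ and $\pi_2:W\to Z_{B,G}$; since the square formed by $f_{B,G}$, $f_{P,G}$ and these projections is Cartesian, base change gives $f_{B,G}^{*}f_{P,G!}\simeq\pi_{2!}\pi_1^{*}$, whence
\[
  \hc_!\chi_P^G(\F)\simeq c_!\,d^{*}\F\,[m]
\]
up to a shift $m$, where $c=g_{B,G}\circ\pi_2:W\to\mathcal{Y}$ and $d=g_{P,G}\circ\pi_1:W\to\mathcal{Y}_P$ are $G$-equivariant. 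The space $W$ parametrizes (up to the relevant quotients) a flag of $B$-type together with a flag of $P$-type; its $G$-orbits $W_w$ are indexed by their relative position, that is, by the set $W^{J}$ of minimal-length representatives of $W/W_J$, and $\overline{W_w}=\bigcup_{v\leq w}W_v$, mirroring the closure relations among the cells $BwP\subset G$.

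Second, I would apply Lemma~\ref{shrieck_filtr} (whose proof applies verbatim in the $G$-equivariant setting) to the sheaf $d^{*}\F[m]$ on $W$ with the stratification above, letting the numerical index decrease with $l(w)$ so that the order condition of that lemma holds; then I would push forward along the triangulated functor $c_!$, which carries a $\langle\,\cdot\,\rangle$-filtration to the corresponding filtration of its images. This yields
\[
  \hc_!\chi_P^G(\F)\in\langle\, c_!\,j_{w!}\,j_w^{*}\,d^{*}\F[m]\,\rangle_{w\in W^{J}},
\]
with $w$ running in non-increasing Bruhat order, where $j_w:W_w\hookrightarrow W$ is the inclusion of the stratum.

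Third --- and here is the real content --- I would identify each graded piece $c_!\,j_{w!}\,j_w^{*}\,d^{*}\F[m]$ with $\DD_w\star\hc_B^{P}(\F)\star\DD_{w^{-1}}$. Restricting the correspondence $(c,d)$ to the stratum $W_w$, the map $d|_{W_w}$ exhibits $W_w$ as a fibration over $\mathcal{Y}_P$ whose push--pull recovers the parabolic restriction $\hc_B^{P}(\F)$, while the additional $U$-directions produced by the cell $BwP$ contribute precisely a left convolution by $\DD_w$ and a right convolution by $\DD_{w^{-1}}$; the free pro-unipotent local system $\LL$ built into $\DD_w$ is matched against the unipotent $T\times T$-monodromy of $\hc_B^{P}(\F)$, which is exactly what the assumption $p^{\dagger}\hc_B^{P}(\F)\in\omega\M$ provides (this assumption also makes the right-hand side well-defined). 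A dimension count along the cells $BwP$ then pins down $m$ together with all the intermediate shifts, so that the identification is on the nose.

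The main obstacle is this last identification: one must carefully unwind the definitions of $\chi_P^G$, $\hc_B^G$, $\hc_B^{P}$ and of the convolution products on $D^b_{\Ad G}(G)$ and on $\M$, and keep track of the left and right $T$-monodromy --- equivalently, of the two $\LL$-twists --- so that left convolution by $\DD_w$ and right convolution by $\DD_{w^{-1}}$ appear on the correct sides and with the correct shifts. This is the $\ell$-adic monodromic counterpart of Lusztig's argument for \cite{lusztig2013truncated}, Proposition~2.6 (compare also \cite{grojnowski1992character} at the level of Grothendieck groups), carried out with the cells of $B\backslash G/P\leftrightarrow W^{J}$ in place of the ones used there.
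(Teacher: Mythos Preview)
Your proposal is correct and follows essentially the same route as the paper: compose the two correspondences via proper base change into a single push--pull, stratify the resulting space by relative position $w\in W^{J}$, and invoke Lemma~\ref{shrieck_filtr}. The paper is only more concrete in that it writes the composed correspondence explicitly as $G\times G/P\times G/B$ with maps $\alpha(g,xP,yB)=(yU,gyU)T$ and $\beta(g,xP,yB)=(xU_P,gxU_P)L_P$, and then leaves the identification of the stratum contributions (what you call ``the main obstacle'') entirely to the reader.
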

  \begin{proof} By proper base change, we have $\hc_!\chi_P^G \simeq \alpha_!\beta^*,$ where $\alpha$ and $\beta$ are from the following diagram:
    \[
      \begin{tikzcd} & G\times G/P \times G/B \arrow[ld, "\beta"'] \arrow[rd, "\alpha"] & \\ \mathcal{Y}_P & & \mathcal{Y_B}
      \end{tikzcd}
    \] $\alpha(g, xP, yB) = (yU, gyU)T, \beta(g, xP, yB) = (xU_P, gxU_P)L_P$. The filtration in the Lemma comes from the filtration of $G \times G/P \times G/B$ by the locally closed subvarieties $X_w = \{(g, xP, yB), xP \text{ in relation }w\text{ to }yB\},$ $w \in W^J$, using Lemma \ref{shrieck_filtr}.
  \end{proof} Combining Lemmas \ref{sec:jucys-murphy-filtr-4}, \ref{sec:jucys-murphy-filtr-3} with \eqref{eq:4}, we get
  \begin{corollary}
    \label{sec:jucys-murphy-filtr} $\hc_!(\mathfrak{W}_k \star \dd_G) \in \langle\DD_w\star\DD_{w_0^{(k)}}^2\star\DD_{w^{-1}}\rangle_{w \in W^{J_k}}$.
  \end{corollary}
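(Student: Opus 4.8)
The statement follows by assembling the three preceding results, so the plan is simply to chain them together. First I would invoke Lemma~\ref{sec:jucys-murphy-filtr-4} to write
\[
  \hc_!(\mathfrak{W}_k \star \dd_G) \;\simeq\; \hc_B^{P_k}\hc_{P_k}^G\chi_{P_k}^G(\mathcal{G}),
  \qquad
  \mathcal{G} := (\mathfrak{W}^{\GL_{k}}_{k}\boxtimes \mathfrak{W}_0^{\GL_{n-k}})\star\hc_{P_k}^G(\dd_G).
\]
By the transitivity $\hc^{P_k}_B\hc^G_{P_k} \simeq \hc^G_B = \hc_!$ of the Harish-Chandra functors, the right-hand side is exactly $\hc_!\chi_{P_k}^G(\mathcal{G})$, which is precisely the object whose filtration is described by Lemma~\ref{sec:jucys-murphy-filtr-3} applied to the standard parabolic $P = P_k$ (associated with $J_k\subset\Sigma$) and to the sheaf $\F = \mathcal{G}$ (working, as usual, level by level in the projective system defining $\dd_G$).

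Next I would check the hypothesis of Lemma~\ref{sec:jucys-murphy-filtr-3}, i.e. that $p^{\dagger}\hc_B^{P_k}(\mathcal{G})$ is monodromic. This is immediate from \eqref{eq:4}, which identifies $\hc_B^{P_k}(\mathcal{G}) \simeq \DD_{w_0^{(k)}}^2$; at each level of the projective system this is a finite convolution of unipotently monodromic standard objects, hence lies in $\M$ after pulling back to $U\backslash G/U$. Feeding this into Lemma~\ref{sec:jucys-murphy-filtr-3} gives
\[
  \hc_!(\mathfrak{W}_k \star \dd_G) \;\simeq\; \hc_!\chi_{P_k}^G(\mathcal{G})
  \;\in\; \bigl\langle\, \DD_w\star\hc_B^{P_k}(\mathcal{G})\star\DD_{w^{-1}}\,\bigr\rangle_{w\in W^{J_k}},
\]
with the $w$ taken in some non-increasing order for the Bruhat partial order. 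Substituting $\hc_B^{P_k}(\mathcal{G})\simeq\DD_{w_0^{(k)}}^2$ once more on the right produces $\bigl\langle\, \DD_w\star\DD_{w_0^{(k)}}^2\star\DD_{w^{-1}}\,\bigr\rangle_{w\in W^{J_k}}$, which is exactly the claimed filtration.

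Since the two nontrivial inputs --- Lemma~\ref{sec:jucys-murphy-filtr-4} (which itself rests on Proposition~\ref{sec:whittaker-averaging-9} and the parabolic compatibilities of Lemmas~\ref{sec:jucys-murphy-filtr-5} and~\ref{sec:jucys-murphy-filtr-6}) and Lemma~\ref{sec:jucys-murphy-filtr-3} (which rests on the stratification argument behind Lemma~\ref{shrieck_filtr}) --- are already in hand, there is essentially no obstacle left in this step. The only points deserving care are that the order on $W^{J_k}$ must be a linear refinement compatible with the Bruhat order, which it automatically is from the filtration of $G\times G/P_k\times G/B$ by relative-position strata used in the proof of Lemma~\ref{sec:jucys-murphy-filtr-3}, and a harmless discrepancy of Tate twists and cohomological shifts between the cited isomorphisms, which I would absorb silently since they do not affect membership in a class of the form $\langle\,\cdot\,\rangle$.
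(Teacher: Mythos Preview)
Your proposal is correct and follows exactly the paper's approach: the corollary is stated there as a direct combination of Lemma~\ref{sec:jucys-murphy-filtr-4}, Lemma~\ref{sec:jucys-murphy-filtr-3}, and \eqref{eq:4}, and you have spelled out precisely that chain, including the transitivity $\hc^{P_k}_B\hc^G_{P_k}\simeq\hc_!$ and the verification of the monodromicity hypothesis via \eqref{eq:4}.
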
 It remains to relate the braids of the form $\sigma_w\sigma_{w_0^{(k)}}^2\sigma_{w^{-1}}, w \in W^{J_k}$ to the products of the Juscys-Murphy braids $\mathfrak{j}^{\mu}, |\mu|=k$. Let us regard the symmetric group $W$ as a group of permutations of the set $\{0, \dots, n-1 \}$, with simple reflections $s_i = (i-1\ i).$ In this way, $W_{J_k}$ is identified with the subgroup $S_k\times S_{n-k}$ of permutations preserving the subset $\{0,\dots,k-1\}$. We have the following combinatorial Lemma, verified by the straightforward computation.
  \begin{lemma} The braid $\mathfrak{j}^{\mu}, |\mu|=k$ is equal to the braid $\sigma_w\sigma_{w_0^{(k)}}^2\sigma_{w^{-1}}$, where $w$ is the minimal length representative of the coset of permutations in $S_n/S_k \times S_{n-k}$ sending $\{0,\dots,k-1\}$ to $\mu$.
  \end{lemma}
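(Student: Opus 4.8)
The plan is to prove the identity by induction, using elementary ``raising'' moves on the subset $\mu$ to reduce to the base case $\mu_{0}=\{0,\dots,k-1\}$, where $w=e$ and the assertion becomes the classical fact that the product of the first $k$ Jucys--Murphy braids is the full twist on the strands $0,\dots,k-1$:
\[
\mathfrak{j}^{\mu_{0}}=\mathfrak{j}_{0}\mathfrak{j}_{1}\cdots\mathfrak{j}_{k-1}=\sigma_{w_{0}^{(k)}}^{2}.
\]
Two ingredients are needed: a commutation property of Jucys--Murphy braids, and a compatibility between the raising moves on subsets and left multiplication by simple reflections on minimal coset representatives.

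First I would establish the commutation relation
\[
\mathfrak{j}_{i}\,\sigma_{l}=\sigma_{l}\,\mathfrak{j}_{i},\qquad 1\le l\le i-1,
\]
by induction on $i$ from $\mathfrak{j}_{i}=\sigma_{i}\mathfrak{j}_{i-1}\sigma_{i}$: for $l\le i-2$ one moves $\sigma_{l}$ past $\sigma_{i}$ (since $\sigma_{l}\sigma_{i}=\sigma_{i}\sigma_{l}$) and past $\mathfrak{j}_{i-1}$ (induction), while for $l=i-1$ one substitutes $\mathfrak{j}_{i-1}=\sigma_{i-1}\mathfrak{j}_{i-2}\sigma_{i-1}$, uses that $\mathfrak{j}_{i-2}$ commutes with $\sigma_{i}$, and applies $\sigma_{i-1}\sigma_{i}\sigma_{i-1}=\sigma_{i}\sigma_{i-1}\sigma_{i}$ twice. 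This relation shows in particular that the $\mathfrak{j}_{i}$ pairwise commute (each $\mathfrak{j}_{l}$ with $l<i$ being a word in $\sigma_{1},\dots,\sigma_{i-1}$), so that $\mathfrak{j}^{\mu}$ is well defined, and it yields the base case: from the reduced-word factorizations $\sigma_{w_{0}^{(k)}}=\sigma_{w_{0}^{(k-1)}}\sigma_{k-1}\sigma_{k-2}\cdots\sigma_{1}=\sigma_{1}\sigma_{2}\cdots\sigma_{k-1}\sigma_{w_{0}^{(k-1)}}$ of the longest element one gets $\sigma_{w_{0}^{(k)}}^{2}=\sigma_{w_{0}^{(k-1)}}\,\mathfrak{j}_{k-1}\,\sigma_{w_{0}^{(k-1)}}=\sigma_{w_{0}^{(k-1)}}^{2}\,\mathfrak{j}_{k-1}$, the last step because $\sigma_{w_{0}^{(k-1)}}$ is a word in $\sigma_{1},\dots,\sigma_{k-2}$ and hence commutes with $\mathfrak{j}_{k-1}$; iterating down to $\sigma_{w_{0}^{(1)}}=1$ and using $\mathfrak{j}_{0}=1$ finishes.

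For the inductive step, write $\mu=\{m_{1}<\dots<m_{k}\}$, choose $j$ with $m_{j}+1\le n-1$ and $m_{j}+1\notin\mu$, and set $\mu'=(\mu\setminus\{m_{j}\})\cup\{m_{j}+1\}$. On the braid side, $\sigma_{m_{j}+1}$ commutes with $\mathfrak{j}_{i}$ for every $i\in\mu\setminus\{m_{j}\}$ --- when $i<m_{j}$ because $\mathfrak{j}_{i}$ is a word in $\sigma_{1},\dots,\sigma_{m_{j}-1}$, each commuting with $\sigma_{m_{j}+1}$, and when $i\ge m_{j}+2$ (forced, as $m_{j}+1\notin\mu$) by the commutation relation above --- so, using $\mathfrak{j}_{m_{j}+1}=\sigma_{m_{j}+1}\mathfrak{j}_{m_{j}}\sigma_{m_{j}+1}$ and the pairwise commutativity of the $\mathfrak{j}_{i}$ to reorder,
\[
\sigma_{m_{j}+1}\,\mathfrak{j}^{\mu}\,\sigma_{m_{j}+1}=\Big(\prod_{i\in\mu\setminus\{m_{j}\}}\mathfrak{j}_{i}\Big)\mathfrak{j}_{m_{j}+1}=\mathfrak{j}^{\mu'}.
\]
On the Weyl group side, the minimal length representative $w'\in W^{J_{k}}$ sending $\{0,\dots,k-1\}$ to $\mu'$ is $w'=s_{m_{j}+1}w$ with $l(w')=l(w)+1$: the transposition $s_{m_{j}+1}$ exchanges the value $m_{j}$ of $w$, attained at a position $<k$, with the value $m_{j}+1$, attained at a position $\ge k$, which creates exactly one new inversion and preserves the minimal-representative property (increasing on $\{0,\dots,k-1\}$ --- using $m_{j+1}>m_{j}+1$ when $j<k$ --- and on $\{k,\dots,n-1\}$). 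Hence $\sigma_{w'}=\sigma_{m_{j}+1}\sigma_{w}$ and $\sigma_{w'^{-1}}=\sigma_{w^{-1}}\sigma_{m_{j}+1}$ in $\operatorname{Br}_{n}$, and therefore
\[
\sigma_{w'}\,\sigma_{w_{0}^{(k)}}^{2}\,\sigma_{w'^{-1}}=\sigma_{m_{j}+1}\big(\sigma_{w}\,\sigma_{w_{0}^{(k)}}^{2}\,\sigma_{w^{-1}}\big)\sigma_{m_{j}+1}=\sigma_{m_{j}+1}\,\mathfrak{j}^{\mu}\,\sigma_{m_{j}+1}=\mathfrak{j}^{\mu'},
\]
the middle equality being the inductive hypothesis. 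Since any $\mu$ with $|\mu|=k$ is reached from $\mu_{0}$ by finitely many such raising moves --- raise the entries to their target positions from the largest down, which strictly increases $\sum_{a\in\mu}a$ at each step --- the induction closes.

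The main obstacle is purely bookkeeping: the commutation relation $\mathfrak{j}_{i}\sigma_{l}=\sigma_{l}\mathfrak{j}_{i}$ for $l<i$, and the length computation $l(s_{m_{j}+1}w)=l(w)+1$ together with the explicit description of $w'$. Both are elementary --- the first a short double induction with the braid relations, the second a direct inspection of a permutation and its one-transposition modification --- so no conceptual difficulty is anticipated; this is precisely the ``straightforward computation'' referred to in the statement.
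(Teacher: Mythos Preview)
Your proof is correct and complete. The paper itself gives no proof of this lemma beyond the phrase ``verified by the straightforward computation,'' so your inductive argument via raising moves on $\mu$, together with the commutation $\mathfrak{j}_i\sigma_l=\sigma_l\mathfrak{j}_i$ for $l\le i-1$ and the identification $w'=s_{m_j+1}w$, is exactly the kind of verification the authors leave to the reader; there is nothing to compare.
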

\end{proof}
\begin{bremark}
  
  Note that the braids $\mathfrak{j}^{\mu}$ are of the form $\sigma_w\sigma_{w^{-1}}$ for some $w \in W$, where we denote the lift of $w$ to the braid group with $\sigma_w$. We then have $\sigma_{w_0}^{-1}\sigma_w\sigma_{w^{-1}} = \sigma_{w^{-1}w_0}^{-1}\sigma_{w^{-1}}$. So representations of these braids in the monodromic Hecke categories can be shown to lie in $\Perv_{w_0}$: the object of the form $\NN_{w_0}\star\DD_{w}\star\DD_{w^{-1}}\simeq \NN_{w_0w}\star\DD_{w^{-1}}$ is in $\Perv'$ because the convolution with $\NN_{w_0w}$ is t-exact from the right, and convolution with $\DD_w$ is t-exact from the left. See, e.g., Lemma 7.7 from \cite{achar2019mixed}, whose proof straightforwardly adapts to our situation. Cf. also Section 5.1 of \cite{arkhipov2009perverse}.
\end{bremark}
\appendix
\section{Examples}
\subsection{Type $\operatorname{A}_1$.} We have $R = \Qlbar[x], s(x) = -x.$ Write $R \otimes R = \Qlbar[x,y]$ \[K_{\mathbb{S}}= B_s(-2) \xrightarrow{y - x} \underline{B_s}, \nabla_s = R(-1) \to \underline{B_s}, \Delta_s = \underline{B_s} \to R(1).
\] We have \[\nabla_s K_{\mathbb{S}} = B_s(-1) \xrightarrow{x + y} \underline{B_s(1)},\] and a morphism of complexes $\Delta_s[1](-1)\to \nabla_sK_{\mathbb{S}}$
\[
\begin{tikzcd} B_s(-1) \arrow[d, "\operatorname{id}"'] \arrow[r, "a\otimes b\mapsto ab"] & \underline{R} \arrow[d, "1 \mapsto x + y"] \\ B_s(-1) \arrow[r, "-x-y"'] & \underline{B_s(1)}
\end{tikzcd}
\] which gives a distinguished triangle
\[ \Delta_s[1](-1) \to \nabla_s K_{\mathbb{S}} \to \nabla_s(1) \to \Delta_s[2](-1).
\] From the long exact sequence of cohomology we get $\mathcal{H}'^0(\nabla_s K_{\mathbb{S}}) = \nabla_s(1),\mathcal{H}'^{-1}(\nabla_s K_{\mathbb{S}}) = \Delta_s(-1),$ (note that $\Delta_s(-1)$ and $\nabla_s(1)$ are in the heart of the t-structure, by definition) and, finally,
\[ \mathcal{H}^0_{w_0}(K_{\mathbb{S}}) = R(1), \mathcal{H}^{-1}_{w_0}(K_{\mathbb{S}}) = \Delta_s^2(-1).
\]
\subsection{Type $\operatorname{A}_2$.} We omit most of the differentials from notations. Let $R = \Qlbar[x_1, x_2], R \otimes R = \Qlbar[x_1, x_2, y_1, y_2]$. Write $B_i = B_{s_i}, B_{ij} = B_{i}B_{j}, B_{121} = B_{s_1s_2s_1} = B_{s_2s_1s_2}$.

We have $B_1B_2B_1 = B_{121}\oplus B_1, B_2B_1B_2 = B_{121}\oplus B_2$.
\[ \Delta_{w_0} = \underline{B_{121}} \to \color{blue}{B_{12}(1)}\color{black}\oplus\color{red}{B_{21}(1)}\color{black} \to \color{blue}B_1(2) \oplus B_2(2)\color{black} \to \color{blue}R(3),
\]
\[ \nabla_{w_0} = \color{red}{R(-3)}\color{black} \to \color{red}{B_1(-2) \oplus B_2(-2)}\color{black} \to \color{blue}{B_{21}(-1)}\color{black}\oplus \color{red}{B_{12}(-1)}\color{black} \to \color{black}\underline{B_{121}},
\]
\[ K_{\mathbb{S}} = B_{121}(-4) \xrightarrow{(y_2-x_2) \oplus (x_1 - y_1)} B_{121}(-2)^{\oplus 2} \xrightarrow{(y_1-x_1)\oplus (y_2 - x_2)} \underline{B_{121}},
\]
\[ \nabla_{w_0}K_{\mathbb{S}} = B_{121}(-1) \xrightarrow{(y_2+x_1) \oplus (-x_1 - y_1)} \color{blue}{B_{121}(1)}\color{black}{\oplus}\color{red}{B_{121}(1)}\color{black} \xrightarrow{(y_1 + x_2)\oplus (y_2 + x_1)} \underline{B_{121}(3)}.
\] We have maps
\[ \Delta_{w_0}[3](-1)\to\nabla_{w_0}K_{\mathbb{S}}, \nabla_{w_0}K_{\mathbb{S}} \to \nabla_{w_0}(3).
\] Taking the cone twice, we see that $\mathcal{H}'^0(\nabla_{w_0} K_{\mathbb{S}}) = \nabla_{w_0}(3),\mathcal{H}'^{-2}(\nabla_{w_0} K_{\mathbb{S}}) = \Delta_{w_0}(-1)$, and $\mathcal{H}'^{-1}$ is an extension of shifted Jucys-Murphy complexes
\[ \nabla_{2}\Delta_{12} = \color{blue}{B_{12}(-1) \to \underline{B_{121} \oplus B_2 \oplus B_1} \to B_{21}(1)\oplus R(1)}
\] and
\[ \nabla_{12}\Delta_1 = \color{red}{B_{21}(-1) \oplus R(-1) \to \underline{B_{121} \oplus B_2 \oplus B_1} \to B_{12}(1)}.
\] The terms appearing in the corresponding complexes are given their corresponding color.

We get
\[\mathcal{H}^0_{w_0}(K_{\mathbb{S}}) = R(3), \mathcal{H}^{-2}_{w_0}(K_{\mathbb{S}}) = \Delta_{w_0}^2(-1),
\] and $\mathcal{H}^{-1}_{w_0}$ is an extension of $\Delta_{21}\Delta_{12}(1)$ and $\Delta_1^2(1)$.
\begin{bremark}

  Note that the grading shifts have signs that are opposite to those in Theorem \ref{sec:char-sheav-full-3}, since the action of the Frobenius is inverted when comparing monodromic categories with Soergel bimodules, cf. Proposition \ref{sec:comp-with-soerg-4}.

\end{bremark}
\bibliography{hhh}
\bibliographystyle{plain}
\Addresses
\end{document}